\let\oldvec\vec% Store \vec in \oldvec
\documentclass[11pt]{article}
% Restore \vec from \oldvec
\let\vec\oldvec% Restore \vec from \oldvec

%\setcounter{chaptenumdepth}{0}
%\usepackage{fontspec}
%\usepackage{fontspec}
%\usepackage[libertine]{newtxmath}
%\usepackage{libertinust1math}
%\RequirePackage[sc]{mathpazo}
%\setmainfont{QTPalatine}
%\RequirePackage[libertine]{newtxmath}
%\usepackage{libertineRoman}
%\usepackage{times}

\RequirePackage{libertine}
\usepackage[scaled=0.9]{helvet}
\RequirePackage[varqu]{zi4} % # timesmathacc, noamssymbols,nosymbolsc
\RequirePackage[libertine, timesmathacc]{newtxmath}
\usepackage{wrapfig}

%\usepackage{garamondx}
%\usepackage[garamondx,cmbraces]{newtxmath}
% !TEX root = paper.tex 
\usepackage[letterpaper, left=1in, right=1in, top=1in, bottom=1in]{geometry}

\usepackage[dvipsnames]{xcolor}
\usepackage{microtype}

% algorithms
\usepackage{algorithm}
\usepackage{verbatim}
\usepackage[noend]{algpseudocode}
\usepackage{enumitem}
% \usepackage{algpseudocode}

%\usepackage{natbib}
%\bibliographystyle{alpha}
%\bibpunct{(}{)}{;}{a}{,}{,}

\usepackage{amsthm}
\usepackage{mathtools}
\usepackage{amsmath}
\usepackage{bm}
\usepackage{bbm}
\let\vec\undefined
\usepackage[mathscr]{eucal}
\usepackage{hyperref}

\usepackage{graphicx}
\usepackage{pdfpages}
\usepackage{thm-restate}
\usepackage{url}
\usepackage{comment}

\usepackage{xpatch}

%% cleveref
%\usepackage[capitalize,nameinlink]{cleveref}
\newcommand{\savehyperref}[2]{\texorpdfstring{\hyperref[#1]{#2}}{#2}}

%\crefname{equation}{Eq.}{Eqs.}
%\Crefname{equation}{Eq.}{Eqs.}

\renewcommand{\eqref}[1]{\savehyperref{#1}{(\ref*{#1})}}

%%% theorems

\theoremstyle{definition} %Sets style of subsequent newtheorems to 'definition'

\newenvironment{manuallemma}[1]{%
	\manualtheoreminner
}{\endmanualtheoreminner}

\newtheorem{assumption}{Assumption}

\theoremstyle{plain}
\newtheorem{remark}{Remark}

\newtheorem{theorem}{Theorem}
\newtheorem{definition}{Definition}

\newtheorem{lemma}[theorem]{Lemma}

\newtheorem{corollary}[theorem]{Corollary}
\newtheorem{proposition}[theorem]{Proposition}

\xpatchcmd{\proof}{\itshape}{\normalfont\proofnameformat}{}{}
\newcommand{\proofnameformat}{\bfseries}

%%%% Local Variables:
%%% mode: latex
%%% TeX-master: "paper"
%%% End:

\definecolor{ashgrey}{rgb}{0.7, 0.75, 0.71}

\usepackage{todonotes}
\usepackage{tikz}
\usetikzlibrary{fit,calc}
%define a marking command

%define a boxing command, argument = colour of box

%\newcommand{\todo}{ }
%\newcommand{\tikzstart}{ }
%\newcommand{\tikzend}{ }

%define some colours according to algorithm parts (or any other method you like)
\colorlet{pink}{red!40}
\colorlet{bluenew}{cyan!60}

% Math delimiters
%\DeclarePairedDelimiter{\abs}{\lvert}{\rvert} %
%\DeclarePairedDelimiter{\brk}{[}{]}
%\DeclarePairedDelimiter{\crl}{\{}{\}}
%\DeclarePairedDelimiter{\prn}{(}{)}
%\DeclarePairedDelimiter{\nrm}{\|}{\|}
%\DeclarePairedDelimiter{\tri}{\langle}{\rangle}
%\DeclarePairedDelimiter{\dtri}{\llangle}{\rrangle}

%\DeclarePairedDelimiter{\ceil}{\lceil}{\rceil}
%\DeclarePairedDelimiter{\floor}{\lfloor}{\rfloor}

\newcommand{\ceil}[1]{\lceil #1 \rceil}

%\newtheorem{theorem}{Theorem}
%\newtheorem{lemma}[theorem]{Lemma}
%\newtheorem{proposition}[theorem]{Proposition}
%\newtheorem{corollary}[theorem]{Corollary}
%\newtheorem{definition}[theorem]{Definition}
%\DeclareMathAlphabet{\mathbb}{U}{msb}{m}{n}

\renewcommand{\v}{\bm{v}}
\newcommand{\e}{\bm{e}}
\newcommand{\z}{\bm{z}}

\newcommand{\bu}{\bm{u}}

\newcommand{\X}{\bm{X}}

\newcommand{\bnabla}{\bm{\nabla}}
\newcommand{\G}{\bm{G}}
\newcommand{\K}{{\mathcal{C}}}
\renewcommand{\O}{{\mathsf{OPT}}}

\newcommand{\mem}{\mathsf{MEM}}
\newcommand{\y}{\bm{y}}
\newcommand{\s}{\bm{s}}
\newcommand{\scR}{\mathscr{R}}
\newcommand{\Cost}{\text{Cost}}

\newcommand{\norm}[1]{\left\| #1 \right\|}

\renewcommand{\mem}{\mathsf{MEM}}
\newcommand{\gau}{\mathsf{GAU}}

 %\mathfrak{PD}

%\renewcommand{\ceil}[1]{\left\lceil #1 \right\rceil}

%\renewcommand{\tv}{\mathrm{tv}}

\newcommand{\metagrad}{\mathsf{MetaGrad}}

\newcommand{\ftrl}{$\mathsf{FTRL}$-$\mathsf{proximal}$}
\newcommand{\Ftrl}{\mathsf{FTRL}} %\text{-}\mathsf{prox}

\newcommand{\A}{\mathsf{A}}

\newcommand{\w}{\bm{w}}

\newcommand{\g}{\bm{g}}
\newcommand{\x}{\bm{x}}
\newcommand{\freegrad}{$\mathsf{FreeGrad}$}

\let\P\undefined

\newcommand{\onedopt}{\text{$1\mathsf{D}$-$\O_{\K^\circ}$}}
\DeclareMathOperator{\P}{\mathbb{P}}

% Arg<x>
\DeclareMathOperator*{\argmin}{arg\,min} % * Places subscript directly under operator
\DeclareMathOperator*{\argmax}{arg\,max}

% styles

% Special letters: blackboard, mathcal, widehat % djhsu magic
\def\ddefloop#1{\ifx\ddefloop#1\else\ddef{#1}\expandafter\ddefloop\fi}
\def\ddef#1{\expandafter\def\csname bb#1\endcsname{\ensuremath{\mathbb{#1}}}}
\ddefloop ABCDEFGHIJKLMNOPQRSTUVWXYZ\ddefloop

\def\ddefloop#1{\ifx\ddefloop#1\else\ddef{#1}\expandafter\ddefloop\fi}
\def\ddef#1{\expandafter\def\csname fr#1\endcsname{\ensuremath{\mathfrak{#1}}}}
\ddefloop ABCDEFGHIJKLMNOPQRSTUVWXYZ\ddefloop

\def\ddefloop#1{\ifx\ddefloop#1\else\ddef{#1}\expandafter\ddefloop\fi}
\def\ddef#1{\expandafter\def\csname eul#1\endcsname{\ensuremath{\EuScript{#1}}}}
\ddefloop ABCDEFGHIJKLMNOPQRSTUVWXYZ\ddefloop

\def\ddefloop#1{\ifx\ddefloop#1\else\ddef{#1}\expandafter\ddefloop\fi}
\def\ddef#1{\expandafter\def\csname scr#1\endcsname{\ensuremath{\mathscr{#1}}}}
\ddefloop ABCDEFGHIJKLMNOPQRSTUVWXYZ\ddefloop

\def\ddefloop#1{\ifx\ddefloop#1\else\ddef{#1}\expandafter\ddefloop\fi}
\def\ddef#1{\expandafter\def\csname b#1\endcsname{\ensuremath{\mathbf{#1}}}}
\ddefloop ABCDEFGHIJKLMNOPQRSTUVWXYZ\ddefloop
\def\ddef#1{\expandafter\def\csname c#1\endcsname{\ensuremath{\mathcal{#1}}}}
\ddefloop ABCDEFGHIJKLMNOPQRSTUVWXYZ\ddefloop
\def\ddef#1{\expandafter\def\csname h#1\endcsname{\ensuremath{\widehat{#1}}}}
\ddefloop ABCDEFGHIJKLMNOPQRSTUVWXYZ\ddefloop
\def\ddef#1{\expandafter\def\csname hc#1\endcsname{\ensuremath{\widehat{\mathcal{#1}}}}}
\ddefloop ABCDEFGHIJKLMNOPQRSTUVWXYZ\ddefloop
\def\ddef#1{\expandafter\def\csname t#1\endcsname{\ensuremath{\widetilde{#1}}}}
\ddefloop ABCDEFGHIJKLMNOPQRSTUVWXYZ\ddefloop
\def\ddef#1{\expandafter\def\csname tc#1\endcsname{\ensuremath{\widetilde{\mathcal{#1}}}}}
\ddefloop ABCDEFGHIJKLMNOPQRSTUVWXYZ\ddefloop

% Names

% one-off macros

%\renewcommand{\ind}{\mathbbm{1}}    %Indicator

%\newcommand{\trn}{\intercal}
%      $Id: macros.tex 506 2009-10-05 16:57:07Z daniel $    

\usepackage{booktabs}
\usepackage{relsize}
\usepackage{xspace}
\usepackage{subfigure}
\usepackage{listings}
\lstloadlanguages{java}
\DeclareGraphicsRule{*}{pdf}{*}{}

\definecolor{tableheadcolor}{rgb}{0.8,0.8,1.0}
%\definecolor{tablealtcolor}{rgb}{0.9,0.9,1.0}
\definecolor{tablealtcolor}{rgb}{0.9,0.9,0.95}

\definecolor{todocolor}{rgb}{0.8,0.8,1.0}
\definecolor{fixcolor}{rgb}{1,0.8,0.8}
\definecolor{commentcolor}{rgb}{0.8,1.0,0.8}

%\usepackage[color=todocolor, colorinlistoftodos]{todonotes}

%\newcommand{\notinpart}{%
% \def\toclevel@chapter{-1}\def\toclevel@section{0}\def\toclevel@subsection{1}} \newcommand{\inpart}{
% \def\toclevel@chapter{0}\def\toclevel@section{1}\def\toclevel@subsection{2}}

%
% Stuff for pretty printing the source code using listings.sty
%

%% set Java as the default language
\lstset{
  numbers=left,
  numberstyle=\tiny,
  stepnumber=1,
  numbersep=2em,
  language=java,                         % the language
  basicstyle=\footnotesize\ttfamily,     % the basic font family to use
  commentstyle=\itshape,                 % the font for comments
  stringstyle=\ttfamily,
%  morekeywords={@Intrinsic, @Unboxed, @RawStorage}
}
%\lstset{language=java}

\newcommand{\textjava}[1]{{\lstset{basicstyle=\ttfamily}\lstinline@#1@}}
\newcommand{\textjavafn}[1]{{\lstset{basicstyle=\footnotesize\ttfamily}\lstinline@#1@}}
\usepackage{setspace}
\usepackage{ifthen}

\long\def\sfootnote[#1]#2{\begingroup%
\def\thefootnote{\fnsymbol{footnote}}\footnote[#1]{#2}\endgroup}
%
% code
%

%
% abbreviations
%

\newcommand{\doi}[1]{\href{http://dx.doi.org/#1}{\nolinkurl{doi:#1}}}
%
% misc
%

%\newcommand{\comment}[1]{\todo[color=commentcolor]{#1}}

\newcommand{\ignore}[1]{}

%\Crefname{equation}{Eq.}{Eqs.}
%\Crefname{assumption}{Assumption}{Assumptions}
%\Crefname{condition}{Condition}{Conditions}

%\DeclareMathSymbol{\shortminus}{\mathbin}{AMSa}{"39}

\newcommand{\algcomment}[1]{\textcolor{blue!70!black}{\footnotesize{\texttt{\textbf{//
					#1}}}}}

\makeatletter
\newcommand{\neutralize}[1]{\expandafter\let\csname c@#1\endcsname\count@}
\makeatother

%\Crefname{claim}{Claim}{Claims}

% \newcommand{\simp}{\Delta}

\newcommand{\wtilde}[1]{\widetilde{#1}}

\newcommand{\reals}{\mathbb{R}}

\newcommand{\E}{\mathbb{E}}

\newcommand{\inner}[2]{\langle#1,#2\rangle}
\newcommand{\commentout}[1]{}

\renewcommand{\P}{\mathbb{P}}

\newcount\Comments  % 0 suppresses notes to selves in text
\Comments=1 % TODO: change to 0 for final version
%\definecolor{darkgreen}{rgb}{0,0.5,0}
%\definecolor{darkred}{rgb}{0.7,0,0}
%\definecolor{teal}{rgb}{0.3,0.8,0.8}
%\definecolor{orange}{rgb}{1.0,0.5,0.0}
%\definecolor{purple}{rgb}{0.8,0.0,0.8}

%\renewcommand{\op}{\mathrm{op}}

% \renewcommand{\ind}[1]{^{{\scriptscriptstyle(#1)}}}

% \newcommand{\Dx}{\cD_{x}}

% \newcommand{\alg}{\textsf{RegAlg}\xspace}
% \newcommand{\alg}{\textup{\textsf{SqAlg}}\xspace}

% \newcommand{\alg}{\textsf{Alg}\xspace}
% \newcommand{\alg}{$\mathrm{Alg}^{\mathsf{OL}}$\xspace}

% \newcommand{\infdim}{K}

% \renewcommand{\op}[1]{\operatorname{#1}}
\newcommand{\nn}{\nonumber}

\newcommand{\bxi}{\bm{\xi}}

\newcommand{\op}{\mathrm{op}}

\newcommand{\bnu}{\bm{\upnu}}

% The following are for the appendix

%\newcommand{\coveridbarkap}{\overline{\scrN}_{\kappa,\mathrm{id}}}

% \newcommand{\Sigxnot}{\Sigma_{x_0}}

\newcommand{\dlyap}{\dlyap}

\newcommand{\citep}[1]{\cite{#1}}
\usepackage[toc,page,header]{appendix}

\makeatletter

\newcommand*\wthelper[2]{%
	\hbox{\dimen@\accentfontxheight#1%
		\accentfontxheight#11.2\dimen@
		$\m@th#1\widehat{#2}$%
		\accentfontxheight#1\dimen@
	}%
}

\newcommand*\accentfontxheight[1]{%
	\fontdimen5\ifx#1\displaystyle
	\textfont
	\else\ifx#1\textstyle
	\textfont
	\else\ifx#1\scriptstyle
	\scriptfont
	\else
	\scriptscriptfont
	\fi\fi\fi3
}

\let\wtilde\undefined

\makeatletter
\newcommand*\wtilde[1]{\mathpalette\wthelpers{#1}}

\newcommand*\wthelpers[2]{%
	\hbox{\dimen@\accentfontxheight#1%
		\accentfontxheight#11.2\dimen@
		$\m@th#1\widetilde{#2}$%
		\accentfontxheight#1\dimen@
	}%
}

\makeatother

%%%%%%%%%%%%%%%%%%%%%%%%%%%%%%%%%%%%%%%%%%%%%%%%%%%%%%%%%%%%%%%%%%%%%%%
%% Preamble
\title{Efficient Projection-Free Online Convex Optimization with Membership Oracle}
\author{{\bf Zakaria Mhammedi}\\ Massachusetts Institute of Technology \\ \texttt{mhammedi@mit.edu}}
 \date{}

%\renewcommand{\thepage}{\roman{page}}

%\makeindex
\begin{document}

	\maketitle

\begin{abstract}
In constrained convex optimization, existing methods based on the ellipsoid or cutting plane method do not scale well with the dimension of the ambient space. Alternative approaches such as Projected Gradient Descent only provide a computational benefit for simple convex sets such as Euclidean balls, where Euclidean projections can be performed efficiently. For other sets, the cost of the projections can be too high. To circumvent these issues, alternative methods based on the famous Frank-Wolfe algorithm have been studied and used. Such methods use a Linear Optimization Oracle at each iteration instead of Euclidean projections; the former can often be performed efficiently. Such methods have also been extended to the online and stochastic optimization settings. However, the Frank-Wolfe algorithm and its variants do not achieve the optimal performance, in terms of regret or rate, for general convex sets. What is more, the Linear Optimization Oracle they use can still be computationally expensive in some cases. In this paper, we move away from Frank-Wolfe style algorithms and present a new reduction that turns any algorithm $\A$ defined on a Euclidean ball (where projections are cheap) to an algorithm on a constrained set $\K$ contained within the ball, without sacrificing the performance of the original algorithm $\A$ by much. Our reduction requires $O(T \ln T)$ calls to a Membership Oracle on $\K$ after $T$ rounds, and no linear optimization on $\K$ is needed. Using our reduction, we recover optimal regret bounds [resp.~rates], in terms of the number of iterations, in online [resp.~stochastic] convex optimization. Our guarantees are also useful in the offline convex optimization setting when the dimension of the ambient space is large. 
\end{abstract}
	
	\clearpage
	\tableofcontents
	\clearpage
	%% Introduction
	\section{Introduction}
	In this paper, we are interested in designing efficient algorithms for constrained convexity optimization in the online, offline, and stochastic settings. Popular algorithms for optimizing a convex objective $f$ defined on a bounded convex set $\K\subset\reals^d$ include, for example, the ellipsoid or cutting plane methods \citep{grotschel1993, grotschel2012, bubeck2015}. Though such algorithms enjoy linear convergence rates, where the optimality gap decreases exponentially fast with the number of iterations, their per-iteration computational complexity depends super-linearly in the dimension $d$. In particular, if $\Cost({\cM}_{\K})$ is the computational cost of testing if some point $\x\in \reals^d$ belongs to $\K$, then state-of-the-art cutting plane-based methods have a computational complexity of order $O(d^3 + d^2\Cost({\cM}_{\K})) \log(1/\epsilon)$ to find an $\epsilon$-suboptimal point \cite{lee2015, lee2018efficient}. Thus, when the dimension $d$ is very large, such methods can become impractical.
	
	 For some convex sets such as a Euclidean ball, alternative methods, such as the Projected Gradient Descent (PGD) algorithm, dispense of the ``expensive'' dimension dependence in their computational complexity at the cost of a worse dependence in $1/\epsilon$ (e.g.~$1/\epsilon^2$ instead of $\log(1/\epsilon)$) in their convergence rate. Despite the latter cost, such methods are still favorable in practice when the dimension $d$ is large. We refer the interested reader to \cite{bubeck2015} for a nice comparison between the convergence rates of different optimization algorithms. In some settings, evaluating the objective function $f$ is itself expensive. For example, this is the case in many machine learning applications where the objective involves a sum over a large number of data points. A popular approach around this issue is to use (projected) Stochastic Gradient Descent (SGD), where it suffices to evaluate a stochastic version of the objective function at each iteration, which can often be done efficiently. % involving a sum over only a small number of randomly selected data points at each iteration. 
	 SGD enjoys similar computational benefits as PGD. However, when the domain $\K$ of interest is not a Euclidean ball, the main bottleneck of such methods is often the projections that need be performed each time the iterates of the algorithm step out of $\K$. This has fueled a long line of research around the design of, so-called, projection-free algorithms that swap expensive projection steps for linear optimization over the domain $\K$, which can often be done much more efficiently. The reader is referred to the vast literature on projection-free optimization that include, for instance, \cite{frank1956,hazan2008sparse,jaggi2013,garber2016faster,kerdreux2020accelerating, Bomze2021}. 
	
The first projection-free algorithm (a.k.a.~the Frank-Wolfe algorithm) was introduced by \cite{frank1956} who applied it to smooth optimization on polyhedral sets. The Frank-Wolfe algorithm swaps expensive projections on the convex set $\K$ for linear optimization on $\K$, which for many sets of interest can be performed efficiently (see Table \ref{tab:comp}). A similar algorithm was later introduced for the Online Convex Optimization (OCO) setting with linear losses \cite{kalai2005}. This is a setting where instead of optimizing a fixed function, the goal is to minimize a quantity called \emph{regret}. Formally, at each round $t$ in OCO, a learner (algorithm) outputs an iterate $\x_t$ in some convex set $\K$, then observes a convex loss function $\ell_t :\K\rightarrow \reals$ that may be chosen by an adversary based on $\x_t$ and the history up to round $t$. The goal is to minimize the regret, which is defined as the difference between the cumulative loss of the learner and that of any comparator $\x\in \K$: 
	\begin{align}
 \text{Regret}_T(\x)\coloneqq \sum_{t=1}^T \ell_t(\x_t)- \sum_{t=1}^T \ell_t(\x) .\nn
	\end{align}
Offline [resp.~Stochastic] optimization are special cases of OCO, where $\ell_t$ is fixed [resp.~stochastic with a fixed mean] for all $t$. Algorithms designed for OCO (e.g. Online Gradient Descent \cite{zinkevich2003}) often lead to optimal convergence rates when used in the offline and stochastic settings via online-to-batch conversion techniques \cite{cesa2004, shalev2011, cutkosky2019}. For examples of problems that can be modeled via OCO we refer the reader to the nice OCO introduction by \cite{hazan2016introduction}. Unlike in the offline setting where there are algorithms such as those based on the cutting plane method that converge exponentially fast with the number of iterations (though a single iteration might be computationally expensive), there are no equivalent options for OCO since the losses are different at each round and the goal is to minimize the regret. This means that designing efficient projection-free algorithms for OCO is even more crucial than in the offline setting.

In this paper, we continue the long line of research in the design of efficient projection-free algorithms for online and stochastic optimization (see e.g.~\cite{kalai2005,hazan2012, hazana16, hazan2020}). We depart from the Frank-Wolfe-style approach by presenting a new algorithm that requires a Membership Oracle $\cM_\K$ for $\K$ instead of a Linear Optimization Oracle. When given $\x\in \reals^d$, the Membership Oracle returns $\mathbb{I}\{\x\in \K\}$. We present an algorithm that requires a total of $O(T\ln T)$ calls to a (possibly approximate) Membership Oracle and guarantees a regret bound of $O(\sqrt{T})$ for general convex functions and convex sets. 
This regret is optimal in the number of rounds $T$. For the general OCO setting (without additional assumptions on $\K$ or the losses), no existing Frank-Wolfe-style algorithm guarantees such a dependence in $T$ in its regret bound using the same number of calls to a Linear Optimization Oracle for $\K$. 

Given that Membership Oracles are typically used to implement Linear Optimization Oracles in the first place \cite{grotschel2012, bubeck2015,lee2018efficient}, it may appear that our approach is strictly better than those based on linear optimization. Though this may be the case from the regret bound's perspective, it is not always the case when it comes to the computational complexity of the Oracles. This is because there are convex sets $\K$ on which linear optimization is cheaper than testing for membership, and vice versa. We summarize the Oracle complexities for different sets in Table \ref{tab:comp}. It is known that the complexity of a Membership Oracle $\cM_{\K}$ is essentially equivalent to the complexity of linear optimization on $\K^\circ$, the polar set of $\K$ (definition in the next section). In fact, linear optimization on $\K^\circ$ (which is what we do as part of our new algorithm) can be performed up to error $\epsilon$ using $O(\log (1/\epsilon))$ calls to $\cM_{\K}$. Conversely, $\cM_{\K}$ can be implemented using a single call to a Linear Optimization Oracle on $\K^\circ$. Since $(\K^\circ)^\circ=\K$ for a closed convex set $\K$, this also means that linear optimization on $\K$ can be viewed as testing membership/performing separation on $\K^\circ$. 
Therefore, when it comes to the computational complexity of the Oracles, Frank-Wolfe-style algorithms and ours may be viewed as complementary.
\begin{table}[h]
\centering
\begin{tabular}{lllll}
\hline
\bf{Domain $\K$} & \multicolumn{2}{l}{Operation Required by } &\multicolumn{2}{l}{Computational Complexity of} \\
&LOO $\cO_{\K}(\x;\cdot )$  & {\color{blue}MO $\cM_{\K}(\x;\cdot)$}  & LOO $\cO_{\K}(\cdot;\delta)$ & {\color{blue} MO $\cM_{\K}(\cdot;\delta)$ }\\
\hline 
\hline
\hspace{-0.2cm}  $\ell_p$-ball in $\reals^d$	&  $ \|\x\|_{q}$ &  {\color{blue}$\|\x\|_{p}$}  &   $O(d)$ & {\color{blue}$O(d)$} \\ 
\hline
\hspace{-0.2cm}  Simplex $\Delta_d \in\reals^d$ & $ \|\x\|_{\infty}$ & {\color{blue} $ \inner{\mathbf{1}}{\x} $}   & $O(d)$ &
 {\color{blue}$O(d)$}.\\
\hline
\hspace{-0.2cm}  Trace-norm-ball in $\reals^{m\times n}$ & $\|\x\|_{\op}$ & {\color{blue} $\|\x\|_{\text{tr}}$} & $O (\text{nnz}(\x)/\sqrt{\delta})$  & {\color{blue} \Cost(SVD)} \\
\hline
\hspace{-0.2cm}    Op-norm-ball in $\reals^{m\times n}$ & $\|\x\|_{\text{tr}}$ &  {\color{blue}$\|\x\|_{{\op}}$}&  \Cost(SVD) & {\color{blue}$O (\text{nnz}(\x)/\sqrt{\delta})$} \\
\hline
\hspace{-0.4cm}  \begin{tabular}{l} Conv-hull of Permutation \\ \quad Matrices in $\reals^{n\times n}$ \end{tabular}&  &  {\color{blue}  \hspace{-0.3cm} \begin{tabular}{l} $\e_i^\top \x \mathbf{1}=1$;  \\  $\e_i^\top \x^\top \mathbf{1}=1$,\end{tabular} \hspace{-0.3cm}$\forall i$} & $O(n^3)$  & {\color{blue}$O(n^2)$} \\ 
\hline
\hspace{-0.4cm}  \begin{tabular}{l} Convex-hull of Rotation \\ \quad Matrices in $\reals^{n\times n}$ \end{tabular}&  &   & \Cost(SVD)& {\color{blue} \Cost(SVD)} \\
\hline
\hspace{-0.4cm}   \begin{tabular}{l} PSD matrices in $\reals^{n\times n}$\\ \quad with unit trace
 \end{tabular} &  $\lambda_{\max} (\x)$ &  {\color{blue} \hspace{-0.3cm} \begin{tabular}{l} $\lambda_{\min}(\x)$\\  $\text{tr}(\x)$
\end{tabular}}  & $O(\text{nnz}(\x) /\sqrt{\delta})$ & {\color{blue}$O(\text{nnx}(\x)/\sqrt{\delta})$} \\
\hline
\hspace{-0.4cm}  \begin{tabular}{l} PSD matrices in $\reals^{n\times n}$\\ \quad with diagonals $\leq$ 1
  \end{tabular} &   &   \hspace{-0.3cm}  {\color{blue} \begin{tabular}{l} $\lambda_{\min}(\x)$\\  $\max_{i\in[n]}(x_{ii})$
\end{tabular} } & $O(\text{nnz}(\x) \sqrt{n^{3} /\delta^{5}})$& {\color{blue}$O(\text{nnx}(\x)/\sqrt{\delta})$} \\
\hline
\hspace{-0.2cm}   \hspace{-0.4cm}  \begin{tabular}{l} The flow polytope with \\ \quad (\#nodes,\ \#edges)=($d$,$m$)
\end{tabular} &   &  & $\wtilde O(d+m)$& {\color{blue}$O(d+m)$} \\
\hline
\hspace{-0.2cm}   \hspace{-0.4cm}  \begin{tabular}{l} The Matroid polytope for \\ \quad  Matroid $M$; \#elem.=$d$
\end{tabular} &   &  & $\wtilde O(d \Cost(\cI_M))$& {\color{blue}\hspace{-0.3cm}   \begin{tabular}{l}  $O(d^2 \Cost(\cI_M)$\\  \quad   $+d^3)$
\end{tabular} } \\
 \hline
 \hline
	\end{tabular}
	\caption{Computational complexity of performing linear optimization [resp.~testing membership] for different sets of interest. $\cO_{\K}$ [resp.~$\cM_{\K}$] denotes a Linear Optimization Oracle (LOO) [resp.~Membership Oracle (MO)]. $\delta>0$ represents the allowed Oracle error (see Section \ref{sec:prelim}). We hide any logarithmic dependence in $1/\delta$. $\Cost$(SVD) [resp.~$\Cost(\cI_{M})$] represents the computational cost of performing SVD [resp.~testing if a set is independent in $M$ (see Section \ref{sec:applications})]. For $\x\in \reals^{n\times m}$, $\text{nnz}(\x)$ represents the number of non-zeros of $\x$. The details for the computational cost of the Linear Optimization Oracles listed can be found in \cite{hazan2012,jaggi2013}. The details for the Membership Oracle complexities can be found in Section \ref{sec:applications}.}
	\label{tab:comp}
\end{table}
\paragraph{Contributions.} For OCO with general convex losses, we provide an algorithm in the form of a (projection-free) reduction that achieves a $O(\kappa\sqrt{dT})$ regret after $T$ rounds using a total of $O(T \ln T)$ calls to an approximate Membership Oracle $\mem_{\K}$, for some constant $\kappa >0$ that depends on the set $\K$. When $\K$ has a non-empty interior and contains the origin, $\kappa=R/r$ such that $\cB(r)\subseteq \K \subset \cB(R)$, where $\cB(x)$ denotes the Euclidean ball of radius $x$. In many practical settings, we show that $\kappa$ is at most $d^{1/2}$ (see Section \ref{sec:applications}). Furthermore, when using $O(d \ln T)$ calls to $\mem_{\K}$ per round (instead of $O(\ln T)$ calls) or $O(\ln T)$ calls to a Separation Oracle on $\K$ per round, our algorithm guarantees a $O(\kappa\sqrt{T})$ regret bound. Thus, our approach allows a trade-off between computation (i.e.~Oracle calls) and regret. 
We also present an algorithm for the case where the losses are strongly convex. The algorithm in question achieves a logarithm regret without requiring the parameter of strong convexity as input, while ensuring a $\wtilde{O}(\kappa\sqrt{dT})$ regret in the worst case. Crucially, the algorithms just mentioned are scale-invariant in the sense that the outputs of the algorithms do not change if the losses are scaled by some positive constant unknown to the algorithms---a desirable property in OCO \cite{cesa2007, orabona2016, ward2019}. 

Our guarantees for general OCO readily transfer to the stochastic setting via the well-known online-to-batch conversion technique \cite{cesa2004, shalev2011}, leading to a convergence rate of order $O(\kappa\sqrt{d/T})$ which is optimal in $T$. We also present an algorithm for $\beta$-smooth stochastic optimization that achieves the rate $O(\beta/T^2 + \sigma/\sqrt{T})$ (hiding the dependence in $d$ and $\kappa$ for simplicity), where $\sigma^2$ is the variance of the noise of the observed subgradients. 
Crucially, our algorithm does not require knowledge of either $\beta$ or $\sigma$, and is thus fully adaptive. The rates just mentioned are optimal in $T$, and all the algorithms require $O(T \ln T)$ calls to $\mem_{\K}$ after $T$ rounds.  

Finally, in the offline setting, a special case of the stochastic setting with zero noise, our rates for general [resp.~$\beta$-smooth] convex functions imply that at most $\wtilde O (d \kappa^2/\epsilon^2)$ [resp.~$\wtilde O (d \kappa /\sqrt{\epsilon})]$ Membership Oracle calls are required to find an $\epsilon$-sub-optimal point. Since state-of-the-art algorithms based on the cutting plane method require $O(d^2 \ln (1/\epsilon))$ calls to a Membership Oracle to achieve the same guarantee \cite{lee2018efficient}, our algorithm provides a viable alternative to the latter whenever $d \geq \Omega(\kappa^2/\epsilon^2)$ for general convex functions or when $d \geq  \Omega (\kappa/\sqrt{\epsilon})$ for smooth functions. We note that the algorithm of \cite{lan2016} has a similar guarantee to ours in the offline setting, though they require a Linear Optimization Oracle on $\K$ instead of a Membership Oracle. We summarize our contributions in Table \ref{tab:result}, where we compare our average regret/rates to the best known methods for general convex sets.

\begin{wrapfigure}{r}{0.29\linewidth}
	\centering
	\includegraphics[width=\linewidth,trim={0cm 5.1cm 15.5cm .4cm},clip]{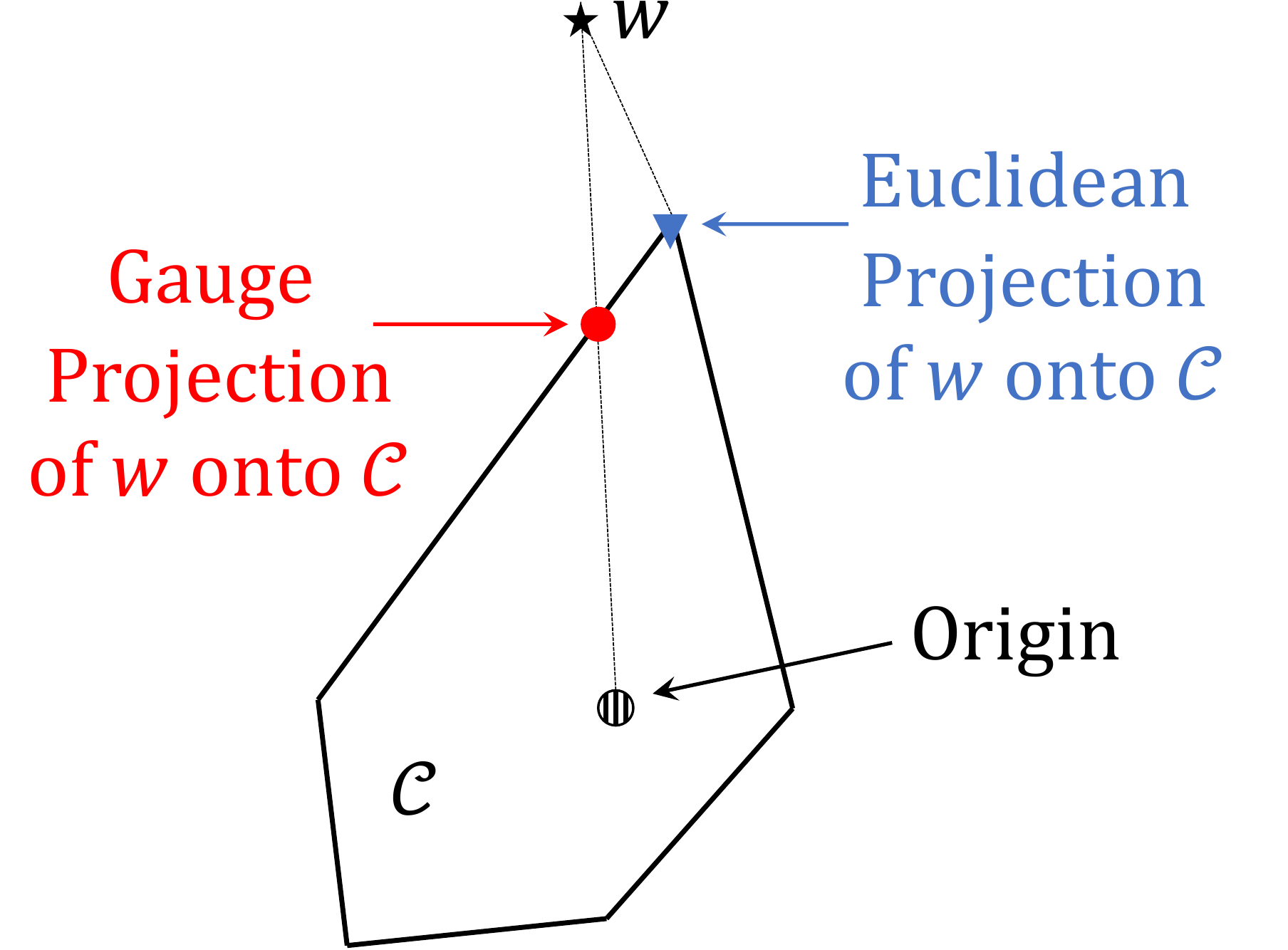}
	\caption{Gauge vs Euc. proj.}
	\label{fig:proj}
\end{wrapfigure}
We achieve all this by reducing an OCO problem on a general convex set $\K$ to an OCO problem on a ball, where projections can be performed efficiently. More specifically, we present an algorithm wrapper that takes in any base algorithm $\A$ defined on a Euclidean ball (where projections can be performed efficiently) containing $\K$ and turns it into an algorithm on $\K$ that does not incur any expensive projections. In particular, we replace Euclidean projections by, what we call, \emph{Gauge projections} that can be performed efficiently using a Membership Oracle (see Figure \ref{fig:proj} for an illustration of Gauge projections). What is more, the performance of the resulting algorithm, in terms of regret or rate, is at most a constant factor worse than the original algorithm $\A$.

\paragraph{Related Works.}
This paper continues a long a line of work in projection-free online learning. Table \ref{tab:result} summarizes the guarantees of different existing Frank-Wolfe-style algorithms. For general convex optimization (with general convex sets), the best known regret is of order $O(T^{3/4})$ and was achieved by \cite{hazan2012}. The corresponding Frank-Wolfe algorithm requires a single call to a Linear Optimization Oracle per round. Later, \cite{garber2016} introduced an algorithm that achieves the optimal $O(\sqrt{T})$ [resp. $O(\log T)$] regret in OCO with general [resp.~strongly convex] losses with one call to a LOO per round. We omitted this result from Table \ref{tab:result} since their algorithm and guarantees only apply to the case where the convex set is a polytope. What is more, they use an Oracle that requires at least $O( \min (dT, d^2))$ arithmetic operations per round (translating into $O(\min(n^4, T n^2))$ operations when $\K$ is a subset of $n\times n$ matrices) which may defeat the purpose of using a projection-free approach in the first place. 
\begin{table}
	\centering
	\begin{tabular}{llllll}
		\hline
	Setting &	Loss Function    &  \multicolumn{4}{l}{Average Regret/Rate}       \\
		&    & \multicolumn{2}{l}{Previous Best} &\multicolumn{2}{l}{ \bf{This paper} }  \\
		\hline 
		\hline
Online &	Non-Smooth	    &  $T^{-1/4}$ & \cite{hazan2012} &  $T^{-1/2}$& (Thms.~\ref{thm:genthm}-\ref{thm:genthmprob}) \\ 
Online &	Smooth &  $T^{-1/3}$ & \cite{hazan2020}  &  $T^{-1/2}$\ &(Thm.~\ref{thm:genthmeff})  \\ 
Online &	Strongly Convex &  $T^{-1/3}$ & \cite{kretzu2021} &  $T^{-1} \cdot \ln T $ &(Thm.~\ref{thm:mainstrong})  \\
Stochastic & 	Non-Smooth  &   $T^{-1/3}$&\cite{hazan2012} & $T^{-1/2}$&(Thms.~\ref{thm:genthm}-\ref{thm:genthmprob})  \\
Stochastic & 	Smooth   & $T^{-1/2}$&\cite{lan2017}  & $\beta T^{-2} + \sigma/T^{-1/2}$& (Thm.~\ref{thm:genthmsmooth}) \\
Stochastic &	Strongly Convex  &  $T^{-1/3}$& \cite{hazan2012} &  $T^{-1}\cdot  \ln T$&(Thm.~\ref{thm:mainstrong}) \\
Offline & 	Non-Smooth  &  $T^{-1/3}$& \cite{hazan2012}  &   $T^{-1/2}$& (Thms.~\ref{thm:genthm}-\ref{thm:genthmprob})  \\
Offline &	Smooth  & $\beta T^{-2}$&  \cite{lan2016}   &  $\beta T^{-2}$ & (Thm.~\ref{thm:genthmsmooth})  \\
		\hline
		\hline
	\end{tabular}
	\caption{Comparing the average regret/rate between our results and the previous best in the literature (we hide any dependence in $d$ for clarity). We only included results that hold for general bounded convex sets $\K$. The results reported for our paper [resp.~previous papers] require $O(T \ln T)$ [resp.~$O(T)$] calls to a Membership [resp.~Optimization] Oracle on $\K$ after $T$ rounds. For the smooth stochastic case, $\beta$ is the smoothness parameter and $\sigma$ corresponds to the noise in the stochastic model.}
	\label{tab:result}
\end{table}
Closer to our approach is that of \cite{mahdavi2012} and \cite{levy2019} who essentially also considered Membership and Separation Oracles for their algorithms instead of a Linear Optimization Oracle on $\K$. In their setting, the set $\K$ is of the form $\{\x \in \reals^d \mid h(\x)\leq 0\}$, where $h:\reals^d \rightarrow \reals$ is a smooth convex function, which translates into the set $\K$ being smooth. The approach of \cite{mahdavi2012} requires at least one orthogonal projection onto $\K$. \cite{levy2019} avoid this by providing a fast approximate projection and achieve optimal regret bounds for both convex and strongly convex losses while only requiring a single call to a Separate and Membership Oracle for $\K$. However, their guarantees become vacuous when the set $\K$ is non-smooth. Their algorithm may be viewed as complementary to that of \cite{garber2016} that works for polytopes. We did not include the guarantees of \cite{levy2019} in Table \ref{tab:result} as it is not applicable to general convex sets. Our approach can viewed be as a generalization of that of \cite{mahdavi2012,levy2019} to non-smooth sets. 

For OCO with smooth [resp.~strongly] convex losses \cite{hazan2020} [resp.~\cite{kretzu2021}] provide algorithms that achieve a $O(T^{2/3})$ regret (up to multiplicative factors in the dimension), requiring a single call to a LOO per round. Unlike our algorithm, their method requires knowledge of the strong convexity/smoothness parameter. Further, their regret bound is suboptimal to ours in terms of the dependence on the number of iterations (see Table \ref{tab:result}).

For smooth projection-free stochastic optimization, the best rate we are aware of is of order $O(\beta/T^2 + \sigma/\sqrt{T})$, where $\beta$ is the smoothness parameter and $\sigma$ is the stochastic noise. The first algorithm to achieve this makes $O(T)$ LO Oracle calls after $T$ rounds and is due to \cite{lan2017}. This improves on the previous best $T^{-1/4}$ rate due to \cite{hazan2012} that uses the same number of Oracle calls. We achieve a similar rate as that of \cite{lan2017}, but unlike the latter we do not require knowledge of $\beta$ or $\sigma$. Furthermore, our algorithm requires at most $O(T \ln T)$ calls to a Membership Oracle instead of a LOO. The story is similar for offline smooth convex optimization where we achieve the optimal accelerated rate of $O(\beta/T^2)$ for $\beta$-smooth function without requiring knowledge of $\beta$, unlike the algorithm of \cite{lan2016}.

Finally, the core idea behind our algorithm is based on an exiting unconstrained-to-constrained reduction due to \cite{cutkosky2018black,cutkosky2020parameter}, which uses carefully designed surrogate losses. The key novelty in our work is a choice of surrogate losses that depend on the Gauge function of the set $\K$ (see Definition \ref{def:gauge} below). In particular, we choose this dependence in a way that allows us to replace potentially expensive (Euclidean) projections by inexpensive Gauge projections (see Definition \ref{def:proj} and Figure \ref{fig:proj}), provided one has access to a Membership Oracle. We are able to perform efficient Gauge projections using recent techniques for estimating subgradients of convex functions that are not necessarily differentiable \cite{lee2018efficient}. Our algorithm for strongly-convex OCO [resp.~smooth stochastic optimization] are based on exiting reductions due to \cite{cutkosky2018black} [resp.~\cite{cutkosky2020parameter}]. 

\paragraph{Outline.}
In Section \ref{sec:prelim}, we describe our setting and provide some standard definitions in convex analysis. In Section \ref{sec:gauge0}, we present some important properties of the Gauge function of a set and describe tools we use to approximate its values and subgradients; these will be required by our projection-free algorithms. In Section \ref{sec:oco}, we present our main method that reduces constrained OCO on an arbitrary compact set $\K$ to OCO on a Euclidean ball containing $\K$ (or any convex set containing $\K$). We instantiate our algorithm for the case of strongly convex losses in Subsection \ref{sec:strong}, where we show that our algorithm achieves a logarithmic regret. We extend our approach to the stochastic and offline settings in Subsection \ref{sec:stochastic}, where we recover optimal accelerated rates. In Section \ref{sec:gauge}, we provide the details of our algorithms used to approximate the Gauge function, its subgradients, and Gauge projections. In Section \ref{sec:applications}, we show how our reduction can be applied in practice in light of our main Assumption \ref{ass:assum}. Missing proofs from the main body are included in Appendix \ref{sec:proofs}.

	\section{Setting and Definitions}
	\label{sec:prelim}
We consider the standard OCO setting where at each \emph{round} $t\geq 1$, a learner (the algorithm) outputs an iterate $\x_t$ in some convex set $\K\subset \reals^d$, then the environment reveals a convex loss $\ell_t\colon \K \rightarrow \reals$, and the learner suffers loss $\ell_t(\x_t)$. The goal of the learner is to minimize the \emph{regret} after $T\geq 1$ rounds against any comparator $\x\in \K$:
\begin{align}
\text{Regret}_T(\x) \coloneqq \sum_{t=1}^T \ell_t(\x_t) - \sum_{t=1}^T \ell_t(\x).\nn
\end{align}
\noindent We say that $(\ell_t)$ is an \emph{adversarial loss sequence} when $\ell_t$, $t\geq 1$, may depend on the learner's iterate $\x_t$ as well as the history $(\x_{s},\ell_s)_{s\in[t-1]}$. These are the types of losses we will consider in our general OCO setting. We note that our bounds are often on the \emph{linearized regret} $\sum_{t=1}^T \inner{\g_t}{\x_t-\x}$, where, for all $t\geq 1$, $\g_t$ is in the subdifferential $\partial \ell_t(\x_t)$ of $\ell_t$ at $\x_t$ (see e.g.~\cite{hiriart2004} for a definition), and $\inner{\cdot}{\cdot}$ denotes the standard inner product. Such bounds automatically transfer to a bound on the regret $\text{Regret}_T(\x)$ by convexity of the losses $(\ell_t)$; for any $\x\in \K$ and $t\geq 1$, we have $\ell_t(\x_t)-\ell_t(\x)\leq \inner{\g_t}{\x_t- \x}$, for all $\g_t\in \partial \ell_t(\x_t)$. Bounding the linearized regret is standard in OCO (see e.g.~\cite{hazan2019}). In some settings, we will consider $B$\emph{-Lipschitz}, $B>0$ [resp.~$\mu$-\emph{strongly convex}, $\mu >0$], losses $(\ell_t)$ which are those that satisfy, for all $t\geq 1$, $\x\in \K$ and $\g_t\in \partial \ell_t(\x)$:
\begin{align}
 	\|\g_t \|\leq B, \quad  [\text{resp.}\  \forall \y\in \K,  \quad  \ell_t(\y) \geq  \ell_t(\x) + \inner{\g_t}{\y - \x}+ \mu \|\x - \y\|^2/2,] \label{eq:lipstrong}
\end{align}
where $\|\cdot\|$ is the Eucledian norm.

In this work, we will not assume knowledge of the horizon $T$, and so our regret bounds hold for all $T$'s simultaneously; these are so-called \emph{anytime} regret bound. We will allow the outputs $(\x_t)$ of the learner and the losses $(\ell_t)$ to be random. In this case, for any $t\geq1$, we let $\cG_{t}$ denote the $\sigma$-algebra generated by $(\x_s, \ell_s)_{s\in[t]}$, and we denote $\E_t[\cdot]\coloneqq \E[\cdot \mid \cG_{t}]$. When for any round $t$ the output $\x_t$ of an algorithm $\A$ is a deterministic function of the history $(\x_s, \ell_s)_{s\in[t-1]}$, we say that $\A$ is a \emph{deterministic}.
Throughout, we assume that $\K$ is ``sandwiched'' between two Euclidean balls $\cB(r)$ and $\cB(R)$ with $0<r<R$: 
\begin{assumption}
		We assume that $\K \subseteq \reals^d$ is a closed convex set such that for some $0<r\leq R$, we have
		\begin{align}
			\cB(r)\subseteq  \K \subseteq \cB(R). \label{eq:lowradius}
		\end{align}
	\end{assumption}
\noindent We note that this assumption comes with no loss of generality as one can easily re-parametrize any OCO setting to satisfy this assumption without any significant computational overhead. In Section \ref{sec:applications}, we discuss the details of how this can be done in the general OCO setting as well as in the popular settings of Table \ref{tab:comp}. In Section \ref{sec:applications}, we also derive upper bounds on the \emph{condition number} $\kappa\coloneqq R/r$, which appears as multiplicative factor in our regret bounds, in the different settings of Table \ref{tab:comp}. 
	
Our goal in this paper is to design algorithms with low regret in OCO without incurring expensive (Euclidean) projections onto the set $\K$. For this, we will use a Membership Oracle for the set $\K$ to perform what we call Gauge Projections. These projections can be done very efficiently using a small number of calls to the Membership Oracle as we detail in Section \ref{sec:gauge}. A Membership Oracle is a map $\cM_{\K}\colon \reals^d \rightarrow \{0,1\}$, where for any $\x\in \reals^d$, $\cM_{\K}(\x) =1$ if and only if $\x\in \K$. Such Oracles are often the building blocks of many offline optimization algorithms \cite{grotschel1993,bubeck2015,lee2018efficient}. In many practical settings, one can only efficiently test membership approximately, and so for the sake of generality we will only assume access to such approximate Oracles. To formally define the notion of approximate Membership Oracle, we let $\cB(\K,\delta)\coloneqq \{\x \in \reals^d : \exists \y \in \K  \text{ such that } \|\x-\y\|\leq \delta \}$, where we recall that $\|\cdot\|$ denotes the Euclidean norm. We also define $\cB(\K,-\delta)\coloneqq \{\x \in \reals^d : \forall \y \in \cB(\delta), \ \x+\y\in  \K \}$.
	\begin{definition}[Approximate Membership Oracle]
		\label{def:mem}
	 Let $\delta >0$ and $\K$ be a convex set such that $\cB(\delta)\subseteq \K\subset \reals^d$. Then, the map $\mem_{\K}(\cdot;\delta)\colon \reals^d \rightarrow \{0,1\}$ is a $\delta$-approximate Membership Oracle if for all $\x\in \reals^d$,
	 \begin{align}
	 \mathbb{I}_{\{\x \in  \cB(\K,-\delta) \}} \wedge	 \mathbb{I}_{\{\x \in \cB(\K, \delta)\}} \leq 	\mem_{\K}(\x;\delta) \leq  \mathbb{I}_{\{\x \in \cB(\K, -\delta) \}} \vee \mathbb{I}_{\{\x \in  \cB(\K, \delta)\}}. \label{eq:mem}
	 	\end{align}
	\end{definition}
\noindent Some definitions of (approximate) Membership Oracles only require \eqref{eq:mem} to hold with high probability (see e.g.~\cite{lee2018efficient}). Our analysis can easily be extended to this case, but we make the deterministic assumption in \eqref{eq:mem} to simplify presentation. We will use an approximate Membership Oracle to build a $\delta$-approximate Linear Optimization Oracle $\cO_{\K^\circ}(\cdot; \delta)$ for the polar set $\K^\circ$ (definition below), where for any $\w$ in a ball containing $\K$, the random output $(\gamma, \s)$ of $\cO(\w;\delta)$ essentially satisfies $ \sup_{\bu \in  \K} \inner{\w}{\bu}\leq \gamma \leq \sup_{\bu \in  \K} \inner{\w}{\bu}+\delta$ and $\inner{\s}{\w}\geq  \sup_{\bu \in  \K} \inner{\w}{\bu}-\delta$ with high probability\footnote{The guarantee provided by $\gamma$ (i.e.~$\inner{\w}{\bu}\leq \gamma \leq \sup_{\bu \in  \K} \inner{\w}{\bu}+\delta$) will be crucial in our analysis, which is why we make our LO Oracle $\cO_{\K^\circ}$ return a scalar-vector pair instead of a single vector as is more common for such Oracles.}.  

We now present some standard definitions in convex analysis (see e.g.~\cite{hiriart2004}). 
	\begin{definition}[Polar Set and Normal Cone] \label{def:normal}Let $\K\subseteq \reals^d$ be a convex set. 
		\begin{itemize}[leftmargin=*]
			\item The polar set $\K^\circ$ of $\K$ is defined as $\K^\circ \coloneqq \{\s \in \reals^d : \inner{\s}{\bu}\leq 1, \forall \bu \in \K\}$. 
			\item The normal cone of $\K$ at $\bu\in \K$ is the set $\cN_{\K}(\bu)\coloneqq \{\s\in \K:\inner{\s}{\w- \bu}\leq 0, \forall \w \in \K \}$.
		\end{itemize}
	\end{definition}
\noindent The notation of a polar set is key to our approach. We will essentially build an efficient Linear Optimization Oracle on $\K^\circ$ that we will then allow us to perform, what we call, Gauge projections (formal definition in Section \ref{sec:gauge}) instead of the often-expensive Eucledian projections. Gauge projections are defined with respect to a quasi-metric based on the Gauge function of the set $\K$:
	\begin{definition}[Gauge and Support Functions]
	\label{def:gauge}
		Let $\K\subseteq \reals^d$ be a convex set. 
		\begin{itemize}[leftmargin=*]
			\item The Gauge function $\gamma_{\K}$ of $\K$ is defined as $\gamma_{\K}(\w)\coloneqq \inf\{\lambda>0:\w \in \lambda \K\}$, for $\w\in \reals^d$.
			\item The support function $\sigma_{\K}$ of $\K$ is defined as $\sigma_{\K}(\w)\coloneqq \sup_{\bu\in \K} \inner{\bu}{\w}$, for $\w\in \reals^d$. 
		\end{itemize}
	\end{definition}
\noindent The Gauge function satisfies all properties of a norm except for symmetry, which requires the set $\K$ to be centrally symmetric. Fortunately, we do not need this property to perform efficient projections using the quasi-metric $\mathrm{d}_{\K}(\bu,\w)\coloneqq \gamma_{\K}(\bu-\w)$. The concept of support function in the previous definition will also be key in our analysis (it is the Gauge function of the polar set $\K^\circ$ as we state in the next lemma). We now introduce the concept of Gauge projection/distance to a set:
	\begin{definition}
		\label{def:proj}
Let $\K$ be as in Assumption \ref{ass:assum}, the Gauge projection on $\K$ is the set-valued map $\Pi^{\mathrm{G}}_{\K}: \reals^d \rightrightarrows \K$ defined by $\Pi^{\mathrm{G}}_{\K}(\w) \coloneqq \argmin_{\bu \in \K} \gamma_{\K}(\w-\bu)$. We also define the Gauge distance to $\K$ as the map $S_{\K}:\reals^d \rightarrow \reals_{\geq 0}$:
\begin{align}
			S_{\K}(\w)=  \inf_{\bu\in \K}\gamma_{\K}(\w - \bu). \label{eq:theS}
		\end{align}
	\end{definition}
	\noindent
Gauge projections, which can be done efficiently using a Membership Oracle (see Section \ref{sec:gauge}), will be all we need to ensure the iterates of our algorithms are in $\K$ thanks to carefully designed surrogate losses.

	\paragraph{Additional Notation.} We let $\ln_+(u) \coloneqq 0 \vee\ln (u)$, for $u>0$. For $(\x_t)\subset \reals^d$ and $t\in \mathbb{N}$, we write $\x_{1:t} \coloneqq (\x_1,\dots, \x_t) \in \reals^{d\times t}$. 
For any real function $f\colon \cA \rightarrow \reals$, we denote by $\argmax_{a\in \cA} f(a)$ [resp.~$\argmin_{a\in \cA} f(a)$] the subset $\cK$ of $\cA$ such that $f(b)= \sup_{a\in \cA} f(a)$ [resp.~$f(b)= \inf_{a\in \cA} f(a)$], for all $b\in \cK$. For any $p\in \mathbb{R}\cup \{\infty\}$, $\bu\in \reals^d$, and $r\geq 0$ we denote by $\cB_p(\bu,r)$, the $\|\cdot\|_p$-ball of radius $r$ centered at $\bu$, where $\|\cdot\|_p$ denotes the $p$-norm. When $\bu =\bm{0}$, we simply write $\cB_p(r)$ for $\cB_p(\bm{0},r)$. For the special case where $p=2$, we let $\|\cdot\|\coloneqq \|\cdot\|_2$ and $\cB(\cdot)\coloneqq \cB_2(\cdot)$. We denote by $\Pi_{\cB(r)}\colon \reals^d \rightarrow \cB(r)$ the Eucledian projection operator onto the ball $\cB(r)$. For any set $\cK$, we denote by $\operatorname{conv}\cK$ [resp.~$\operatorname{bd} \K$] the convex hull [resp.~boundary] of $\cK$. We let $\iota_\cK\colon \reals^d \rightarrow \{0,+\infty\}$ be the indicator function of the set $\cK$, where $\iota_{\cK}(\w)=0$ if and only if $\w\in \cK$. Finally, for any mathematical statement $\cE$, we let $\mathbb{I}_{\cE}=1$ if $\cE$ is true, and 0 otherwise.

	\section{Preliminaries: Gauge Function Approximation and More}
	\label{sec:gauge0}
	Some of the technical tools introduced in this section will be detailed further in Section \ref{sec:gauge}. At the heart of our approach is the design of surrogate losses for online and stochastic optimization that depend on the Gauge distance function $S_{\K}$ (see \eqref{eq:theS}) in a way that leads to an algorithm that performs Gauge projections instead of potentially expensive Euclidean projections (more details in the next section). Naturally, this will require the ability to efficiently evaluate $S_{\K}$ and its subgradients. The next lemma (whose proof is in Section \ref{sec:gauge}) shows that the latter can be expressed as concise functions of $\gamma_{\K}$ (the Gauge function):
	\begin{lemma}
		\label{lem:projectiononK}
		For any set $\K$ satisfying Assumption \ref{ass:assum}, and any $\bu\not\in\K$ and $\w\in \reals^d$, we have 
		\begin{gather}
	\frac{\bu}{\gamma_{\K}(\bu)} \in \Pi^{\mathrm{G}}_{\K}(\bu);	\ \	\ \ S_\K(\w) = 0\vee (\gamma_{\K}(\w) -1);   \ \  \& \ \ \partial S_\K(\w) = \left\{ \begin{array}{ll} \partial \gamma_{\K}(\w)=  \argmax_{\s\in \K^\circ}\inner{\s}{\w}, & \text{if } \w\notin \K;\\ \{ \bm{0}\},& \text{otherwise}.  \end{array}   \right. \nn
		\end{gather} 
	\end{lemma}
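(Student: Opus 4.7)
The plan is to handle the three assertions in the lemma separately, using only that $\gamma_{\K}$ is a subadditive, positively homogeneous convex function (because $\K$ is convex and contains a ball around the origin by Assumption~\ref{ass:assum}).

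First I would prove the projection identity $\bu/\gamma_{\K}(\bu)\in\Pi^{\mathrm{G}}_{\K}(\bu)$ for $\bu\notin\K$. Set $\bar{\bu}\coloneqq \bu/\gamma_{\K}(\bu)$; since $\bu\notin\K$ we have $\gamma_{\K}(\bu)>1$ and $\bar{\bu}\in\operatorname{bd}\K$. By positive homogeneity,
\[
\gamma_{\K}(\bu-\bar{\bu}) \;=\; \gamma_{\K}\!\left(\bu\cdot \tfrac{\gamma_{\K}(\bu)-1}{\gamma_{\K}(\bu)}\right) \;=\; \gamma_{\K}(\bu)-1.
\]
For any competitor $\z\in\K$, subadditivity of $\gamma_{\K}$ gives $\gamma_{\K}(\bu)\le \gamma_{\K}(\bu-\z)+\gamma_{\K}(\z)\le \gamma_{\K}(\bu-\z)+1$, where we used $\gamma_{\K}(\z)\le 1$ for $\z\in\K$. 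Hence $\gamma_{\K}(\bu-\z)\ge \gamma_{\K}(\bu)-1$, so $\bar{\bu}$ attains the infimum. As a by-product this also establishes $S_{\K}(\bu)=\gamma_{\K}(\bu)-1$ for $\bu\notin\K$.

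Next I would establish the formula $S_{\K}(\w)=0\vee(\gamma_{\K}(\w)-1)$ for all $\w$. If $\w\in\K$, taking the feasible choice $\bu=\w$ gives $S_{\K}(\w)\le 0$, and nonnegativity of $\gamma_{\K}$ forces $S_{\K}(\w)=0$; since $\gamma_{\K}(\w)\le 1$ on $\K$, we also have $0\vee(\gamma_{\K}(\w)-1)=0$. For $\w\notin\K$ the formula follows from the previous paragraph together with $\gamma_{\K}(\w)-1>0$.

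Finally I would compute $\partial S_{\K}(\w)$ by writing $S_{\K}=\max\{0,\,\gamma_{\K}-1\}$ and applying the standard max-rule for convex functions. When $\w\notin\K$ the second argument strictly dominates, so $\partial S_{\K}(\w)=\partial \gamma_{\K}(\w)$; when $\w\in\operatorname{int}\K$ the constant $0$ strictly dominates, so $\partial S_{\K}(\w)=\{\bm 0\}$. For the identification $\partial\gamma_{\K}(\w)=\argmax_{\s\in\K^{\circ}}\inner{\s}{\w}$, I would use the bipolar theorem (which applies because $\K$ is closed convex with $\bm 0$ in its interior by Assumption~\ref{ass:assum}) to rewrite $\gamma_{\K}=\sigma_{\K^{\circ}}$, and then invoke the classical formula for the subdifferential of a support function of a compact convex set, namely $\partial\sigma_{\K^{\circ}}(\w)=\argmax_{\s\in\K^{\circ}}\inner{\s}{\w}$ (compactness of $\K^{\circ}$ follows from $\cB(r)\subseteq \K$).

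The only real subtlety is the case $\w\in\operatorname{bd}\K$, where the max-rule gives $\partial S_{\K}(\w)=\operatorname{conv}(\{\bm 0\}\cup\partial\gamma_{\K}(\w))$; the lemma's statement is consistent with this in the sense that $\bm 0$ is always a valid subgradient on $\K$ (which is all that will be needed algorithmically for Gauge projections in the next section). I would make this remark explicit but would not expect it to cause trouble, since the only property used downstream is that any element of the right-hand side above serves as a subgradient for the surrogate-loss construction.
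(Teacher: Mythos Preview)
Your argument is correct, and for the projection identity it is genuinely simpler than the paper's. The paper rewrites $\min_{\bu\in\K}\gamma_{\K}(\w-\bu)$ as minimizing $\phi(\bu)\coloneqq\sigma_{\K^\circ}(\w-\bu)+\iota_{\K}(\bu)$, applies the subdifferential sum rule to get $\partial\phi(\bu)=-\partial\sigma_{\K^\circ}(\w-\bu)+\cN_{\K}(\bu)$, picks $\s_*\in\partial\sigma_{\K^\circ}(\w-\w/\gamma_{\K}(\w))$, and then verifies via the polar-set inequality that $\s_*\in\cN_{\K}(\w/\gamma_{\K}(\w))$, so that $\bm 0\in\partial\phi(\w/\gamma_{\K}(\w))$. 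Your two-line subadditivity bound $\gamma_{\K}(\bu)\le\gamma_{\K}(\bu-\z)+\gamma_{\K}(\z)\le\gamma_{\K}(\bu-\z)+1$ for $\z\in\K$ yields the same conclusion without normal cones, sum rules, or the identification $\gamma_{\K}=\sigma_{\K^\circ}$; the cost is only that you do not exhibit the optimality certificate $\s_*$, but that object is not used downstream. For the formula $S_{\K}=0\vee(\gamma_{\K}-1)$ and the subdifferential claim both proofs coincide. Your remark on the boundary case is apt: the paper's proof simply asserts $\partial S_{\K}(\w)=\{\bm 0\}$ for $\w\in\K$ from the fact that $S_{\K}$ vanishes on $\K$, which is strictly speaking only valid on $\operatorname{int}\K$; as you note, this does not affect any later use, since $\bm 0\in\partial S_{\K}(\w)$ everywhere on $\K$ is all that is required.
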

\noindent Note that for any $\bu\not\in \K$, Lemma \ref{lem:projectiononK} also provides the expression of a Gauge projection point in $\Pi^{\text{G}}_{\K}(\bu)$, which will be leveraged by our algorithm. Next, we describe how we build algorithms for approximating $\gamma_{\K}$ and its subgradients using our approximate Membership Oracle $\mem_{\K}$. As Lemma \ref{lem:projectiononK} shows, this is all that is needed to approximate $S_{\K}$ and its subgradients. 

\paragraph{Approximating $\gamma_{\K}$ and its subgradients.}  By definition of the Gauge function, we have for any $\w\in \reals^d$, \begin{align}
		\gamma_{\K}(\w) = \inf \{ \lambda \in \reals_{\geq 0} \mid \w \in \lambda \K \}= 1/\sup \{ \nu \in \reals_{\geq 0} \mid  \nu \w \in \K \}. \label{eq:gauge0}
	\end{align} 
Using the Membership Oracle $\mem_{\K}$, we can approximate the largest $\nu\geq 0$ such that $\nu \w \in \K$ via bisection, which will lead to an approximation of $\gamma_{\K}(\w)$ by \eqref{eq:gauge0}. This is exactly what we do in Algorithm \ref{alg:gauge} in Section \ref{sec:gauge} below. Algorithm \ref{alg:gauge} ($\gau_{\K}$) requires at most $\ceil{\log_2((4\kappa)^2/\delta)}+1$ calls to the Membership Oracle $\mem_{\K}(\cdot; r\delta/(4\kappa)^2)$ to return a $\delta$-approximate value of $\gamma_{\K}$, where $\kappa\coloneqq R/r$ and $r,R>0$ are as in \eqref{eq:lowradius}. In Lemma \ref{lem:gauge} of Section \ref{sec:gauge} below, we state the full guarantee of Algorithm \ref{alg:gauge}.
	
In addition to approximating $\gamma_{\K}$, we will also need to approximate its subgradients. By Lemma \ref{lem:projectiononK}, the subdifferential of $S_{\K}$ coincides with that of $\gamma_{\K}$ at any point $\w$ outside $\K$. The lemma also shows that finding a subgradient of $\gamma_{\K}$ is equivalent to performing linear optimization on the polar set $\K^\circ$. In Section \ref{sec:gauge}, we present two algorithms, $\O_{\K^\circ}$ (Alg.~\ref{alg:optimize}) and $\onedopt$ (Alg.~\ref{alg:optimizenew})), based on \cite[Algorithm~2]{lee2018efficient}, which use $\gau_{\K}$ (Alg.~\ref{alg:gauge}) and a random partial difference along different coordinates to approximate the subgradients of $\gamma_{\K}$. The first algorithm $\O_{\K^\circ}$ requires at most $O(d \ln (d\kappa/ \delta))$ calls to the approximate Membership Oracle $\mem_{\K}$ to find a ``$\delta$-approximate'' subgradient; the precise guarantee is stated in Proposition~\ref{prop:subgradient}. The algorithm $\onedopt$ is a stochastic version of $\O_{\K^\circ}$ that picks a single coordinate $I$ along which to estimate the subgradient of the Gauge function $\gamma_{\K}$. As a result, $\onedopt$ requires at most $O(\ln (d\kappa/ \delta))$ calls to the approximate Membership Oracle $\mem_{\K}$, which will provide a computational benefit over $\O_{\K^\circ}$ at the cost of a slightly worse performance (see Section \ref{sec:oco} for a discussion). Furthermore, the output of $\onedopt$ is equal to that of $\O_{\K^\circ}$ in expectation for the same input (see Lemma~\ref{lem:celeb} for the precise guarantee). 
\paragraph{Additional properties of the Gauge function.} We now present a set of key properties of Gauge and Support functions, which will be useful in the analysis to come (the proof is in Appendix \ref{sec:teclemmas}):
	\begin{lemma}
		\label{lem:properties}
		Let $\w\in \reals^d\setminus \{\bm{0}\}$ and $0< r\leq R$. Further, let $\K$ be a closed convex set such that $\cB(r) \subseteq \K \subseteq \cB(R)$. Then, the following properties hold:
		\begin{enumerate}[label=(\alph*)]
			\item $\sigma_{\K^\circ}(\w)= \gamma_{\K}(\w)$ and $(\K^\circ)^\circ =\K$.
			\item $\sigma_{\K}(\alpha \w) = \alpha \sigma_{\K}(\w)$ and $\partial \sigma_{\K}(\alpha \w)= \partial \sigma_{\K}(\w) = \argmax_{\bu \in \K} \inner{\bu}{\w}$, for all $\alpha\geq 0$.  %(Positive homogeneity)
			\item $r \|\w\|\leq \sigma_{\K}(\w) \leq R\|\w\|$, $\|\w\|/R\leq \gamma_{\K}(\w) \leq \|\w\|/r$, and $\cB(1/R)\subseteq \K^\circ \subseteq \cB(1/r)$.
			\item $\inner{\w}{\bu}\leq \sigma_{\K}(\w) \cdot \gamma_{\K}(\bu)$, for all $\bu\in \reals^d$.  (Cauchy Schwarz)
			\item $\sigma_{\K}(\w+\bu)\leq\sigma_{\K}(\w)+\sigma_{\K}(\bu)$, for all $\bu\in \reals^d$.  (Sub-additivity)
		\end{enumerate}
	\end{lemma}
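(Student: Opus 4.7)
The plan is to dispatch (a)--(e) in order, concentrating the real work on (a); the remaining items follow from standard support-function identities together with the inclusion $\cB(r)\subseteq \K \subseteq \cB(R)$ guaranteed by Assumption~\ref{ass:assum}.

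For (a), I would prove $\sigma_{\K^\circ}(\w) = \gamma_{\K}(\w)$ by a two-sided bound. For the upper bound, any $\s \in \K^\circ$ satisfies $\inner{\s}{\bu} \leq 1$ for every $\bu \in \K$; since $\K$ is closed and $\w \neq \zero$, the point $\w/\gamma_{\K}(\w)$ lies in $\K$, so $\inner{\s}{\w}/\gamma_{\K}(\w)\leq 1$, i.e.\ $\inner{\s}{\w}\leq \gamma_{\K}(\w)$, and taking the sup over $\s\in \K^\circ$ gives $\sigma_{\K^\circ}(\w)\leq \gamma_{\K}(\w)$. For the reverse inequality, I would separate the point $\w/(\gamma_{\K}(\w)-\epsilon)$ from $\K$ via Hahn--Banach (valid for small $\epsilon>0$ since this point lies outside $\K$), then normalize the resulting separating functional so that it satisfies $\inner{\cdot}{\bu}\leq 1$ on $\K$; this produces a vector $\s_\epsilon \in \K^\circ$ with $\inner{\s_\epsilon}{\w}\geq \gamma_{\K}(\w)-\epsilon$. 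Sending $\epsilon\downarrow 0$ closes the bound. The identity $(\K^\circ)^\circ =\K$ is the classical bipolar theorem, which applies because $\K$ is closed convex and contains the origin in its interior (forced by $\cB(r)\subseteq \K$).

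For (b), factoring $\alpha\geq 0$ out of the inner product in $\sigma_{\K}(\alpha\w)=\sup_{\bu\in\K}\inner{\bu}{\alpha\w}$ gives positive homogeneity immediately. The subdifferential identity $\partial\sigma_{\K}(\w) = \argmax_{\bu\in\K}\inner{\bu}{\w}$ is the standard support-function formula (a consequence of Danskin's theorem, applicable since $\K$ is compact), and invariance under positive scaling of $\w$ is direct from inspection of the argmax. For (c), I would invoke the fact that $\sigma_{\K}$ is monotone and $\gamma_{\K}$ is antimonotone in $\K$ under set inclusion, combined with the direct computations $\sigma_{\cB(\rho)}(\w)=\rho\|\w\|$ and $\gamma_{\cB(\rho)}(\w)=\|\w\|/\rho$. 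The hypothesis $\cB(r)\subseteq \K\subseteq \cB(R)$ then yields both two-sided bounds. The polar inclusion $\cB(1/R)\subseteq \K^\circ \subseteq \cB(1/r)$ follows by antimonotonicity of the polar operation together with $\cB(\rho)^\circ = \cB(1/\rho)$ and $(\K^\circ)^\circ = \K$ from (a).

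Finally, (d) reduces to the observation that, for $\bu\neq\zero$, closedness of $\K$ places $\bu/\gamma_{\K}(\bu)\in \K$, so $\inner{\w}{\bu/\gamma_{\K}(\bu)}\leq \sigma_{\K}(\w)$; multiplying through by $\gamma_{\K}(\bu)\geq 0$ yields the claim, and the case $\bu=\zero$ is trivial with both sides equal to zero. Property (e) is immediate by splitting $\inner{\bx}{\w+\bu}=\inner{\bx}{\w}+\inner{\bx}{\bu}$ and using sub-additivity of the supremum over the common set $\bx\in\K$. The only real obstacle in the whole argument is making the lower bound in (a) rigorous without circularity; the remaining parts are essentially bookkeeping built on top of that identity and the ball-sandwich hypothesis.
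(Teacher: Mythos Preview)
Your proposal is correct. The route you take is genuinely different from the paper's, though: the paper does not prove (a), (b), (d), or (e) at all but simply cites standard references (\cite{molinaro2020} for (a), \cite{hiriart2004} for (b) and (e), \cite{friedlander2014gauge} for (d)), and only works out (c) directly. By contrast, you give self-contained elementary arguments for every item, in particular deriving the gauge--support duality $\sigma_{\K^\circ}=\gamma_{\K}$ via a two-sided bound with Hahn--Banach. For (c), both you and the paper use the ball sandwich, but the paper bounds $\sigma_{\K}$ directly from $\cB(r)\subseteq\K\subseteq\cB(R)$, establishes the polar inclusions by checking the definition of $\K^\circ$ by hand, and then obtains the $\gamma_{\K}$ bounds by applying the $\sigma$ bounds to $\K^\circ$ via (a); you instead invoke monotonicity of $\sigma$ and antimonotonicity of $\gamma$ and of the polar operation. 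Your approach buys self-containment and makes the lemma independent of outside sources; the paper's approach is shorter but defers the substance to the literature. One small remark: in (c) you do not actually need the bipolar identity to get $\cB(1/R)\subseteq\K^\circ\subseteq\cB(1/r)$; antimonotonicity of the polar applied directly to $\cB(r)\subseteq\K\subseteq\cB(R)$ suffices.
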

\noindent We now move to the main section of the paper where we present our projection-free reduction that uses the tools we described in this section.

	\section{Efficient Projection-Free Online and Stochastic Convex Optimization}
	\label{sec:oco}
	In this section, we will use our approximate Linear Optimization Oracles on $\K^\circ$ described in the previous section (and detailed in Section \ref{sec:gauge} below) to build efficient algorithms for OCO with optimal regret bounds. Our final algorithms make at most $O(T \ln T)$ calls to the Membership Oracle $\mem_{\K}$ after $T\geq 1$ rounds, and are also time-uniform in the sense that they do not require a horizon $T$ as input.
	
	In  Subsection \ref{sec:general}, we will consider the case where the losses ($\ell_t$) are convex without additional curvature assumptions. In this case, we are able to design scale-invariant algorithms (Alg.~\ref{alg:projectionfreewrapper} in the settings of Theorems \ref{thm:genthm}-\ref{thm:genthmprob}) with an adaptive regret bound; one that scales with $O(\sqrt{V_T})$, where $V_T \coloneqq \sum_{t=1}^T \|\g_t\|^2$ and $(\g_t)$ are the observed subgradients at the iterates of the algorithm. Such bounds can be much tighter than the standard $O(\sqrt{T})$ bounds in settings with smooth losses \cite{srebro2010} (see also discussion in \cite{cutkosky2018black}). In particular, in the stochastic setting with a $\beta$-smooth objective function, our adaptive bound automatically implies a rate of order $O(\beta/T+ \sigma/\sqrt{T})$, where $\sigma$ is related to the stochastic noise \cite[Corollary 6]{cutkosky2019}. Thus, when there is no noise, i.e.~in the offline setting, the algorithm achieves the fast $O(\beta/T)$ rate without knowing the smoothness parameter $\beta$. In Section, \ref{sec:stochastic} we show an even faster rate using a slighly modified algorithm.  
	
	In Subsection \ref{sec:strong}, we design an algorithm (Alg.~\ref{alg:projectionfreewrapperstong}) that achieves a logarithmic regret whenever losses are strongly convex, while maintain a $\wtilde O(\sqrt{T})$ regret in the worse case. Additionally, our algorithm does not require any curvature parameter as input to achieve a logarithm regret. Finally, in Appendix \ref{sec:scaleinvariant} we present a reduction that makes our algorithm for the strongly convex case scale-invariant in the sense that scalling the losses by some positive constant does not change the outputs of the algorithm (consequently the algorithm does not require any scale information as input). 
	
	\begin{algorithm}[H]
		\caption{Projection-Free Algorithm-Wrapper for OCO on $\K$.}
		\label{alg:projectionfreewrapper}
		\begin{algorithmic}[1]
			\Require A OCO algorithm $\A$ on $\cB(R)\supseteq \K$.
			\Statex~~~~~~~~~~ A tolerance sequence $(\delta_t)\subset (0,1/3)$.
			\Statex~~~~~~~~~~ LO Oracle $\cO_{\K^\circ}$ on $\K^\circ$ such that $\cO_{\K^\circ}(\w;\delta)\in  \reals_{\geq0}\times \reals^d$, $\forall \w\in \cB(R), \delta \in (0,1)$.
			\Statex~~~~~~~~~~ \algcomment{In the ideal case, $(\gamma, \s)=\cO_{\K^\circ}(\w;\delta) \implies \gamma = \inner{\s}{\w} \text{ and } \w\in \argmin_{\bu\in \K^\circ}\inner{\bu}{\w}$, $\forall \w\in \cB(R), \delta \in (0,1)$} 
			\vspace{0.2cm} 
			\State Initialize $\A$ and set $\w_{1} \in \cB(R)$ to $\A$'s first output.
			\For{$t=1,2,\dots$}
			\State Set $(\gamma_t,\s_t) = \cO_{\K^\circ}(\w_t;\delta_t)$
			\State Set $\bnu_t= \mathbb{I}_{\{\gamma_t\geq 1\}} \s_t$. \algcomment{Subgradient~of $S_{\K}$ at $\w_t$}
			\State Play $\x_t=\mathbb{I}_{\{\gamma_t\geq 1\}} \w_t/\gamma_t +\mathbb{I}_{\{\gamma_t< 1\}}\w_t.$ \algcomment{Gauge projection of $\w_t$ onto $\K$}
			\State Observe subgradient $\g_t \in \partial \ell_t(\x_t)$.
			\State  \label{line:2} Set $ \tilde{\g}_t = \g_t -  \mathbb{I}_{\inner{\g_t}{\w_t}<0}    \inner{\g_t}{\x_t} \bnu_t$ \algcomment{Subgradient of $\tilde \ell_t(\w) := \inner{\g_t}{\w} - \mathbb{I}_{\inner{\g_t}{\w_t} <0}  \inner{\g_t}{{\x}_t}  S_{\K}(\w)$ at $\w_t$} 
			\State Set $\A$'s $t$th loss function to $f_t : \w \mapsto \inner{\tilde{\g}_t}{\w}$.
			\State Set $\w_{t+1} \in \cB(R)$ to $\A$'s $(t+1)$th output given the history $((\w_i, \x_i,f_i)_{i\leq t})$.
			\EndFor
		\end{algorithmic}
	\end{algorithm}	
\noindent 	To avoid expensive projections, our algorithms make use of convex surrogate losses of the form $\tilde \ell(\w) \coloneqq\inner{\g}{\w} + b S_{\K}(\w)$, for $\w\in \reals^d$ and $b\geq 0$, where $S_{\K}$ is the Gauge distance function defined in \eqref{eq:theS} and $\g$ is a subgradient of some loss of interest $\ell\colon \K \rightarrow \reals$. Note that $\tilde \ell$ is not constrained to the set $\K$, unlike the actual loss of interest $\ell$. The choice of such a surrogate loss function is inspired by existing constrained-to-unconstrained reductions in OCO due to \cite{cutkosky2018black,cutkosky2020parameter}. Similar to the latter, our projection-free reduction allows us to bound the regret of our algorithm (in this case Alg.~\ref{alg:projectionfreewrapper}) by the regret of any OCO subroutine $\A$ that is fed subgradients of the surrogate losses. Here, the iterates of $\A$ need not be constrained to the set $\K$ since the domain of the surrogate losses is technically unconstrained. Thus, to build efficient projection-free OCO algorithms on $\K$ with optimal regret, it suffices to pick a sub-algorithm $\A$ that I) has an optimal regret bound on a set $\cW$ containing $\K$ and II) that does not incur any expensive projections. We will pick $\cW=\cB(R)\supseteq \K$ so that Euclidean projections onto $\cW$, which may be required by $\A$, can be performed efficiently.
	
	As mentioned in the preceeding paragraph, to excecute our projection-free reduction, we need to feed the subroutine $\A$ of Algorithm \ref{alg:projectionfreewrapper} subgradients of convex surrogate losses, which are of the form $\tilde \ell(\cdot) \coloneqq\inner{\g}{\cdot} + b S_{\K}(\cdot)$, $b\geq 0$. This requires evaluating the subgradients of the Gauge distance function $S_{\K}$. As we showed in Lemma \ref{lem:projectiononK}, evaluating such subgradients reduces to linear optimization on $\K^\circ$. The latter can be performed efficiently using a Membership Oracle as we described in Section \ref{sec:gauge0} (and detail in Section \ref{sec:gauge}). For clarity, we will first present the implications of our projection-free reduction when Algorithm \ref{alg:projectionfreewrapper} has access to a perfect LO Oracle $\cO_{\K^\circ}$ on $\K^\circ$; that is, we assume that $\cO_{\K^\circ}$ satisfies:
	\begin{align}
\forall \w \in \reals^d, \delta >0 , \quad  \left[(\gamma, \s)=	\cO_{\K^\circ}(\w; \delta)\right] \implies  \left[ \s \in  \argmax_{\x \in \K^\circ} \inner{\x}{\w} \ \ \text{and} \ \  \gamma =\inner{\s}{\w} \right]. \quad \label{eq:circle}
		\end{align} 
	The second argument $\delta>0$ of $\cO_{\K^\circ}$ does not play a relevant role when the latter is a perfect LO Oracle. 
		\begin{lemma}
		\label{lem:mainreductionperfect}
		Let $\kappa\coloneqq R/r$, where $r,R>0$ are as in \eqref{eq:lowradius}. Further, for $t\ge1$, let $\w_t, \tilde{\g}_t, \x_t$, and $\g_t\in \partial \ell_t(\x_t)$ be as in Algorithm \ref{alg:projectionfreewrapper} with $\cO_{\K^\circ}$ as in \eqref{eq:circle}; subroutine $\A$ set to any OCO algorithm on $\cB(R)$; and any tolerance sequence $(\delta_s)\subset (0,1/3)$. Then, $(\x_t)\subset \K$ and $\|\tilde{\g}_t\|\leq   (1+\kappa) \|\g_t\|$. Further, we have
		\begin{align}
			\forall \x\in \K,\quad 
			\inner{\g_t}{\x_t-\x} \leq \tilde\ell_t(\w_t) - \tilde \ell_t(\x)\leq \inner{\tilde{\g}_t}{\w_t-\x}, \label{eq:individualperfect} 
		\end{align}	
	where $\tilde \ell_t(\w) \coloneqq \inner{\g_t}{\w} - \mathbb{I}_{\inner{\g_t}{\w_t} <0} \cdot \inner{\g_t}{{\x}_t} \cdot S_{\K}(\w)$.
	\end{lemma}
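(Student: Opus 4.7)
The plan is to prove the three conclusions (membership of $\x_t$ in $\K$, the subgradient norm bound, and the two-sided inequality \eqref{eq:individualperfect}) in sequence, each leveraging the structural identities from Lemmas \ref{lem:projectiononK} and \ref{lem:properties}. First, I would use the perfect-oracle assumption \eqref{eq:circle} together with Lemma \ref{lem:properties}(a) (which gives $\sigma_{\K^\circ}=\gamma_\K$) to identify $\gamma_t$ with $\gamma_\K(\w_t)$ and $\s_t$ with an element of $\argmax_{\s\in\K^\circ}\inner{\s}{\w_t}=\partial\gamma_\K(\w_t)$. The case $\gamma_t<1$ gives $\w_t\in\K$ and the algorithm sets $\x_t=\w_t$; the case $\gamma_t\ge1$ invokes the Gauge-projection formula $\bu/\gamma_\K(\bu)\in\Pi^{\mathrm{G}}_\K(\bu)$ from Lemma \ref{lem:projectiononK}, so $\x_t=\w_t/\gamma_t\in\K$.

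For the norm bound, when $\inner{\g_t}{\w_t}\ge 0$, the indicator vanishes and $\tilde\g_t=\g_t$. Otherwise, triangle inequality gives $\|\tilde\g_t\|\le\|\g_t\|+|\inner{\g_t}{\x_t}|\cdot\|\bnu_t\|$. Here $\x_t\in\K\subseteq\cB(R)$ yields $|\inner{\g_t}{\x_t}|\le R\|\g_t\|$ by Cauchy--Schwarz, and $\s_t\in\K^\circ\subseteq\cB(1/r)$ from Lemma \ref{lem:properties}(c) yields $\|\bnu_t\|\le 1/r$, producing the bound $(1+R/r)\|\g_t\|=(1+\kappa)\|\g_t\|$.

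For the upper bound in \eqref{eq:individualperfect}, I would first check that $\tilde\ell_t$ is convex by showing the coefficient $-\mathbb{I}_{\inner{\g_t}{\w_t}<0}\inner{\g_t}{\x_t}$ of $S_\K$ is nonnegative: in both branches ($\gamma_t\ge1$ with $\x_t=\w_t/\gamma_t$, and $\gamma_t<1$ with $\x_t=\w_t$), $\inner{\g_t}{\x_t}$ shares the sign of $\inner{\g_t}{\w_t}$, so the indicator forces $\inner{\g_t}{\x_t}\le 0$. Then $\bnu_t$ is a subgradient of $S_\K$ at $\w_t$ by Lemma \ref{lem:projectiononK} (either $\w_t\notin\K$ and $\s_t\in\partial\gamma_\K(\w_t)=\partial S_\K(\w_t)$, or $\w_t\in\K$ and $\bnu_t=0\in\partial S_\K(\w_t)$), hence $\tilde\g_t\in\partial\tilde\ell_t(\w_t)$, and the convexity of $\tilde\ell_t$ gives $\tilde\ell_t(\w_t)-\tilde\ell_t(\x)\le\inner{\tilde\g_t}{\w_t-\x}$.

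The main obstacle, and the step I would treat most carefully, is the lower bound $\inner{\g_t}{\x_t-\x}\le \tilde\ell_t(\w_t)-\tilde\ell_t(\x)$. Since $\x\in\K$ gives $S_\K(\x)=0$, this reduces to showing $\inner{\g_t}{\w_t-\x_t}\ge \mathbb{I}_{\inner{\g_t}{\w_t}<0}\inner{\g_t}{\x_t}\,S_\K(\w_t)$. I plan a short case analysis. If $\inner{\g_t}{\w_t}\ge 0$ the right-hand side vanishes, and we need $\inner{\g_t}{\w_t}\ge\inner{\g_t}{\x_t}$, which is immediate since $\x_t$ equals either $\w_t$ or $\w_t/\gamma_t$ with $\gamma_t\ge1$. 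If $\inner{\g_t}{\w_t}<0$ and $\gamma_t<1$, then $\x_t=\w_t$ and $S_\K(\w_t)=0$, so both sides vanish. The substantive subcase is $\inner{\g_t}{\w_t}<0$ with $\gamma_t\ge 1$: here $\x_t=\w_t/\gamma_t$ and $S_\K(\w_t)=\gamma_t-1$ by Lemma \ref{lem:projectiononK}, and a direct algebraic manipulation shows $\inner{\g_t}{\w_t}-\inner{\g_t}{\x_t}(\gamma_t-1)=\inner{\g_t}{\x_t}\gamma_t-\inner{\g_t}{\x_t}(\gamma_t-1)=\inner{\g_t}{\x_t}$, yielding equality. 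Assembling the four cases completes the proof.
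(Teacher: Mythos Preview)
Your proposal is correct and follows essentially the same approach as the paper's proof: the same identification $\gamma_t=\gamma_\K(\w_t)$ from the perfect oracle, the same case split on $\gamma_t\lessgtr 1$ for membership and the norm bound (using $\K^\circ\subseteq\cB(1/r)$), convexity of $\tilde\ell_t$ via nonnegativity of the $S_\K$ coefficient together with $\bnu_t\in\partial S_\K(\w_t)$ for the right inequality, and the same three-case comparison of $\inner{\g_t}{\x_t}$ with $\tilde\ell_t(\w_t)$ for the left inequality, with the identical algebraic identity $\inner{\g_t}{\w_t}=\gamma_t\inner{\g_t}{\x_t}$ handling the substantive subcase. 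The only cosmetic difference is that you phrase the lower bound as $\inner{\g_t}{\w_t-\x_t}\ge \mathbb{I}_{\inner{\g_t}{\w_t}<0}\inner{\g_t}{\x_t}\,S_\K(\w_t)$ whereas the paper phrases it as $\inner{\g_t}{\x_t}\le\tilde\ell_t(\w_t)$; these are the same inequality.
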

\noindent Lemma \ref{lem:mainreductionperfect} allows us to bound the instantaneous (linearized) regret $\inner{\g_t}{\x_t - \x}$ of Algorithm \ref{alg:projectionfreewrapper} with respect to the instantaneous regret $\inner{\tilde\g_t}{\w_t -\x}$ of subroutine $\A$. Thus by summing \eqref{eq:individualperfect} in Lemma \ref{lem:mainreductionperfect} for $t=1,2,\dots$, we can bound the regret of Alg.~\ref{alg:projectionfreewrapper} with respect to that of $\A$. Further, Lemma \ref{lem:mainreductionperfect} shows that the scale of the subgradients $(\tilde \g_t)$ of the losses fed to $\A$ is at most a constant times the scale of the subgradients $(\g_t)$ at the iterates $(\x_t)$ of Algorithm \ref{alg:projectionfreewrapper}; in particular, $\|\tilde \g_t\| \leq (1+\kappa)\|\g_t\|$, for all $t\geq 1$, where $\kappa \coloneqq R/r$. This latter fact ensures that the regret bound of $\A$ is not too large as a function of the scale of $(\g_t)$. 

	\begin{proof}[{\bf Proof of Lemma \ref{lem:mainreductionperfect}}]
		Let $\gamma_t$, $\bnu_t$, $ \w_t,\tilde{\g}_t$, and ${\x}_t$ be as in Algorithm \ref{alg:projectionfreewrapper}. 
By the expression of the subdifferential of $S_{\K}$ in Lemma~\ref{lem:projectiononK} and the definition of $\tilde \ell_t$, we have that $\g_t - \mathbb{I}_{\inner{\g_t}{\w_t} <0}  \inner{\g_t}{{\x}_t}  \bnu_t \in \partial \tilde \ell_t(\w_t)$. Thus, since $-\mathbb{I}_{\inner{\g_t}{\w_t} <0}  \inner{\g_t}{{\x}_t} >0$ (see definition of $\x_t$ in Alg.~\ref{alg:projectionfreewrapper}), $\tilde \ell_t$ is convex and so
		\begin{align}
			\forall \x\in \K,\quad \tilde \ell_t(\w_t) - \tilde \ell_t(\x) \leq \inner{\g_t - \mathbb{I}_{\inner{\g_t}{\w_t} <0}  \inner{\g_t}{{\x}_t}  \bnu_t}{\w_t-\x}= \inner{\tilde\g_t}{\w_t -\x}. \nn
		\end{align}
		It remains to show that $\inner{\g_t}{\x_t -\x}\leq \tilde \ell_t(\w_t) - \tilde \ell_t(\x)$, for all $t\geq 1$ and $\x \in \K$. First note that for all $\x \in \K$, we have 	$S_{\K}(\x)=0$, and so
		\begin{align}
			\tilde \ell_t(\x) = \inner{\g_t}{\x}, \quad \forall \x \in \K. \label{eq:firstequalityperfect}
		\end{align}
		We will now compare $\inner{\g_t}{\x_t}$ to $\tilde \ell_t(\w_t)$ by considering cases. Suppose that $\gamma_t <1$. In this case, we have $\x_t = \w_t$ and so $
			\inner{\g_t}{\x_t} = \inner{\g_t}{\w_t} = \tilde \ell_t(\w_t).$ Now suppose that $\gamma_t \geq 1$ and $\inner{\g_t}{\w_t}\geq 0$. By the fact that $\gamma_{\K}(\w_t)=\gamma_t\geq 1$ and $\x_t = \w_t/\gamma_t$, we have
		\begin{align}
			\inner{\g_t}{\x_t} \leq \inner{\g_t}{\w_t} = \tilde \ell_t(\w_t). \label{eq:firstsecondperfect}
		\end{align}
		Now suppose that $\gamma_t\ge 1$ and $\inner{\g_t}{\w_t}<0$. Using the fact that $\x_t=\w_t/\gamma_t=\w_t/\gamma_{\K}(\w_t)$ and that $S_{\K}(\w_t)= \gamma_{\K}(\w_t)-1$ (Lemma \ref{lem:projectiononK}), we have 
		\begin{align}
		\inner{\g_t}{\x_t}+	\inner{\g_t}{\x_t}\cdot S_{\K}(\w_t)  =   \inner{\g_t}{\w_t/\gamma_{\K}(\w_t)} +  \inner{\g_t}{\w_t} - \inner{\g_t}{\w_t/\gamma_{\K}(\w_t)} =  \inner{\g_t}{\w_t}. \nn 
		\end{align}
	Rearranging this, we get
		\begin{gather}	
			\inner{\g_t}{\x_t}  =  \inner{\g_t}{\w_t} -  \inner{\g_t}{\x_t} S_{\K}(\w_t) = \tilde{\ell}_t(\w_t).\label{eq:secondperfect}
		\end{gather}
By combining \eqref{eq:firstequalityperfect}, \eqref{eq:firstsecondperfect}, and \eqref{eq:secondperfect}, we obtain:
		\begin{align}
			\inner{\g_t}{\x_t -\x}  \leq \tilde \ell_t(\w_t) - \tilde \ell_t(\x)  \leq \inner{\tilde{\g}_t}{\w_t -\x}, \quad \forall \x \in \K. \nn
		\end{align}
		This shows \eqref{eq:individualperfect}. It remains to bound $\|\tilde{\g}_t\|$ in terms of $\|\g_t\|$ and show that $\x_t \in \K$. When $\gamma_t <1$, we have $\tilde \g_t =\g_t$ and $\x_t=\w_t$, and so $\|\tilde\g_t\|=\|\g_t\|$. Furthermore, since $\gamma_{\K}(\w_t) = \gamma_t<1$, the definition of the Gauge function implies that $\w_t \in \K$ and so $\x_t\in \K$ (since $\x_t = \w_t$). Now suppose that $\gamma_t \geq 1$. In this case, we have $\x_t=\w_t/\gamma_t= \w_t/\gamma_{\K}(\w_t)$ and $\tilde\g_t=\g_t - \mathbb{I}_{\inner{\g_t}{\w_t}<0} \inner{\g_t}{\x_t} \bnu_t$. Using this and that $\gamma_{\K}(\w_t)=\gamma_t\geq 1$ we get
		\begin{align}
	\|\tilde{\g}_t\|  =  \|\g_t- \mathbb{I}_{\inner{\g_t}{\w_t} <0}  \inner{\g_t}{{\x}_t}  \bnu_t\| = 	\|\g_t\| + \|\g_t\| \frac{\| \w_t\|}{\gamma_{\K}(\w_t)}  \|\bnu_t\|  \stackrel{(*)}{\leq}	\|\g_t\| \left(1+  \frac{\| \w_t\|}{r} \right) \leq  (1 +\kappa )\|\g_t\|. \nn
		\end{align}
	where $(*)$ follows by the fact that $\|\bnu_t\|\leq 1/r$ (by Lemma \ref{lem:properties}-(c) and the fact that $\bnu_t \in \K^\circ$). Further, when $\gamma_t\geq 1$, we have $\gamma_\K(\x_t)=\gamma_\K(\w_t/{\gamma_{\K}(\w_t)})=\gamma_\K(\w_t)/{\gamma_{\K}(\w_t)}= 1$, and so $\x_t \in \K$. 
	\end{proof}	
\noindent Before stating the full regret bound of Algorithm \ref{alg:projectionfreewrapper} in the next subsection and discuss its computational complexity, we first state a version of Lemma~\ref{lem:mainreductionperfect} when the LO Oracle $\cO_{\K^\circ}$ is set to the approximate LO Oracle $\O_{\K^\circ}$ (Alg.~\ref{alg:optimize}) we build in Section \ref{sec:gauge}:
	\begin{lemma}
		\label{lem:mainreduction}
		Let $\kappa\coloneqq R/r$ and $\A$ be any OCO algorithm on $\cB(R)$. Further, for $t\ge1$, let $\w_t, \tilde{\g}_t, \x_t$, and $\g_t\in \partial \ell_t(\x_t)$ be as in Alg.~\ref{alg:projectionfreewrapper} with $\cO_{\K^\circ}\equiv \O_{\K^\circ}$ (Alg.~\ref{alg:optimize}) and any tolerance sequence $(\delta_s)\subset (0,1/3)$. Then, $(\x_t)\subset \K$, and $\forall t\geq 1$, there exists a variable $\Delta_t \in \left[0,{15^2 d^{4} \kappa^3}{\delta^{-2}_t}\right]$ s.t.~$\E_{t-1}[\Delta_t] \leq \delta_t$, $\|\tilde{\g}_t\|\leq   (1+\Delta_t+\kappa) \|\g_t\|$, 
		\begin{align}
	\text{and}\quad 		\forall \x\in \K,\quad 
			\inner{\g_t}{\x_t-\x} \leq \tilde\ell_t(\w_t) - \tilde \ell_t(\x)+\delta_t R \|\g_t\| \leq \inner{\tilde{\g}_t}{\w_t-\x} + (2\delta_t +\Delta_t) R \|\g_t\|. \label{eq:individual} 
		\end{align}	
	\end{lemma}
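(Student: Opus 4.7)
My plan is to mirror the case analysis in the proof of Lemma~\ref{lem:mainreductionperfect}, but carefully track the additional error introduced by replacing the exact maximizer of $\langle \s, \w_t\rangle$ over $\K^\circ$ with the approximate output $(\gamma_t,\s_t)$ of $\O_{\K^\circ}$. The approximation guarantee for $\O_{\K^\circ}$ (to be proved in Section~\ref{sec:gauge}, Proposition~\ref{prop:subgradient}) will give, deterministically, that $\gamma_t$ over-estimates $\gamma_{\K}(\w_t)=\sigma_{\K^\circ}(\w_t)$ by at most an additive $\delta_t$, and will give a random vector $\s_t$ whose contribution to $\tilde{\g}_t$ differs from the perfect-oracle subgradient by a random quantity $\Delta_t$ satisfying $\Delta_t\in[0,15^2d^4\kappa^3/\delta_t^{-2}]$ almost surely and $\E_{t-1}[\Delta_t]\le\delta_t$. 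The proof simply plugs these two estimates into the identities that were used in the perfect case and extracts the new error terms.

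\textbf{Feasibility and norm bound.} First, I will verify $\x_t\in\K$. If $\gamma_t<1$, then $\gamma_{\K}(\w_t)\le\gamma_t<1$ (by the over-estimation property), so $\w_t\in\K$ and $\x_t=\w_t\in\K$. If $\gamma_t\ge1$, then $\x_t=\w_t/\gamma_t$ and $\gamma_{\K}(\x_t)=\gamma_{\K}(\w_t)/\gamma_t\le 1$, so again $\x_t\in\K$. For the norm bound, in the case $\gamma_t<1$ we have $\tilde{\g}_t=\g_t$ and the bound is trivial. In the case $\gamma_t\ge1$ with $\langle\g_t,\w_t\rangle\ge0$ the term involving $\bnu_t$ drops out of $\tilde{\g}_t$, so again the bound is trivial. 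In the remaining case I will use $\|\x_t\|\le R$ together with the fact that, whereas the perfect oracle produces $\bnu_t^\star\in\K^\circ\subseteq\cB(1/r)$, the approximate $\s_t$ may stick out of $\K^\circ$ by a random amount; this overshoot is precisely what the random variable $\Delta_t\|\g_t\|$ is designed to absorb, yielding $\|\tilde{\g}_t\|\le(1+\kappa+\Delta_t)\|\g_t\|$.

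\textbf{The two regret inequalities.} For the first inequality, $\langle\g_t,\x_t-\x\rangle\le\tilde\ell_t(\w_t)-\tilde\ell_t(\x)+\delta_t R\|\g_t\|$, I will rerun the case analysis of Lemma~\ref{lem:mainreductionperfect}. The only case that was tight was $\gamma_t\ge1,\,\langle\g_t,\w_t\rangle<0$, where the identity $\langle\g_t,\x_t\rangle+\langle\g_t,\x_t\rangle S_{\K}(\w_t)=\langle\g_t,\w_t\rangle$ used $S_{\K}(\w_t)=\gamma_{\K}(\w_t)-1$ together with $\x_t=\w_t/\gamma_{\K}(\w_t)$. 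When $\gamma_t$ merely approximates $\gamma_{\K}(\w_t)$, a short algebraic manipulation shows
\begin{align*}
\langle\g_t,\x_t\rangle-\tilde\ell_t(\w_t)=\langle\g_t,\w_t\rangle\bigl(\gamma_{\K}(\w_t)/\gamma_t-1\bigr),
\end{align*}
and $|\langle\g_t,\w_t\rangle|\cdot(\gamma_t-\gamma_{\K}(\w_t))/\gamma_t\le R\|\g_t\|\delta_t$ by Cauchy--Schwarz, $\|\w_t\|\le R$, $\gamma_t\ge1$ and the oracle's additive error. Adding $-\tilde\ell_t(\x)=-\langle\g_t,\x\rangle$ on both sides gives the first inequality. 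For the second inequality I will invoke convexity of $\tilde\ell_t$: the \emph{perfect-oracle} subgradient is $\g_t-\langle\g_t,\x_t\rangle\bnu_t^\star$ with $\bnu_t^\star\in\argmax_{\s\in\K^\circ}\langle\s,\w_t\rangle$, whereas we plug in $\tilde{\g}_t=\g_t-\langle\g_t,\x_t\rangle\s_t$. The discrepancy is $-\langle\g_t,\x_t\rangle\langle\bnu_t^\star-\s_t,\w_t-\x\rangle$, which I will bound using $|\langle\g_t,\x_t\rangle|\le\|\g_t\|R$, $\|\w_t-\x\|\le 2R$, and the two-sided approximation guarantees (the $\delta_t$ piece of Proposition~\ref{prop:subgradient} covers the error along $\w_t$, while the $\Delta_t$ piece covers the error along $-\x$ for $\x\in\K\subset\cB(R)$), producing the $(2\delta_t+\Delta_t)R\|\g_t\|$ slack.

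\textbf{Main obstacle.} The main technical difficulty is not the plugging-in itself but extracting the random variable $\Delta_t$ with the stated two-sided control ($\Delta_t\lesssim d^4\kappa^3/\delta_t^2$ almost surely and $\E_{t-1}[\Delta_t]\le\delta_t$). This requires the precise Oracle guarantee of $\O_{\K^\circ}$ (Alg.~\ref{alg:optimize}), where $\s_t$ is built from random partial finite differences of $\gau_{\K}$. On any given round the partial difference may be a large over-estimate of a true coordinate of a subgradient (because $\gamma_{\K}$ is only piecewise differentiable), hence the almost-sure bound scales inversely in $\delta_t$; but the finite-difference step-size can be chosen so that the conditional expectation is controlled by $\delta_t$, giving the Markov-type bound $\E_{t-1}[\Delta_t]\le\delta_t$. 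Once that statement from Section~\ref{sec:gauge} is invoked, the present lemma follows by the deterministic algebraic steps sketched above.
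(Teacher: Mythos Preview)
Your overall architecture—redo the case analysis of Lemma~\ref{lem:mainreductionperfect} while tracking the oracle error, and lean on Proposition~\ref{prop:subgradient} for both the almost-sure and conditional-mean control of $\Delta_t$—is exactly right, and your treatment of feasibility, the norm bound, and the first inequality $\langle\g_t,\x_t-\x\rangle\le\tilde\ell_t(\w_t)-\tilde\ell_t(\x)+\delta_tR\|\g_t\|$ is essentially correct (you should also separately cover the boundary sub-case $\gamma_t\ge1$ with $\w_t\in\K$, where $S_\K(\w_t)=0$ so the identity you wrote no longer holds verbatim; the paper handles this in a one-line argument).

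The gap is in your plan for the second inequality. You propose to start from exact convexity with the true maximizer $\bnu_t^\star\in\argmax_{\s\in\K^\circ}\langle\s,\w_t\rangle$ and then bound the discrepancy $\langle\bnu_t^\star-\s_t,\w_t-\x\rangle$. Proposition~\ref{prop:subgradient} does not give you control on that inner product: it is a one-sided approximate-subgradient inequality $\gamma_\K(\bu)\ge\gamma_\K(\w_t)+\langle\s_t,\bu-\w_t\rangle-\Delta_t$, not a two-sided bound on $\s_t-\bnu_t^\star$. Chasing the algebra, your route leaves behind the term $\gamma_\K(\x)-\langle\bnu_t^\star,\x\rangle\ge0$, which is \emph{not} small in general (take $\K=[-1,1]^2$, $\w_t=(2,0)$, $\bnu_t^\star=(1,0)$, $\x=(0,1)$: the term equals $1$). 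So the sentence ``the $\delta_t$ piece covers the error along $\w_t$, while the $\Delta_t$ piece covers the error along $-\x$'' does not correspond to anything Proposition~\ref{prop:subgradient} actually delivers.

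The fix is to drop $\bnu_t^\star$ entirely and use the approximate-subgradient inequality \emph{directly on $\tilde\ell_t$}, which is what the paper does (packaged as Lemma~\ref{lem:subgradient}). Concretely, for $\x\in\K$ one only needs $b_t S_\K(\w_t)\le b_t\langle\s_t,\w_t-\x\rangle+b_t(\Delta_t+\delta_t)$, where $b_t=-\mathbb{I}_{\langle\g_t,\w_t\rangle<0}\langle\g_t,\x_t\rangle\in[0,R\|\g_t\|]$. Proposition~\ref{prop:subgradient} with $\bu=\x$ gives $\langle\s_t,\w_t-\x\rangle\ge\gamma_\K(\w_t)-\gamma_\K(\x)-\Delta_t\ge\gamma_\K(\w_t)-1-\Delta_t$, and the gauge estimate $\gamma_t\le\gamma_\K(\w_t)+\delta_t$ yields $\gamma_\K(\w_t)-1\ge S_\K(\w_t)-\delta_t$ whenever $\gamma_t\ge1$; combining these gives exactly $S_\K(\w_t)\le\langle\s_t,\w_t-\x\rangle+\Delta_t+\delta_t$ with no leftover term.
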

	\noindent The proof is in Appendix \ref{sec:mainreductionproof}. Next, we apply this result in the context of OCO with general losses.
	\subsection{Algorithm for General Online Convex Optimization}
	\label{sec:general}
	In Algorithm \ref{alg:projectionfreewrapper}, $(\x_t)\subset \K$ and $(\w_t)\subset \cB(R)$ are the iterates of Alg.~\ref{alg:projectionfreewrapper} and $\A$, respectively, where $\A$ is the subroutine of Alg.~\ref{alg:projectionfreewrapper}. In this case, Lemma \ref{lem:mainreduction}, and in particular \eqref{eq:individual}, implies that the instantaneous linearized regret of Alg.~\ref{alg:projectionfreewrapper} at some round $t\geq 1$ is bounded in terms of the instantaneous regret of the subroutine $\A$, which is fed approximate subgradients of the surrogate losses $(\tilde \ell_t)$. Thus, by summing \eqref{eq:individual} over $t=1,\dots,T$, we can bound the regret of Algorithm \ref{alg:projectionfreewrapper} in terms of that of $\A$. We formalize this result for a general subroutine $\A$ in Appendix \ref{sec:genred} (Proposition \ref{prop:generalred}).
	
	For the sake of clarity, we will now instantiate Algorithm \ref{alg:projectionfreewrapper} with a specific subroutine $\A$. In particular, we will set $\A$ to be the Follow-The-Regularized-Leader Proximal (\ftrl) \cite{mcmahan2017survey}. \ftrl{} is summarized in Algorithm \ref{alg:FTRL-proximal} in Appendix \ref{app:OCO}. The algorithm takes in a parameter $R>0$ and performs at most one Euclidean projection onto the ball $\cB(R)$ per round, which can be done efficiently (requiring only $O(d)$ arithmetic operations per projection). In particular, for any $\w \in \reals^d \setminus \cB(R)$, $\Pi_{\cB(R)}(\w) = R\w/\|\w\|$,  and $\Pi_{\cB(R)}(\w)=\w$ for $\w\in \cB(R)$. Another advantage of \ftrl{} is that it has a regret bound of the form $O(\sqrt{V_T})$, where $V_T\coloneqq \sum_{t=1}^T \|\g_t\|^2$ and $(\g_t)$ are the observed subgradients (see Proposition \ref{prop:ftrl-proximal} for the precise statement of the regret bound). This is known as an adaptive regret bound, and can be much smaller than the standard $O(\sqrt{T})$ worst-case regret bounds; e.g.~when the losses are smooth \cite{srebro2010,cutkosky2018black}. We now state the regret bound of Algorithm \ref{alg:projectionfreewrapper}
	when $\A$ is set to \ftrl: 
	\begin{theorem}
		\label{thm:genthm}
		Let $\delta \in(0,1/3)$ and $\kappa\coloneqq R/r$, with $r$ and $R$ as in \eqref{eq:lowradius}. Let $(\ell_t)$ be any adversarial sequence of convex losses on $\K$ and $(\x_t)$ be the iterates of Alg.~\ref{alg:projectionfreewrapper} in response to $(\ell_t)$. Further, let $\g_s \in \partial \ell_s(\x_s)$, $s\geq 1$. If Alg.~\ref{alg:projectionfreewrapper} is run with subroutine $\A$ set to \ftrl{} with parameter $R$; $\cO_{\K^\circ}\equiv \O_{\K^\circ}$ (Alg.~\ref{alg:optimize}); and $\delta_t= \delta/t^2$, $t\geq 1$, then for all $T\geq 1$ and $\rho \in (0,1)$, $(\x_t)_{t\in[T]} \in \K$ and with probability at least $1-\rho$:
		\begin{align}
	\forall \x \in \K, \ \   \sum_{t=1}^T \inner{\g_t}{\x_t-\x} \leq 4(1+\kappa)R\sqrt{\sum_{t=1}^T \|\g_t\|^2}  + R\cdot  (12+10\delta/\rho) \cdot  \max_{t\leq T} \|\g_t\|.\label{eq:regretftrl}
		\end{align}
	\end{theorem}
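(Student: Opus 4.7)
The plan is to chain three ingredients: Lemma~\ref{lem:mainreduction} (to convert the regret on $\K$ to the regret of the subroutine $\A$ on $\cB(R)$ plus an error term), the adaptive regret bound of \ftrl{} (Proposition~\ref{prop:ftrl-proximal}), and a Markov's-inequality argument to control the random oracle-error variables $\Delta_t$.

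First, I would invoke Lemma~\ref{lem:mainreduction} at each round. This immediately yields $(\x_t)_{t\in[T]}\subset \K$ and gives, for every comparator $\x\in\K$,
\begin{align}
\sum_{t=1}^T \inner{\g_t}{\x_t-\x} \;\leq\; \sum_{t=1}^T \inner{\tilde\g_t}{\w_t - \x} \;+\; R\sum_{t=1}^T (2\delta_t + \Delta_t)\|\g_t\|, \nn
\end{align}
with $\E_{t-1}[\Delta_t]\leq \delta_t$ and $\|\tilde\g_t\|\leq (1+\kappa+\Delta_t)\|\g_t\|$. Since the $f_t(\w)=\inner{\tilde\g_t}{\w}$ fed to $\A$ are linear and the comparator $\x\in\K\subseteq\cB(R)$, the first sum on the right is exactly the regret of \ftrl{} (with domain radius $R$) on the linear loss sequence $(f_t)$, which by Proposition~\ref{prop:ftrl-proximal} is bounded by a quantity of the form $c_1 R\sqrt{\sum_{t\leq T}\|\tilde\g_t\|^2} + c_2 R\max_{t\leq T}\|\tilde\g_t\|$ for explicit constants $c_1,c_2$.

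Next, to re-express the $\tilde\g_t$-norms in terms of the $\g_t$-norms I would use Minkowski's inequality: writing $\|\tilde\g_t\|\leq \big((1+\kappa)+\Delta_t\big)\|\g_t\|$ and viewing $\sqrt{\sum_t (a_t+b_t)^2}$ as an $\ell_2$ triangle inequality gives
\begin{align}
\sqrt{\sum_{t\leq T}\|\tilde\g_t\|^2} \;\leq\; (1+\kappa)\sqrt{\sum_{t\leq T}\|\g_t\|^2} \;+\; \sqrt{\sum_{t\leq T}\Delta_t^2\|\g_t\|^2} \;\leq\; (1+\kappa)\sqrt{\sum_{t\leq T}\|\g_t\|^2} \;+\; \max_{t\leq T}\|\g_t\|\sum_{t\leq T}\Delta_t, \nn
\end{align}
where in the last step I used that $\sqrt{\sum_t \Delta_t^2}\leq \sum_t \Delta_t$ (for nonnegative $\Delta_t$). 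A similar pointwise bound handles the $\max_{t\leq T}\|\tilde\g_t\|$ term.

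It remains to control $\sum_{t\leq T}\Delta_t$ and $\sum_{t\leq T}\delta_t\|\g_t\|$. The deterministic piece is easy: $\sum_{t\geq 1}\delta_t = \delta\sum_{t\geq 1}t^{-2}\leq 2\delta$ since $\pi^2/6<2$, so $\sum_{t\leq T}\delta_t\|\g_t\|\leq 2\delta\max_{t\leq T}\|\g_t\|$. For the random piece, the tower property gives $\E\bigl[\sum_{t\leq T}\Delta_t\bigr]\leq \sum_{t\leq T}\delta_t\leq 2\delta$, and then Markov's inequality yields $\sum_{t\leq T}\Delta_t \leq 2\delta/\rho$ with probability at least $1-\rho$. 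Plugging these into the two preceding displays and collecting constants gives the claimed bound $4(1+\kappa)R\sqrt{\sum_{t\leq T}\|\g_t\|^2} + R(12+10\delta/\rho)\max_{t\leq T}\|\g_t\|$. The only delicate step is the combined bookkeeping in this last paragraph: the $\Delta_t$ contribution must be accounted for twice (once inside the $\ell_2$ sum via Minkowski, once additively as the $R\sum_t \Delta_t\|\g_t\|$ term) and matched with the two explicit constants of the \ftrl{} regret bound so that everything collapses into the stated $4$, $12$, and $10/\rho$.
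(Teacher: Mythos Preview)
Your proposal is correct and follows essentially the same route as the paper: invoke Lemma~\ref{lem:mainreduction}, plug in the \ftrl{} bound from Proposition~\ref{prop:ftrl-proximal}, convert $\|\tilde\g_t\|$ back to $\|\g_t\|$, and control $\sum_t\Delta_t$ in probability. Two small remarks: (i) the \ftrl{} bound in Proposition~\ref{prop:ftrl-proximal} is just $2R\sqrt{2\sum_t\|\tilde\g_t\|^2}$ with no $\max_t\|\tilde\g_t\|$ term, so your $c_2$ is zero and that extra ``similar pointwise bound'' step is unneeded; (ii) the paper handles the $\tilde\g_t\to\g_t$ conversion via $(a+b)^2\le 2a^2+2b^2$ rather than Minkowski (yielding the slightly worse constant $4$ instead of $2\sqrt{2}$), and bounds $\sum_t\Delta_t$ via Doob's submartingale inequality rather than direct Markov on $\sum_t\Delta_t$---your choices are at least as tight and the constants fit under the stated $4$, $12$, and $10\delta/\rho$.
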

	\begin{proof}[{\bf Proof}]
		By Lemma \ref{lem:mainreduction}, we have, for all $\w$ and $t\geq 1$, $\inner{\g_t}{\x_t - \x}  \leq \inner{\tilde \g_t}{\w_t-\x} + (2\delta_t +\Delta_t)R \|\g_t\|$, where $(\Delta_t)\subset \reals_{\geq 0}$ is a sequence of positive random variables satisfying $\E_{t-1}[\Delta_t]\leq \delta_t$, for all $t\in[T]$. Summing this inequality for $t=1,\dots,T$, we obtain, for all $\x\in \K$, 
		\begin{align}
			\sum_{t=1}^T	\inner{{\g}_t}{\x_t - \x} & \leq \sum_{t=1}^T \inner{\tilde{\g}_t}{\w_t-\x}+\sum_{t=1}^T(2\delta_t +\Delta_t)R \|\g_t\|, \nn \\
			& \leq 2 R \sqrt{2\sum_{t=1}^T \| {\tilde{\g}}_t\|^2} + \sum_{t=1}^T(2\delta_t +\Delta_t)R \|\g_t\|,\label{eq:regretftrlold}  \\
			&  \leq 4 (1+\kappa)R \sqrt{\sum_{t=1}^T \| {{\g}}_t\|^2} + \sum_{t=1}^T(2\delta_t +5\Delta_t)R \|\g_t\|, \label{eq:show}
			% \label{eq:secondsummed2}
		\end{align}
		where \eqref{eq:regretftrlold} follows by our choice of the subroutine $\A$ and the regret bound of \ftrl{} in Proposition \ref{prop:ftrl-proximal}, and the last inequality follows by the fact that $\|\tilde \g_t\| \leq (1+\kappa+\Delta_t) \|\g_t\|$ (Lemma \ref{lem:mainreduction}) and the fact that $(a+b)^2 \leq 2 a^2 + 2 b^2$ for all $a,b\in \reals_{>0}$. By Lemma \ref{lem:mainreduction}, we also have $\x_t \in \K$, for all $t\geq 1$. 
		
		We now instantiate \eqref{eq:show} with the specific choice of tolerance sequence $(\delta_t)$ in theorem's statement, where we explicitly bound the right-most sum in \eqref{eq:regretftrlold} involving $(\Delta_t)$. For this, we let $X_t \coloneqq \sum_{i=1}^t (\Delta_i - \bar\delta_i )$, where $\bar \delta_i \coloneqq \E_{i-1}[\Delta_i]\leq \delta_i$. The process $(X_t)$ is a martingale; that is, for all $i\geq 1$, we have, $\E_i[X_t]  = X_i,$ for all $i<t$. Thus, by Doob's martingale inequality \cite[Theorem 4.4.2]{durrett2019} we have, for any $\rho\in(0,1)$ and $T\geq 1$:
		\begin{align}
			\P\left[ \sum_{t=1}^T\Delta_t \geq (1+1/\rho) \sum_{t=1}^T \delta_t  \right] \leq \P\left[ X_T\geq   \sum_{t=1}^T \delta_t/\rho  \right]   \leq \P\left[\max_{t\leq T} X_t \geq  \sum_{t=1}^T \delta_t/\rho  \right] \leq   \frac{\rho  \E\left[X_T\vee 0  \right]}{ \sum_{t=1}^T \delta_t} \leq \rho, \nn
		\end{align}
	where the last inequality follows by the fact that $\E[X_T\vee0]\leq \E[\sum_{t=1}^T \Delta_t]\leq \sum_{t=1}^T \delta_t$. Using this and the fact that $\sum_{t=1}^{\infty}1/t^2\leq 2$ in combination with \eqref{eq:show}, we obtain \eqref{eq:regretftrl}. 
	\end{proof}
\noindent Note that at each round $t$, the instance of Algorithm \ref{alg:projectionfreewrapper} in Theorem \ref{thm:genthm} invokes $\O_{\K^\circ}(\cdot;\delta_t)$ once. Thus, in light of Remark \ref{rem:complexity} in Section \ref{sec:gauge}, the algorithm makes at most $O( d T\ln (dT\kappa/\delta)$ calls to the Membership Oracle $\mem_{\K}$ after $T$ rounds. We also note that if one has an optimization Oracle for $\K^\circ$ [resp.~a separation Oracle for $\K$ \cite{lee2018efficient}], then Algorithm \ref{alg:projectionfreewrapper} can be executed with only ${O}(1)$ [resp.~$\wtilde O(1)$] Oracle calls per round while maintaining the same regret as in Theorem \ref{thm:genthm}.
	
	\paragraph{A More Efficient Implementation.} We can reduce the number of Oracle calls to a total of $O(T \ln (d T \kappa/\delta))$ by using the ``one-dimensional'' stochastic version of $\O_{\K^\circ}$ we present in Section \ref{sec:gauge}; i.e. $\onedopt$ (Alg.~\ref{alg:optimizenew}). This will come at the cost of a $O(\sqrt{d})$ multiplicative factor in the regret bound. We now state the guarantee of Algorithm \ref{alg:projectionfreewrapper} with $\cO_{\K^\circ} \equiv  \onedopt$ (the proof in in Appendix \ref{sec:genthmeffproof}):
	\begin{theorem}
		\label{thm:genthmeff}
		Let $\delta \in (0,1/3)$ and $\kappa\coloneqq R/r$, with $r$ and $R$ as in \eqref{eq:lowradius}. Let $(\ell_t)$ be any adversarial sequence of convex losses on $\K$ and $(\x_t)$ be the iterates of Alg.~\ref{alg:projectionfreewrapper} in response to $(\ell_t)$. Further, let $\g_s \in \partial \ell_s(\x_s)$, $s\geq 1$. If Alg.~\ref{alg:projectionfreewrapper} is run with subroutine $\A$ set to \ftrl{} (Alg.~\ref{alg:FTRL-proximal}) with parameter $R$; $\cO_{\K^\circ}\equiv \onedopt$; and $\delta_t= \delta/t^2$, $t\geq 1$, then $(\x_t)\subset \K$ and 
		\begin{align}
			\forall \x \in \K, \quad \sum_{t=1}^T\E[\inner{{\g}_t}{\x_t - \x}]  \leq   4 R \sqrt{(1+d\cdot (\kappa+\delta)^2) \sum_{t=1}^T   \E \left[ \| {{\g}}_t\|^2\right]}  + 6 \delta R \cdot \E\left[\max_{t\in[T]}\|\g_t\|\right].\label{eq:regretoned}
		\end{align}
	\end{theorem}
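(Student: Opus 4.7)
The plan is to closely parallel the argument used for Theorem \ref{thm:genthm}, replacing deterministic bounds by expected-value bounds since $\onedopt$ is a randomized Oracle. First I would establish an analog of Lemma \ref{lem:mainreduction} where $\cO_{\K^\circ}$ is replaced by $\onedopt$. By Lemma \ref{lem:celeb} the output of $\onedopt(\w_t;\delta_t)$ matches that of $\O_{\K^\circ}(\w_t;\delta_t)$ in expectation, so the stochastic surrogate gradient $\tilde \g_t$ produced in Line~\ref{line:2} of Algorithm~\ref{alg:projectionfreewrapper} is an unbiased estimator (conditioned on the past) of the deterministic surrogate gradient associated with $\O_{\K^\circ}$. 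This immediately transfers the one-step inequality
\[
\inner{\g_t}{\x_t-\x}\;\leq\;\inner{\tilde\g_t}{\w_t-\x}+(2\delta_t+\Delta_t)\,R\,\|\g_t\|
\]
from Lemma \ref{lem:mainreduction} into expectation (using the tower rule to handle the conditional expectation of $\tilde\g_t$).

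The next key step is to control $\E[\|\tilde\g_t\|^2]$. Because $\onedopt$ samples a single coordinate $I\sim\mathrm{Unif}([d])$ and rescales by $d$, its output has second moment at most $d$ times that of the corresponding deterministic estimate. I would combine this fact with the bound on $\bnu_t$ from Lemma \ref{lem:properties}-(c) (giving $\|\bnu_t\|\le 1/r$ in the deterministic case) to obtain an inequality of the form $\E_{t-1}[\|\tilde\g_t\|^2]\leq (1+d(\kappa+\Delta_t')^2)\|\g_t\|^2$, up to lower-order terms collected into the $\Delta_t'$ quantities satisfying $\E_{t-1}[\Delta_t']\leq \delta_t$. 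This is the only place where the extra $\sqrt{d}$ factor in the theorem enters.

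Having this in hand, I would sum the per-round inequality over $t=1,\dots,T$, apply the adaptive regret bound of \ftrl{} (Proposition \ref{prop:ftrl-proximal}) to $\sum_t \inner{\tilde\g_t}{\w_t-\x}$, and then take expectations. Jensen's inequality pulls the expectation inside the square root, yielding
\[
\E\!\left[\sqrt{\textstyle\sum_t \|\tilde\g_t\|^2}\right]\;\leq\;\sqrt{\textstyle\sum_t \E[\|\tilde\g_t\|^2]}\;\leq\;\sqrt{(1+d(\kappa+\delta)^2)\textstyle\sum_t \E[\|\g_t\|^2]},
\]
which recovers the main term in \eqref{eq:regretoned}. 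The residual error terms involving $(\delta_t)$ and $(\Delta_t)$ are handled by the choice $\delta_t=\delta/t^2$, since $\sum_t \delta_t\leq 2\delta$ and $\E[\sum_t \Delta_t]\leq \sum_t \delta_t\leq 2\delta$; bounding these by $\E[\max_{t\le T}\|\g_t\|]\cdot \sum_t(2\delta_t+\Delta_t)$ produces the additive $6\delta R\cdot \E[\max_{t\in[T]}\|\g_t\|]$ term. The containment $(\x_t)\subset \K$ is inherited directly from the analogous fact in Lemma \ref{lem:mainreduction}.

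The main obstacle I anticipate is the bookkeeping for the variance factor: one must verify carefully that the bound $\E_{t-1}[\|\tilde\g_t\|^2]\leq (1+d(\kappa+\Delta_t')^2)\|\g_t\|^2$ actually holds for the specific form of $\tilde\g_t = \g_t - \mathbb{I}_{\{\inner{\g_t}{\w_t}<0\}}\inner{\g_t}{\x_t}\bnu_t$ when $\bnu_t$ is the randomized one-coordinate output of $\onedopt$, and that all the expectations are handled at the correct conditioning level so that Jensen's inequality can be applied cleanly at the end. Once that bound is properly established, the rest of the proof follows the template of Theorem \ref{thm:genthm} essentially line by line, with the probabilistic Doob-style martingale argument replaced by simpler expectation bounds.
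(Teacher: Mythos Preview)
Your proposal is correct and follows essentially the same approach as the paper's proof: pass the per-round inequality of Lemma~\ref{lem:mainreduction} (stated for $\O_{\K^\circ}$) into expectation via the unbiasedness in Lemma~\ref{lem:celeb}, apply the \ftrl{} regret bound, and then Jensen's inequality to pull the expectation inside the square root. One small clarification on the variance step you flagged as the main obstacle: the second-moment bound is cleaner than you wrote---since Lemma~\ref{lem:celeb} gives the deterministic bound $\E_{t-1}[\|\hat\s_t\|^2]\le d(1/r+\delta_t/R)^2$ directly, one obtains $\E_{t-1}[\|\tilde\g_t\|^2]\le 2(1+d(\kappa+\delta_t)^2)\|\g_t\|^2$ without any auxiliary random $\Delta_t'$; the randomness is entirely absorbed by the conditional expectation, and the error term $\sum_t(2\delta_t+\Delta_t)R\|\g_t\|$ becomes $\sum_t 3\delta_t R\|\g_t\|$ after taking $\E_{t-1}$. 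Also note that the argument relies on $\w_t,\x_t,\gamma_t,\g_t$ being measurable with respect to the past (which holds because \ftrl{} and $\gau_\K$ are deterministic), so that the swap $\E_{t-1}[\tilde\s_t]=\E_{t-1}[\hat\s_t]$ can be applied inside the inner product.
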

\noindent The instance of Algorithm \ref{alg:projectionfreewrapper} in Theorem \ref{thm:genthmeff} invokes $\onedopt(\cdot;\delta_t)$ at each iteration $t$. Thus, in light of Remark \ref{rem:complexityoned} in Section \ref{sec:gauge}, the algorithm makes at most $O(T\ln (dT\kappa/\delta)$ calls to the Membership Oracle $\mem_{\K}$ after $T$ rounds. We also remark that our choice of tolerance sequence $(\delta_t)$ in Theorems \ref{thm:genthm} and \ref{thm:genthmeff} is too conservative, requiring the Membership Oracle to be more accurate than necessary to achieve a $O(\sqrt{T})$ regret. In fact, our choice of $(\delta_t)$ in Theorems \ref{thm:genthm} and \ref{thm:genthmeff} ensures that the errors involved in the approximations of the subgradients of the surrogate losses add up to a lower order term in the regret bound; i.e.~the right-most terms in \eqref{eq:regretftrl} and \eqref{eq:regretoned}. We can choose a larger sequence of tolerances as long as the sum of the approximation errors is of order $O(\sqrt{T})$. Next we derive a high probability regret bound for Algorithm \ref{alg:projectionfreewrapper} and show that one can pick $\delta_t$ as large as $O(t^{-1/2})$.
 
\paragraph{High Probability Regret Bound.}
We will now derive a high probability regret bound for Algorithm \ref{alg:projectionfreewrapper} where $\cO_{\K^\circ}\equiv \onedopt$ (Alg.~\ref{alg:optimizenew}) and $\A$ is set to \ftrl. For the sake of simplicity, we will assume that the losses are $B$-Lipschitz (see \eqref{eq:lipstrong}). Technically, this condition is not needed to derive the result in our next theorem (up to log factors in the regret), but we make it to simplify the probabilistic argument we follow in the proof of the latter. We note that the algorithm does not require knowledge of $B$.
\begin{theorem}
	\label{thm:genthmprob}
	Let $\delta \in(0,1/3)$, $B>0$, and $\kappa\coloneqq R/r$, with $r$ and $R$ as in \eqref{eq:lowradius}. Let $(\ell_t)$ be any adversarial sequence of $B$-Lipschitz convex losses on $\K$ and $(\x_t)$ be the iterates of Alg.~\ref{alg:projectionfreewrapper} in response to $(\g_t \in \partial \ell_t(\x_t))$. If Alg.~\ref{alg:projectionfreewrapper} is run with subroutine $\A$ set to \ftrl{} with parameter $R$; $\cO_{\K^\circ}\equiv \onedopt$ (Alg.~\ref{alg:optimizenew}); and $\delta_t= \delta/t^{-1/2}$, $t\geq 1$, then $(\x_t)\subset \K$ and for all $\rho\in(0,1)$, $T\geq d\ln ({1}/{\rho})$, and $\x \in \K$,
	\begin{align}
		\P\left[\sum_{t=1}^T \inner{\g_t}{\x_t - \x}\leq  8 R B(\kappa+\delta)\sqrt{ d  T (3+2\ln(1/\rho)) }    + 2 d B R \cdot (2\kappa+2\delta  +3\delta \sqrt{T}/\rho) \right] \geq 1 - \rho.\nn
	\end{align}
\end{theorem}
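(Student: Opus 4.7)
The strategy is to upgrade the expectation bound of Theorem~\ref{thm:genthmeff} to a high-probability bound via two martingale concentration arguments, while exploiting $B$-Lipschitzness to replace $\|\g_t\|$ by $B$ at suitable points. I would start from the $\onedopt$ analog of Lemma~\ref{lem:mainreduction} used in the proof of Theorem~\ref{thm:genthmeff}: at each round $t$ there is a random $\Delta_t\geq 0$ with $\E_{t-1}[\Delta_t]\leq \delta_t$, a conditional second-moment estimate $\E_{t-1}[\|\tilde{\g}_t\|^2]\leq d(1+\kappa+\Delta_t)^2\|\g_t\|^2$ (inherited from the single-coordinate structure of $\onedopt$ and Lemma~\ref{lem:celeb}), and the per-round reduction $\inner{\g_t}{\x_t-\x}\leq \inner{\tilde{\g}_t}{\w_t-\x}+(2\delta_t+\Delta_t)R\|\g_t\|$. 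Summing, applying the \ftrl{} regret bound (Proposition~\ref{prop:ftrl-proximal}) to $\sum_t\inner{\tilde{\g}_t}{\w_t-\x}$, and using $\|\g_t\|\leq B$ yields
\begin{align*}
\sum_{t=1}^T\inner{\g_t}{\x_t-\x} \leq 2R\sqrt{2\sum_{t=1}^T\|\tilde{\g}_t\|^2} + RB\sum_{t=1}^T(2\delta_t+\Delta_t).
\end{align*}

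The additive stochastic term $RB\sum_t(2\delta_t+\Delta_t)$ will be handled by reusing the Doob martingale argument from the proof of Theorem~\ref{thm:genthm}: the schedule $\delta_t=\delta/\sqrt{t}$ gives $\sum_{t\leq T}\delta_t\leq 2\delta\sqrt{T}$, and Doob's inequality yields $\sum_t\Delta_t\leq (1+2/\rho)\cdot 2\delta\sqrt{T}$ with probability at least $1-\rho/2$, producing the $O(\delta BR\sqrt{T}/\rho)$ summand (the stated factor $d$ comes from the deterministic worst-case bound on $\|\tilde{\g}_t\|$ used inside the reduction lemma). The core step, however, is to control $\sum_t\|\tilde{\g}_t\|^2$ in high probability: since $\onedopt$ concentrates its mass on a uniformly random coordinate, $\|\tilde{\g}_t\|^2$ can be up to a factor $d$ larger than its conditional mean $\lesssim dB^2(\kappa+\Delta_t)^2$, so a naive Azuma--Hoeffding bound would introduce an extra $\sqrt{d}$ factor and miss the target. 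Instead I would apply Freedman's inequality to the martingale differences $Y_t:=\|\tilde{\g}_t\|^2-\E_{t-1}[\|\tilde{\g}_t\|^2]$, using a kurtosis estimate $\mathrm{Var}_{t-1}(\|\tilde{\g}_t\|^2)\lesssim dB^2(\kappa+\delta)^2\cdot\E_{t-1}[\|\tilde{\g}_t\|^2]$ obtained by writing $\tilde{\g}_t$ explicitly as a scaled coordinate selection (via Lemma~\ref{lem:celeb}) and computing $\E_{t-1}[\|\tilde{\g}_t\|^4]$ in closed form. This yields, with probability at least $1-\rho/2$,
\begin{align*}
\sum_{t=1}^T\|\tilde{\g}_t\|^2\leq 2dB^2(\kappa+\delta)^2 T + O\!\left(dB^2(\kappa+\delta)^2\ln(1/\rho)\right),
\end{align*}
so the hypothesis $T\geq d\ln(1/\rho)$ absorbs the second term into the first and gives $2R\sqrt{2\sum_t\|\tilde{\g}_t\|^2}\leq 8RB(\kappa+\delta)\sqrt{dT(3+2\ln(1/\rho))}$. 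A union bound over the two probability-$\rho/2$ events then completes the proof.

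The hard part will be executing the Freedman step with the correct $d$-dependence: the leading term inside the square root must scale as $\sqrt{d}$ rather than $d$, which forces a variance-aware (Bernstein-type) concentration instead of a uniform-bound one, and in turn forces a fourth-moment estimate on $\tilde{\g}_t$ that directly exploits the explicit coordinate-sampling structure of $\onedopt$. Once this bound is established, assembling the remaining pieces (the simple Doob argument for $\sum_t\Delta_t$, the \ftrl{} regret bound, and the single-round reduction lemma) is routine bookkeeping, and the sample-size condition $T\geq d\ln(1/\rho)$ is exactly what is needed to absorb the Freedman additive term into the leading $\sqrt{dT\ln(1/\rho)}$ term.
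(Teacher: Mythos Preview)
Your overall plan is close to the paper's, but there is a genuine gap in the starting inequality. You write the per-round reduction as
\[
\inner{\g_t}{\x_t-\x}\leq \inner{\tilde{\g}_t}{\w_t-\x}+(2\delta_t+\Delta_t)R\|\g_t\|,
\]
but this is Lemma~\ref{lem:mainreduction}, which is proved for $\cO_{\K^\circ}\equiv\O_{\K^\circ}$ (the full $d$-coordinate oracle). In the setting of Theorem~\ref{thm:genthmprob} the algorithm uses $\cO_{\K^\circ}\equiv\onedopt$, so the $\tilde{\g}_t$ actually appearing in Algorithm~\ref{alg:projectionfreewrapper} is built from the single-coordinate estimate $\hat{\s}_t$, not from $\tilde{\s}_t$. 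The displayed inequality therefore does \emph{not} hold pathwise; it only holds after taking $\E_{t-1}[\cdot]$ (this is exactly how the proof of Theorem~\ref{thm:genthmeff} proceeds). Pathwise, the correct relation carries an extra zero-mean term,
\[
\inner{\g_t}{\x_t-\x}\leq \inner{\tilde{\g}_t}{\w_t-\x}+(2\delta_t+\Delta_t)R\|\g_t\|+X_t,
\qquad X_t\coloneqq \mathbb{I}_{\inner{\g_t}{\w_t}<0}\inner{\g_t}{\x_t}\,\inner{\bnu_t-\tilde{\bnu}_t}{\w_t-\x},
\]
with $\E_{t-1}[X_t]=0$ because $\E_{t-1}[\hat{\s}_t]=\E_{t-1}[\tilde{\s}_t]$ when $\hat\gamma_t\geq 1$ (Lemma~\ref{lem:celeb}); see Lemma~\ref{lem:weso} for the explicit statement. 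Your proposal never accounts for $\sum_t X_t$.

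The paper controls $\sum_t X_t$ by a \emph{separate} Freedman application, using $|X_t|\leq 4dRB(\kappa+\delta)$ and $\E_{t-1}[X_t^2]\leq 8R^2B^2 d(\kappa+\delta)^2$ (both obtained from Lemma~\ref{lem:celebext} and Proposition~\ref{prop:subgradient}). This is the source of the $4RB(\kappa+\delta)\sqrt{2dT\ln(1/\rho)}+4dRB(\kappa+\delta)$ contribution inside the final bound. Altogether the paper's proof union-bounds over \emph{three} events (Freedman on $\sum_t X_t$, Freedman on $\sum_t\|\tilde{\g}_t\|^2$, and Markov on $\sum_t\Delta_t\|\g_t\|$), whereas your plan only has two. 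Your Freedman treatment of $\sum_t\|\tilde{\g}_t\|^2$ and the Doob/Markov treatment of $\sum_t\Delta_t$ are fine and match the paper's; the missing ingredient is the third concentration step for the oracle-coupling martingale $\sum_t X_t$.
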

\noindent  The proof of the theorem is in Appendix \ref{sec:genthmprobproof}. Once again, the instance of Algorithm \ref{alg:projectionfreewrapper} in Theorem \ref{thm:genthmprob} requires only $O(T \ln (T d\kappa/\delta ))$ calls of the Membership Oracle $\mem_{\K}$ (see the discussion proceeding Theorem \ref{thm:genthmeff}). 
    \paragraph{Optimality of the bounds and computational trade-offs.} 
  All the regret bounds in Theorems \ref{thm:genthm}-\ref{thm:genthmprob} have an optimal dependence in $T$. As for the dependence in $d$, we see that the regret bounds of Algorithm~\ref{alg:projectionfreewrapper} in the settings of Theorems \ref{thm:genthmeff} and \ref{thm:genthmprob} can be improved by a factor of $O(\sqrt{d})$ when making $\wtilde{O}(d)$ calls to $\mem_{\K}$ per round (as in the setting of Theorem \ref{thm:genthm}) or $\wtilde{O}(1)$ calls to a Separation Oracle for $\K$, if available. We also note that $\onedopt$ (Alg.~\ref{alg:optimizenew}), which is used in the settings of Theorems \ref{thm:genthmeff} and \ref{thm:genthmprob}, randomly selects a coordinate in $[d]$ and estimates the subgradient of $\gamma_{\K}$ in that coordinate's direction. A version of this algorithm, call it $k\text{D}$-$\text{OPT}_{\K^\circ}$, that samples $i\in [\lfloor d/k \rfloor]$ uniformly at random and selects coordinates $\{i k + j-1: j \in [k]\}\cap [d]$ to estimate the subgradients of $\gamma_{\K}$ would lead to a regret bound for Algorithm \ref{alg:projectionfreewrapper}, with $\cO_{\K^\circ}\equiv k\text{D}$-$\text{OPT}_{\K^\circ}$, of order $O(\sqrt{d T/k})$, while making $\wtilde{O}(k)$ calls to $\mem_{\K}$ per round. This leads to a natural trade-off between computation (Oracle calls) and regret.
  
Finally, we note that there is another potential dependence in $d$ in the regret bounds through the condition number $\kappa$. In Section \ref{sec:applications}, we bound this quantity for the popular settings listed on Table \ref{tab:result}. We are able to show that $\kappa$ is often less than $d^{1/2}$ in many settings. Nevertheless, we note that $\kappa$ is present in our bounds because of our pessimistic upper bounds on the norms of the subgradients of the surrogate losses; i.e.~$\|\tilde \g_t\|\leq (1+\Delta_t + \kappa) \|\g_t\|$, for all $t\geq 1$ (see Lemma \ref{lem:mainreduction}). In fact, we do not expect the magnitude of $(\tilde \g_t)$ to be often much larger than that of $(\g_t)$ in practice; recall that $\tilde \g_t\neq \g_t$ only if the iterate $\w_t$ of the subroutine $\A$ is outside $\K$.

\paragraph{Implications for the stochastic and offline settings.}
The results we presented so far are also relevant in the stochastic and offline settings thanks to the online-to-batch conversion technique \cite{cesa2004, shalev2011}. In the latter settings (which we describe in more detail in Section \ref{sec:stochastic}), the losses $(\ell_t)$ are i.i.d.~and satisfy $\E[\ell_t]\equiv f$ for some fixed convex function $f\colon \K \rightarrow \reals$. Thus, if we let $\text{Regret}_T(\cdot)$ be the regret of Algorithm~\ref{alg:projectionfreewrapper} in response to $(\ell_t)$ and $x_*\in \argmin_{\x\in \K} f(\x)$, then the average iterate $\bar \x_T$ of Alg.~\ref{alg:projectionfreewrapper} after $T$ rounds satisfies
\begin{align}
\E[f(\bar \x_T)] - \inf_{\x \in \K} f(\x)\stackrel{(*)}{\leq}  \frac{1}{T}\sum_{t=1}^T \E[\ell_t(\x_t) -  \ell_t(\x_*)] \leq \frac{\text{Regret}_T(\x_*)}{T},   \label{eq:regretstoch}
\end{align}
where $(*)$ follows by Jensen's inequality and the fact that $\x_t$ is independent of $\ell_t$. Plugging the regret bounds of Alg.~\ref{alg:projectionfreewrapper} in the settings of Theorems \ref{thm:genthmeff}-\ref{thm:genthmprob} into \eqref{eq:regretstoch} leads to a $O(\sqrt{d\kappa^2/T})$ rate in stochastic and offline optimization, which is optimal in $T$\footnote{The rate $O(1/\sqrt{T})$ is optimal when no further assumptions on $f$ are made (other than convexity).}. Therefore, Alg.~\ref{alg:projectionfreewrapper} in the latter settings requires $\wtilde O(d\kappa^2/\epsilon^2)$ calls to the Membership Oracle to attain an $\epsilon$-suboptimal point in offline optimization. Thus, whenever $d\geq \Omega(\kappa^2/\epsilon^2)$, our algorithm is a viable alternative to those based on the cutting plane method, which require at least $O(d^2 \ln (1/\epsilon))$ calls to a Membership Oracle to achieve the same guarantee. 

When the objective function $f$ is $\beta$-smooth our adaptive bounds\footnote{Adaptive in the sense that they scale with $\sqrt{\sum_{t=1}^T \|\g_t\|^2}$ instead of $\sqrt{T}$.} in Theorems \ref{thm:genthm} and \ref{thm:genthmeff} together with an extension of the online-to-batch conversion analysis \cite[Corollary 6]{cutkosky2019} automatically imply a rate of $O(\beta/T+ \sigma/\sqrt{T})$, where $\sigma^2$ is the variance of the stochastic noise (see Section \ref{sec:stochastic} for a precise definition). Thus, in the offline setting (i.e.~$\sigma=0$), our algorithm achieves the fast $O(\beta/T)$ rate without knowing the smoothness parameter $\beta$. In Section \ref{sec:stochastic}, we show how this rate can be improved further.

We now move to the setting where the losses can be strongly convex.  
	\subsection{Algorithm for Strongly Convex Online Optimization}
	\label{sec:strong}
	In this section, we will build a subroutine $\A$ for Algorithm \ref{alg:projectionfreewrapper} that will enable the latter to ensure a logarithmic regret when the losses are strongly convex, while maintaining the worst-case $O(\sqrt{T})$ regret up to log-factors. The subroutine in question is displayed in Algorithm \ref{alg:projectionfreewrapperstong}. This subroutine is based on an exiting reduction due to \cite{cutkosky2018black} with the following key differences; I) we perform clipping of the subgradients $(\tilde \g_t)$ in Alg.~\ref{alg:projectionfreewrapperstong}; II) we slightly change the expression of the variables $(Z_t)$ in Line \ref{line:last} of Alg~\ref{alg:projectionfreewrapperstong}; and III) we use \freegrad{}, a \emph{parameter-free} OCO algorithm \cite{mhammedi2020}, as the underlying OCO subroutine. These differences will allow us to make our final algorithm scale-invariant in the sense that multiplying the losses by a positive constant does not change the outputs of the algorithm. Crucially, the algorithm does not require any prior scale information on the losses unlike many existing OCO algorithms (see e.g.~\cite{mhammedi2020}).    
	
		\begin{algorithm}[H]
		\caption{Subroutine $\A$ for Algorithm \ref{alg:projectionfreewrapper} for Strongly Convex Online Optimization.}
		\label{alg:projectionfreewrapperstong}
		\begin{algorithmic}[1]
			\Require Parameters $\epsilon,R>0$, with $R$ as in \eqref{eq:lowradius} and OCO Algorithm \freegrad{} (Alg.~\ref{alg:freegrad}).
			\Statex~~~~~~~~~~~ Input $(\x_t, f_t)$ from the Master Algorithm \ref{alg:projectionfreewrapper} at each round $t\geq 1$, where $\x_t \in \reals^d$ and $f_t\colon \reals^d\rightarrow \reals$.  
			\vspace{0.2cm} 
			\State  	 Initialize \freegrad{} with parameters $\epsilon, R>0$, and set $\bu_0$ to \freegrad's first output.
			\State Set $\tilde B_0=\epsilon$; $Z_0=2\epsilon^2$; and $\v_0=\bm{0}$.
			\For{$t=1,2,\dots$}
			\State Play $\w_t= \bu_{t+1} + \v_{t}/Z_{t}$ and observe $\tilde \g_t \in \partial f_t(\w_t)$. \algcomment{$f_t$ is specified by Alg.~\ref{alg:projectionfreewrapper}}
			\State  Set $\tilde B_t=\tilde B_{t-1}\vee \|\tilde \g_{t}\|$ and $\hat \g_{t} = \tilde \g_t \cdot \tilde B_{t-1}/\tilde B_t$.
			\State \label{line:bluestart}   Send linear loss $\w \mapsto  \inner{ \tilde \g_t}{\w}$ to \freegrad{} as the $t$th loss function.
			\State   Set $\bu_{t+1} \in \cB(R)$ to \freegrad's $(t+1)$th output given the history $((\bu_i, \w\rightarrow \inner{\tilde \g_i}{\w}))_{i\leq t}$.	
			\State Set $Z_t=Z_{t-1}+\|\hat \g_{t}\|^2 + \tilde B_t^2 - \tilde B_{t-1}^2$ and $\v_t =\v_{t-1}+ \|\hat \g_{t}\|^2 \x_{t}$. \algcomment{$\x_t$ is specified by Alg.~\ref{alg:projectionfreewrapper}} \label{line:last}
			\EndFor
		\end{algorithmic}
	\end{algorithm}
\noindent	The underlying subroutine \freegrad{} is displayed in Algorithm \ref{alg:freegrad} in Appendix \ref{app:OCO}. The key feature of \freegrad{} that allows us to adapt to strong convexity is that its regret is bounded from above by $O(\|\w\| \sqrt{V_T})$, up to log-factors in $\|\w\|$ and $V_T$, where $V_T=\sum_{t=1}^T\|\g_t\|^2$ and $(\g_t)$ are the observed subgradients at the iterates of the algorithm (the precise statement of the regret bound is differed to Appendix \ref{app:OCO}). Thus, similar to \ftrl{}, \freegrad{} also has an adaptive regret that can be much smaller than the worst-case $O(\sqrt{T})$; e.g.~when the losses are smooth \cite{srebro2010}. Furthermore, \freegrad's regret bound scales with the norm of the comparator $\|\w\|$, and thus becomes small for comparators close to the origin. This property will be crucial to prove a logarithmic regret for strongly convex losses \cite{cutkosky2018black}. We now state our main result for this subsection (the proof is in Appendix \ref{sec:mainstrongproof}):
	\begin{theorem}
		\label{thm:mainstrong}
		Let $\mu,B>0$, $\delta \in(0,1/3)$, and $\kappa\coloneqq R/r$, where $r$ and $R$ as in \eqref{eq:lowradius}. Suppose that Algorithm \ref{alg:projectionfreewrapper} is run with $\cO_{\K^\circ}\equiv \onedopt$ (Alg.~\ref{alg:optimizenew}); $\delta_t = \delta/t^2$, $\forall t \geq 1$; and sub-routine $\A$ set to Alg.~\ref{alg:projectionfreewrapperstong} with parameter $\epsilon>0$. Then, for any adversarial sequence of convex [resp.~$\mu$-strongly convex] $B$-Lipschitz losses $(\ell_t)$ on $\K$ the iterates $(\x_t)$ of Algorithm \ref{alg:projectionfreewrapper} satisfy $(\x_t)\subset \K$, and for all $T\geq 1$ and $\x\in \K$, we have, 
		\begin{align}
			\sum_{t=1}^T \E[\ell_t(\x_t)-\ell_t(\x)]\leq U_T R B \sqrt{T} +  R B U_T^2/\nu, \quad  \left[\text{resp.} \ \sum_{t=1}^T \E[\ell_t(\x_t)-\ell_t(\x)]\leq \left( \frac{R}{\nu}+ \frac{B}{2\mu} \right)  B  U_T^2 \right],\nn 
		\end{align}
		where $U_T = O\left(\nu d^{1/2} \ln (e+ \frac{\kappa d R  T B}{\epsilon\delta}) \right)$; $\nu \coloneqq  1/(R\wedge 1) + \kappa + \delta$; and $\g_t\in \partial \ell_t(\x_t)$, $\forall t\geq 1$.
	\end{theorem}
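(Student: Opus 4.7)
The plan is to chain two reductions: first Lemma \ref{lem:mainreduction}, which controls the regret of the outer wrapper (Alg.~\ref{alg:projectionfreewrapper}) in terms of the linearized regret of the subroutine $\A$ on the surrogate subgradients $\tilde\g_t$, and then a Cutkosky-style analysis of Alg.~\ref{alg:projectionfreewrapperstong} (which wraps \freegrad{}) to turn the latter into either a $\sqrt{T}$ bound for general convex losses or a logarithmic bound under $\mu$-strong convexity. First, by Lemma \ref{lem:mainreduction}, for every $\x\in\K$,
\[
\sum_{t=1}^T\inner{\g_t}{\x_t-\x}\ \leq\ \sum_{t=1}^T\inner{\tilde\g_t}{\w_t-\x}+\sum_{t=1}^T(2\delta_t+\Delta_t)R\|\g_t\|,
\]
with $\E_{t-1}[\Delta_t]\le\delta_t=\delta/t^2$, so the rightmost error sum is in expectation at most $O(\delta RB)$ (using $\|\g_t\|\le B$ and $\sum 1/t^2\le 2$). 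The same inequality upgrades to the $\ell_t$-regret by convexity, while under strong convexity we instead carry the extra $-\tfrac{\mu}{2}\|\x_t-\x\|^2$ slack on the left.

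Next, I would analyze the internal state of Alg.~\ref{alg:projectionfreewrapperstong}. Writing $\w_t=\bu_t+\v_{t-1}/Z_{t-1}$ and adding/subtracting $\inner{\tilde\g_t}{\v_{t-1}/Z_{t-1}-\x}$, the standard manipulation of Cutkosky's reduction yields
\[
\sum_{t=1}^T\inner{\tilde\g_t}{\w_t-\x}\ =\ \underbrace{\sum_{t=1}^T\inner{\tilde\g_t}{\bu_t-\bm 0}}_{\text{\freegrad{} regret vs.~}\bm 0}+\sum_{t=1}^T\inner{\tilde\g_t}{\v_{t-1}/Z_{t-1}-\x}.
\]
For the first term I would invoke the \freegrad{} guarantee (Appendix \ref{app:OCO}), which, after accounting for the clipping $\hat\g_t=\tilde\g_t\tilde B_{t-1}/\tilde B_t$ and the $\tilde B_t^2-\tilde B_{t-1}^2$ correction in $Z_t$, gives a bound of the form $O(\|\bu\|\sqrt{V_T\log(\|\bu\|V_T/\epsilon)}+\epsilon)$ evaluated at $\bu=\bm 0$ — the logarithmic comparator-norm dependence is what makes the scheme scale-invariant and parameter-free. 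For the second term I would use Abel summation and the recursion $\v_t=\v_{t-1}+\|\hat\g_t\|^2\x_t$, $Z_t=Z_{t-1}+\|\hat\g_t\|^2+(\tilde B_t^2-\tilde B_{t-1}^2)$, to express it as $\sum_t\|\hat\g_t\|^2\inner{\x_t-\x,\cdot}/Z_{t-1}$, which after Cauchy--Schwarz and telescoping becomes $O(RB\cdot U_T\sqrt{T})$ in the general case and is instead absorbed into $-(\mu/2)\sum_t\|\x_t-\x\|^2$ in the strongly convex case via the elementary inequality $\inner{\a}{\b}\le \tfrac{1}{2\mu}\|\a\|^2+\tfrac{\mu}{2}\|\b\|^2$.

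At this point the strongly convex branch drops out: the leftover quadratic terms cancel (up to a $B^2/(2\mu)$ factor on $\sum \|\hat g_t\|^2$), and since $V_T\le U_T^2$ logarithmically via the standard self-bounding argument for adaptive bounds, one obtains the $U_T^2$ scaling. In the general case, one instead bounds $\sqrt{V_T}\le U_T\sqrt{T}$ directly. The scaling of $V_T=\sum_t\|\hat\g_t\|^2$ in terms of $\|\g_t\|$ comes from Lemma \ref{lem:mainreduction}: $\|\tilde\g_t\|\le(1+\kappa+\Delta_t)\|\g_t\|$, so after taking expectation (using $\E[\Delta_t]\le\delta_t$) one gets the $\nu=1/(R\wedge 1)+\kappa+\delta$ factor, plus a $\sqrt{d}$ factor inherited from $\onedopt$ as in Theorem~\ref{thm:genthmeff}. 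Finally, I would collect the log factors into $U_T=O(\nu\sqrt{d}\log(e+\kappa dRTB/(\epsilon\delta)))$ and verify that $\x_t\in\K$ for all $t$ (which is already asserted by Lemma~\ref{lem:mainreduction}).

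The main obstacle will be the bookkeeping around the clipping step and the modified $Z_t$ — these modifications are precisely what make the scheme adaptive to the unknown scale $B$ and to the unknown strong-convexity parameter $\mu$, and they require verifying that the \freegrad{} regret bound still applies to the clipped gradients $\hat\g_t$ while the unclipped $\tilde\g_t$ appear in the Cutkosky telescoping. A secondary technical point is deducing the in-expectation bound on $\sum_t\|\hat\g_t\|^2$ despite the randomness of $\onedopt$; here I expect to use the identity that $\onedopt$ matches $\O_{\K^\circ}$ in expectation (Lemma~\ref{lem:celeb}) so the $d$-factor arises cleanly as in Theorem~\ref{thm:genthmeff}.
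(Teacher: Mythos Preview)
Your decomposition of the inner regret is not the one that makes the Cutkosky reduction work, and as written it cannot deliver the logarithmic bound in the strongly convex case. You split
\[
\sum_{t}\inner{\tilde\g_t}{\w_t-\x}=\sum_t\inner{\tilde\g_t}{\bu_t-\bm 0}+\sum_t\inner{\tilde\g_t}{\overline{\x}_{t-1}-\x},
\]
and then invoke \freegrad{} against comparator $\bm 0$. But the entire point of the comparator-adaptive bound of \freegrad{} is to take the comparator to be $\x-\overline{\x}_T$, so that the \freegrad{} term contributes $O(\|\x-\overline{\x}_T\|\sqrt{Z_T})$ (up to logs). The paper's decomposition is
\[
\sum_t\inner{\hat\g_t}{\w_t-\x}=\sum_t\inner{\hat\g_t}{\bu_t-(\x-\overline{\x}_T)}+\sum_t\inner{\hat\g_t}{\overline{\x}_{t-1}-\overline{\x}_T},
\]
and the second ``drift'' term is bounded by $M_T\sigma_T\sqrt{Z_T\ln(Z_T/\epsilon^2)}$ via a nontrivial argument (Lemma~\ref{lem:shortcut}): one shows $\bigl\|\sum_{i\le t}\hat\g_i\bigr\|\le M_t\sqrt{Z_t}+N_t$ by running the \freegrad{} bound with a carefully chosen moving comparator, and then combines this with $\overline{\x}_{t-1}-\overline{\x}_t=\|\hat\g_t\|^2(\x_t-\overline{\x}_t)/Z_{t-1}$ and a bias--variance identity $Z_T\|\x-\overline{\x}_T\|^2+Z_T\sigma_T^2=\sum_t\|\hat\g_t\|^2\|\x_t-\x\|^2+O(1)$. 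This is what produces the crucial intermediate inequality
\[
\E\Bigl[\sum_t\inner{\g_t}{\x_t-\x}\Bigr]\le U_T\sqrt{\E\Bigl[\sum_t\|\g_t\|^2\|\x_t-\x\|^2\Bigr]}+RBU_T^2/\nu,
\]
after which the strongly convex case follows by AM--GM with $\eta=2B^2/\mu$ (the square root is essential: without it the cancellation with $-\tfrac{\mu}{2}\sum_t\|\x_t-\x\|^2$ leaves a linear-in-$T$ term, not $U_T^2$).

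Your second term $\sum_t\inner{\tilde\g_t}{\overline{\x}_{t-1}-\x}$ is not of the form $\sum_t c_t\inner{\,\cdot\,}{\x_t-\x}$: Abel summation gives increments $\overline{\x}_t-\overline{\x}_{t-1}\propto(\x_t-\overline{\x}_t)$, not $(\x_t-\x)$, so the ``absorb into $-\tfrac{\mu}{2}\|\x_t-\x\|^2$'' step does not go through. A secondary issue: Lemma~\ref{lem:mainreduction} is stated for $\cO_{\K^\circ}\equiv\O_{\K^\circ}$; with $\onedopt$ you need the analogue (Lemma~\ref{lem:weso}), which introduces mean-zero noise $\xi_t(\x)$ and the bound $\|\tilde\g_t\|\le(1+d\kappa+d\Delta_t)\|\g_t\|$---this is where the $\sqrt d$ in $U_T$ actually enters, not from the instantaneous regret inequality itself. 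You should also note that Lemma~\ref{lem:shortcut} needs $\inner{\tilde\g_t}{\x_t}\le\inner{\tilde\g_t}{\w_t}+\text{error}$ (Lemma~\ref{lem:tool}/\ref{lem:weso}), a separate ingredient you did not mention.
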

	\noindent We note that the instance of Algorithm \ref{alg:projectionfreewrapper} in the preceding theorem automatically adapts to the strong convexity constant $\mu>0$ of the losses $(\ell_t)$, where it achieves a logarithmic regret. For general convex losses, the algorithm ensures the optimal worst-case $O(\sqrt{T})$ regret up to log-factors. 
	
	\paragraph{Computational Complexity.}
	The instance of Algorithm \ref{alg:projectionfreewrapper} in Theorem \ref{thm:mainstrong} makes the same number of calls to the Oracle $\onedopt$ as in the setting of Theorems \ref{thm:genthmeff} and \ref{thm:genthmprob}. Thus, this instance makes at most $O(T \ln (d \kappa T/\delta))$ calls to the Membership Oracle $\mem_{\K}$. Further, we note that the sequence $(\delta_t)$ may be set to $\delta_t= \delta/t^{-1/2}$ [resp.~$\delta_t= \delta_t/t$], for all $t\geq 1$, for general convex [resp.~strong-convex] functions, allowing the Membership Oracle $\mem_{\K}$ to be less accurate while maintaining the same regret bound as Theorem \ref{thm:mainstrong} up to constant factors (see discussion after Theorem \ref{thm:genthmeff}). Finally, we restricted the losses to be $B$-Lipschitz in Theorem \ref{thm:mainstrong} only to simplify the proofs---the algorithm need not know $B$.

	\paragraph{Scale-invariance.} Though the instance of Alg.~\ref{alg:projectionfreewrapper} in Theorem \ref{thm:mainstrong} does not require a bound on the norm of the gradients (an information typically required by other OL algorithms \cite{orabona2016}), the algorithm is not scale-invariant; multiplying the sequence of losses $(\ell_t)$ by some fixed constant changes the iterates $(\x_t)$ of the algorithm. In general, this is an undesirable property for OL algorithms \cite{orabona2016}.

	Note also that the regret bound in Theorem \ref{thm:mainstrong} can technically be unbounded due to the fraction $B/\epsilon$ in the expression of $U_T$. This fraction can be arbitrarily large if $\epsilon$ (a parameter of the algorithm) is too small relative to the Lipschitz constant $B$. Such a problematic ratio has appeared in previous works such as \cite{ross2013normalized,Wintenberger2017,Kotlowski17,mhammedi2019,KempkaKW19}. To tame such a ratio, \cite{mhammedi2019} and \cite{mhammedi2020} presented a technique for certain OCO algorithms, such as $\metagrad$ \cite{Erven2016,van2021metagrad} and \freegrad, based on the idea of restarting these algorithms whenever the ratio between the maximum norm of the observed subgradients and the norm of the initial subgradient is too large. In Appendix \ref{sec:scaleinvariant}, we extend this technique and present a general reduction (Algorithm \ref{alg:red}) that makes a large class of Online Learning algorithms (including the instance of Alg.~\ref{alg:projectionfreewrapper} in the setting of Theorem \ref{thm:mainstrong}) scale-invariant and gets rid of problematic ratios in their regret bounds.
	
\paragraph{Optimality and link to stochastic optimization.}
The regret bounds in Theorem \ref{thm:mainstrong} are optimal in $T$. For the dependence in $d$ in the regret bounds, the computational trade-offs, and the link to stochastic and offline optimization, see the discussion at the end of Section \ref{sec:general}.	
	%\todo{inline}[Finished here. Should check the proof of the previous Theorem]
	\subsection{Efficient Projection-Free Smooth Stochastic Optimization}
	\label{sec:stochastic}
	So far, we have mainly considered the setting where $(\ell_s)$ is a sequence convex losses that may be chosen in an adversarial fashion, and the goal was to choose a sequence of iterates $(\x_t)\subset \K$ such that the cumulative loss $\sum_{t=1}^T \ell_t(\x_t)$ is small. In this subsection, we are interested in minimizing a fixed convex differentiable function $f: \K \mapsto \reals$, with only access to a Stochastic Gradient Oracle for the function $f$. Formally, we assume there exists a $\sigma>0$ such that for any round $t\geq 1$ and some query point $\x_t \in \K$, we have access to a subgradient $\g_s \in \partial \ell_s(\x_t)$, where \begin{gather}\ell_s(\x)\coloneqq f(\x) + \inner{\x}{\bxi_s}, \label{eq:modifiedloss} \intertext{and $(\bxi_s \in \reals^d)$ are i.i.d.~random vectors satisfying}
		\E[\bxi_s] =\bm{0} \quad \text{and} \quad \E[\|\bxi_s\|^2] \leq \sigma^2, \quad \forall s\geq 1. \label{eq:stochsetting}
	\end{gather}
We will also assume that the function $f$ is $\beta$-smooth; that is, $f$ is differentiable\footnote{To avoid boundary issues, we assume (similar to \cite[Section B.4.1]{hiriart2004}) that $\K$ is contained in a open set $\Omega$ on which $f$ is differentiable.} on $\K$ and
	\begin{align}
		\forall \x,\y\in \K,\ \ f(\y) \leq f(\x) + \inner{\nabla f(\x)}{\y - \x} + \frac{\beta}{2} \|\y - \x\|^2.\label{eq:smoothness}
	\end{align}
	It will be instrumental to use the following consequence of \eqref{eq:smoothness} (see e.g.~\cite{srebro2010smoothness, levy2018online, cutkosky2018distributed}); if $f$ is $\beta$-smooth, then $\|\nabla f(\x) -\nabla f(\x_*)\|^2 \leq 2 \beta \cdot (f(\x)- f(\x_*))$, where $x_* \in \argmin_{\x\in \K} f(\x)$. Thus, when $\x_*$ is in the interior of $\K$, we have $\nabla f(\x_*)=\bm{0}$, and so it follows that
	\begin{align}
		\|\nabla f(\x)\|^2 \leq 2 \beta \cdot (f(\x)- f(\x_*)). \label{eq:crebro}
	\end{align}
	For the sake of clarity, we now summarize the assumptions we make on the loss process in this section:
	\begin{assumption}
		\label{ass:assum}
		The sequence $(\ell_s)$ in Algorithm \ref{alg:projectionfreewrappersmooth} satisfies \eqref{eq:modifiedloss} with I) $\bxi_1, \bxi_2, \dots \in \reals^d$ i.i.d.~vectors as in \eqref{eq:stochsetting}; and II) $f$ is $\beta$-smooth, for $\beta >0$, and satisfies $\text{int}(\K) \cap \argmin_{\x\in\K} f(\x)\neq \emptyset$, where $\text{int}(\K)$ denotes the interior of $\K$.
	\end{assumption}
\noindent	Without loss of generality (by making $R$ larger if necessary), we assume that there exists $R'\leq R$ such that:
		\begin{align}
	\cB(r)\subseteq \K	\subseteq \cB(R') \subseteq \cB(R)/(1+\nu), \quad \text{where} \quad 	 \nu\coloneqq 4 \sqrt{2}(1+d\kappa), \quad  \text{and} \quad \kappa \coloneqq R'/r. \label{eq:newnewrad}
	\end{align}
	We now state the main result of this section (the proof is in Appendix \ref{sec:genthmsmoothproof}): 
	\begin{theorem}
		\label{thm:genthmsmooth}
		Let $\delta \in(0,1/3)$, and $r,R,R',\nu$, and $\kappa$ be as in \eqref{eq:newnewrad}. Further, suppose that Alg.~\ref{alg:projectionfreewrapper} is run with $\cO_{\K^\circ}\equiv \onedopt$ (Alg.~\ref{alg:optimizenew}); $\delta_t = \delta/t^3$, $\forall t \geq 1$; and sub-routine $\A$ set to Alg.~\ref{alg:projectionfreewrappersmooth}. Then, under Assumption \ref{ass:assum}, the iterates $(\x_t)$ of Alg.~\ref{alg:projectionfreewrappersmooth}, satisfy $(\x_t)\subset \K$, and for all $T\geq 1$,
		\begin{align}
			\E\left[f(\x_T)-f(\x_*)\right] & \le  \frac{2\nu \epsilon  R'+\nu^2 \beta (R')^2U_T +  3\delta  R (\ln T+6d) \sqrt{\sigma^2  + 2\beta R'}}{T^2} + \frac{2 \nu R'\sigma\sqrt{U_T}}{\sqrt{T}},\nn
		\end{align}
		where $\x_* \in \argmin_{\x\in\K} f(\x)$ and $U_T \coloneqq \ln \left(1+2\epsilon^{-2} (\sigma^2 +2 R' \beta)T^3 \right)$.
	\end{theorem}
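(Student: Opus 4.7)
The plan is to follow the same template as the proofs of Theorems \ref{thm:genthmeff} and \ref{thm:mainstrong}: reduce the analysis of Algorithm \ref{alg:projectionfreewrapper} on $\K$ to a regret analysis of its subroutine $\A$ on $\cB(R)$, then exploit the acceleration mechanism built into Algorithm \ref{alg:projectionfreewrappersmooth} together with smoothness of $f$ to produce the claimed $O(\beta / T^2 + \sigma/\sqrt{T})$ rate on $\E[f(\x_T) - f(\x_*)]$.

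First, I would invoke Lemma \ref{lem:mainreduction} with $\cO_{\K^\circ} \equiv \onedopt$ and $\delta_t = \delta/t^3$ to conclude $(\x_t)\subset \K$, and to obtain, for every $\x\in \K$ and every $t\geq 1$, a deviation variable $\Delta_t \geq 0$ with $\E_{t-1}[\Delta_t]\leq \delta_t$ and
\begin{align}
\inner{\g_t}{\x_t - \x} &\leq \inner{\tilde \g_t}{\w_t - \x} + (2\delta_t + \Delta_t) R\|\g_t\|, \nn \\
\|\tilde \g_t\| &\leq (1+\kappa + \Delta_t)\|\g_t\|. \nn
\end{align}
Since the losses in the stochastic setting are $\ell_t(\x) = f(\x) + \inner{\bxi_t}{\x}$ with $\E[\bxi_t]=\bm 0$ and $\x_t$ measurable w.r.t.\ $\cG_{t-1}$, taking conditional expectations turns $\inner{\g_t}{\x_t-\x}$ into $\inner{\nabla f(\x_t)}{\x_t-\x}$ up to a noise term whose sum concentrates. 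The scaling $\|\tilde \g_t\|\leq \nu \|\g_t\|$ (using $\nu = 1/(R\wedge 1) + \kappa + \delta$ only via the constants; here the relevant factor is $1+\kappa+\Delta_t$) ensures that the gradient scale fed to $\A$ is of the same order as the true one.

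Next, I would analyze Algorithm \ref{alg:projectionfreewrappersmooth}, which plays iterates of the form $\w_t = \bu_{t+1} + \v_t/Z_t$ with $\v_t = \sum_{s\le t}\|\hat\g_s\|^2 \x_s$, $Z_t = 2\epsilon^2+\sum_{s\le t}\|\hat\g_s\|^2+\tilde B_t^2 - \epsilon^2$, and feeds the linear losses $\w\mapsto \inner{\tilde \g_s}{\w}$ to \freegrad. This matches the accelerated online-to-batch construction of \cite{cutkosky2019} (weights $\propto \|\hat\g_s\|^2$, combined with the last-iterate-as-weighted-average trick that yields $\x_T$ itself as the relevant output). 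Using the \freegrad{} regret bound $\sum_s \inner{\tilde \g_s}{\bu_{s+1}-\bu}\le O(\|\bu\|\sqrt{V_T}\cdot U_T^{1/2})$ with $V_T = \sum_s \|\tilde \g_s\|^2$ and $U_T$ the logarithmic factor in the statement, together with the Abel-type identity connecting $\sum_s \inner{\tilde \g_s}{\w_s - \bu}$ to $Z_T \cdot \inner{\nabla f(\x_T) + \text{noise}}{\x_T - \bu}$, I can rewrite the combined bound as
\begin{align}
Z_T \E[f(\x_T) - f(\x_*)] \leq \E\bigl[O(\nu R' \sqrt{V_T\, U_T})\bigr] + \text{error terms from }(\Delta_t,\delta_t).\nn
\end{align}

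Then I would use the smoothness consequence \eqref{eq:crebro}, which yields $\|\nabla f(\x_s)\|^2 \leq 2\beta (f(\x_s)-f(\x_*))$, and the decomposition $\|\g_s\|^2 \leq 2\|\nabla f(\x_s)\|^2 + 2\|\bxi_s\|^2$, to bound
\begin{align}
\E[V_T] \leq O\Bigl(\nu^2 \beta \sum_{s=1}^T \E[f(\x_s)-f(\x_*)] + \nu^2 \sigma^2 T\Bigr).\nn
\end{align}
The weighting scheme of Algorithm \ref{alg:projectionfreewrappersmooth} (chosen precisely to create acceleration) lets this sum be re-expressed in terms of $Z_T \E[f(\x_T) - f(\x_*)]$, so the combined inequality reads schematically
\begin{align}
Z_T \E[f(\x_T)-f(\x_*)] \leq C_1 \nu R'\sqrt{U_T}\sqrt{\nu^2\beta\cdot Z_T\E[f(\x_T)-f(\x_*)] + \nu^2 \sigma^2 T} + (\text{errors}).\nn
\end{align}
Solving this quadratic inequality (using $a\leq b\sqrt{a}+c \implies a \leq b^2 + 2c$ type facts) and dividing by $Z_T\geq \Omega(T^2 / U_T)$ (which is the key growth rate for the weights in the acceleration scheme) gives the desired $\nu^2 \beta (R')^2 U_T / T^2 + \nu R' \sigma \sqrt{U_T}/\sqrt{T}$ rate. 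The initial constant $2\nu\epsilon R'/T^2$ comes from the $\epsilon^2$ initialization of $Z_0$ in \freegrad, and the term $3\delta R(\ln T + 6d)\sqrt{\sigma^2+2\beta R'}/T^2$ arises by bounding $\sum_t (2\delta_t + \Delta_t)R\|\g_t\|$ via Cauchy--Schwarz and the martingale concentration of $\sum_t \Delta_t$ as in the proof of Theorem \ref{thm:genthm}, using $\delta_t = \delta/t^3$ to control higher-order terms.

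The main obstacle will be carefully aligning the accelerated online-to-batch scheme (which in \cite{cutkosky2020parameter} is set up for a standard OCO algorithm on $\K$) with our surrogate-loss reduction: the iterates $\x_t$ fed into the weighted average $\v_t$ live in $\K$, but the regret we control via \freegrad{} is on $\w_t \in \cB(R)$, and the two are linked only through the scalar $\gamma_t$ used in the Gauge projection. Translating the \freegrad{} regret on $(\tilde \g_t, \w_t)$ into a statement about $\sum_s \|\hat \g_s\|^2 \inner{\nabla f(\x_s)}{\x_s - \x_*}$ (which is what the accelerated scheme requires) demands using the per-round inequality of Lemma \ref{lem:mainreduction} inside the sum \emph{before} applying \freegrad's bound, and then handling the $\w_t$-dependent cross terms. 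Taking expectations, separating the noise $\bxi_t$ (which is independent of $\x_t$) from $\nabla f(\x_t)$, and invoking $\cO_{\K^\circ}$'s stochastic guarantee conditionally is then routine bookkeeping.
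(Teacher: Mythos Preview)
Your proposal misidentifies both the structure of Algorithm \ref{alg:projectionfreewrappersmooth} and the mechanism that produces the $1/T^2$ rate. You describe the subroutine as playing $\w_t = \bu_{t+1} + \v_t/Z_t$ with gradient-norm weights $\|\hat\g_s\|^2$ and \freegrad{} underneath; that is Algorithm \ref{alg:projectionfreewrapperstong}, the strongly convex subroutine. Algorithm \ref{alg:projectionfreewrappersmooth} instead runs \ftrl{} (not \freegrad{}) on the weighted linear losses $\w\mapsto t\inner{\tilde\g_t}{\w}$, uses the deterministic linear weights $\lambda_t=t$ with $\Lambda_t=\sum_{s\le t}s$, and forms $\w_{t+1}=(1-\mu_{t+1})(\x_t-\eta_t\g_t)+\mu_{t+1}\bu_{t+1}$ with $\eta_t=\nu R'/\sqrt{Z_t}$, $Z_t=\epsilon^2+\sum_{s\le t}\Lambda_s\|\g_s\|^2$. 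This is a Nesterov-style momentum scheme (as in \cite{cutkosky2019}), not the last-iterate weighted-average construction you sketch.

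Consequently the heart of the argument is not a quadratic self-bounding loop. The paper writes $\lambda_t(\w_t-\bu_t)=\Lambda_{t-1}(\y_{t-1}-\w_t)$ with $\y_s=\x_s-\eta_s\g_s$, applies Lemma \ref{lem:mainreduction} (in its $\onedopt$ form) twice to pass from $(\tilde\g_t,\w_t)$ back to $(\g_t,\x_t)$, and then uses smoothness to get $\Lambda_t(f(\y_t)-f(\x_t))\le -\eta_t\Lambda_t\|\g_t\|^2+\tfrac{\beta}{2}\eta_t^2\Lambda_t\|\g_t\|^2+\eta_t\Lambda_t\inner{\bxi_t}{\g_t}$. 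Summing the $-\eta_t\Lambda_t\|\g_t\|^2$ terms with $\eta_t=\nu R'/\sqrt{Z_t}$ yields $-\nu R'\sqrt{\sum_t\Lambda_t\|\g_t\|^2}$, and the crucial step is that $\nu=4\sqrt{2}(1+d\kappa)$ is chosen precisely so this negative term cancels the \ftrl{} regret $4(1+d\kappa)R'\sqrt{\sum_t\lambda_t^2\|\g_t\|^2}$, since $\Lambda_t\ge\lambda_t^2/2$. After this cancellation, what survives is the $\beta$-overhead $\tfrac{\nu^2(R')^2\beta}{2}\ln(1+Z_T/\epsilon^2)$, the noise term $\nu R'\sigma\sqrt{\Lambda_T U_T}$, and the $\delta_t$-errors; dividing by $\Lambda_T\ge T^2/2$ gives the bound. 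The inequality \eqref{eq:crebro} is used only to bound $\E[\|\g_t\|^2]\le\sigma^2+2R'\beta$ (a constant, via Lemma \ref{lem:conv}) inside logarithms and error terms, not to create the recursion $V_T\lesssim\beta\sum_s\E[f(\x_s)-f(\x_*)]$ that you propose. Your quadratic-inequality route, even if it closed, would not produce the $1/T^2$ factor because nothing in it forces $Z_T\gtrsim T^2$; that growth comes from the deterministic weights $\lambda_t=t$, not from gradient norms.
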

\noindent The proof of Theorem \ref{thm:genthmsmooth} is based on an extension of a result due to \cite{cutkosky2019}. The instance of Algorithm~\ref{alg:projectionfreewrapper} in Theorem \ref{thm:genthmsmooth} invokes $\onedopt(\cdot;\delta_t)$ at each iteration $t$. Thus, in light of Remark \ref{rem:complexityoned} in Section \ref{sec:gauge}, the algorithm makes at most $O(T\ln (dT\kappa/\delta)$ calls to the Membership Oracle $\mem_{\K}$ after $T$ rounds.

\paragraph{Optimality of the rate and application to the offline setting.} The rate in Theorem \ref{thm:genthmsmooth} is optimal in $T$ and implies the fast $O(d^2 \beta  \kappa^2/T^2)$ rate in the offline smooth setting (i.e.~$\sigma =0$), which is also optimal in $T$. We note that the leading $d^2$ in this rate can be removed if one uses $\cO_{\K^\circ}=\O_{\K^\circ}$ (Alg.~\ref{alg:optimize}) instead of $\cO_{\K^\circ}=\onedopt$ in Algorithm \ref{alg:projectionfreewrapper} at the cost of more calls to the Membership Oracle; $\wtilde{O}(d)$ calls to $\mem_{\K}$ per round instead of $\wtilde{O}(1)$ (see discussion at the end of Section \ref{sec:general} on such computational trade-offs). Thus, Algorithm~\ref{alg:projectionfreewrapper} in the setting of Theorem~\ref{thm:genthmsmooth} with $\cO_{\K^\circ}$ set to $\O_{\K^\circ}$ (instead of $\onedopt$) reaches an $\epsilon$-sub-optimal point in offline smooth optimization after $\wtilde O(d\kappa/\sqrt{\epsilon})$ calls to $\mem_{\K}$. Since state-of-the-art algorithms based on the cutting plane method require $O(d^2 \ln (1/\epsilon))$ calls to a Membership Oracle to reach an $\epsilon$-sub-optimal point \cite{lee2018efficient}, our algorithm provides a viable alternative to the latter whenever $d \geq  \Omega(\kappa/\sqrt{\epsilon})$ and the objective function is smooth. As we shall see in Section \ref{sec:applications}, $\kappa$ is less than $d^{1/2}$ in many settings of interest. We also recall that the presence of $\kappa$ in our bounds is due to an over conservative upper bound on the norms of the subgradients of the surrogate losses, and so we expect the rates of our algorithms to scale better with $d$ in practice. 
	
\noindent 

			\begin{algorithm}[H]
		\caption{Subroutine $\A$ for Algorithm \ref{alg:projectionfreewrapper} for Stochastic Convex Optimization.}
		\label{alg:projectionfreewrappersmooth}
		\begin{algorithmic}[1]
			\Require $r,R,R',\nu$, and $\kappa$ as in \eqref{eq:newnewrad}.
			\Statex~~~~~~~~~~~ Input $(\x_t, f_t)$ from the Master Algorithm \ref{alg:projectionfreewrapper} at each round $t\geq 1$, where $\x_t \in \reals^d$ and $f_t\colon \reals^d\rightarrow \reals$.  
			\vspace{0.2cm} 
			\State  	 Initialize \ftrl{} with parameter $R'>0$ and set $\bu_1$ to \ftrl{}'s first output (i.e.~$\bu_1=\bm{0}$).
			\State Set $\Lambda_1=0$; $Z_0=\epsilon^2$; and $\w_1=\bm{0}$.
			\For{$t=1,2,\dots$}
			\State Output $\w_t$ and observe $\tilde \g_t \in \partial f_t(\w_t)$. \algcomment{$f_t$ is provided by Alg.~\ref{alg:projectionfreewrapper}}
\State   Set $\bu_{t+1} \in \cB(R')$ to \ftrl{}'s $(t+1)$th output given the history $((\bu_i,\w\mapsto i\cdot \inner{\tilde \g_i}{\w}))_{i\leq t}$.	
			\State Set $Z_t = Z_{t-1} + \Lambda_{t} \|\g_{t}\|^2$ and $\eta_t = \nu R'/\sqrt{Z_t}$.
			\State Set $\Lambda_{t+1} = \Lambda_{t}+ t+1$ and $\mu_{t+1} = (t+1)/\Lambda_{t+1}$.
			\State \label{line:blueend} Set $\w_{t+1} = (1-\mu_{t+1}) ({\x}_{t}- \eta_{t}\g_{t}) + \mu_{t+1} \bu_{t+1}$.	\algcomment{$\x_t$ is provided by Alg.~\ref{alg:projectionfreewrapper}}	
			\EndFor
		\end{algorithmic}
	\end{algorithm}

		\section{Efficient Gauge Projections using a Membership Oracle $\cM_{\K}$}
	\label{sec:gauge}
	In this section, we build explicit algorithms that use $\mem_{\K}$ to efficiently approximate $\gamma_{\K}$ and its subgradients. As a result of this (and thanks to Lemma \ref{lem:projectiononK}), we show that Gauge projections can be performed efficiently for any bounded convex set $\K$ satisfying Assumption \ref{ass:assum} using a Membership Oracle---requiring only $\wtilde{O}(d)$ calls to the latter, where $\wtilde{O}$ hides log factors in the tolerated approximation error. Thanks to Lemma \ref{lem:projectiononK}, Gauge projections have a very simple interpretation; the projection of a point $\w\notin \K$ onto $\K$ is the point of intersection of the ray $\{\lambda \w: \lambda \geq 0\}$ and the boundary of $\K$ (see Figure \ref{fig:proj}). Such a point always exists under Assumption \ref{ass:assum}. Gauge projections are all we need to ensure the iterates of our new algorithms are within $\K$ thanks to the carefully designed surrogate losses in Section \ref{sec:oco}. Our starting point is Lemma \ref{lem:projectiononK} which we now prove:
	\begin{proof}[{\bf Proof of Lemma \ref{lem:projectiononK}}]
		Suppose that $\gamma_{\K}(\w)>1$ (note that by Assumption \ref{ass:assum}, we have $\gamma_{\K}(\w)<+\infty$). Note that this is equivalent to $\w\not\in \K$ by the definition of the Gauge function. We will show that $\w/\gamma_{\K}(\w) \in \argmin_{\bu\in \K}\gamma_{\K}(\w-\bu)$, which is equivalent to showing that
		\begin{align}
			\w/\gamma_{\K}(\w)\in \argmin_{\bu \in \reals^d} \left\{ \gamma_{\K}(\w-\bu)+ \iota_{\K}(\bu)\right\}  =  \argmin_{\bu \in \reals^d}\left\{ \sigma_{\K^\circ}(\w - \bu) + \iota_{\K}(\bu)\right\}, \nn
		\end{align}
		where the last equality follows by Lemma \ref{lem:properties}-(a). Thus, $\w/\gamma_{\K}(\w) \in \argmin_{\bu\in \K}\gamma_{\K}(\w-\bu)$ if 
		\begin{align}
			\bm{0} \in \partial \phi(\w/\gamma_{\K}(\w)), \quad \text{where} \quad \phi(\bu)\coloneqq \sigma_{\K^\circ}(\w - \bu) + \iota_{\K}(\bu).\label{eq:alternative} 
		\end{align}
		Using the sub-differential chain-rule and the fact that $\partial \iota_{\K}(\bu)=\cN_{\K}(\bu)$ (see e.g.~\cite{hiriart2004}), where $\cN_{\K}(\bu)$ is the normal cone at $\bu$ (see Definition \ref{def:normal}), we have \begin{align}\partial \phi(\bu) = - \partial\sigma_{\K^\circ}(\w-\bu) + \cN_{\K}(\bu).\label{eq:subdiff}
		\end{align}
		Let $\s_* \in \partial \sigma_{\K^\circ}(\w- \w/\gamma_{\K}(\w))$. We will show that $\s_*\in \cN_{\K}(\w/\gamma_{\K}(\w))$, where we recall that \begin{align} \cN_{\K}(\w/\gamma_{\K}(\w))= \{\s \in \reals^d : \inner{\s}{\y} \leq \inner{\s}{\w/\gamma_{\K}(\w)}, \forall \y \in \K \},\label{eq:normalcone}
		\end{align} which will imply \eqref{eq:alternative} thanks to \eqref{eq:subdiff}. 
		By Lemma \ref{lem:properties}-(b), we have $\s_* \in\partial \sigma_{\K^\circ}(\w-\w/\gamma_{\K}(\w)) = 	\partial \sigma_{\K^\circ}(\w) = \argmax_{\s\in \K^\circ} \inner{\s}{\w},$ and so by Lemma \ref{lem:properties}-(a), we get $\inner{\s_*}{\w} =\sigma_{\K^\circ}(\w)= \gamma_{\K}(\w)$. On the other hand, by definition of the polar set $\K^\circ$, we have $\inner{\s_*}{\y}\leq 1$, for all $\y \in \K$. Using this and the fact that $\inner{\s_*}{\w}= \gamma_{\K}(\w)$, we get 
		\begin{align}
			\forall \y\in \K,\quad 	\inner{\s_*}{\y}\leq 1 =  \inner{\s_*}{\w}/\gamma_{\K}(\w)=\inner{\s_*}{\w/\gamma_{\K}(\w)}.\nn
		\end{align}
		Combining this with the definition of the normal cone $\cN_{\K}(\w/\gamma_{\K}(\w))$ in \eqref{eq:normalcone}, we get that $\s_* \in \cN_{\K}(\w/\gamma_{\K}(\w))$. This shows that $\s_* \in \partial \sigma_{\K^\circ}(\w-\w/\gamma_{\K}(\w))\cap  \cN_{\K}(\w/\gamma_{\K}(\w))$, and so $\bm{0}\in \partial \phi(\w/\gamma_{\K}(\w))$ by \eqref{eq:subdiff}. This in turn implies that $$\w/\gamma_{\K}(\w) \in \argmin_{\bu\in \K} \gamma_{\K}(\w-\bu).$$ 
		From this result and the definition of the proximity function $S_{\K}(\w)$, we get 
		\begin{align}
			S_{\K}(\w)  = \gamma_{\K}(\w - \w/\gamma_{\K}(\w))
			=\gamma_{\K}(\w \cdot (1-1/\gamma_\K(\w))) =(1-1/\gamma_{\K}(\w)) \cdot  \gamma_{\K}(\w) = \gamma_{\K}(\w)-1,\label{eq:express}
		\end{align}
		where the penultimate inequality follows by the current assumption that $\gamma_{\K}(\w) >1$ and the positive homogeneity of Gauge functions (Lemma \ref{lem:properties}-(b)). It remains to consider the case where $\gamma_{\K}(\w)\leq 1$. In this case, we have $\w\in \K$, and so 
		\begin{align}
			S_{\K}(\w) =\inf_{\bu\in \K} \gamma_{\K}(\w -\bu) = \gamma_{\K}(\w-\w)= 0.\label{eq:constant}
		\end{align}
		In combination with \eqref{eq:express}, \eqref{eq:constant} shows that $S_{\K}(\w)=0\vee (\gamma_{\K}(\w)-1)$, which is a convex function (since it is the maximum of two convex functions). Finally, \eqref{eq:express} [resp.~\eqref{eq:constant}] shows that $\partial S_{\K}(\w) = \argmax_{\s\in \K^{\circ}} \inner{\s}{\w}$ when $\w\not\in \K (\equiv \gamma_{\K}(\w)> 1)$ [resp.~$\partial S_{\K}(\w)= \{\bm{0}\}$ when $\w\in \K$].
	\end{proof}
	\noindent	As mentioned earlier, Lemma \ref{lem:projectiononK} shows the crucial fact that $\w/\gamma_{\K}(\w) \in \Pi^{\mathrm{G}}_{\K}(\w)$, for any $\w\not\in\K$, and so to perform approximate Gauge projections (which we need to build our efficient algorithms), it suffices to approximate $\gamma_{\K}$. We technically also need to compute approximate subgradients of $S_{\K}$, which according to Lemma  \ref{lem:projectiononK} reduces to linear optimization on $\K^\circ$. Since we do not assume access to a Linear Optimizer Oracle over $\K^\circ$, we will estimate subgradients of $S_{\K}$ using our Membership Oracle $\mem_{\K}$. %  $\gamma_{\K}$ as these will be fed to OCO subroutines we use. We will show fu task redrther below that latter task reduced to Linear Optimization on $\K^\circ$. 
	
	\begin{algorithm}[h]
		\caption{$\mathsf{GAU}_{\K}$: Approximate Gauge Function using Membership Oracle and the Bisection Method. }
		\label{alg:gauge}
		\begin{algorithmic}[1]
			\Require Input point $\w \in \cB(6R/5)$ with $R$ as in \eqref{eq:lowradius}. %such that $\|\w\| \geq 3 r/4$.
			\Statex~~~~~~~~~~ Input $\delta \in(0,1)$.
			\Statex~~~~~~~~~~ $\epsilon$-Approximate Membership Oracle $\mem_{\K}(\cdot;\epsilon)$ for $\K$ and $\epsilon>0$ (see Definition \ref{def:mem}).
			\vspace{0.2cm} 
			\State Set $\epsilon= \delta r /( 4\kappa)^2$. \algcomment{$\kappa \coloneqq R/r$, where $r$ and $R$ are as in \eqref{eq:lowradius}}
			\If{$\mem_{\K}(2\w;\epsilon)=1$ or $\|\w\| \leq  r/2$}   \label{line:memset}
			\State Return $\tilde \gamma=0$.\label{line:memsetnext}
			\EndIf
			\State Set $\alpha=0$, $\beta=2$, and $\mu=(\alpha+\beta)/2$. 
			\While{$\beta-\alpha> {\delta}/({8\kappa^2})$} \algcomment{$\kappa \coloneqq R/r$, where $r$ and $R$ are as in \eqref{eq:lowradius}}
			\State Set $\alpha=\mu$ if $\mem_{\K}(\nu \w; \epsilon)=1$; and $\beta=\mu$ otherwise.\label{line:cond}
			\State Set $\mu=(\alpha+\beta)/2$.
			\EndWhile
			\State Return $\tilde \gamma = (\alpha - \delta/(8\kappa^2))^{-1}$. 
		\end{algorithmic}
	\end{algorithm}
	\subsection{Approximating the Gauge Function $\gamma_{\K}$ using $\cM_{\K}$}  By definition of the Gauge function, we have for any $\w\in \reals^d$ \begin{align}
		\gamma_{\K}(\w) = \inf \{ \lambda \in \reals_{\geq 0} \mid \w \in \lambda \K \}= 1/\sup \{ \nu \in \reals_{\geq 0} \mid  \nu \w \in \K \}. \label{eq:gauge}
	\end{align} 
	Using the Membership Oracle $\mem_{\K}$, we can approximate the largest $\nu\geq 0$ such that $\nu \w \in \K$ via bisection, which will lead to an approximation of $\gamma_{\K}(\w)$ by \eqref{eq:gauge}. This is exactly what we do in Algorithm \ref{alg:gauge}. We now state the guarantee of Algorithm \ref{alg:gauge} (the proof is Appendix \ref{sec:gaugeproof}):
	\begin{lemma}[$\gau_{\K}\colon$ Approximate Gauge Function]
		\label{lem:gauge}
		Let $r,R>0$ be as in \eqref{eq:lowradius}. For any $\delta \in(0,1)$ and $\w\in \cB(6R/5)$, the output $\tilde\gamma= \gau_{\K}(\w;\delta)$ of Algorithm \ref{alg:gauge} satisfies
		\begin{align}
			\gamma_{\K}(\w) \leq \tilde \gamma  \leq  \gamma_{\K}(\w)+ \delta, \quad \text{if }\  \gamma_{\K}(\w)\geq 9/16\ \text{ or }\ \tilde \gamma\geq 1. \nn
		\end{align} 
		Furthermore, Algorithm \ref{alg:gauge} calls the Membership Oracle $\mem_{\K}(\cdot; r\delta/(4\kappa)^2)$ at most $\ceil{\log_2((4\kappa)^2/\delta)}+1$ times.
	\end{lemma}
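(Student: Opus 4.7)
My plan is to split the argument into (i) a short bookkeeping step for the oracle-call count and (ii) an invariant-based analysis of the bisection, together with a case split that matches each hypothesis of the lemma. The key preparatory step is to derive, from Definition~\ref{def:mem} and the inclusion $\cB(r)\subseteq\K$ (which yields $\gamma_{\K}(\bm{e})\leq\|\bm{e}\|/r$ via Lemma~\ref{lem:properties}), the two implications
\[
\mem_{\K}(\nu\w;\epsilon)=1 \;\implies\; \nu\gamma_{\K}(\w)\leq 1+\epsilon/r,\qquad \mem_{\K}(\nu\w;\epsilon)=0 \;\implies\; \nu\gamma_{\K}(\w)>1-\epsilon/r,
\]
both of which rest on Gauge sub-additivity (a consequence of Lemma~\ref{lem:properties}(a),(e)).

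The oracle-call count is immediate from the halving structure of the loop: $[\alpha,\beta]$ shrinks by a factor of two per iteration from its initial length $2$, and the stopping condition $\beta-\alpha\leq\delta/(8\kappa^2)$ gives at most $\lceil\log_2((4\kappa)^2/\delta)\rceil$ iterations, each costing one call to $\mem_{\K}(\cdot;\epsilon)$ with $\epsilon=r\delta/(4\kappa)^2$; adding the single initial query on $2\w$ produces the claimed total.

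For the approximation, I first handle the short-circuit branch that returns $\tilde\gamma=0$: both sub-cases ($\|\w\|\leq r/2$ directly, or $\mem_{\K}(2\w;\epsilon)=1$ via the first implication above) yield $\gamma_{\K}(\w)<9/16$ and $\tilde\gamma<1$, so both hypotheses of the lemma fail and the bound holds vacuously. In the bisection branch, writing $\alpha^{\star},\beta^{\star}$ for the final values, I rule out $\alpha^{\star}=0$ using the upper bound $\gamma_{\K}(\w)\leq 6\kappa/5$ (an all-zero sequence of oracle responses would force $\gamma_{\K}(\w)\gtrsim\kappa^2/\delta$, contradicting this bound), and I argue that $\beta^{\star}=2$ forces $\alpha^{\star}\geq 2-\delta/(8\kappa^2)$, which again makes both hypotheses vacuous. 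In the remaining case both endpoints have been updated by the oracle, giving $\mem_{\K}(\alpha^{\star}\w;\epsilon)=1$ and $\mem_{\K}(\beta^{\star}\w;\epsilon)=0$.

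Combining the two implications with $\beta^{\star}-\alpha^{\star}\leq\delta/(8\kappa^2)$ sandwiches the final $\alpha^{\star}$ between $(1-\delta/(16\kappa^2))/\gamma-\delta/(8\kappa^2)$ and $(1+\delta/(16\kappa^2))/\gamma$, where $\gamma\coloneqq\gamma_{\K}(\w)$. Plugging these into $\tilde\gamma=1/(\alpha^{\star}-\delta/(8\kappa^2))$, the lower bound $\tilde\gamma\geq\gamma$ reduces to $\gamma\geq 1/2$, while the upper bound $\tilde\gamma\leq\gamma+\delta$ reduces to controlling a $\gamma$-quadratic expression using $\gamma\leq 6\kappa/5$. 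Finally I will verify $\gamma\geq 1/2$ under each hypothesis: for $\gamma\geq 9/16$ it is immediate, and for $\tilde\gamma\geq 1$ the inequality $\alpha^{\star}\leq 1+\delta/(8\kappa^2)$ forces $\beta^{\star}\leq 1+\delta/(4\kappa^2)$, so the second implication yields $\gamma>(1-\delta/(16\kappa^2))/(1+\delta/(4\kappa^2))>1/2$ when $\delta<1/3$. The main obstacle is purely careful bookkeeping of the two independent error sources—the bisection tolerance $\delta/(8\kappa^2)$ and the oracle slack $\epsilon/r=\delta/(16\kappa^2)$—and of verifying that the multiplicative blow-up introduced by inverting $\alpha^{\star}-\delta/(8\kappa^2)$ stays under $\delta$ thanks to the crude bound $\gamma\leq 6\kappa/5$.
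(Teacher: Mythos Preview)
Your overall strategy matches the paper's: derive the two oracle implications, maintain the bisection invariants, and convert the resulting sandwich on $\alpha^\star$ into bounds on $\tilde\gamma$. The oracle-count step, the short-circuit analysis, the two implications via Gauge sub-additivity, and the main sandwich computation are all correct and essentially parallel the paper's proof.

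There is, however, a genuine gap in your case split. Your claim that ``$\beta^\star=2$ forces $\alpha^\star\geq 2-\delta/(8\kappa^2)$, which again makes both hypotheses vacuous'' does not go through for the full range $\delta\in(0,1)$. From $\mem_{\K}(\alpha^\star\w;\epsilon)=1$ together with $\alpha^\star\geq 2-\delta/(8\kappa^2)$ you obtain only
\[
\gamma_{\K}(\w)\;\leq\;\frac{1+\delta/(16\kappa^2)}{2-\delta/(8\kappa^2)},
\]
and a short computation shows this upper bound exceeds $9/16$ precisely when $\delta\geq 16\kappa^2/17$. So for $\kappa$ close to $1$ and $\delta$ close to $1$ the hypothesis $\gamma_{\K}(\w)\geq 9/16$ is not excluded, and that sub-case is left unhandled by your argument.

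The cleanest fix---and this is exactly what the paper does---is to drop the case split entirely. Since you enter the bisection branch only after the short-circuit check fails, you already know $\mem_{\K}(2\w;\epsilon)=0$; hence the invariant $\mem_{\K}(\beta\w;\epsilon)=0$ holds for the \emph{initial} value $\beta=2$, not merely after an oracle update. Likewise $\mem_{\K}(\bm{0};\epsilon)=1$ holds because $\bm{0}\in\cB(r)\subseteq\cB(\K,-\epsilon)$. The invariants $\mem_{\K}(\alpha^\star\w;\epsilon)=1$ and $\mem_{\K}(\beta^\star\w;\epsilon)=0$ therefore hold unconditionally at loop exit, and your sandwich argument applies directly without any preliminary cases. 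The positivity of $\alpha^\star-\delta/(8\kappa^2)$ (needed to invert) then follows from the lower bound on $\beta^\star$ combined with $\gamma_{\K}(\w)\leq 6\kappa/5$, so the separate exclusion of $\alpha^\star=0$ becomes unnecessary as well.
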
 
	\begin{algorithm}
		\caption{$\O_{\K^\circ}$: Approximate Linear Optimization Algorithm on $\K^\circ$.}
		\label{alg:optimize}
		\begin{algorithmic}[1]
			\Require Input point $\w \in \cB(R)$ and $\delta \in(0,1/3)$. 
			\vspace{0.2cm} 
			\State Set $\varepsilon = \frac{r^2 \delta^3}{10^3 d^{7/2} R^2}$, $\nu_1=\frac{r \delta}{10 d}$, and $\nu_2=\sqrt{\frac{\varepsilon \nu_1 r}{d^{1/2} }}.$ \algcomment{$r$ and $R$ are as in \eqref{eq:lowradius}}
			\State Sample $\bu \in \cB_{\infty}(\w,\nu_1)$ and $\z\in \cB_{\infty}(\bu,\nu_2)$ independently and uniformly at random.
			\For{$i=1,2,\dots,d$}
			\State Let $\w'_i$ and $\w_i$ be the end point of the interval $\cB_{\infty}(\bu,\nu_2) \cap \{\z+\lambda \e_i : \lambda \in \reals\}$.
			\State Set $\tilde s_i =\frac{1}{2\nu_2} (\mathsf{GAU}_{\K}( \w'_i;\varepsilon) -\mathsf{GAU}_{\K}(\w_i;\varepsilon))$. \algcomment{$\gau_{\K}$ is as in Algorithm \ref{alg:gauge}}  
			\EndFor
			\State Set $ \tilde \s = (\tilde s_1, \dots,  \tilde s_d)^\top$ and $ \tilde\gamma = \gau_{\K}(\w;\delta)$.
			\State Return $(\tilde \gamma, \tilde \s)$. 
		\end{algorithmic}
	\end{algorithm}
	\subsection{Approximating the Subgradients of $\gamma_{\K}$ using $\cM_{\K}$} In addition to approximating $\gamma_{\K}$, we will also need to approximate its subgradients, which would then lead to approximate subgradients of the Gauge distance function $S_{\K}$ by Lemma \ref{lem:projectiononK}. The lemma also implies that approximating subgradients of $\gamma_{\K}$ essentially comes down to performing linear optimization on $\K^\circ$. 
Algorithm \ref{alg:optimize} ($\O_{\K^\circ}$), which is based on \cite[Alg.~2]{lee2018efficient}, uses $\gau_{\K}$ and a random partial difference in each coordinate to approximate the subgradients of $\gamma_{\K}$. In the next proposition, we state the precise guarantee of the algorithm. The proof of the proposition, which is in Appendix \ref{sec:subgradientproof}, is somewhat technical and relies heavily on existing results due to \cite{lee2018efficient} that we restate in Appendix \ref{app:opt}. 
	
	\begin{proposition}[$\O_{\K^\circ}\colon$ Approximate LOO on $\K^\circ$]
		\label{prop:subgradient}
		Let $\kappa \coloneqq R/r$ with $r,R>0$ as in \eqref{eq:lowradius}. For any $\w\in \cB(R)$ and $\delta\in (0,1/3)$, let $(\tilde \gamma,\tilde \s)$ be the output of Alg.~\ref{alg:optimize} with input $(\w,\delta)$. Then, $\|\tilde \s\|_{\infty}< +\infty$ almost surely, and there exists a positive random variable $\Delta\in[0,{15^2 d^{4}\kappa^3}{\delta^{-2}}]$ satisfying $\E[\Delta]\leq \delta$, and such that if $\tilde \gamma\geq 1$, then 
		\begin{align}
			\begin{array}{c}
				\gamma_{\K}(\bu) \geq \gamma_{\K}(\w)+ \inner{\tilde \s}{\bu - \w}  -   \Delta \cdot\max ( 1, {\|\bu\|}/{R}) , \quad \forall \bu\in \reals^d;  \\[2pt]
				\| \tilde \s\|_{\infty} \leq \dfrac{\delta}{R} +\dfrac{1}{r};   \quad  \|\tilde \s\|\leq \dfrac{\Delta}{R} + \dfrac{1}{r} ; \ \  \text{and} \ \  \|\tilde \s\|^2 \leq  \left(\dfrac{2}{r}+\dfrac{\delta}{R}\right)\dfrac{ \Delta}{R} +\dfrac{1}{r^2}.
			\end{array}
			\label{eq:bisection} 
		\end{align}
	\end{proposition}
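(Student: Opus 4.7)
The plan is to view Algorithm \ref{alg:optimize} as a discrete approximation of the gradient of the doubly-smoothed Gauge function, following the template of \cite{lee2018efficient}. Define
\[
f_1(\bu) := \E_{\z\sim \cB_\infty(\bu,\nu_2)}[\gamma_\K(\z)] \quad \text{and} \quad f(\w) := \E_{\bu\sim \cB_\infty(\w,\nu_1)}[f_1(\bu)].
\]
Because each convolution with a uniform density on a box smooths one derivative's worth of regularity, $f$ is convex and continuously differentiable; moreover, for every coordinate $i$ the standard endpoint secant identity gives
\[
\partial_i f_1(\bu) \;=\; \E_{\z\sim \cB_\infty(\bu,\nu_2)}\!\left[\frac{\gamma_\K(\w'_i)-\gamma_\K(\w_i)}{2\nu_2}\right],
\]
where $\w_i, \w'_i$ are the endpoints of the segment through $\z$ parallel to $\e_i$ inside $\cB_\infty(\bu,\nu_2)$. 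Averaging over $\bu$ shows $\nabla f(\w) = \E[\s^\star]$, where $\s^\star$ is the exact analogue of $\tilde{\s}$ computed with the true values of $\gamma_\K$ in place of $\gau_\K$. This is the essence of \cite[Alg.~2]{lee2018efficient}, which I would invoke directly.

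Next I would control the four sources of error separately. First, Lemma \ref{lem:properties}-(c) shows $\gamma_\K$ is $1/r$-Lipschitz, which gives $|f(\w)-\gamma_\K(\w)|\leq (\nu_1+\nu_2)\sqrt{d}/r$ and $\|\nabla f(\w)\|_\infty\leq 1/r$. Second, Lemma \ref{lem:gauge} yields $|\gau_\K(\cdot;\varepsilon)-\gamma_\K(\cdot)|\leq \varepsilon$ whenever $\gau_\K$ returns a nontrivial value, so each coordinate of $\tilde\s$ differs from the idealized $\s^\star$ by at most $\varepsilon/\nu_2$. The specific choices $\varepsilon =\tfrac{r^2\delta^3}{10^3 d^{7/2} R^2}$, $\nu_1 = \tfrac{r\delta}{10 d}$, and $\nu_2^2 = \tfrac{\varepsilon\nu_1 r}{\sqrt{d}}$ are exactly those required by the quantitative lemmas of \cite{lee2018efficient} to make the sum of these smoothing and truncation errors a fixed fraction of $\delta$; I would apply those lemmas verbatim. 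Third, the coordinate-wise bound $\|\tilde\s\|_\infty\leq 1/r+\varepsilon/\nu_2$ gives the $\|\tilde\s\|_\infty\leq 1/r+\delta/R$ claim after substituting the parameter choices.

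Third, I would produce the subgradient-style inequality. Since $f$ is convex, $f(\bu)\geq f(\w)+\inner{\nabla f(\w)}{\bu-\w}$. Chaining with $|f-\gamma_\K|\le (\nu_1+\nu_2)\sqrt d/r$ and with $\inner{\nabla f(\w)-\tilde\s}{\bu-\w}$, and using Cauchy--Schwarz plus Lemma \ref{lem:properties}-(c) to write $\|\bu-\w\|\leq R(1+\|\bu\|/R)\leq 2R\max(1,\|\bu\|/R)$, yields the inequality in \eqref{eq:bisection} with a single scalar $\Delta$ absorbing (i) the two-sided smoothing bias $2(\nu_1+\nu_2)\sqrt d/r$, (ii) the error in the output scalar $\tilde\gamma$ (bounded by $\delta$ through Lemma \ref{lem:gauge} once $\tilde\gamma\geq 1$), and (iii) the per-coordinate deviation $|\tilde\s_i-\partial_i f(\w)|\cdot R$ summed over the $d$ coordinates. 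Taking expectations, the in-expectation parts collapse to $\nabla f(\w)$, while the smoothing and $\gau_\K$ biases sum to at most $\delta$ by the parameter choices; this is where the specific constants from \cite{lee2018efficient} are needed. The worst-case bound $\Delta\leq 15^2 d^4\kappa^3/\delta^2$ then comes from crude coordinate-wise upper bounds $|\tilde\s_i|\leq O(1/\nu_2)$ and $\nu_2^{-1}=O(d^{3/4}\sqrt{R/(\delta^{3/2}r)})\cdot r^{-1}$, multiplied by $d$ coordinates and by $R$ to rescale.

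Finally, the $\ell_2$ bounds $\|\tilde\s\|\leq \Delta/R+1/r$ and $\|\tilde\s\|^2\leq (2/r+\delta/R)\Delta/R+1/r^2$ follow by splitting $\tilde\s = \nabla f(\w)+(\tilde\s-\nabla f(\w))$, bounding $\|\nabla f(\w)\|\leq 1/r$, and absorbing $\|\tilde\s-\nabla f(\w)\|\cdot R$ into $\Delta$. The main obstacle will be the careful accounting for the parameter choices so that each inequality holds with the stated constants; the scaffolding is essentially the proof of \cite[Alg.~2]{lee2018efficient} adapted to the Gauge function using its $1/r$-Lipschitz continuity and the quantitative guarantees of $\gau_\K$.
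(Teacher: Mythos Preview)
Your overall template is right, but there is a genuine gap in how you set up the random error $\Delta$. You linearize the doubly-smoothed function $f$ at the \emph{deterministic} point $\w$, so your $\Delta$ must absorb $R\|\tilde\s-\nabla f(\w)\|_1$. You then claim that ``taking expectations, the in-expectation parts collapse to $\nabla f(\w)$''; but $\nabla f(\w)=\E[\s^\star]$ only gives $\|\E[\tilde\s]-\nabla f(\w)\|$ small, not $\E[\|\tilde\s-\nabla f(\w)\|_1]$. The latter is the mean absolute deviation of $\s^\star$ over the outer box, and Lee et al.'s key lemma (Lemma~\ref{lem:convex_almost_flat} here) does \emph{not} control it: that lemma bounds $\E_{\bu,\z}[\|\nabla h(\z)-\g(\bu)\|_1]$ via the ratio $\nu_2/\nu_1$, i.e., the variation of the gradient within the \emph{inner} box averaged over the outer box, not the variation of $\g(\bu)$ over the outer box. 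A concrete failure: take $\K=\cB_1(1)$ so $\gamma_\K=\|\cdot\|_1$, and $\w=\e_1$; then $\s^\star_2=\tfrac{|u_2+\nu_2|-|u_2-\nu_2|}{2\nu_2}$ has mean $0$ but $\E[|\s^\star_2|]\approx 1$, so your $\E[\Delta]\geq R\cdot 1$, not $\delta$. The paper avoids this by linearizing at the \emph{random} sample point $\z$ (Lemma~\ref{lem:separate_conv_func}): then the error term is $X=\|\nabla h(\z)-\tilde\s\|_1$, and since $\nabla h(\z)$ and $\tilde\s$ share the same randomness in $(\bu,\z)$, Lemma~\ref{lem:convex_almost_flat} together with the secant identity gives $\E[X]\leq 2d^{5/4}\sqrt{L\varepsilon/\nu_1}$. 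The paper's $\Delta=2RX+4d\nu_1/r$ then satisfies $\E[\Delta]\leq\delta$ after substituting the parameters. Because $\gamma_\K$ is not differentiable, the paper also mollifies to $h_\sigma$ and passes to the limit via Fatou (Lemma~\ref{lem:separate_conv_funcnew}); your doubly-smoothed $f$ would sidestep that, but only if you fix the linearization point.

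There is also a smaller gap you glossed over: Lemma~\ref{lem:gauge} guarantees $|\gau_\K(\cdot;\varepsilon)-\gamma_\K(\cdot)|\leq\varepsilon$ only when $\gamma_\K(\cdot)\geq 9/16$ or the output is $\geq 1$; otherwise $\gau_\K$ may return $0$. The paper uses the hypothesis $\tilde\gamma\geq 1$ to first deduce $\gamma_\K(\w)\geq 2/3$, then shows $\|\w-\w_i\|\leq 11r\delta/100$ so that $\gamma_\K(\w_i)\geq \gamma_\K(\w)-11\delta/100>9/16$ (and $\w_i\in\cB(6R/5)$). Your phrase ``whenever $\gau_\K$ returns a nontrivial value'' does not cover this, and without it the $\varepsilon$-accuracy at the secant endpoints is unjustified.
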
      
	
	\begin{remark}[Complexity of $\O_{\K^\circ}$]
		\label{rem:complexity} In the setting of Proposition \ref{prop:subgradient}, Algorithm \ref{alg:optimize} $(\O_{\K^\circ})$ requires $2d\cdot (\ceil{\log_2((4\kappa)^2/\varepsilon)}+1)$ calls to the Membership Oracle $\mem_{\K}(\cdot; r\varepsilon/(4\kappa)^2)$, where $\varepsilon= r^2 \delta^3/(10^3 d^{7/2} R^2)$, and one call to $\mem_{\K}(\cdot; r\varepsilon/(4\kappa)^2)$. This follows by Lemma \ref{lem:gauge} and the fact that $\O_{\K^\circ}$ makes $2\cdot d$ calls to $\gau_{\K}(\cdot;\varepsilon)$ and one call to $\gau_{\K}(\cdot;\delta)$. 
	\end{remark}
	
	\noindent In light of Remark \ref{rem:complexity}, Proposition \ref{prop:subgradient} implies that approximating the subgradient of $\gamma_{\K}$ at a point $\w\in \cB(R)$ up to some random error $\Delta\geq 0$ satisfying $\E[\Delta]\leq \delta$, requires only $O(d \ln (d \kappa/\delta))$ calls to the Membership Oracle $\mem_{\K}$. This means that the approximation error decreases exponentially fast with the number of calls to $\mem_{\K}$, which allows us to build our efficient projection-free algorithms in Section \ref{sec:oco}.	
	
	\begin{algorithm}
		\caption{$\onedopt$: `One-Dimensional' Stochastic Version of $\O_{\K^\circ}$.}
		\label{alg:optimizenew}
		\begin{algorithmic}[1]
			\Require Input point $\w \in \cB(R)$ and $\delta \in(0,1/3)$. 			
			\vspace{0.2cm} 
			\State Set $\varepsilon = \frac{r^2 \delta^3}{10^3 d^{7/2} R^2}$, $\nu_1=\frac{r \delta}{10 d}$, and $\nu_2=\sqrt{\frac{\varepsilon \nu_1 r}{d^{1/2} }}.$ \algcomment{$r$ and $R$ are as in \eqref{eq:lowradius}}
			\State Sample $I\in [d]$, $\bu \in \cB_{\infty}(\w,\nu_1)$, and $\z\in \cB_{\infty}(\bu,\nu_2)$ independently and uniformly at random.
			\State Let $\w'_I$ and $\w_I$ be the end point of the interval $\cB_{\infty}(\bu,\nu_2) \cap \{\z+\lambda \e_I : \lambda \in \reals\}$.
			\State \label{line:estimate}Set $\tilde s_I =\frac{1}{2\nu_2} (\mathsf{GAU}_{\K}( \w'_I;\varepsilon) -\mathsf{GAU}_{\K}(\w_I;\varepsilon) )$. \algcomment{$\gau_{\K}$ is as in Algorithm \ref{alg:gauge}} 
			\State Set $ \hat \gamma = \gau_{\K}(\w;\delta)$ and $ \hat \s= d \tilde s_I\cdot  \e_I$.
			\State Return $(\hat \gamma, \hat \s)$. 
		\end{algorithmic}
	\end{algorithm}
	
	In some settings, calling a Membership Oracle $\Omega(d)$ times per iteration might still be too expensive. A way around this is to use a stochastic version $\onedopt$ of $\O_{\K^\circ}$ that calls $\mem_{\K}$ at most $\wtilde O(1)$ times and has the same output as $\O_{\K^\circ}$ in expectation. Algorithm \ref{alg:optimizenew}, \onedopt, achieves this by randomly sampling a coordinate $I\in[d]$ for estimating the subgradient of $\gamma_{\K}$ (Line \ref{line:estimate} of Alg.~\ref{alg:optimizenew}) and using importance weights. We now state the guarantee of Algorithm \ref{alg:optimizenew} (the proof is in Appendix \ref{sec:celebproof}):
	\begin{lemma}
		\label{lem:celeb}
		Let $\delta\in (0,1/3)$, $\w\in \cB(R)$, and $\kappa \coloneqq R/r$, where $r,R>0$ are as in \eqref{eq:lowradius}. Further, let $(\tilde \gamma ,  \tilde \s)=\O_{\K^\circ}(\w;\delta)$ and $( \hat\gamma,   \hat\s)=\onedopt(\w;\delta)$ (Alg.~\ref{alg:optimizenew}). Then, $\|\hat \s\|<+\infty$ a.s.; $\tilde \gamma =\hat \gamma$; and if $\hat \gamma \geq 1$, it follows that
		\begin{align}
			\E[  \hat \s] =  \E[\tilde \s],   \quad    \text{and}\quad     \E[\|  \hat \s\|^2]\leq  d\cdot \left({1}/{r}+ {\delta}/{R} \right)^2. \nn   	
		\end{align}
	\end{lemma}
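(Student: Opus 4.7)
The plan is to unpack the two algorithms $\O_{\K^\circ}$ and $\onedopt$ side by side and use a clean coupling/importance-weighting argument. Both algorithms ultimately output a pair $(\cdot,\gau_{\K}(\w;\delta))$, and since $\gau_{\K}$ is a deterministic function of its inputs (the membership oracle $\mem_{\K}$ of Definition~\ref{def:mem} is deterministic, and the bisection loop in Algorithm~\ref{alg:gauge} is data-independent), the identity $\tilde\gamma=\hat\gamma$ is immediate. The almost-sure finiteness of $\|\hat\s\|$ follows from the finiteness of $\gau_{\K}(\cdot;\varepsilon)$: each call returns either $0$ or the reciprocal of a number bounded below by $\delta/(8\kappa^2) \cdot \tfrac12$, so $|\tilde s_I|$ is almost surely finite and $\|\hat\s\|=d|\tilde s_I|<+\infty$ a.s.

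For the expectation identity, I will condition on $I$. Given $I=i$, the estimator $\tilde s_I$ in Algorithm~\ref{alg:optimizenew} is computed using freshly sampled $(\bu,\z)$ via the \emph{same} finite-difference procedure (and with the same parameters $\nu_1,\nu_2,\varepsilon$) that Algorithm~\ref{alg:optimize} uses in its $i$-th coordinate. The joint distribution of $(\bu,\z)$ is identical in both algorithms, so the marginal law of $\tilde s_i$ is identical; the only difference is that $\O_{\K^\circ}$ reuses $(\bu,\z)$ across coordinates, which introduces correlation but does not affect marginals. Hence, writing $X_i$ for the $i$-th coordinate of $\tilde\s$ and $Y_i$ for the variable returned when $I=i$ in $\onedopt$, we have $Y_i \stackrel{d}{=} X_i$. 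This yields
\[
\E[\hat\s]\;=\;\sum_{i=1}^d \Pr[I=i]\cdot d\,\E[Y_i]\,\e_i\;=\;\sum_{i=1}^d \E[X_i]\,\e_i\;=\;\E[\tilde\s],
\]
which is the required identity.

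The variance bound is the one step requiring the bounds from Proposition~\ref{prop:subgradient}, and the apparent pitfall is that a coordinate-wise $\ell_\infty$ bound only gives $d^2(1/r+\delta/R)^2$, which is too loose by a factor of $d$. The fix is to use the \emph{$\ell_2$} bound from Proposition~\ref{prop:subgradient}, namely $\|\tilde\s\|^2\le (2/r+\delta/R)\Delta/R+1/r^2$, together with $\E[\Delta]\le \delta$; expanding gives $\E[\|\tilde\s\|^2]\le 2\delta/(rR)+\delta^2/R^2+1/r^2=(1/r+\delta/R)^2$. Then, using the same distributional identity $Y_i\stackrel{d}{=}X_i$ conditional on $I=i$,
\[
\E\bigl[\|\hat\s\|^2\bigr]\;=\;d^2\,\E[Y_I^2]\;=\;d^2\cdot \frac{1}{d}\sum_{i=1}^d\E[X_i^2]\;=\;d\cdot \E[\|\tilde\s\|^2]\;\le\; d\cdot (1/r+\delta/R)^2,
\]
which closes the proof. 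The only subtle step is this last chain, where the factor-of-$d$ blowup from the importance weight $d$ in $\hat\s=d\tilde s_I\e_I$ is exactly cancelled by the $1/d$ probability of sampling any particular coordinate, leaving a single $d$ that absorbs the fact that $\tilde\s$ has $d$ coordinates.
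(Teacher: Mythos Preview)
Your proof is correct and follows essentially the same approach as the paper: both use the distributional identity between the $i$-th coordinate of $\tilde\s$ and the (rescaled) output of $\onedopt$ conditioned on $I=i$, and both appeal to the $\ell_2$ bound $\|\tilde\s\|^2\le (2/r+\delta/R)\Delta/R+1/r^2$ from Proposition~\ref{prop:subgradient} together with $\E[\Delta]\le\delta$ to obtain $\E[\|\tilde\s\|^2]\le(1/r+\delta/R)^2$, whence $\E[\|\hat\s\|^2]=d\,\E[\|\tilde\s\|^2]$ yields the claim. Your explicit observation that the $\ell_\infty$ bound would be off by a factor of $d$ and that the $\ell_2$ bound is the right tool here is exactly the point.
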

	\begin{remark}[Complexity of $\onedopt$]
		\label{rem:complexityoned} In the setting of Lemma \ref{lem:celeb}, Algorithm \ref{alg:optimizenew} $(\onedopt)$ requires  $2\cdot (\ceil{\log_2((4\kappa)^2/\varepsilon)}+1)$ calls to the Membership Oracle $\mem_{\K}(\cdot; r\varepsilon/(4\kappa)^2)$, where $\varepsilon= r^2 \delta^3/(10^3 d^{7/2} R^2)$, and one call to $\mem_{\K}(\cdot;\delta)$. This follows by Lemma \ref{lem:gauge} and the fact that $\onedopt$ calls $\gau_{\K}(\cdot;\varepsilon)$ twice and $\gau_{\K}(\cdot;\delta)$ once. 
	\end{remark}
	
	\noindent Note that $\onedopt$ randomly selects a single coordinate $I$ along which to estimate the subgradient of the Gauge function $\gamma_{\K}$. Generalizing this idea to $k\leq d$ coordinates, one can build a version of $\onedopt$, call it $k\text{D}$-$\text{OPT}_{\K^\circ}$, that samples $i\in [\lfloor d/k \rfloor]$ uniformly at random and selects coordinates $\{i k + j-1: j \in [k]\}\cap [d]$ along which to estimate the subgradients of $\gamma_{\K}$. In this case, $k\text{D}$-$\text{OPT}_{\K^\circ}$ makes $\wtilde{O}(k)$ calls to the Membership Oracle and would lead to a natural trade-off between computation (i.e.~Oracle calls) and regret (see discussion in Section \ref{sec:general}).

		\section{Applying the Projection-Free Reduction in Practice}
		\label{sec:applications}
	In this section, we consider various popular settings where our algorithm may be applied. We note that Assumption \ref{ass:assum}, i.e.~the condition that $\cB(r)\subseteq \K \subseteq \cB(R)$ may not always be satisfied in these settings, but one can easily reparametrize the problem to satisfy the condition. We will derive explicit reparametrizations for the popular settings studied in \cite{hazan2012,jaggi2013} (those in Table \ref{tab:comp}). But, first we will present a general reparametrization recipe.
	
	Suppose that the available losses $(f_t)$ are defined on a convex set $\cK\subset \reals^d$ that does not necessarily satisfy \eqref{eq:lowradius} (e.g.~if $\cK$ has an empty interior like the simplex). Further, suppose that we have a Membership Oracle $\cM_{\cK}$ for $\cK$ and a subgradient Oracle for $(f_t)$. We will show that one can easily reparametrize the problem on a set $\K$ satisfying \eqref{eq:lowradius} and whose Membership Oracle can easily be constructed from that of $\cK$.   
	
	Let $\cH$ be the subspace generated by the span of $\cK$; that is, $
	\cH \coloneqq \{\lambda \x : \lambda \in \reals \text{ and } \x \in \text{conv}\cK\}.$
	Further, let $(\bu_i)_{1\leq i\leq m}$, $m\leq d$, be an orthogonal basis of $\cH$ (this can be computed offline once for the given problem), and let $\bm{c}$ be a point in the relative interior of $\cK$. Then, one can work with the surrogate losses $\ell_t: \reals^m \rightarrow \reals \cup \{+\infty\}$ defined by 
	\begin{align}
		\ell_t(\x)=\left\{ \begin{array}{ll}   f_t\left(\bm{c}+\sum_{i=1}^m x_i\bu_i  \right) , & \text{if } \bm{c}+ \sum_{i=1}^m x_i\bu_i  \in \cK; \\ +\infty, & \text{otherwise}.
		\end{array}\right.\nn
	\end{align}
	The convexity of $\ell_t$ follows immediately from that of $f_t$. We also note that since $\bm{c}$ was chosen in the relative interior of $\cK$, there exists $r,R>0$ such that the domain $\K \subset \reals^m$ of $\ell_t$ satisfies $\cB(r)\subseteq \K\subseteq \cB(R)$; that is, Assumption \ref{ass:assum} is satisfied. We now show that a Membership Oracle for $\K$ [resp.~subgradient Oracle for $\ell_t$] can easily be constructed from a Membership Oracle for $\cK$ [resp.~subgradient Oracle for $f_t$].
	
	Starting with the Membership Oracle: for any $\x\in \K$, a Membership Oracle $\cM_{\K}$ for $\K$ can be implemented as $\cM_{\K}(\x)= \cM_{\cK}(\bm{c} + \sum_{i=1}^m \bu_i x_i)$. Now, given a subgradient Oracle for $f_t$, it is also easy to get a subgradient Oracle for $\ell_t$; for any $\x\in \reals^m$ such that $\y \coloneqq\sum_{i=1}^m \bu_i x_i \in \cK$, we have $\partial \ell_t(\x) = \{ \sum_{i=1}^m \inner{\bu_i}{\bm{\zeta}} \bu_i: \bm{\zeta} \in \partial f_t(\y)\}$. Using this method, one needs to perform $O(m d)$ arithmetic operations at each round to compute a subgradient of $\ell_t$. It is possible to avoid this computational overhead by, instead, using a finite difference approach for estimating subgradients of $\ell_t$, requiring only $2m$ calls to a value Oracle for $f_t$ per round. Lemma \ref{lem:separate_conv_funcnew} provides the means for doing this\footnote{Technically, Lemma \ref{lem:separate_conv_funcnew} provides a way of approximating the subgradients of a function whose domain is unconstrained. However, one can extend the result to the constrained case by a careful treatment of the region around the boundary of the set.}. 
	
	We now show that in many practice cases there are natural parametrizations that do not require any expensive pre-processing step such as identifying a basis for the span of $\cH$. In Table \ref{tab:cond}, we summarize the upper bounds we derive on the condition number $\kappa$ for different sets of interest after reparametrization. In Table \ref{tab:comp}, we summarize the computational complexity of a Membership Oracle for these sets. 
	
\begin{table}[h]
\centering
\begin{tabular}{lll}
\hline
\bf{Domain} &  \multicolumn{2}{l}{Upper bound on $\kappa$ post-reparametrization}    \\
\hline 
\hline
$\ell_p$-ball in $\reals^d$	&  $d^{|1/p-1/2|}$ &   $O(d^{|1/p-1/2|})$, where $\K \subset\reals^d$  \\ 
\hline
Simplex $\Delta_d \in\reals^d$ & $2d$  &  $O(d)$, where $\K \subset\reals^d$ \\
\hline
Trace-norm-ball in $\reals^{m\times n}$ & $\sqrt{m \wedge n}$  &  $O(d^{1/4})$, where $\K \subset\reals^d$  \\
\hline
Op-norm-ball in $\reals^{m\times n}$ & $\sqrt{m \wedge n}$ &  $O(d^{1/4})$, where $\K \subset\reals^d$    \\
\hline
Conv-hull of Permutation Matrices in $\reals^{n\times n}$ &  $\sqrt{5}n$ &   $O(d^{1/2})$, where $\K \subset\reals^d$ \\ 
\hline
Convex-hull of Rotation Matrices in $\reals^{n\times n}$ &  $2n^{1/2}$  &  $O(d^{1/4})$, where $\K \subset\reals^d$  \\
\hline
PSD matrices in $\reals^{n\times n}$ with unit trace
&  $8 n^2$  &  $O(d)$, where $\K \subset\reals^d$  \\
\hline
PSD matrices in $\reals^{n\times n}$ with diagonals $\leq$ 1
  &  $4 n^{3/2}$ &  $O(d^{3/4})$, where $\K \subset\reals^d$  \\
\hline
The flow polytope with (\#nodes,\ \#edges)=($d$,$m$)
 & \multicolumn{2}{c}{\emph{Problem dependent}}    \\
\hline
The matroid polytope for matroid $M$; \#elem.=$d$ &  \multicolumn{2}{c}{\emph{Problem dependent}}   \\
 \hline
 \hline
\end{tabular}
\caption{Upper bounds on the condition number $\kappa$ for the different settings of Table \ref{tab:comp} after reparametrization.}
	\label{tab:cond}
\end{table}
	
	\subsection{$\ell_p$-Norm Balls} Consider the setting where the losses are defined on $\K=\{ \x: \|\x\|_p\leq 1\}$.  In this case, the span of $\K$ is $\reals^d$ and we can pick $\bm{c} =\bm{0}$. For $1\leq p \leq 2$ Assumption \ref{ass:assum} is satisfied with $r= d^{1/2 - 1/p}$ and $R= 1$; this follows by the fact that the $\ell_p$ norm satisfies $	\|\x\| \leq \|\x\|_p  \leq d^{1/p-1/2}\|\x\|, \forall \x\in \reals^d.$ When $2\leq p$ Assumption \ref{ass:assum} is satisfied with $r= 1$ and $R= d^{1/2 - 1/p}$. Thus, in either case, the condition number is $$\kappa=R/r\leq d^{|1/p - 1/2|},$$ and there is no need to reparametrize. The operation needed for the Membership Oracle is simply computing $\|\x\|_p$, whereas linear optimization on $\K^\circ$ amounts to evaluation the dual norm $\|\x\|_q$, where $1/q+1/p=1$ \cite{jaggi2013}. 
	
	\subsection{Simplex $\Delta_d$}
	Let $d>1$ and consider the setting where the losses $(f_t)$ are defined on the simplex $\Delta_d \coloneqq \{\x \in \reals_{\geq 0}^d :\mathbf{1}^\top \x =1 \}$. Since $\Delta_d$ has an empty interior, it does not satisfy Assumption \ref{ass:assum}. However, we can easily reparametrize the problem to ensure Assumption \ref{ass:assum}. 
	
	Let $\bm{c}\coloneqq (\frac{1}{2}+\frac{1}{2d}) \e_{d}+\sum_{i=1}^{d-1} \frac{1}{2d}\e_i$ and consider the sequence of reparametrized losses $(\ell_t)$ given by 
	\begin{align}
		\ell_t(\x) \coloneqq f_t\left(\bm{c}+J^\top \x  \right), \quad t \geq 1, \quad \text{where} \quad J \coloneqq \begin{bmatrix}
			I_{d-1}&  -\mathbf{1} \end{bmatrix}\in \reals^{d-1\times d}. \label{eq:simplex}
	\end{align}
	For any $t$, the function $\ell_t$ is convex and defined on the set \begin{align}\K\coloneqq \left\{\x\in  \reals^{d-1}: x_i \geq -\frac{1}{2 d} \text{ and } \mathbf{1}^\top \x  \leq \frac{1}{2}+\frac{1}{2d}\right\}.\label{eq:thesimplexC}\end{align}
We now show that for this reparametrized setting, Assumption \ref{ass:assum} holds with $r= 1/(2 d)$ and $R=1$, which implies a condition number of $\kappa = R/r \leq  2d$:
\begin{proposition}
		\label{prop:thesimplexC}
		The set $\K$ in \eqref{eq:thesimplexC} satisfies $\cB(1/(2d)) \subset \K \subset \cB(1)$.
	\end{proposition}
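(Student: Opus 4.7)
The plan is to verify the two inclusions separately, treating $\K$ as an affine image of the standard simplex $\Delta_d$ under the map used to define it in \eqref{eq:simplex}.

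\textbf{Inner inclusion.} For any $\x\in\reals^{d-1}$ with $\|\x\|\le 1/(2d)$, I would directly check the two defining constraints of $\K$. First, $x_i\ge -\|\x\|_\infty\ge -\|\x\|\ge -1/(2d)$ for every $i$. Second, by Cauchy--Schwarz, $\mathbf{1}^\top\x\le \sqrt{d-1}\,\|\x\|\le \sqrt{d-1}/(2d)\le (d+1)/(2d)$, where the last step uses $\sqrt{d-1}\le d+1$ for $d\ge 1$. Both inequalities in the definition of $\K$ in \eqref{eq:thesimplexC} are satisfied, so $\x\in\K$.

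\textbf{Outer inclusion.} The main step is to observe that the affine map $T(\x)\coloneqq \bm{c}+J^\top\x$ sends $\K$ bijectively onto $\Delta_d$. Indeed, a short computation using $\mathbf{1}^\top\bm{c}=(d-1)/(2d)+1/2+1/(2d)=1$ and $\mathbf{1}^\top J^\top\x=\mathbf{1}^\top\x-\mathbf{1}^\top\x=0$ shows that $\mathbf{1}^\top T(\x)\equiv 1$; moreover, the coordinate-wise nonnegativity of $T(\x)$ translates exactly to the two inequality systems that define $\K$. Hence $\K$ is a $(d{-}1)$-dimensional simplex with $d$ vertices $\v_1,\dots,\v_d$, where $\v_j=T^{-1}(\e_j)$. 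Computing these preimages, I get $\v_d=-\mathbf{1}/(2d)$, and for $j<d$, $\v_j$ has $j$-th coordinate $(2d-1)/(2d)$ and all other coordinates $-1/(2d)$.

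Since $\x\mapsto\|\x\|$ is convex, $\max_{\x\in\K}\|\x\|=\max_{j\in[d]}\|\v_j\|$. A direct calculation gives $\|\v_d\|^2=(d-1)/(4d^2)\le 1$ and, for $j<d$,
\begin{align*}
\|\v_j\|^2=\frac{(2d-1)^2+(d-2)}{(2d)^2}=\frac{4d^2-3d-1}{4d^2}\le 1.
\end{align*}
Thus $\|\x\|\le 1$ on $\K$, completing the outer inclusion.

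The only nontrivial step is recognizing $\K$ as an affine image of $\Delta_d$ and correctly identifying its vertices; everything else is bookkeeping. I do not anticipate any real obstacle beyond these coordinate computations.
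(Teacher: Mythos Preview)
Your proof is correct. For the outer inclusion you do exactly what the paper does: identify the $d$ vertices of the simplex $\K$ and check that each has Euclidean norm at most~$1$; your detour through the affine bijection $T$ with $\Delta_d$ is just a different (and perfectly fine) way of locating those vertices. For the inner inclusion, however, your argument is genuinely simpler than the paper's: you directly verify that any $\x$ with $\|\x\|\le 1/(2d)$ satisfies the defining inequalities of $\K$, whereas the paper instead computes the orthogonal projection of the origin onto each of the $d$ facets of $\K$ and checks that each projection has norm at least $1/(2d)$. The paper's approach has the minor advantage of showing that $1/(2d)$ is the \emph{exact} inradius (it is attained on the facets $x_j=-1/(2d)$), but for the stated proposition your direct verification is shorter and entirely sufficient.
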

	\begin{proof}[{\bf Proof}]
Note that the vertices of the set $\K$ are $\v_1,\dots,\v_{d}$, where 
\begin{align}
\forall i\in[d-1],\quad  \v_{i} = \e_i -  \sum_{j\in [d-1]} \frac{\e_j}{2 d}  , \quad \text{and} \quad \v_{d} = -  \sum_{j\in [d-1]} \frac{\e_j}{2 d}.\nn 
\end{align}  
Since $\v_i \in  \cB(1)$ for all $i\in [d]$, we get that $\K\subseteq \cB(1)$. We now show that $\cB(1/(2d))\subseteq \K$. For this, we need to find the point $\bu$ in the boundary of $\K$ that is closest to the origin. This point must be the orthogonal projection of the origin onto one of the $(d-2)$-dimensional faces of $\K$; there are $d$-many such faces corresponding to one of the inequalities defining $\K$ being satisfied with equality. Thus, $\bu$ must satisfy one of the following:
\begin{itemize}
\item $\bu = \left(\frac{1}{d-1} - \frac{1}{2d} \right)\sum_{i\in[d-1]}\e_i$.
\item $\exists i\in[d-1]$, such that $\bu = -\frac{\e_j}{2d}$.
\end{itemize}
In all cases, we have $\|\bu\| \geq 1/(2d)$, and so this shows that $\cB(1/(2d))\subseteq \K$. 
\end{proof}
\noindent It is clear from the definition of the set $\K$ that the operations required to test the membership of a point $\x\in \reals^{d-1}$ are I) computing $\inner{\mathbf{1}}{\x}$; and II) evaluating $x_i$, for $i\in[d-1]$. Therefore, the corresponding computational complexity is $O(d)$. We now show how to build a subgradient Oracle for the reparametrized losses $(\ell_t)$. By the chain-rule, $\g$ is a subgradient of $\ell_t$ at $\x$ if and only if
	$\g = J \bm{\zeta},  \text{for } \bm{\zeta} \in \partial f_t(\bm{c}+J^\top \x ).$
	Here, $J \bm{\zeta}$ can be evaluated in $O(d)$.

	\subsection{Trace and Operator Norm Balls}
	\paragraph{Trace norm ball.} Let $m,n\geq 1$, $s\coloneqq m \wedge n$, and consider the setting where the losses are defined on the trace-norm ball $\K \coloneqq \{\x \in \reals^{m\times n} :  \sum_{i=1}^s \sigma_i(\x) \leq 1 \}$, where $\sigma_1(\x)\geq \dots \geq \sigma_s(\x)$ are the singular values of $\x$ in non-increasing order. Implementing a Membership Oracle $\cM_{\K}$ for $\K$ requires computing the sum of singular values of a matrix $\x$, and so the computational complexity of $\cM_{\K}$ is at most that of performing SVD. Since the trace-norm $\|\cdot\|_{\text{tr}}$ satisfies $$\frac{1}{\sqrt{s}}\|\x\|_{\text{tr}}  \leq  \sqrt{\sum_{i=1}^s \sigma_i(\x)^2}  \leq \|\x\|_{\text{tr}},$$ and $\sqrt{\sum_{i=1}^s \sigma_i(\x)^2}= \sqrt{\text{tr}(\x\x^\top)}=\|\x\|_{\text{F}}$ is just the Eucledian norm on $\reals^{m\times n}$ ($\|\cdot\|_{\text{F}}$ denotes the Frobenius norm), we have that Assumption \ref{ass:assum} is satisfied with $r=1$ and $R= \sqrt{m\wedge n}$, and so the condition number is $\kappa=\sqrt{m \wedge n}$.
	
	\paragraph{Operator norm Ball.} Let $m,n\geq 1$, $s\coloneqq m \wedge n$, and consider the setting where the losses are defined on the operator-norm ball $\K \coloneqq \{\x \in \reals^{m\times n} :  \sigma_1(\x) \leq 1 \}$, where $\sigma_1(\x)\geq \dots \geq \sigma_s(\x)$ are the singular values of $\x$. The operator norm $\|\cdot\|_{\text{op}}$ is the dual to the trace-norm $\|\cdot\|_{\text{tr}}$. Since $\|\x\|_{\text{op}}$ is the largest singular value of $\x$, we have $\|\x\|_{\text{op}}\leq \|\x\|_{\text{F}}\leq \sqrt{s}  \|\x\|_{\text{op}}$, and so Assumption \ref{ass:assum} is satisfied with $r=(m \wedge n)^{-1/2}$ and $R=1$. Implementing a Membership Oracle for $\K$ requires computing the largest singular value of a given matrix $\x$. It is possible to approximate the largest singular value up to error $\delta$ using $O(\text{nnz}(\x)/\sqrt{\delta})$ arithmetic operations, where $\text{nnz}(\x)$ represents the number of non-zeros of $\x$ \cite[Proposition 8]{jaggi2013}. That is, the complexity of implementing a $\delta$-approximate Membership Oracle $\cM_{\K}(\cdot;\delta)$ (see Definition \ref{def:mem}) is $O(\text{nnz}(\x)/\sqrt{\delta})$.

	\subsection{Convex-hull of Permutation Matrices} We now consider the setting where the losses $(f_t)$ are defined on the convex-hull of permutation matrices, also known as the Birkhoff polytope $\cK \coloneqq \{ \x \in \reals^{n \times n}_{\geq 0}:  \e_i^\top \x \mathbf{1} = \e_i^\top \x^\top \mathbf{1}=1 \}$. Assumption \ref{ass:assum} is not satisfies since the \emph{Birkhoff polytope} has an empty interior. However, as we did in the case of the simplex, we can easily reparametrize the problem to satisfy Assumption \ref{ass:assum}. We assume that $n> 1$ . Before presenting the reparametrized losses, we first introduce some notation. Let 
	\begin{align}
		\bm{c} \coloneqq \sum_{1\leq i\leq n}\left(\frac{1}{2} + \frac{1}{2n}\right) \e_{in}+ \sum_{1\leq j\leq n}\left(\frac{1}{2} + \frac{1}{2n}\right) \e_{nj}+\sum_{1\leq i ,j\leq n} \frac{\e_{ij}}{2n},\nn
		\end{align}
		and for any $\x \in \reals^{n-1 \times n-1}$ define the matrix $\bar \x\in \reals^{n \times n}$ such that 
		\begin{align}
	 \bar x_{ij} \coloneqq  \left\{ \begin{array}{ll} x_{ij}, &  \forall i,j \in[n-1]; \\ 0,& \text{otherwise}.\end{array} \right.	\label{eq:barx}
		\end{align}
With this, consider the sequence of reparametrized losses $(\ell_t)$ given by 
	\begin{gather}
		\ell_t(\x) \coloneqq f_t(\bm{c} + M \bar \x + \bar \x M^\top), \ \ \forall \x \in \reals^{n-1 \times n-1}, \nn \\ \text{where}\quad  M \coloneqq \begin{bmatrix} J^\top & \bm{0} \end{bmatrix}  \in \reals^{n \times n}  \ \ \text{and} \ \ J \text{ as in \eqref{eq:simplex} with $d=n$}. \label{eq:theM}
	\end{gather}
For any $t$, the function $\ell_t$ is convex and defined on the set 
	\begin{align}
		\K \coloneqq \left\{ \x \in \reals^{n-1 \times n-1}:\forall i,j\in[n-1], \ \  x_{ij}\geq - \frac{1}{2 n};\ \sum_{k=1}^{n-1} x_{ik}\leq \frac{1}{2}+\frac{1}{2n};\  \text{ and } \  \sum_{k=1}^{n-1} x_{kj} \leq \frac{1}{2} +\frac{1}{2n} \right\}.\nn
	\end{align}
We now show that for this reparametrized setting, Assumption \ref{ass:assum} holds with $r= 1/(2 n)$ and $R=\sqrt{5}/2$. Note that the vertices of the set $\K$ are $\v_1,\dots,\v_{(n-1)^2 +1}$, where 
\begin{align}
\forall i\in[(n-1)^2],\quad  \v_{i} =  \tilde\e_i -  \sum_{j\in [(n-1)^2] } \frac{\tilde\e_{j}}{2 n}, \quad \text{and} \quad \v_{(n-1)^2 +1} = - \sum_{j\in [(n-1)^2]} \frac{\tilde\e_j}{2 n},\nn 
\end{align}  
where $\tilde \e_i= \e_{pq}$ and $p,q$ are the unique integers satisfying $i = p + (n-1) (q-1)$ and  $p,q\in [n-1]$. Since $\v_i \in  \cB(\sqrt{5}/2)$, for all $i\in (n-1)^2 +1$, we get that $\K\subseteq \cB(\sqrt{5}/2)$. We now show that $\cB(1/(2n))\subseteq \K$. For this, we need to find the point $\bu$ in the boundary of $\K$ that is closest to the origin. This point must be the orthogonal projection of the origin onto one of the $((n-1)^2-1)$-dimensional faces of $\K$; there are $(n^2-1)$-many such faces corresponding to one of the inequalities defining $\K$ being satisfied with equality. Thus, $\bu$ must satisfy one of the following:
\begin{itemize}
	\item $\exists i \in[n-1]$, such that
	$\bu = \left(\frac{1}{n-1} - \frac{1}{2n} \right)\sum_{j\in[n-1]}  \e_{ij}$.
	\item 	$\exists j \in[n-1]$, such that $\bu = \left(\frac{1}{n-1} - \frac{1}{2n} \right)\sum_{i\in[n-1]}  \e_{ij}$. 
	\item $\exists i \in [(n-1)^2]$, such that $\bu = \frac{-\tilde \e_i}{2n}$.
\end{itemize}
In all cases, we have $\|\bu\| \geq 1/(2n)$, and so this shows that $\cB(1/(2n))\subseteq \K$. Thus, the condition number for this reparametrized setting is $$\kappa = R/r \leq  \sqrt{5}n.$$ 
	It is clear from the definition of the set $\K$ that the operations required to test membership for a point $\x\in \reals^{n \times n}$ are I) computing $\e_i^\top \x \mathbf{1}$ and $\e_i^\top \x^\top \mathbf{1}$ for $i,j\in[n]$; and II) evaluating $x_{ij}$, for $i,j\in[n]$. Thus the computational complexity of testing membership in $\K$ is $O(n^2)$. We now show how to build a subgradient Oracle for the reparametrized losses $(\ell_t)$. By the chain-rule, $\g$ is a subgradient of $\ell_t$ at $\x$, if and only if, for all $i,j \in [n-1]$, 
	\begin{align}
	g_{ij} =  2 \zeta_{ij} -\zeta_{nj} - \zeta_{in}, \quad \text{for}\quad  \bm{\zeta} \in \partial f_t(\bm{c}+M \bar\x +\bar \x M^\top), \nn 
	\end{align}
where $\bar \x$ and $M$ are as in \eqref{eq:barx} and \eqref{eq:theM}, respectively. Since $M$ has $2(n-1)$ non-zero entries, $\g$ can be computed in $O(n^2)$ time (this is linear in the dimension of $\K$).

	\subsection{Convex-hull of Rotation Matrices}
	We now consider the setting where the losses $(f_t)$ are defined on the convex-hull of rotation matrices; that is, 
	\begin{align}
		\K \coloneqq \operatorname{conv} \mathrm{SO}(n), \quad \text{where}\quad  	\mathrm{SO}(n) \coloneqq \{ \x \in \reals^{n \times n}:  \x^\top \x = I,\ \ \text{det}(\x)=1 \}.\nn
	\end{align}	
	This set satisfies Assumption \ref{ass:assum} with $r=1-2/n$ and $R=n$ (implying a condition number of at most $\kappa = n^2/(n-2) = O(n)$), and so there is not need to reparametrize as we show next:
	\begin{proposition}
		\label{prop:gros}
		Let $n> 2$. The convex hull $\K$ of Orthogonal matrices in $\reals^{n\times n}$ satisfies
		\begin{align}
			\cB(1/2) \subseteq \K \subseteq \cB(\sqrt{n}). \nn
		\end{align}
	\end{proposition}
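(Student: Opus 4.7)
The plan is to prove the two inclusions separately. For the outer inclusion $\K \subseteq \cB(\sqrt{n})$, I will use that every orthogonal matrix $U$ has Frobenius norm $\|U\|_{\fro} = \sqrt{\trace(U^\top U)} = \sqrt{\trace(I_n)} = \sqrt{n}$. Since the Euclidean norm on $\reals^{n \times n}$ coincides with the Frobenius norm, each generator of $\K$ lies in $\cB(\sqrt{n})$, and convexity of the ball immediately yields $\K \subseteq \cB(\sqrt{n})$.

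For the inner inclusion $\cB(1/2) \subseteq \K$, I plan to invoke the classical characterization $\operatorname{conv}(O(n)) = \{M \in \reals^{n \times n} : \|M\|_{\op} \leq 1\}$. One direction is immediate: each orthogonal $U$ has $\|U\|_{\op} = 1$, and the operator-norm unit ball is convex. For the reverse direction, I will take an SVD $M = U\Sigma V^\top$ with $\Sigma = \operatorname{diag}(\sigma_1,\dots,\sigma_n)$ and $\sigma_i \in [0,1]$, and write each $\sigma_i = \tfrac{1+\sigma_i}{2}(+1) + \tfrac{1-\sigma_i}{2}(-1)$. Distributing across coordinates gives $\Sigma = \sum_{\epsilon \in \{\pm 1\}^n} c_\epsilon \operatorname{diag}(\epsilon)$ as a convex combination of sign-diagonal (and hence orthogonal) matrices, so $M = \sum_\epsilon c_\epsilon\, U\operatorname{diag}(\epsilon) V^\top$ exhibits $M$ as a convex combination of orthogonal matrices.

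With the characterization in hand the proposition follows quickly: any $M$ with $\|M\| = \|M\|_{\fro} \leq 1/2$ satisfies $\|M\|_{\op} \leq \|M\|_{\fro} \leq 1/2 < 1$, placing $M$ in the operator-norm unit ball and hence in $\K$. This yields $\cB(1/2) \subseteq \K$. The main step worth scrutinizing is the operator-norm characterization; if one wishes to restrict to $\mathrm{SO}(n)$ (determinant $+1$) rather than the full $O(n)$ literally named in the statement, a parity adjustment is needed: for every $\epsilon$ with an odd number of $-1$'s, simultaneously flip the sign of one column of $U$ and one column of $V$, which preserves the product $U\operatorname{diag}(\epsilon)V^\top$ while forcing all factors to have determinant $+1$. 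For $n \geq 2$ this introduces only routine bookkeeping and does not affect the radii obtained above.
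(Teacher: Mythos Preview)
Your argument for the full orthogonal group $O(n)$ is clean and correct, and in fact yields the stronger inclusion $\cB(1)\subseteq\operatorname{conv} O(n)$ since $\|M\|_{\op}\le\|M\|_{\fro}$. The paper also uses the identification $\operatorname{conv} O(n)=\{M:\|M\|_{\op}\le1\}$, so there you agree.

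However, in the paper's context the set is $\K=\operatorname{conv}\mathrm{SO}(n)$ (rotation matrices); the word ``Orthogonal'' in the proposition is a slip. For $\mathrm{SO}(n)$ the paper proceeds differently: it invokes the characterization $\operatorname{conv}\mathrm{SO}(n)=(\operatorname{conv} O(n))\cap\bigl((n-2)\,\mathrm{SO}^{-}(n)^\circ\bigr)$ of Saunderson et al., shows $\mathrm{SO}^{-}(n)\subseteq\cB(\sqrt{n})$ so that $\cB(1/\sqrt{n})\subseteq\mathrm{SO}^{-}(n)^\circ$, and intersects with $\cB(1)\subseteq\operatorname{conv} O(n)$ to get $\cB\bigl(1\wedge(n-2)/\sqrt{n}\bigr)\subseteq\operatorname{conv}\mathrm{SO}(n)$, which contains $\cB(1/2)$ for $n>2$.

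Your proposed parity adjustment for $\mathrm{SO}(n)$ does not work. Flipping one column of $U$ and the same column of $V$ indeed preserves the product $U\operatorname{diag}(\epsilon)V^\top$, but precisely because the product is unchanged, so is its determinant: a term with determinant $-1$ stays that way regardless of how you refactor it. More fundamentally, $\operatorname{conv}\mathrm{SO}(n)$ is \emph{strictly} smaller than the operator-norm unit ball: for $n=3$, the matrix $M=-I/\sqrt{3}$ has $\|M\|_{\op}=1/\sqrt{3}<1$, yet pairing with $-I\in\mathrm{SO}^{-}(3)$ gives $\langle M,-I\rangle=\sqrt{3}>n-2=1$, so $M\notin\operatorname{conv}\mathrm{SO}(3)$ by the characterization above. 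Hence no ``routine bookkeeping'' on the SVD decomposition can push your $O(n)$ argument through to $\mathrm{SO}(n)$; an $\mathrm{SO}(n)$-specific argument like the paper's is genuinely required.
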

	\begin{proof}[{\bf Proof}]
		Let $\mathrm{O}(n)$ the set of orthogonal matrices in $\reals^{n \times n}$. By \cite[Proposition 4.6]{saunderson2015}, we have 
		\begin{align}
			\operatorname{conv} \mathrm{SO}(n) = 	(\operatorname{conv} \mathrm{O}(n))  \cap ((n-2)   \mathrm{SO}^{-}(n)^\circ  ), \label{eq:setequality}
		\end{align}
		where $\mathrm{SO}^{-}(n) \coloneqq \{ \x \in \reals^{n \times n}:  \x^\top \x = I,\ \ \text{det}(\x)=-1\}$. It is know that $\operatorname{conv} \mathrm{O}(n)$ coincides with the operator-norm ball \cite{saunderson2015}, and so we have (see paragraph on the operator norm ball above) \begin{align}\cB(1)\subseteq \operatorname{conv} \mathrm{O}(n). \label{eq:orth}
		\end{align}
		We will now show that $\cB(1/\sqrt{n})\subseteq \mathrm{SO}^{-}(n)^\circ$ from which we conclude that $\cB(1/2)\subset \cB(1 \vee(n^{1/2}-2n^{-1/2})) \subset	\operatorname{conv} \mathrm{SO}(n)$ using \eqref{eq:setequality}, \eqref{eq:orth}, and the fact that $n>2$. Let $D$ be the diagonal matrix satisfying $D_{ii}=1$ for all $i\in[n-1]$ and $D_{nn}=-1$. It is known that $\mathrm{SO}^{-}(n) = D \cdot \mathrm{SO}(n)$ (see e.g.~\cite{saunderson2015}). Therefore, since $D\in \mathrm{O}(n)$, we have 
		\begin{align}
			\sup_{\x \in \mathrm{SO}^{-}(n)} \|\x\|_{\text{op}} =  \sup_{\x \in \mathrm{SO}(n)} \|\x\|_{\text{op}} = 1.\nn
		\end{align}
		Thus, by the fact that $\|\cdot\|_{\text{F}}\leq \sqrt{n}\|\cdot \|_{\text{op}}$, we have $\mathrm{SO}^{-}(n) \subseteq \cB(\sqrt{n})$, which implies that $\cB(1/\sqrt{n})\subseteq \mathrm{SO}^{-}(n)^\circ$. In fact, since $\mathrm{SO}^{-}(n) \subseteq \cB(\sqrt{n})$, we have $\inner{\bu}{\x}\leq 1$, for all $\bu \in \cB(1/\sqrt{n})$ and $\x\in \mathrm{SO}^{-}(n)$, and so $\cB(1/\sqrt{n})\subseteq \mathrm{SO}^{-}(n)^\circ$ by definition of a Polar set. Combining this with the fact that $\cB(1)\subseteq  \operatorname{conv} \mathrm{O}(n)$ and \eqref{eq:setequality}, we get that 
		\begin{align}
		\cB(1/2) \stackrel{n>2}{\subset}	\cB(1\vee (n^{1/2}-2n^{-1/2})) \subseteq  \operatorname{conv} \mathrm{SO}(n) =\K.\nn	
		\end{align}
		We now show that $\K\subseteq \cB(n)$. This follows by the fact that $\K = \text{conv} \ \mathrm{SO}(n) \subset \text{conv} \ \mathrm{O}(n)$ and that $\text{conv} \ \mathrm{O}(n) \subseteq \cB(\sqrt{n})$ since $\text{conv} \ \mathrm{O}(n)$ is the operator norm ball.
	\end{proof}
\noindent	To assess the complexity of a Membership Oracle for $\text{conv} \ \mathrm{SO}(n)$, we use the characterization of $\mathrm{SO}(n)$ in \eqref{eq:setequality}. Also, as argued in the proof of the previous proposition, $\operatorname{conv} \mathrm{O}(n)$ coincides with the operator-norm ball \cite{saunderson2015}. Thus, in light of \eqref{eq:setequality}, to test if $\x$ is in $ \operatorname{conv} \mathrm{SO}(n)$, it suffices to test if $\x$ is in the operator norm ball and in the set $\mathrm{SO}^{-}(n)^{\circ}$, simultaneously. The complexity of the former test is at most that of SVD \cite{saunderson2015}. We now show that the complexity of testing for $\x\in \mathrm{SO}^{-}(n)^{\circ}$ is also at most that of SVD up to a constant factor. First, we note that testing membership for $\mathrm{SO}^{-}(n)^\circ$ can be performed using a single call to a Linear Optimization Oracle on $\mathrm{SO}^{-}(n)$ (by leveraging the definition of a polar set). Furthermore, LO on $\mathrm{SO}^{-}(n)$ can be done using one call to a LOO on $\mathrm{SO}(n)$. The latter follows by the fact that $\mathrm{SO}^{-}(n) = D \cdot \mathrm{SO}(n)$ (see e.g.~\cite{saunderson2015}), where $D$ is the diagonal matrix defined in the proof of Proposition \ref{prop:gros}, and so 
	\begin{align}
		\cO_{\mathrm{SO}^{-}(n)}(\x) = \sup_{\y \in \mathrm{SO}^{-}(n)} \inner{\y}{\x} = \sup_{\y \in \mathrm{SO}(n)} \inner{\y}{D\x} = \cO_{\mathrm{SO}(n)}(D\x).\nn
	\end{align}
	Finally, since the complexity of linear optimization on $\mathrm{SO}^{-}(n)$ is at most the cost of SVD \cite{jaggi2013}, we conclude, in light of \eqref{eq:setequality}, that the complexity of a Membership Oracle for $\operatorname{conv}  \mathrm{SO}(n)$ is also at most that SVD up to a constant factor. 
	
	\subsection{PSD Matrices with Unit Trace}
	We now consider the set PSD matrices with unit trace. This set does not satisfy Assumption \ref{ass:assum} and so we need to reparametrize. It will be useful to introduce the operator ${U}\colon \reals^{n(n-1)/2}\rightarrow \reals^{n\times n }$, where for each $\z\in \reals^{n(n-1)/2}$, ${U}(\z)$ is the upper-triangular matrix whose $i$th column is equal to $(z_{i(i-1)/2+1}, \dots,z_{i(i+1)/2}, 0, \dots, 0)^\top \in \reals^n$. 
	Further, for any $\x\in \reals^n$, we let $\text{diag}(\x)$ be the matrix whose diagonal constructed from the vector $\x$, and define
	\begin{align}
		\Theta(\y,\z) \coloneqq  \text{diag}(J^\top \y) +U(\z)+ U(\z)^\top,\nn
		\end{align}
	for all $\y\in \reals^{n-1}$ and $\z\in \reals^{n(n-1)/2}$, where $J$ is as in \eqref{eq:simplex} with $d=n$. With this, we consider the set of reparametrized losses $(\ell_t)$ given by 
	\begin{align}
		\ell_t(\x) \coloneqq f_t (\bm{c} + \Theta(\y, \z)), \quad \text{where} \quad \bm{x}\coloneqq (\y,\z)\in \reals^{n-1} \times \reals^{n(n-1)/2} \quad \text{and}\quad \bm{c}\coloneqq \left(\frac{1}{2} + \frac{1}{2n}\right) \e_{nn}+\sum_{i=1}^{n-1} \frac{\e_{ii}}{2n}.\nn
	\end{align}
	For any $t$, the function $\ell_t$ is convex and defined on the set  
	\begin{align}
		\K \coloneqq \left\{ (\y,\z) \in \reals^{n-1} \times \reals^{n (n-1)/2}: \bm{c}+ \Theta(\y,\z)  \succeq 0 \right\}.  \label{eq:theC} 
	\end{align}
	Furthermore, this set satisfies Assumption \ref{ass:assum} with $r=n^{-3/2}/4$ and $R=2\sqrt{n}$, leading to a condition number of at most $\kappa = 8 n^2$ for the set (this is linear in the dimension of $\K$):
	\begin{proposition}
		\label{prop:theC}
		The set $\K$ in \eqref{eq:theC} satisfies $\cB(n^{-3/2}/4) \subseteq \K \subseteq \cB(2\sqrt{n})$.
	\end{proposition}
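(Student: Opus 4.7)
The plan is to exploit the fact that the parametrization $(\y,\z) \mapsto M := \bm{c} + \Theta(\y,\z)$ is \emph{trace-preserving} and bi-Lipschitz in Frobenius norm. First, observe that $\mathrm{tr}(\bm{c}+\Theta(\y,\z)) = \tfrac{n-1}{2n} + \bigl(\tfrac12+\tfrac{1}{2n}\bigr) + \mathbf{1}^\top J^\top \y = \tfrac12 + \tfrac12 + 0 = 1$ identically (the cross term vanishes since $\mathbf{1}^\top J^\top = 0$), so every $(\y,\z)\in\K$ encodes a PSD matrix with unit trace, and $U(\z)+U(\z)^\top$ contributes only off-diagonal entries.

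For the outer inclusion $\K \subseteq \cB(2\sqrt{n})$: since $U(\z)$ is strictly upper-triangular, the diagonal/off-diagonal pieces of $\Theta$ are Frobenius-orthogonal, giving $\|\Theta(\y,\z)\|_{\mathrm{F}}^2 = \|J^\top\y\|^2 + 2\|\z\|^2$. Expanding $\|J^\top\y\|^2 = \|\y\|^2 + (\mathbf{1}^\top\y)^2 \geq \|\y\|^2$ yields $\|(\y,\z)\|^2 \leq \|\Theta(\y,\z)\|_{\mathrm{F}}^2$. For $(\y,\z)\in\K$, we bound the right-hand side using the triangle inequality: $\|\Theta(\y,\z)\|_{\mathrm{F}} = \|M-\bm{c}\|_{\mathrm{F}} \leq \|M\|_{\mathrm{F}} + \|\bm{c}\|_{\mathrm{F}}$; and $\|M\|_{\mathrm{F}} \leq \|M\|_{\mathrm{tr}} = 1$ because $M\succeq 0$ with unit trace, while a direct computation shows $\|\bm{c}\|_{\mathrm{F}}^2 = (n-1)/(4n^2) + (n+1)^2/(4n^2) = (n+3)/(4n) \leq 1$. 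Hence $\|(\y,\z)\| \leq 2 \leq 2\sqrt{n}$.

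For the inner inclusion $\cB(n^{-3/2}/4) \subseteq \K$: the task is to show $\bm{c}+\Theta(\y,\z) \succeq 0$ whenever $\|(\y,\z)\| \leq 1/(4 n^{3/2})$. Note $\bm{c}$ is a diagonal matrix with smallest entry $1/(2n)$, so $\lambda_{\min}(\bm{c}) = 1/(2n)$. By Weyl's inequality,
\begin{equation*}
\lambda_{\min}(\bm{c}+\Theta(\y,\z)) \geq \tfrac{1}{2n} - \|\Theta(\y,\z)\|_{\mathrm{op}} \geq \tfrac{1}{2n} - \|\Theta(\y,\z)\|_{\mathrm{F}}.
\end{equation*}
Using Cauchy--Schwarz on $(\mathbf{1}^\top\y)^2 \leq (n-1)\|\y\|^2$ gives $\|J^\top\y\|^2 \leq n\|\y\|^2$, so $\|\Theta(\y,\z)\|_{\mathrm{F}}^2 \leq n\|\y\|^2 + 2\|\z\|^2 \leq n\|(\y,\z)\|^2$. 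For $\|(\y,\z)\| \leq 1/(4n^{3/2})$ this yields $\|\Theta(\y,\z)\|_{\mathrm{F}} \leq 1/(4n)$, hence $\lambda_{\min}(\bm{c}+\Theta(\y,\z)) \geq 1/(4n) > 0$, as required.

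There is no real obstacle: both directions are routine once one records the key identity $\|\Theta(\y,\z)\|_{\mathrm{F}}^2 = \|J^\top\y\|^2 + 2\|\z\|^2$ together with the two-sided bound $\|\y\|^2 \leq \|J^\top\y\|^2 \leq n\|\y\|^2$, which sandwiches the parameter norm $\|(\y,\z)\|$ and the Frobenius norm of the image between constant and $\sqrt{n}$ factors. The trace-preservation calculation then automatically puts $M$ inside the PSD trace-one set, whose Frobenius radius is at most $1$, closing the argument.
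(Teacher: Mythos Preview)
Your proof is correct and considerably more direct than the paper's. Both inclusions follow cleanly from the Frobenius identity $\|\Theta(\y,\z)\|_{\mathrm F}^2=\|J^\top\y\|^2+2\|\z\|^2$ together with the sandwich $\|\y\|^2\le\|J^\top\y\|^2\le n\|\y\|^2$, after which the outer bound is just ``PSD with unit trace has Frobenius norm at most $1$'' plus a triangle inequality, and the inner bound is Weyl's inequality against $\lambda_{\min}(\bm c)=1/(2n)$. Your outer inclusion is in fact sharper than stated: you obtain $\K\subseteq\cB(2)$, not merely $\cB(2\sqrt n)$.

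The paper instead goes through a spectral-decomposition detour: it introduces the simplex-like set $\K'=\{\bm\lambda\in\reals^{n-1}:\lambda_i\ge-1/(2n),\ \mathbf 1^\top\bm\lambda\le 1/2+1/(2n)\}$ and the maps $\Psi(\bm\lambda)=\mathrm{diag}(\e_n/2+J^\top\bm\lambda)$, $\Phi_H(\w)=-\e_{nn}/(2n)+H^\top\w H$, then establishes $\Theta(\K)=\bigcup_{H\in\mathrm O(n)}\Phi_H\circ\Psi(\K')$ by diagonalizing $\Theta'(\y,\z)$. From this representation it reads off the inner radius via the distance from $\Psi(\bm 0)$ to $\Psi(\partial\K')$ (invoking Proposition~\ref{prop:thesimplexC}), and the outer radius via $\|\Phi_H\circ\Psi(\bm\lambda)\|_{\mathrm{op}}$. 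This is more structural---it explains $\K$ as an orbit of the simplex under orthogonal conjugation---but requires proving the set equality and tracking operator versus Frobenius norms through $\Phi_H$. Your argument bypasses all of that with a two-line eigenvalue perturbation, at the cost of giving no geometric picture of $\K$.
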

\noindent	To implement a Membership Oracle for $\K$ one needs to be able to test if a matrix of the form $\bm{c}+ \Theta(\y,\z)$ is positive definite. Since this matrix is symmetric, it suffices to check if the smallest eigenvalue of $\Theta(\y,\z)$ is non-negative. We now present a way of approximating the smallest eigenvalue of a symmetric matrix, which will then lead to an approximate Membership Oracle for $\K$. Given a symmetric matrix $M$ and $\delta>0$, first approximate its largest singular value $\sigma_1(M)$ up to error $\delta/2$. This can be done using $\wtilde {O}(\text{nnz}(M)/\sqrt{\delta})$ arithmetic operations (see \cite{kuczynski1992} and \cite[Proposition 8]{jaggi2013}). Next, approximate the largest singular value $\sigma_1(M')$ of $M' \coloneqq M -  \sigma_1(M)\cdot I_n$ up to error $\delta/2$. This also requires $\wtilde {O}(\text{nnz}(M)/\sqrt{\delta})$ arithmetic operations. Now, since the smallest eigenvalue of $M$ is given by $\lambda_{\min}(M) = \sigma_1(M)-\sigma_1(M')$, we can compute a $\delta$-approximate value of $\lambda_{\min}(M)$, and thus implement a $\delta$-approximate Membership Oracle, using $\wtilde{O}(\text{nnz}(M)/\sqrt{\delta})$ arithmetic operations.
	
	We now show how to build a subgradient Oracle for the reparametrized losses $(\ell_t)$. By the chain-rule, $\g\coloneqq(\g_{y},\g_{z})$ is a subgradient of $\ell_t$ at $\x\coloneqq (\y,\z)$ if and only if
	$$\g_y = J \text{diag}^{-1}(\bm{\zeta}) \quad \text{and} \quad  \g_z \coloneqq U^{-1}(\bm{\zeta}),  \quad \text{for } \bm{\zeta} \in \partial f_t(\bm{c}+\Theta(\y,\z)),$$
	where $J$ is as \eqref{eq:simplex} and $U^{-1}\colon \reals^{n \times n} \rightarrow \reals^{n(n-1)/2}$ [resp.~$\text{diag}^{-1}\colon\reals^{n \times n} \rightarrow \reals^n$] is any operator satisfying $U^{-1}\circ U(\z) =\z$, for all $\z$ [resp.~$\text{diag}^{-1}\circ \text{diag}(\x)=\x$, for all $\x$]. Thus, the subgradient Oracle for $\ell_t$ requires only an additional $O(n^2)$ operations (this is linear in the dimension of $\K$). We now prove Proposition \ref{prop:theC}:
	\begin{proof}[{\bf Proof of Proposition \ref{prop:theC}}] By the fact that $\|\text{diag}(J^\top \y)\|_{\text{F}}\leq 2 \sqrt{n} \|\y\|$ and $\|U(\z)+U(\z)^\top\|_{\text{F}} \leq 2 \|U(\z)\|_{\text{F}}$, for all $\y \in \reals^{n-1}$ and $\z \in \reals^{n(n-1)/2}$, we have
		\begin{align}
\|\Theta(\y,\z)  \|_{\text{F}}/(2\sqrt{n})  \leq \|(\y, \z)\| = \|\y\| + \|\z\| \leq  \|\Theta(\y,\z)  \|_{\text{F}}.\label{eq:sandwitch}
		\end{align}
We will now bound the norm of $\Theta(\y, \z)$. For any orthogonal matrix $H \in \cO(n)$, $\w\in \reals^{n\times n}$, and $\bm{\lambda} \in \reals^{n-1}$ , define 	
\begin{gather}
\Phi_{H}(\w) \coloneqq - \e_{nn}/(2n) + H^\top \w H;\quad  \quad \Psi(\bm{\lambda}) \coloneqq \text{diag}(\e_{n}/2+ J^\top \bm{\lambda}), \nn \\
\text{and} \quad \K' \coloneqq \left\{ \bm{\lambda} \in \reals^{n-1}: \ \lambda_i \geq \frac{-1}{2n} \ \ \text{and} \ \ \mathbf{1}^\top \bm{\lambda} \leq \frac{1}{2} + \frac{1}{2n} \right\},  \nn % \label{eq:simplex2}
\end{gather} 
where $J$ is as in \eqref{eq:simplex} with $d=n$. Since a similarity transformation does not change the trace, or the eigenvalues for that matter, we have $\bm{c}+\Phi_H\circ \Psi(\bm{\lambda})\in  \K,$ $\forall H \in \cO(n), \forall \bm{\lambda}\in \K'$. In particular, this implies that
\begin{align}
\bigcup_{H \in \cO(n)} \Phi_H\circ \Psi(\K') \subseteq \Theta(\K).
 \label{eq:inclusion}
\end{align}
We will now show that for any $H \in \cO(n)$ and $\bm{\lambda}$ on the boundary of $\K'$, we have $\|\Phi_H\circ \Psi(\bm{\lambda})- \Phi_H\circ \Psi(\bm{0})\|_{\text{F}} \geq 1/(2n)$. This, combined with \eqref{eq:inclusion},  would imply that $\cB(1/(2 n)) \subseteq \Theta(\K)$. Let $H \in \cO(n)$ and $\bm{\lambda}\in \text{bd}\ \K'$. Since $\Phi_H$ is an isometry with respect to the distance induced by the operator norm, we have 
\begin{align}
\|\Phi_H\circ \Psi(\bm{\lambda})- \Phi_H\circ \Psi(\bm{0})\|_{\text{F}}\geq \|\Phi_H\circ \Psi(\bm{\lambda})- \Phi_H\circ \Psi(\bm{0})\|_{\text{op}} = \| \Psi(\bm{\lambda}) -  \Psi(\bm{0})  \|_{\text{op}} = \|J^\top \bm{\lambda}\| \geq \|\bm{\lambda}\| \geq 1/(2n),\nn
\end{align} 
where the last inequality follows by Proposition \ref{prop:thesimplexC}. Thus, we have that $\cB(1/(2 n)) \subseteq \Theta(\K)$. Combining this with \eqref{eq:sandwitch} implies that $\cB(n^{-3/2}/4) \subseteq \K$.

 We will now show that $\K \subset \cB(2\sqrt{n})$. For this, we will first show that \eqref{eq:inclusion} holds with equality. Let $\Theta'(\cdot, \cdot)\coloneqq \e_{nn}/2 + \Theta(\cdot, \cdot)$, and observe that the constraint defining the set $\K$ in \eqref{eq:theC} translates to $I_n/(2n) + \Theta'(\cdot, \cdot)\succeq 0$. Let $(\y,\z)\in \K$. Since $\Theta'(\y, \z)$ is a real symmetric matrix, there exists an orthogonal matrix $H$ such that $\Lambda \coloneqq H \Theta'(\y,\z)H^\top$ is a diagonal matrix. Let $\lambda_1,\dots, \lambda_n$ be the diagonal elements of $\Lambda$. Since these are the eigenvalues of $\Theta'(\y,\z)$, the fact that $I/(2n) + \Theta'(\y,\z)  \succeq 0$ implies $\lambda_i \geq -1/(2n)$, for $i\in[n]$. Furthermore, since $\text{tr}(\Theta'(\y,\z))=1/2$, we have $\sum_{i=1}^{n} \lambda_i = 1/2$, and so since $\lambda_n \geq - 1/(2n)$, it follows that $\sum_{i=1}^{n-1} \lambda_i \leq 1/2 + 1/(2n)$. Thus, we have $\bm{\lambda}' \coloneqq (\lambda_1,\dots,\lambda_{n-1}) \in \K'$ and  
		\begin{align}
		\e_{nn}/2+\Theta(\y,\z) = \Theta'(\y,\z) = H^{\top}\text{diag}(e_n/2+ J^{\top}\bm{\lambda}') H = \e_{nn}/2+\Phi_H\circ \Psi(\bm{\lambda}').\nn
		\end{align} 
		Therefore, we have $\Theta(\K) \subseteq \bigcup_{H\in \cO(n)} \Phi_H\circ \Psi(\K')$, and so by \eqref{eq:inclusion}, we have that $\Theta(\K)= \bigcup_{H\in \cO(n)} \Phi_H\circ \Psi(\K')$. Now, for any $H \in \cO(n)$ and $\bm{\lambda}\in \K'$, we have
		\begin{align}
\|\Phi_H\circ \Psi(\bm{\lambda})- \Phi_H\circ \Psi(\bm{0})\|_{\text{F}} &\leq \sqrt{n}\|\Phi_H\circ \Psi(\bm{\lambda})- \Phi_H\circ \Psi(\bm{0})\|_{\text{op}}, \nn \\ &  = \sqrt{n}\| \Psi(\bm{\lambda}) -  \Psi(\bm{0})  \|_{\text{op}}, \nn \\ &  =\sqrt{n} \|J^\top \bm{\lambda}\| \leq \sqrt{n}\|\bm{\lambda}\| +\sqrt{n}|\inner{\mathbf{1}}{\bm{\lambda}}|  \leq 2\sqrt{n},\nn
		\end{align}
		where the last inequality follows by Proposition \ref{prop:thesimplexC}, and the fact that $\inner{\mathbf{1}}{\bm{\lambda}}\leq 1/2 + 1/(2n)\leq 1$ by definition of $\K'$. Therefore, by \eqref{eq:sandwitch}, we have $\K\subseteq \cB(2\sqrt{n})$. 
\end{proof}
\label{sec:PSDtrace}
	
	\subsection{PSD Matrices with Bounded Diagonals}
	We now consider the set of PSD matrices with bounded diagonals; that is, $\cK\coloneqq \{\x \in \reals^{n\times n}:  \x \succeq 0, \text{and}\  0\leq  x_{ii} \leq 1 , \text{for all } i \in[n]\}$. This set does not satisfy Assumption \ref{ass:assum} and so we need to reparametrize. As in the case of PSD matrices with unit trace, we let ${U}\colon \reals^{n(n-1)/2}\rightarrow \reals^{n\times n }$ be the operator such that for each $\z\in \reals^{n(n-1)/2}$, ${U}(\z)$ is the upper-triangular matrix whose $i$th column is equal to $(z_{i(i-1)/2+1}, \dots,z_{i(i+1)/2}, 0, \dots, 0)^\top \in \reals^n$. 
	Also, for any $\y\in \reals$, we let $\text{diag}(\y)$ be the matrix whose diagonal constructed from the vector $\y$, and define $$\Xi(\y,\z) \coloneqq  \text{diag}(\y) +U(\z)+ U(\z)^\top.$$ With this, we consider the set of reparametrized losses $(\ell_t)$ given by 
	\begin{align}
		\ell_t(\x) \coloneqq f_t (\bm{c} + \Xi(\y, \z)), \quad \text{where} \quad \bm{x}\coloneqq (\y,\z) \quad \text{and}\quad  \bm{c}\coloneqq I_n/2.\nn
	\end{align}
	For any $t$, the function $\ell_t$ is convex and defined on the set  
	\begin{align}
		\K \coloneqq \left\{ (\y,\z) \in \reals^{n} \times \reals^{n (n-1)/2}: \bm{c}+ \Xi(\y,\z)  \succeq 0, \quad y_{ii} \leq 1/2, \forall i \in[n]  \right\},   \label{eq:theCC} 
	\end{align}
	Furthermore, this set satisfies Assumption \ref{ass:assum} with $r=1/4$ and $R=n^{3/2}/2$, and so the condition number is $\kappa =2 n^{3/2}$:
	\begin{proposition}
		\label{prop:theCC}
		The set $\K$ in \eqref{eq:theCC} satisfies $\cB(1/4) \subseteq \K \subseteq \cB(n^{3/2})$.
	\end{proposition}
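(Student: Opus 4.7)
The plan is to exploit a clean Frobenius-norm identity that links the parameter $(\y,\z)$ to the matrix $\Xi(\y,\z)$. Since $\mathrm{diag}(\y)$ has zero off-diagonal, $U(\z)+U(\z)^\top$ has zero diagonal, and $U(\z)$ is strictly upper triangular (so $U(\z)^\top$ and $U(\z)$ have disjoint supports), a direct expansion gives
\begin{align}
\|\Xi(\y,\z)\|_{\mathrm{F}}^2 = \|\y\|^2 + 2\|\z\|^2, \nn
\end{align}
which sandwiches $\|(\y,\z)\|^2 \leq \|\Xi(\y,\z)\|_{\mathrm{F}}^2 \leq 2\|(\y,\z)\|^2$. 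Combined with the standard norm comparisons $\|\cdot\|_{\mathrm{op}} \leq \|\cdot\|_{\mathrm{F}} \leq \sqrt{n}\|\cdot\|_{\mathrm{op}}$ on $\reals^{n\times n}$, this identity is the single tool I need for both inclusions.

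For the lower bound $\cB(1/4)\subseteq \K$, I would take $(\y,\z)$ with $\|(\y,\z)\|\le 1/4$. Then $|y_i|\leq \|\y\|\leq 1/4 \leq 1/2$, so the diagonal constraint $y_i\leq 1/2$ in \eqref{eq:theCC} holds trivially. Moreover, by the Frobenius identity above, $\|\Xi(\y,\z)\|_{\mathrm{op}} \leq \|\Xi(\y,\z)\|_{\mathrm{F}} \leq \sqrt{2}\|(\y,\z)\| \leq \sqrt{2}/4 < 1/2$, so $I_n/2 + \Xi(\y,\z) \succeq (1/2 - \sqrt{2}/4)\, I_n \succ 0$, which is the PSD constraint. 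Hence $(\y,\z)\in \K$.

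The upper bound $\K\subseteq \cB(n^{3/2})$ is the main obstacle, since we must extract a spectral bound on $\Xi(\y,\z)$ using only the defining constraints. The plan is to let $\lambda_1,\dots,\lambda_n$ be the eigenvalues of the symmetric matrix $\Xi(\y,\z)$ for a fixed $(\y,\z)\in\K$. The PSD constraint $I_n/2+\Xi(\y,\z)\succeq 0$ yields $\lambda_i \geq -1/2$ for every $i$. The trace identity $\sum_i \lambda_i = \mathrm{tr}(\Xi(\y,\z)) = \sum_i y_i \leq n/2$ (using the diagonal constraints) then gives, for any fixed $i$,
\begin{align}
\lambda_i \;=\; \sum_j \lambda_j - \sum_{j\neq i}\lambda_j \;\leq\; n/2 + (n-1)/2 \;\leq\; n, \nn
\end{align}
so together with $|\lambda_i|\leq \max(\lambda_i, 1/2)$ we conclude $\|\Xi(\y,\z)\|_{\mathrm{op}} = \max_i |\lambda_i| \leq n$. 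Applying $\|\cdot\|_{\mathrm{F}}\leq \sqrt{n}\|\cdot\|_{\mathrm{op}}$ and the Frobenius identity once more yields $\|(\y,\z)\| \leq \|\Xi(\y,\z)\|_{\mathrm{F}} \leq \sqrt{n}\cdot n = n^{3/2}$, which completes the proof. The only subtle point is combining the two one-sided spectral constraints (a lower bound on each $\lambda_i$ from PSD, an upper bound on their sum from the trace) to produce a two-sided uniform bound on the operator norm.
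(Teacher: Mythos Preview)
Your proof is correct. The upper-bound argument is essentially the paper's: both diagonalise $\Xi(\y,\z)$, use the PSD constraint to get $\lambda_i\ge -1/2$, use the diagonal constraints to get $\sum_i\lambda_i=\mathrm{tr}\,\Xi(\y,\z)=\sum_i y_i\le n/2$, combine these into $\|\Xi\|_{\mathrm{op}}\le n$, and finish via $\|\cdot\|_{\mathrm F}\le\sqrt{n}\,\|\cdot\|_{\mathrm{op}}$.

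Where you differ is in packaging. You use the exact identity $\|\Xi(\y,\z)\|_{\mathrm F}^2=\|\y\|^2+2\|\z\|^2$, which immediately gives $\|(\y,\z)\|_2\le\|\Xi\|_{\mathrm F}\le\sqrt{2}\,\|(\y,\z)\|_2$; the paper instead works with the looser sandwich $\|\Xi\|_{\mathrm F}/2\le \|\y\|+\|\z\|\le\|\Xi\|_{\mathrm F}$ and a sum-of-norms quantity. Your identity is cleaner and keeps everything in the Euclidean norm that $\cB(\cdot)$ actually refers to. For the inner inclusion $\cB(1/4)\subseteq\K$, you simply check the two defining constraints of $\K$ directly for any $(\y,\z)$ with $\|(\y,\z)\|\le 1/4$; the paper instead argues by showing every boundary point of $\K$ has norm at least $1/4$, splitting into two cases according to which constraint is active. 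Your direct verification is shorter and avoids the case split, at no loss of generality. Overall the two proofs are variations on the same theme, with yours being the more streamlined execution.
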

\noindent	To implement a Membership Oracle for $\K$ one needs to be able to test if a matrix of the form $\bm{c}+ \Xi(\y,\z)$ is positive definite. Since this matrix is symmetric, it suffices to check that the smallest eigenvalue of $\Xi(\y,\z)$ is non-negative. This can be done in the same way as in Subsection \ref{sec:PSDtrace} (PSD matrices with unit trace), and so a $\delta$-approximate Membership Oracle for $\K$ can be implemented using $\wtilde{O}({\text{nnz}(\x)/\sqrt{\delta}})$ arithmetic operations for any input $\x\in \reals^{n}\times \reals^{n(n-1)/2}$ and tolerance $\delta>0$.

	We now show how to build a subgradient Oracle for the reparametrized losses $(\ell_t)$. By the chain-rule, $\g\coloneqq(\g_{y},\g_{z})$ is a subgradient of $\ell_t$ at $\x\coloneqq (\y,\z)$ if and only if
	$$\g_y = \text{diag}^{-1}(\bm{\zeta}) \quad \text{and} \quad  \g_z \coloneqq U^{-1}(\bm{\zeta}),  \quad \text{for } \bm{\zeta} \in \partial f_t(\bm{c}+\Xi(\y,\z)),$$
	where $U^{-1}\colon \reals^{n \times n} \rightarrow \reals^{n(n-1)/2}$ [resp.~$\text{diag}^{-1}\colon\reals^{n \times n} \rightarrow \reals^n$] is an operator satisfying $U^{-1}\circ U(\z) =\z$, for $\z$ [resp.~$\text{diag}^{-1}\circ \text{diag}(\x)=\x$, for all $\x$]. Thus, the subgradient Oracle for $\ell_t$ only requires an additional $O(n^2)$ operations (this is linear in the dimension of $\K$).
	
	\begin{proof}[{\bf Proof of Proposition \ref{prop:theCC}}]  By the fact that $\|\text{diag}(\y)\|_{\text{F}}=\|\y\|$ and $\|U(\z)+U(\z)^\top\|_{\text{F}} \leq 2 \|U(\z)\|_{\text{F}}$, for all $\y \in \reals^{n}$ and $\z \in \reals^{n(n-1)/2}$, we have 
		\begin{align}
			\|\Xi(\y,\z)  \|_{\text{F}}/2     \leq 	\|(\y, \z)\|\coloneqq  \|\y\| + \|\z\| \leq  \|\Xi(\y,\z)  \|_{\text{F}}.\label{eq:sandwitch2}
		\end{align}
		We will now bound the norm of $\Xi(\y, \z)$. Let $(\y, \z)\in \K$. Since $\Xi(\y, \z)$ is a real symmetric matrix, there exists an orthogonal matrix $H$ such that $\Lambda \coloneqq H \Xi(\y,\z)H^\top$ is a diagonal matrix. Let $\lambda_1,\dots, \lambda_n$ be the diagonal elements of $\Lambda$. Since these are also the eigenvalues of $\Xi(\y,\z)$, the fact that $I_n/2+ \Xi(\y,\z)  \succeq 0$ implies that $\lambda_i \geq -1/2$, for all $i\in[n]$. Furthermore, since $y_{ii}\leq 1/2$ for all $i\in[n]$, we have $\text{tr}(\Xi(\y,\z))\leq n/2$ and so $\sum_{i=1}^{n} \lambda_i \leq n/2$. That is, 
		\begin{align}
			\bm{\lambda} \in \K' \coloneqq \left\{ \x\in \reals^{n}: \ x_i \geq \frac{-1}{2} \ \ \text{and} \ \ \mathbf{1}^\top \x \leq \frac{n}{2} \right\}. \nn
		\end{align} 
		The argument above implies that \begin{gather} \Xi(\K)\subseteq \bigcup_{H\in \mathrm{O}(n)} \Phi_H(\K'), \quad 
			\text{where}  \quad  	\Phi_H(\bm{\lambda}') =  H^\top \text{diag}\left(\bm{\lambda}\right) H. \nn
		\end{gather} 
		Using that $\|\bm{\lambda}\|\leq n$, for all $\bm{\lambda} \in \K'$, and the fact that multiplication by an orthogonal matrix does not change the operator norm, we have 
		\begin{align}
			n\geq  \sup_{\bm{\lambda} \in \K'}\|\bm{\lambda}\|=   \sup_{\bm{\lambda} \in \K', H \in \mathrm{O}(n)}\| \Phi_{H}(\bm{\lambda})\|_{\text{op}} \geq  \sup_{\x \in \K }\|\Xi(\x)\|_{\text{op}} \geq  \sup_{\x \in \K }\|\Xi(\x)\|_{\text{F}}/\sqrt{n}	 \geq   \sup_{\x \in \K }\|\x\|/\sqrt{n},\nn
		\end{align}
		where the last inequality follows by \eqref{eq:sandwitch2}. This implies that $$\K\subseteq\cB(n^{3/2}).$$ 
		We now show that $\cB(1/4) \subseteq  \K$. For this, we need to evaluate the quantity $\inf_{\x \in \text{bd} \K} \|\x\|$. Let $\x \in \text{bd} \K$. The fact that $\x$ is on the boundary of $\K$ implies that at least one of the inequality constraints in the definition of $\K$ must be satisfied with equality; that is, one of the following must be true:
	\begin{enumerate}[label=(\alph*)]
			\item $\exists i \in[n]$, such that $x_{ii} =1/2$.
			\item $\exists \bu \in \reals^n$, such that $\|\bu\|=1$ and $\Xi(\y,\z)\bu = -\bm{c}\bu$.
		\end{enumerate}
		If (a) is true, then $\|\x\|\geq 1/2$. In case (b) holds, then by \eqref{eq:sandwitch2}, we have 
			\begin{align}
		\|\x\| \geq \|\Xi(\y,\z)\|_{\text{F}}/2 \geq  \|\Xi(\y,\z)\|_{\text{op}}/2\geq \|\Xi(\y,\z)\bu\|/2 = \|\bm{c}\bu\|/2 = 1/4.\nn
		\end{align}	
Since $\x$ was chosen arbitrarily on the boundary of $\K$, we have that $\cB(1/4) \subseteq \K$, which completes the proof.
	\end{proof}
\subsection{The Flow and Matroid Polytopes}
	\paragraph{Flow Polytope.}
For this polytopes, we do not present an explicit parametrization since it is highly dependent on the specific problem at hand. We only study the complexity of the Membership Oracles for this case. The flow polytope represents the convex hull of indicator vectors corresponding to paths in a directed acyclic graph with $d$ nodes and $m$ edges. This polytope can be described with $O(m+d)$ linear inequalities \cite{hazan2012,meszaros2019}. Therefore, a Membership Oracle for this polytope can be implemented using $O(d+m)$ arithmetic operations. Linear optimization on the flow polytope has the same complexity up to log-factor \cite{schrijver2003}.

\paragraph{Matroid Polytope.}
Same as in the case of the flow polytope, we only comment on the computational complexity of a Membership Oracle. A Matroid Polytope is the convex hull of indicator vectors corresponding to the independent sets $A\in I$ of a matroid $M= (E,I)$. The polytope can be described using $O(2^d)$ linear inequalities where $d = |E|$. Thus, the naive implementation of the Membership Oracle that checks all these linear inequalities would be intractable. We will present an alternative approach to designing a $\delta$-approximate Membership Oracle for $M$ that requires only $O(d^3 + d^2 \ln(d) \Cost(\cI_{M}))\ln (1/\delta)$ arithmetic operations, where $\Cost(\cI_{M})$ is the computational cost (number of arithmetic operations) of testing if a subset of $E$ is independent (i.e.~an element of $I$). 

Let $\K$ denote the matroid polytope corresponding to our matroid $M=(E,I)$. As we have shown in Section \ref{sec:gauge}, the complexity of a Membership Oracle on $\K$ is the same as linear optimization on $\K^\circ$. Furthermore, a $\delta$-approximate Linear Optimization Oracle for $\K^\circ$ can be implemented using $O(d^3 + d \Cost(\cS_{\K^\circ})) \ln (1/\delta)$ arithmetic operations, where $\Cost(\cS_{\K^\circ})$ is the computational cost (number of arithmetic operations) of a Separation Oracle on $\K^\circ$ \cite{lee2018efficient}. The fact that $(\K^{\circ})^\circ=\K$ for a closed convex set and our results from Section \ref{sec:gauge} (see also \cite{grotschel1993,molinaro2020}) imply that the complexity of $\cS_{\K^\circ}$ is the same as linear optimization on $\K$. The latter can be performed using $O(d \ln (d) \Cost(\cI_M))$ arithmetic operations \cite[Section 40.1]{schrijver2003}. All in all, a Membership Oracle for $\K$ can be implemented using $O(d^3 + d^2 \ln(d) \Cost(\cI_{M}))$ arithmetic operations (omitting log-factors in $1/\delta$). 
\section{Discussion}
In this paper, we presented a novel projection-free reduction that allowed us to turn any OCO algorithm defined on a Euclidean ball $\cB$, to an algorithm on any convex set $\K$ contained in $\cB$, without sacrificing the performance of the original algorithm by much. Thanks to this, we were able to build explicit algorithms that achieve optimal regret bounds in OCO without incurring expensive projections. In particular, we swapped expensive Euclidean projections for, what we call, Gauge projections, and show that the latter can be performed efficiently using a Membership Oracle; our final algorithms make at most $O(T \ln T)$ calls to such an Oracle after $T$ rounds. We also extended our results to the stochastic and offline settings where we recovered optimal rates in terms of the number of iterations. 
\paragraph{Advantages over Frank-Wolfe variants.} One advantage of our reduction is that it allows one to achieve the optimal regret bound in OCO (in the number of iterations), unlike existing Frank-Wolfe variants (see Table \ref{tab:result}). Another advantage of our approach is that it does not require linear optimization over $\K$, which can be expensive in some settings; our method only requires a Membership Oracle. Furthermore, since our reduction works for any base algorithm, one can achieve different types of guarantees (e.g.~a dynamic/ anytime regret or one that scales with the norm of the comparator \cite{mhammedi2020,cutkosky2020parameter}) by substituting $\A$ in Alg.~\ref{alg:projectionfreewrapper} with an algorithm that is known to achieve the desired guarantee.  
\paragraph{Advantages of Frank-Wolfe variants.} The main downside of our approach is that it is parametrization dependent, and so the user needs to do the extra work of choosing a parametrization. We have done this for popular settings in Section \ref{sec:applications}. Furthermore, as we pointed out in the introduction, there are setting where linear optimization on a set can be done more efficiently than evaluating membership. This is the case for sets that have a combinatorial structure such as Matroids. In this case, Frank-Wolfe-style algorithms may be more practical. Finally, Frank-Wolfe iterates can often be expressed explicitly as a convex combination of a finite number of points on the boundary of the set of interest, which can be desirable in some applications \cite{garber2016faster}.  

\paragraph{Future directions.} As reflected by our derivations in Section \ref{sec:applications}, the parametrization we choose for the problem can affect the constant $\kappa$ (i.e.~the condition number) in the bound, and so it is important to choose a parametrization that minimizes $\kappa$ as much as possible. The origin of $\kappa$ in our bound is due to an upper bound on the norms of the subgradients of the surrogate losses (see Proposition \ref{prop:subgradient}). We explained in Section \ref{sec:general} that these upper bounds, which lead to the linear dependence in $\kappa$ in our regret bounds, may be too conservative in practice. A future direction would be to explore if such bounds can be improved. We also note that our reduction can easily be extended to the setting where the losses are exp-concave instead of strongly convex using existing results due to \cite{mhammedi2019}. One can also extend our reduction to the Bandit setting \cite{chen2019projection,garber2020improved}. Finally, it remains to test our method on real-world datasets. 

\clearpage
\appendix

\section*{Appendices} 
\label{sec:appendix}
\addcontentsline{toc}{section}{Appendices}

	\section{Technical Lemmas}
	\label{sec:teclemmas}
	This section contains some technical lemmas we need to prove our results.
	\begin{lemma}
		\label{lem:doob}
		Let $(Y_t)\subset \reals_{\geq 0}$ be a sequence of random variable satisfying $\E[Y_t \mid \cG_{t-1}]\leq \delta_t$, for all $t\geq 1$, for some sequence $(\delta_t)\subset \reals_{\geq 0}$. Then, for any $\rho\in(0,1)$ and $T\geq 1$, we have with probability at least $1-\rho$, 
		\begin{align} 
			\P\left[ \sum_{t=1}^T Y_t \geq (1+1/\rho) \sum_{t=1}^T \delta_t  \right]  \leq \rho.\nn
		\end{align}
	\end{lemma}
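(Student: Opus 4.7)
The plan is to mimic the martingale argument already used inside the proof of Theorem~\ref{thm:genthm}, where this inequality appears implicitly for the random variables $(\Delta_t)$. The statement of the lemma is the abstract form of that argument.

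First I would set $\bar\delta_t \coloneqq \E[Y_t \mid \cG_{t-1}]$, so that $0 \leq \bar\delta_t \leq \delta_t$ by hypothesis and non-negativity of $Y_t$. Then I would define the centered partial sums
\begin{align}
X_t \coloneqq \sum_{i=1}^t (Y_i - \bar\delta_i), \quad t \geq 1,\nn
\end{align}
and observe that $(X_t)$ is a martingale with respect to the filtration $(\cG_t)$, since $\E[Y_i - \bar\delta_i \mid \cG_{i-1}] = 0$ by construction.

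Next I would reduce the target event to an event on $X_T$. Since $\sum_{t=1}^T Y_t = X_T + \sum_{t=1}^T \bar\delta_t \leq X_T + \sum_{t=1}^T \delta_t$, the event $\{\sum_{t=1}^T Y_t \geq (1+1/\rho)\sum_{t=1}^T \delta_t\}$ is contained in the event $\{X_T \geq \rho^{-1}\sum_{t=1}^T \delta_t\}$, which in turn is contained in $\{\max_{t \leq T} X_t \geq \rho^{-1}\sum_{t=1}^T \delta_t\}$. Applying Doob's maximal inequality \cite[Theorem 4.4.2]{durrett2019} to the submartingale $(X_t \vee 0)$ then yields
\begin{align}
\P\!\left[\max_{t \leq T} X_t \geq \rho^{-1}\textstyle\sum_{t=1}^T \delta_t\right] \leq \frac{\rho \cdot \E[X_T \vee 0]}{\sum_{t=1}^T \delta_t}.\nn
\end{align}

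Finally I would bound $\E[X_T \vee 0]$ by noting that $X_T \leq \sum_{t=1}^T Y_t$ (since $\bar\delta_t \geq 0$) and $Y_t \geq 0$, so $X_T \vee 0 \leq \sum_{t=1}^T Y_t$, giving $\E[X_T \vee 0] \leq \sum_{t=1}^T \E[Y_t] = \sum_{t=1}^T \E[\bar\delta_t] \leq \sum_{t=1}^T \delta_t$. Plugging this back yields the claimed probability bound of $\rho$. There is no real obstacle here; the only subtle point is recognizing that the non-negativity of $Y_t$ (which makes both $X_T \vee 0$ easy to bound and $\bar\delta_t \geq 0$ effective in the reduction) is what allows a clean one-sided Markov-type control without needing stronger tail assumptions on the $Y_t$.
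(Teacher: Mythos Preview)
Your proposal is correct and follows essentially the same approach as the paper's proof: define $\bar\delta_t = \E[Y_t\mid\cG_{t-1}]$, form the martingale $X_t = \sum_{i\le t}(Y_i-\bar\delta_i)$, reduce the target event to $\{X_T \ge \rho^{-1}\sum_t\delta_t\}$, apply Doob's maximal inequality \cite[Theorem~4.4.2]{durrett2019}, and bound $\E[X_T\vee 0]\le \sum_t\delta_t$. You also correctly identified that this argument already appears verbatim inside the proof of Theorem~\ref{thm:genthm}.
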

	\begin{proof}[{\bf Proof}]
		Let $X_t \coloneqq \sum_{i=1}^t (Y_i - \bar\delta_i )$, where $\bar \delta_i \coloneqq \E[Y_i \mid \cG_{i-1}]\leq \delta_i$. The process $(X_t)$ is a martingale; that is, for all $i\geq 1$, we have, for all $i<t$,  
		\begin{align}
			\E[ X_t \mid \cG_{i}]  = \sum_{s=1}^i ( Y_s- \bar \delta_s) = X_i.\nn
		\end{align}
		Thus, by Doob's martingale inequality \cite[Theorem 4.4.2]{durrett2019}, we have, for any $\rho\in(0,1)$, and $T\geq 1$
		\begin{align}
			\P\left[ \sum_{t=1}^T Y_t \geq (1+1/\rho) \sum_{t=1}^T \delta_t  \right] \leq \P\left[ X_T\geq   \sum_{t=1}^T \delta_t/\rho  \right]   \leq \P\left[\max_{t\leq T} X_t \geq  \sum_{t=1}^T \delta_t/\rho  \right] \leq   \frac{\rho \E\left[X_T\vee 0  \right]}{ \sum_{t=1}^T \delta_t} \leq \rho.  \nn
		\end{align}
	\end{proof}
\begin{theorem}
	\label{thm:freedman}
	Let $\cF_1, \dots ,\cF_n$ be a filtration, and $X_1, \dots, X_n$ be real random variables such that $X_i$ is $\cF_i$-measurable, $\E[X_i \mid \cF_{i-1}]=0$, $|X_i|\leq b$, and $\sum_{i=1}^n \E[X_i^2 \mid \cF_{i-1}]\leq V$ for some $b,V\geq 0$. Then, for any $\delta \in(0,1)$, with probability at least $1-\delta$, 
	\begin{align}
		\sum_{i=1}^n X_i \leq 2 \sqrt{V_n \ln (1/\delta)} +b \ln (1/\delta).	\nn
	\end{align} 
\end{theorem}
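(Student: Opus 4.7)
\textbf{Proof plan for Theorem \ref{thm:freedman}.} The plan is to follow the classical Cram\'er--Chernoff approach adapted to martingales, constructing an exponential supermartingale, applying Markov, and finally optimizing over the free parameter.

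\emph{Step 1: MGF bound for bounded mean-zero conditional increments.} First I would establish the pointwise inequality
\begin{align}
e^{\lambda y} \;\leq\; 1 + \lambda y + \psi(\lambda)\, y^2 \quad\text{for all } y\leq b,\nn
\end{align}
where $\psi(\lambda) := (e^{\lambda b} - 1 - \lambda b)/b^2$. This is a standard calculus fact: the function $y \mapsto (e^{\lambda y} - 1 - \lambda y)/y^2$ is nondecreasing on $(-\infty,b]$, so it is bounded by its value at $y=b$. Plugging in $y = X_i$ and taking conditional expectation given $\cF_{i-1}$, the linear term vanishes since $\E[X_i\mid \cF_{i-1}]=0$, giving
\begin{align}
\E[e^{\lambda X_i}\mid \cF_{i-1}] \;\leq\; 1 + \psi(\lambda)\,\E[X_i^2\mid \cF_{i-1}] \;\leq\; \exp\!\bigl(\psi(\lambda)\,\E[X_i^2\mid \cF_{i-1}]\bigr).\nn
\end{align}

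\emph{Step 2: Exponential supermartingale and tail bound.} Let $S_k := \sum_{i=1}^k X_i$ and $V_k := \sum_{i=1}^k \E[X_i^2 \mid \cF_{i-1}]$. Define $M_k := \exp(\lambda S_k - \psi(\lambda) V_k)$. Using Step 1, $\E[M_k \mid \cF_{k-1}] \leq M_{k-1}$, so $(M_k)$ is a nonnegative supermartingale with $\E[M_n]\leq 1$. Markov's inequality gives $\P[M_n \geq 1/\delta]\leq \delta$, which, together with the deterministic bound $V_n \leq V$, yields
\begin{align}
\P\!\left[\, S_n \;\geq\; \frac{\psi(\lambda)\, V + \ln(1/\delta)}{\lambda}\,\right] \;\leq\; \delta.\nn
\end{align}

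\emph{Step 3: Optimization over $\lambda$.} To turn this into the stated bound I would use the Bennett--Bernstein estimate $\psi(\lambda) \leq \lambda^2/\bigl(2(1-\lambda b/3)\bigr)$, valid for $0 < \lambda b < 3$. Substituting and minimizing the resulting expression in $\lambda$ (or, equivalently, inverting the Bernstein tail $\exp(-t^2/(2(V+bt/3)))\le \delta$ for $t$) gives, after solving a quadratic,
\begin{align}
S_n \;\leq\; \sqrt{2V\ln(1/\delta)} + \tfrac{2b}{3}\ln(1/\delta) \;\leq\; 2\sqrt{V\ln(1/\delta)} + b\ln(1/\delta),\nn
\end{align}
with probability at least $1-\delta$, which is exactly the claimed bound (with a little slack absorbed into the constants).

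\emph{Main obstacle.} Steps 1--2 are routine once one knows the recipe; the only mildly delicate point is verifying the pointwise inequality $e^{\lambda y}\leq 1+\lambda y + \psi(\lambda) y^2$ and handling the case $\lambda y < 0$ (where monotonicity of $(e^{\lambda y}-1-\lambda y)/y^2$ must be checked on $(-\infty,b]$, not just $(0,b]$). The final step (optimizing $\lambda$) requires a short but careful quadratic calculation to get constants in the clean form stated; one could alternatively split into the regimes $V \geq b^2 \ln(1/\delta)$ and $V < b^2\ln(1/\delta)$ and choose $\lambda = \sqrt{\ln(1/\delta)/V}\wedge (1/b)$ to avoid solving the quadratic, which is arguably the simplest route.
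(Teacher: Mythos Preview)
Your proof plan is correct and is the standard Cram\'er--Chernoff / exponential-supermartingale derivation of Freedman's inequality. However, there is nothing to compare against: the paper states Theorem~\ref{thm:freedman} in its technical-lemmas appendix without proof, treating it as a known concentration result to be invoked later (in the proof of Theorem~\ref{thm:genthmprob}). So your proposal does not duplicate or diverge from anything in the paper---it simply fills in a proof the authors chose to omit.
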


\begin{lemma}
\label{lem:concave}
For any $a\geq1$, $b>0$, the map $f:x \mapsto \sqrt{x \cdot\ln (a+b x)}$ is concave for $x>0$.
\end{lemma}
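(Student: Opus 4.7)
The plan is to reduce the concavity of $f$ to an elementary inequality via a direct computation of $f''$. Write $g(x) \coloneqq x\ln(a+bx)$, so $f(x) = \sqrt{g(x)}$. Since $a\ge 1$ and $b>0$, we have $g(x)>0$ for all $x>0$, so $f$ is smooth on $(0,\infty)$ and
\begin{align}
f''(x) \;=\; \frac{2\,g(x)g''(x)-g'(x)^2}{4\,g(x)^{3/2}}. \nonumber
\end{align}
Hence it suffices to establish the pointwise inequality $2g(x)g''(x)\le g'(x)^2$ for $x>0$.

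Next I would carry out the differentiation after the substitution $u = a+bx$, which simplifies the algebra considerably. A direct computation yields
\begin{align}
g(x) \;=\; \tfrac{u-a}{b}\,\ln u, \qquad g'(x) \;=\; \ln u + 1 - \tfrac{a}{u}, \qquad g''(x) \;=\; \tfrac{b(u+a)}{u^2}. \nonumber
\end{align}
Plugging these in and writing $y \coloneqq a/u \in (0,1]$ (since $u\ge a\ge 1$), the desired inequality $2gg''\le (g')^2$ becomes, after clearing denominators,
\begin{align}
2(1-y^2)\ln u \;\le\; \bigl(\ln u + (1-y)\bigr)^2. \nonumber
\end{align}
Expanding the right-hand side and cancelling the common term $2(1-y)\ln u$ reduces this further to
\begin{align}
2y(1-y)\ln u \;\le\; (\ln u)^2 + (1-y)^2. \nonumber
\end{align}

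Finally, since $u\ge 1$ we have $\ln u \ge 0$, and $y\in(0,1]$ gives $y\le 1$. Therefore AM--GM yields
\begin{align}
(\ln u)^2 + (1-y)^2 \;\ge\; 2(1-y)\ln u \;\ge\; 2y(1-y)\ln u, \nonumber
\end{align}
which is exactly what we need. Combining with the reduction above gives $f''(x)\le 0$ on $(0,\infty)$, proving concavity.

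The only mildly delicate point is making sure the reduction goes through without sign errors; the substitution $u=a+bx$ and $y=a/u$ keeps all quantities nonnegative and turns the problem into a two-line AM--GM argument, so I do not anticipate any genuine obstacle. The hypotheses $a\ge 1$ (ensuring $\ln u\ge 0$) and $y\le 1$ are both used, and indeed the inequality can fail if $a<1$ is small enough that $u<1$ for some $x>0$, which is why the hypothesis $a\ge 1$ matters.
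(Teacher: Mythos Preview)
Your proof is correct and follows essentially the same approach as the paper: both compute $f''$ and show its numerator is nonpositive via an elementary inequality. The paper writes out $f''$ directly and uses $(a+bx)^2\ge a^2$ followed by completing the square $-(a\ln(a+bx)-bx)^2\le 0$, while your substitution $u=a+bx$, $y=a/u$ together with AM--GM is a tidy reorganization of the same computation.
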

\begin{proof}[{\bf Proof}]
Let $a\geq 1$ and $b>0$. We will show that the second derivative of $f$ is negative for all $x>0$. We have 
\begin{align}
\forall x >0, \quad 	f''(x) & =\frac{-(a+b x)^2 \log ^2(a+b x)+2 a b x \log (a+b x)-b^2 x^2}{4 (a+b x)^2 (x \log (a+b
		x))^{3/2}},\nn \\
	& \leq \frac{-a^2 \log ^2(a+b x)+2 a b x \log (a+b x)-b^2 x^2}{4 (a+b x)^2 (x \log (a+b
		x))^{3/2}},\nn \\
	& = - \frac{- (a\log (a+b x)-b x)^2}{4 (a+b x)^2 (x \log (a+b
		x))^{3/2}} \leq 0.\nn
	\end{align}
\end{proof}
\noindent We now restate and prove Lemma \ref{lem:properties}:
	\begin{manuallemma}{1'}
	\label{lem:properties2}
	Let $\w\in \reals^d\setminus \{\bm{0}\}$ and $0< r\leq R$. Further, let $\K$ be a closed convex set such that $\cB(r) \subseteq \K \subseteq \cB(R)$. Then, the following properties hold:
	\begin{enumerate}[label=(\alph*)]
		\item $\sigma_{\K^\circ}(\w)= \gamma_{\K}(\w)$ and $(\K^\circ)^\circ =\K$.
		\item $\sigma_{\K}(\alpha \w) = \alpha \sigma_{\K}(\w)$ and $\partial \sigma_{\K}(\alpha \w)= \partial \sigma_{\K}(\w) = \argmax_{\bu \in \K} \inner{\bu}{\w}$, for all $\alpha\geq 0$.  %(Positive homogeneity)
		\item $r \|\w\|\leq \sigma_{\K}(\w) \leq R\|\w\|$, $\|\w\|/R\leq \gamma_{\K}(\w) \leq \|\w\|/r$, and $\cB(1/R)\subseteq \K^\circ \subseteq \cB(1/r)$.
		\item $\inner{\w}{\bu}\leq \sigma_{\K}(\w) \cdot \gamma_{\K}(\bu)$, for all $\bu\in \reals^d$.  (Cauchy Schwarz)
		\item $\sigma_{\K}(\w+\bu)\leq\sigma_{\K}(\w)+\sigma_{\K}(\bu)$, for all $\bu\in \reals^d$.  (Sub-additivity)
	\end{enumerate}
\end{manuallemma}
\begin{proof}[{\bf Proof}] Points $(a)$, $(b)$, and $(e)$ follow from standard results in convex analysis, see e.g.~\cite[Lemma 2]{molinaro2020} for point (a) and \cite{hiriart2004} for points $(b)$ and $(e)$. Point \emph{(d)} follows from \cite[Equation 2.3 \& Proposition 2.3]{friedlander2014gauge} and Point \emph{(a)}. We now show point (c). The first inequality in Point $(c)$ follows by the fact that 
	\begin{align}
		R \|\w\|	\stackrel{(i)}{\geq}  \sup_{\bu \in \K}\inner{\bu}{\w} = 	  \sigma_{\K}(\w) \geq \inf_{\bu \in \K} \inner{\bu}{\w} \stackrel{(ii)}{\geq} r\|\w\|, \nn
	\end{align}
	where $(i)$ and $(ii)$ follow by the assumption that $\cB(r)\subseteq \K \subseteq \cB(R)$. We now show that $\cB(1/R)\subseteq \K^\circ \subseteq \cB(1/r)$. For any $\x\in \cB(1/R)$, we have $\inner{\x}{\w}\leq 1$ for all $\w \in \K$, since $\K \subseteq \cB(R)$. By definition of the polar set, this implies that $\cB(1/R)\subseteq \K^\circ$. Now, let $\x\in \K^\circ$. This implies that $\inner{\x}{\w}\leq 1$ for all $\w\in \K$. For $\w= r \x/\|\x\|$ (which is guaranteed to be in $\K$ since $\cB(r)\subseteq \K$) this inequality implies that $\|\x\|\leq 1/r$, and so $\K^\circ \subseteq \cB(1/r)$. Finally, $\|\w\|/R\leq \gamma_{\K}(\w) \leq \|\w\|/r$ follows by point $(a)$ and the facts that $\cB(1/R)\subseteq \K^\circ \subseteq \cB(1/r)$ and that $(\cB(r) \subseteq \K \subseteq \cB(R) \implies  r \|\w\|\leq \sigma_{\K}(\w) \leq R\|\w\|)$, which we just showed.
\end{proof}
	
	\section{Adaptive OCO Algorithms}
	\label{app:OCO}
	We now present two algorithms for online convex optimization that we will build on to derive our results.
	\begin{algorithm}[H]
		\caption{FTRL-proximal on $\cB(R)$ \cite[Algorithm 2 \& Section 3.3]{mcmahan2017survey}}
		\label{alg:FTRL-proximal}
		\begin{algorithmic}[1] 
			\Require Radius $R$.
			\vspace{0.2cm} 
			\State Initialize ${\w}_1=\bm{0}$, $\G_0=\bm{0}$, and $V_0=0$.
			\State Initialize $\eta_0=\infty$ and $\Sigma_{0}=0$, 
			\For{$t=1,2,\dots$}
			\State Play ${\w}_t$ and observe ${\bnabla}_t\in \partial \ell_t({\w}_t)$.
			\State Set $\eta_t = \sqrt{2} R/\sqrt{V_t}$, $\sigma_t = 1/\eta_t -1/\eta_{t-1},$ and $\Sigma_t = \Sigma_{t-1}+\sigma_t$.  \algcomment{With the convention $1/\infty\coloneqq 0$}
			\State Set $\G_t=\G_{t-1}+{\bnabla}_t$ and $V_t = V_{t-1}+\|\bnabla_t\|^2$.
			\State Set $\w_{t+1}= \Pi_{\cB(R)}(\wtilde \w_{t+1})$, where $\wtilde \w_{t+1} \coloneqq (-\G_{t} + \sum_{s=1}^t \sigma_s {\w}_s)/ \Sigma_t$. 
			\Statex \algcomment{The vector ${\w}_{t+1}$ above satisfies ${\w}_{t+1}\in \argmin_{\w\in \cB(R)} \inner{\G_t}{\w} + \sum_{s=1}^t \sigma_s^2 \|\w-{\w}_s\|^2/2.$}
			\EndFor
		\end{algorithmic}
	\end{algorithm}
\noindent %The first algorithm, \ftrl{} (see Algorithm \ref{alg:FTRL-proximal}), will be useful to us when we know the radius $R$ of the ball $\cB(R)$ that contains $\K$. 
The algorithm requires $R$ as input, but does not require an upper bound on the norm of the input loss vectors $(\bnabla_t)$ (i.e.~the algorithm adapts to the norm of the loss vectors). We now state the guarantee of \ftrl{} which follows from \cite[Theorem 2 \& Section 3.3]{mcmahan2017survey} with the choice of learning rate $\eta_t \coloneqq   \sqrt{2} R/\sqrt{V_t}$, where $V_t \coloneqq  \sum_{s=1}^t \|{\bnabla}_s\|^2$:
	\begin{proposition}[FTRL-proximal's regret]
		\label{prop:ftrl-proximal}
		For any adversarial sequence of convex losses $(\ell_t)$ on $\cB(R)$, the iterates $(\w_t)$ of \ftrl{} with parameter $R>0$ in response to $(\ell_t)$, satisfy for all $T\geq 1$ and $\w\in \cB(R)$,
		\begin{align}
			\sum_{t=1}^T (\ell_t(\w_t)- \ell_t(\w))	\leq 	\sum_{t=1}^T \inner{\bnabla_t}{\w_t - \w} \leq 2 R\sqrt{2V_T},\nn
		\end{align}  
		where $\bnabla_t$, $t\geq 1$, is any subgradient in $\partial \ell_t(\w_t)$ and $V_T \coloneqq\sum_{t=1}^T \|{\bnabla}_t\|^2$.
	\end{proposition}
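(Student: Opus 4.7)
\bigskip

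\noindent \textbf{Proof proposal for Proposition \ref{prop:ftrl-proximal}.}

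The first inequality is immediate from convexity: for each $t$ and any subgradient $\bnabla_t \in \partial \ell_t(\w_t)$, one has $\ell_t(\w_t) - \ell_t(\w) \le \inner{\bnabla_t}{\w_t - \w}$, and summing over $t$ yields the claim. The rest of the work is bounding the linearized regret.

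The plan is to use the standard FTRL-proximal analysis from \cite{mcmahan2017survey}. Define the proximal regularizer $R_T(\w) \coloneqq \sum_{s=1}^T \tfrac{\sigma_s}{2}\|\w-\w_s\|^2$, where by construction $\sum_{s=1}^t \sigma_s = 1/\eta_t$. The cumulative objective $F_{t+1}(\w) \coloneqq \inner{\G_t}{\w}+R_t(\w)$ is $1/\eta_t$-strongly convex (since $R_t$ is), and $\w_{t+1}$ is its constrained minimizer on $\cB(R)$. The ``strong FTRL lemma'' (i.e., be-the-regularized-leader inequality plus one-step deviation) gives
\begin{align}
\sum_{t=1}^T \inner{\bnabla_t}{\w_t - \w} \;\le\; R_T(\w) \;+\; \sum_{t=1}^T\!\left(\inner{\bnabla_t}{\w_t-\w_{t+1}} - \tfrac{1}{2\eta_t}\|\w_{t+1}-\w_t\|^2\right)\!. \nonumber
\end{align}
By Fenchel--Young (or completing the square), the parenthesised quantity is at most $\tfrac{\eta_t}{2}\|\bnabla_t\|^2$, so the linearized regret is bounded by $R_T(\w) + \sum_{t=1}^T \tfrac{\eta_t}{2}\|\bnabla_t\|^2$.

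For the first term, since both $\w$ and each $\w_s$ lie in $\cB(R)$, we have $\|\w-\w_s\|^2 \le 4R^2$, hence $R_T(\w) \le 2R^2 \sum_{s=1}^T \sigma_s = 2R^2/\eta_T = \sqrt{2}\,R\sqrt{V_T}$ after plugging in $\eta_T = \sqrt{2}R/\sqrt{V_T}$. For the stability sum, I would invoke the elementary telescoping bound $\sum_{t=1}^T \|\bnabla_t\|^2/\sqrt{V_t} \le 2\sqrt{V_T}$ (which follows by comparing sums to integrals of $1/\sqrt{x}$), so
\begin{align}
\sum_{t=1}^T \tfrac{\eta_t}{2}\|\bnabla_t\|^2 \;=\; \tfrac{\sqrt{2}R}{2}\sum_{t=1}^T\frac{\|\bnabla_t\|^2}{\sqrt{V_t}} \;\le\; \sqrt{2}\,R\sqrt{V_T}. \nonumber
\end{align}
Adding the two contributions yields $2\sqrt{2}\,R\sqrt{V_T} = 2R\sqrt{2V_T}$, as claimed.

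The only non-routine step is the strong FTRL lemma itself; everything else is direct plug-in. Since the learning rate $\eta_t$ is data-dependent (through $V_t$), one must be slightly careful that the strong-convexity parameter $1/\eta_t$ used for the per-step stability bound matches the regularization already accrued up through round $t$ (which it does by the telescoping choice of $\sigma_t$). This bookkeeping is the main potential pitfall, but it is exactly what the proximal construction is designed to handle, so no additional tricks are needed.
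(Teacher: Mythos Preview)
Your proposal is correct and aligns with the paper's approach: the paper does not give a proof at all, but simply states that the bound ``follows from \cite[Theorem~2 \& Section~3.3]{mcmahan2017survey} with the choice of learning rate $\eta_t = \sqrt{2}R/\sqrt{V_t}$.'' What you have written is precisely the standard FTRL-proximal analysis from that reference, instantiated with quadratic proximal regularizers on $\cB(R)$, so you have effectively filled in the details behind the citation.
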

	\begin{algorithm}[H]
		\caption{\freegrad{} \cite{mhammedi2020} with the unconstrained-to-constrained reduction due to \cite{cutkosky2020parameter}.}
		\label{alg:freegrad}
		\begin{algorithmic}[1] 
			\Require Parameters $\epsilon>0$ and $R>0$.
			\vspace{0.2cm} 
			\State Initialize $\w_1=\s_1=\bm{0}$, $B_0 = \epsilon$, $\G_0=\bm{0}$, and $Q_0=\epsilon^2$. \label{line:one}
			\For{$t=1,2,\dots$}
			\State Play $\w_t$ and observe $\bnabla_t\in \partial \ell_t(\w_t)$. 
			\State Set $B_t = B_{t-1} \vee\|\bnabla_t\|$ and $\bar\bnabla_t \coloneqq \bnabla_t \cdot  B_{t-1}/B_t$.
			\State Set $\tilde \bnabla_t = \bar \bnabla_t - \mathbb{I}_{\inner{\bnabla_t}{\w_t}<0} \cdot  \inner{\bar \bnabla_t}{\s_t} \s_t$. \algcomment{$\s_t$ is defined on Lines \ref{line:one} and \ref{line:eight}}
			\State Set $\G_t=\G_{t-1}+\tilde\bnabla_t$ and $Q_t = Q_{t-1}+\|\tilde\bnabla_t\|^2$. 
			\State  \label{eq:output} Set $\displaystyle  \x_{t+1} \coloneqq -\G_{t} \cdot    \frac{ (2Q_{t} +B_{t} \|\G_{t}\|)\cdot \epsilon^2 }{2(Q_{t}+ B_t\|\G_{t} \|)^2 \ \sqrt{ Q_{t}}} \cdot \exp\left(\frac{\|\G_{t}\|^2}{2 Q_{t} + 2 B_t \|\G_{t}\|} \right)$.
			\State Set $\w_{t+1}=\Pi_{\cB(R)}(\x_{t+1})$ and $\s_{t+1} =  \x_{t+1}/\|\x_{t+1}\| \cdot  \mathbb{I}_{\|\x_{t+1}\|>R}$. \algcomment{We use the convention that $0/0=0$.} \label{line:eight}
			\EndFor
		\end{algorithmic}
	\end{algorithm}
\noindent Another algorithm that will be instrumental to developing our projection-free algorithms is \freegrad{} (see Algorithm \ref{alg:freegrad}).
An important property of \freegrad{} that will be useful to us when developing a projection-free algorithm for strongly convex losses (see Section \ref{sec:strong}) is that its regret scales directly with the norm $\|\w\|$ of the comparator, as opposed to the worst-case $R$. 
We note that \freegrad{} internally clips the sequence of observed sub-gradients. In our application of \freegrad{}, it will be useful to state the guarantee of its iterates $(\w_t)$ in response to the sequence of clipped subgradients $(\bar \bnabla_t)$: 
	\begin{proposition}[FreeGrad's clipped linearized regret]
		\label{prop:freegrad}
		For any adversarial sequence of convex losses $(\ell_t)$ on $\reals^d$, the iterates $(\w_t)$ of \freegrad{} with parameter $\epsilon,R>0$, satisfy for all $T\geq 1$ and $\w\in \cB(R)$,	
		\begin{align}
			\sum_{t=1}^T \inner{\bar \bnabla_t}{\w_t - \w} \leq  2 \|\w\| \sqrt{\bar V_T\ln_+ \left(\frac{ 2\|\w\| \bar V_T }{\epsilon^2} \right)}     +  4 B_T \|\w\|  \ln \left(  \frac{4B_T  \|\w\| \sqrt{\bar V_T}  }{ \epsilon^2} \right)+\epsilon , \nn
		\end{align}
		where $\bnabla_t\in \partial \ell_t(\w_t)$ (any sub-grad.),  $B_T \coloneqq \epsilon \vee \max_{t\in[T]} \|\bnabla_t\|$, $\bar \bnabla_t  \coloneqq \bnabla_t \cdot B_{t-1}/B_t$, and $\bar V_T \coloneqq \epsilon^2+ \sum_{t=1}^T \|\bar \bnabla_t\|^2$.% Furthermore, we have $\|\bar \bnabla_t\|\leq \|\bnabla_t\|$, for all $t$.
	\end{proposition}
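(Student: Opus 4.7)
The plan is to combine three ingredients: an unconstrained-to-constrained reduction that converts the left-hand constrained linearized regret on $\cB(R)$ into an unconstrained linearized regret of the \freegrad{} iterates $(\x_t)$ on the surrogate subgradients $(\tilde\bnabla_t)$; the potential/coin-betting analysis of the base \freegrad{} update; and a telescoping bookkeeping of the clipping mechanism that replaces $\bnabla_t$ by $\bar\bnabla_t$.

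First I would establish, for every $\w\in\cB(R)$, the per-round inequality $\inner{\bar\bnabla_t}{\w_t-\w}\leq \inner{\tilde\bnabla_t}{\x_t-\w}$. When $\x_t\in\cB(R)$ this is an equality since $\s_t=\bm{0}$, $\w_t=\x_t$, and $\tilde\bnabla_t=\bar\bnabla_t$. When $\x_t\notin\cB(R)$, one has $\s_t=\x_t/\|\x_t\|$ and $\w_t=R\s_t$; plugging into $\tilde\bnabla_t=\bar\bnabla_t-\indic_{\inner{\bnabla_t}{\w_t}<0}\inner{\bar\bnabla_t}{\s_t}\s_t$ and using $\inner{\s_t}{\w-\w_t}\leq 0$ for $\w\in\cB(R)$ (from $\inner{\s_t}{\w}\leq \|\w\|\leq R$) together with a case split on the indicator closes the argument. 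This mirrors the surrogate-loss computation of Lemma~\ref{lem:mainreductionperfect}, but with the distance-to-ball $(\|\x\|-R)\vee 0$ in place of the Gauge distance, which is exactly the unconstrained-to-constrained reduction of \cite{cutkosky2020parameter}.

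Second I would invoke the unconstrained potential analysis from \cite{mhammedi2020}. The update on Line~\ref{eq:output} is (a scaling of) the gradient of a potential of the form $\Phi_t(\G,Q,B)\propto \tfrac{\epsilon^2}{\sqrt{Q}}\exp\bigl(\tfrac{\|\G\|^2}{2Q+2B\|\G\|}\bigr)$; a careful per-round Taylor-style inequality shows that $\Phi_t+\sum_{s\leq t}\inner{\tilde\bnabla_s}{\x_s-\w}$ is bounded below (essentially an $\epsilon$-valued sub/super-potential property). Taking the Fenchel dual of $\Phi_T$ at the comparator $\w$ and using $\Phi_0=\epsilon$ then yields the unconstrained bound $\sum_{t=1}^T\inner{\tilde\bnabla_t}{\x_t-\w}\leq 2\|\w\|\sqrt{\bar V_T\,\ln_+\!\bigl(2\|\w\|\bar V_T/\epsilon^2\bigr)}+\epsilon$ plus a clipping correction.

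The remaining clipping correction is a standard telescoping argument: since $B_t$ is non-decreasing and $\bar\bnabla_t=\bnabla_t\cdot B_{t-1}/B_t$, the per-round discrepancy between running the analysis with the raw and clipped subgradients introduces a bounded increment each time a new max subgradient appears, summing to $O\bigl(B_T\|\w\|\ln(\cdots)\bigr)$, which yields the $4B_T\|\w\|\ln(4B_T\|\w\|\sqrt{\bar V_T}/\epsilon^2)$ term. The main obstacle is clearly the per-round potential-decrease step inside the \freegrad{} analysis: it reduces to a non-trivial exponential inequality that is the heart of the coin-betting machinery; once that inequality is in hand, the reduction (step one) and the clipping accounting (step three) are essentially bookkeeping modeled on Lemma~\ref{lem:mainreductionperfect} and on standard adaptive-Lipschitz tricks.
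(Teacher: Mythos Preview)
Your proposal is correct and follows essentially the same approach as the paper: the paper's own ``proof'' is a one-line citation stating that the result follows from \cite[Theorem~6]{mhammedi2020} (the unconstrained \freegrad{} potential analysis with clipping) combined with \cite[Theorem~2]{cutkosky2020parameter} (the unconstrained-to-constrained reduction), which is precisely your Step~2 and Step~1 respectively. Your Step~3 is not really a separate ingredient --- the $4B_T\|\w\|\ln(\cdots)$ term already comes out of the \freegrad{} analysis in \cite{mhammedi2020} (it is the price paid inside the potential argument for the running-max Lipschitz hint, not a separate telescoping pass on $\bnabla_t$ versus $\bar\bnabla_t$) --- but this is a minor point of attribution, not a gap.
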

\noindent Technically, the original version of \freegrad{} applies to unbounded OCO, whereas the version of \freegrad{} in Algorithm \ref{alg:freegrad} generates outputs in $\cB(R)$ using the same constrained-to-unconstrained reduction due to \cite{cutkosky2020parameter}. We constrain the output of \freegrad{} to ensure that the iterates $(\w_t)$ of Algorithm \ref{alg:projectionfreewrapper} in the setting of Section \ref{sec:strong} are bounded, which in turn ensures that the approximation error of $\O_{\K^\circ}$ in Algorithm \ref{alg:projectionfreewrapper} is not too large. 
The proof of Proposition \ref{prop:freegrad} follows by \cite[Theorem 6]{mhammedi2020} and \cite[Theorem 2]{cutkosky2020parameter}.

	We close this section by mentioning that the unbounded version of \freegrad{}, where $\tilde \bnabla_t = \bnabla_t$ and $\w_t = \x_t$,  is an FTRL instance with the specific sequence of regularizers $(\phi^{*}_t)$, where $\phi^*_t$ is the Fenchel dual of
	\begin{align}
		\phi_t(\x) \coloneqq \frac{1}{\sqrt{V_{t-1}}}\exp \left( \frac{\|\x\|^2}{2 V_{t-1} + 2 \|\x\|} \right), \quad V_t \coloneqq \epsilon^2 + \sum_{s=1}^t \|\bnabla_s\|^2.\nn
\end{align}
In particular, the iterate $\x_{t+1}$ on Line \ref{eq:output} of Algorithm \ref{alg:freegrad} is given by 
\begin{align}
	\x_{t+1} \in \argmin_{\x\in \reals^d}  \left\{\inner{\G_{t}}{\x} + \phi^*_t(\x) \right\}= \left\{\nabla  \phi_t(-\G_t) \right\}.\nn
	\end{align}
We recall that the Fenchel dual $\phi^*_t$ of $\phi_t$ is defined as $\phi_t^*(\x) = \sup_{\bu \in \reals^d} \left\{ \inner{\bu}{\x} - \phi_t(\bu) \right\}$.

	\section{Linear Optimization on $\K^\circ$ using a Membership Oracle for $\K$}
	\label{app:opt}
	In this section, we restate and prove a slight extension of \cite[Lemma 9 \& 10]{lee2018efficient}, which we need in the proof of Proposition \ref{prop:subgradient} at the end of this section. Our extension involves showing that the approximate subgradient in \cite[Lemma 10]{lee2018efficient} has bounded norm in high probability, which we need in the proof of Proposition \ref{prop:subgradient}. We also make the limiting argument used in the proof of \cite[Lemma 9 \& 10]{lee2018efficient} more precise; in particular, for any convex function $f:\reals^d \rightarrow \reals$ (not necessarily differentiable) and $\varepsilon>0$, we explicitly construct a twice differentiable function $\tilde f_{\varepsilon}$ such that $\|f-\tilde f_{\varepsilon}\|_{\infty}\leq \varepsilon$. This will then allow us to make a limiting argument precise via Fatou's lemma to get the final result we want (see proof of Lemma \ref{lem:separate_conv_funcnew}).

	Throughout this section, we let $U_{\infty}(\bu,\nu)$ denote the uniform distribution over $\cB_{\infty}(\bu,\nu)$, for any $\bu\in \reals^d$ and $\nu >0$. The next lemma is taken from \cite[Lemma 9]{lee2018efficient} with only minor notation adjustments.
	\begin{lemma}
		\label{lem:convex_almost_flat} For any $\w\in \reals^d$, $0<\nu_{2}\leq \nu_{1}$, and twice
		differentiable convex function $h$ defined on $B_{\infty}(\w,\nu_{1}+\nu_{2})$
		with $\norm{\nabla h(\z)}_{\infty}\leq L$ for any $z\in \cB_{\infty}(\w,\nu_{1}+\nu_{2})$
		we have 
		\[
		\E_{\bu\sim U_{\infty}(\w,\nu_{1})}\E_{\z\sim U_{\infty}(\bu,\nu_{2})}\norm{\nabla h(\z)-\g(\bu)}_{1}\leq \frac{\nu_{2} d^{3/2}L}{\nu_{1}} ,
		\]
		where $\g(\bu) \coloneqq \E_{\z\sim U_{\infty}(\bu,\nu_{2})}[\nabla h(\z)]$ and $U_{\infty}(\bu,\nu)$ denotes the uniform distribution over $\cB_{\infty}(\bu,\nu)$.
	\end{lemma}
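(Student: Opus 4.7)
My proof plan is to first use the identity $\g(\bu) = \E_{\z' \sim U_\infty(\bu,\nu_2)}[\nabla h(\z')]$ and Jensen's inequality to reduce the claim to bounding
\[
\E_{\bu}\E_{\z,\z'\mid\bu}\|\nabla h(\z) - \nabla h(\z')\|_1,
\]
where $\z, \z'$ are i.i.d.\ uniform on $\cB_\infty(\bu,\nu_2)$. The substitution $\bv = \z - \bu$, $\bv' = \z' - \bu$ makes $\bv, \bv'$ i.i.d.\ uniform on $\cB_\infty(\bm{0},\nu_2)$ and independent of $\bu$, while keeping $\bu + \bv, \bu + \bv' \in \cB_\infty(\w,\nu_1+\nu_2)$, where $h$ is twice differentiable.

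Next, I would apply the fundamental theorem of calculus to the gradient:
\[
\nabla h(\bu + \bv) - \nabla h(\bu + \bv') = \int_0^1 \nabla^2 h\bigl(\bu + \bv' + t(\bv - \bv')\bigr)\,(\bv - \bv')\, dt,
\]
which reduces the task to controlling an averaged matrix norm of $\nabla^2 h$ paired with a vector norm of $\bv - \bv'$. Convexity of $h$ plays a pivotal dual role: the Hessian $\nabla^2 h$ is positive semidefinite, so $\|\nabla^2 h\|_{\mathrm{op}} \le \mathrm{tr}(\nabla^2 h)$, and (equivalently) off-diagonal entries are dominated by diagonal ones via $|\partial_{ij}h|\le \tfrac12(\partial_{ii}h + \partial_{jj}h)$.

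The most delicate ingredient is an averaged Hessian-trace bound, exploiting that $\bu$ is uniform on the large box $\cB_\infty(\w,\nu_1)$. For any admissible shift $\bs \in \cB_\infty(\bm{0},\nu_2)$ and any coordinate $i$, Fubini and the fundamental theorem of calculus along the $i$-axis yield
\[
\E_{\bu \sim U_\infty(\w,\nu_1)}\!\bigl[\partial_{ii}h(\bu+\bs)\bigr] \;=\; \frac{1}{2\nu_1}\,\E_{\bu_{-i}}\!\bigl[\partial_i h(w_i+\nu_1+s_i,\bu_{-i}+\bs_{-i}) - \partial_i h(w_i-\nu_1+s_i,\bu_{-i}+\bs_{-i})\bigr] \;\le\; \frac{L}{\nu_1},
\]
because the bracketed difference is at most $2L$ by the hypothesis $\|\nabla h\|_\infty \le L$. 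Summing over $i$ gives $\E_\bu[\mathrm{tr}(\nabla^2 h(\bu+\bs))] \le dL/\nu_1$ for every admissible shift.

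Finally, the pieces are combined through a chain of norm inequalities of the form $\|Ax\|_1 \le \sqrt{d}\,\|A\|_{\mathrm{op}}\,\|x\|_2$, with $A = \nabla^2 h$ and $x = \bv - \bv'$; Fubini then decouples $\E_\bu\|\nabla^2 h(\cdot)\|_{\mathrm{op}} \le \E_\bu\,\mathrm{tr}(\nabla^2 h(\cdot)) \le dL/\nu_1$ from the remaining $(\bv,\bv')$-dependence, after which the concentration estimate $\E\|\bv-\bv'\|_2 \le \sqrt{d}\,\nu_2$ finishes the bound. The main obstacle I expect is getting the dimension dependence exactly right: a naive combination of these ingredients yields a larger power of $d$ than claimed, and obtaining the precise $d^{3/2}$ factor requires tracking which $\sqrt{d}$ factor arises from the $\ell_1\leftrightarrow\ell_2$ passage versus from $\E\|\bv-\bv'\|_2$, and ordering the Fubini-and-PSD arguments so that the Hessian trace bound is used without squaring.
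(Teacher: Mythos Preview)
Your plan is sound in spirit and the individual ingredients are correct: the Jensen reduction, the line-integral representation of $\nabla h(\z)-\nabla h(\z')$, the PSD estimate $\|\nabla^2 h\|_{\mathrm{op}}\le \mathrm{tr}(\nabla^2 h)$, and especially the averaged trace bound $\E_{\bu}\mathrm{tr}\,\nabla^2 h(\bu+\bs)\le dL/\nu_1$ via the one–dimensional fundamental theorem of calculus. However, the combination you describe yields the bound $d^{2}\nu_2 L/\nu_1$, not $d^{3/2}\nu_2 L/\nu_1$. Concretely, the chain
\[
\|Ax\|_1\;\le\;\sqrt{d}\,\|A\|_{\mathrm{op}}\,\|x\|_2,\qquad
\E_{\bu}\|A\|_{\mathrm{op}}\le\E_{\bu}\mathrm{tr}(A)\le \tfrac{dL}{\nu_1},\qquad
\E\|\bv-\bv'\|_2\le \sqrt{d}\,\nu_2
\]
multiplies to $\sqrt{d}\cdot(dL/\nu_1)\cdot\sqrt{d}\,\nu_2=d^{2}L\nu_2/\nu_1$. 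Each of these three factors is individually sharp for some PSD matrix and some $x$, and none of the natural reorderings (e.g.\ replacing $\|A\|_{\mathrm{op}}$ by $\|A\|_F$, or working entrywise via $\sum_{i,j}|A_{ij}|\le d\,\mathrm{tr}(A)$, or applying Cauchy--Schwarz across $\E_\bu$ before the trace step) improves the exponent. Your remark that the gap is a matter of ``tracking which $\sqrt{d}$ factor arises'' and ``ordering the Fubini-and-PSD arguments'' is therefore too optimistic: within the framework you describe, $d^2$ appears to be the floor.

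The paper itself does not prove this lemma; it simply quotes \cite[Lemma~9]{lee2018efficient}. So there is no in-paper argument to compare against. If you want the stated $d^{3/2}$, you will need to consult the Lee--Sidford--Vempala proof and import whatever additional idea it uses; your outline, as written, only establishes the (still useful, but weaker) bound $Cd^{2}\nu_2 L/\nu_1$.
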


	\begin{algorithm}
		\caption{Approximate Subgradient Oracle \cite{lee2018efficient}.}
		\label{alg:optimizeold}
		\begin{algorithmic}[1]
			\Require Inputs $\nu_1>0$, $L>0$, $\varepsilon>0$, and $\w\in \reals^d$. 
			\Statex~~~~~~~~~~ Function $\tilde f :\reals^d \rightarrow \reals$. 
			\vspace{0.2cm} 
			\State Set $\nu_2=\sqrt{\frac{\varepsilon \nu_1 }{d^{1/2}L }}.$ 
			\State Sample $\bu \in \cB_{\infty}(\w,\nu_1)$ and $\z\in \cB_{\infty}(\bu,\nu_2)$ independently and uniformly at random.
			\For{$i=1,2,\dots,d$}
			\State Let $\w'_i$ and $\w_i$ be the end point of the interval $\cB_{\infty}(\bu,\nu_2) \cap \{\z+\lambda \e_i : \lambda \in \reals\}$.
			\State Set $\tilde s_i =\frac{1}{2\nu_2} (\tilde f( \w'_i) -\tilde f(\w_i) )$.
			\EndFor
			\State Set $\tilde \s = (\tilde s_1, \dots, \tilde s_d)^\top$.
			\State Return $\tilde \s$. 
		\end{algorithmic}
	\end{algorithm}
	\noindent We now restate and slightly extend \cite[Lemma 10]{lee2018efficient} for twice differentiable functions. We then extend the result to non-differentiable functions using Fatou's lemma---see Lemma~\ref{lem:separate_conv_funcnew}.
	\begin{lemma}
		\label{lem:separate_conv_func} Let $L,\nu_{1}>0$, $\w \in \reals^d$, and $h:\reals^d \rightarrow \reals$ be a twice differentiable convex
		function such that $\norm{\nabla h(\x)}_{\infty}\leq L$, for any $\x\in B_{\infty}(\w,2\nu_{1})$. Also, let $\varepsilon\in(0 , \nu_{1}\sqrt{d}L]$ and $\tilde h:\reals^d \rightarrow \reals$ be such that $\|\tilde h - h\|_{\infty} \leq\varepsilon'$ for some $\varepsilon'>0$. Then, the variables $\nu_2$, $\bu$, $\z$, $\tilde \s$, and $(\w_i,\w_i')_{i\in[d]}$ generated during a run of Alg.~\ref{alg:optimizeold}  with input $(\tilde h,\nu_{1}, L,\varepsilon,\w)$ satisfy
		\begin{gather}
			\forall \v\in\reals^d, \ h(\v)\geq h(\w)+\left\langle \tilde \s,\v-\w\right\rangle - \| \nabla h(\z)- \tilde \s\|_1  \cdot \norm{\v-\w}_{\infty}-4d \nu_{1}L, \nn 
			\intertext{Furthermore, for $\tilde s_i \coloneqq (h(\w_i')- h(\w_i))/(2\nu_2)$, for all $i\in[d]$, we have }
			\| \nabla h(\z)- \tilde \s\|_1 \leq  \| \nabla h(\z)- \tilde \s\|_1 + d \varepsilon'/\nu_2 \quad \text{and} \quad \E[\| \nabla h(\z)- \tilde \s\|_1]\leq 2 d^{5/4}\sqrt{{L \varepsilon }/{\nu_1}}.\nn
		\end{gather}
	\end{lemma}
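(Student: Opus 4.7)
The plan is to establish the three claims of the lemma in sequence, leveraging only the convexity of $h$, the gradient bound $\|\nabla h\|_\infty \leq L$ on $B_\infty(\w,2\nu_1)$, and Lemma~\ref{lem:convex_almost_flat}. First, note that $\nu_2 \leq \nu_1$ follows from the hypothesis $\varepsilon \leq \nu_1\sqrt{d}L$, so $\bu \in B_\infty(\w,\nu_1)$ and $\z, \w_i, \w_i' \in B_\infty(\bu,\nu_2) \subseteq B_\infty(\w, 2\nu_1)$; in particular, $\|\nabla h(\z)\|_1 \leq dL$ and $\|\nabla h(\w)\|_1 \leq dL$.

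For the first-order lower bound on $h(\v)$, I would apply convexity at the point $\z$: $h(\v) \geq h(\z) + \langle \nabla h(\z), \v - \z\rangle$, then split $\v - \z = (\v-\w) + (\w-\z)$ and further decompose $\langle \nabla h(\z), \v-\w\rangle = \langle \tilde\s, \v-\w\rangle + \langle \nabla h(\z) - \tilde\s, \v-\w\rangle$. By H\"older's inequality, the last term is at least $-\|\nabla h(\z) - \tilde\s\|_1 \|\v-\w\|_\infty$, while the cross term satisfies $|\langle \nabla h(\z),\w-\z\rangle| \leq \|\nabla h(\z)\|_1 \|\w-\z\|_\infty \leq dL(\nu_1+\nu_2) \leq 2dL\nu_1$. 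Converting $h(\z)$ back to $h(\w)$ uses convexity once more at $\w$: $h(\z) \geq h(\w) + \langle \nabla h(\w), \z-\w\rangle \geq h(\w) - 2dL\nu_1$. Summing the two penalties gives the claimed $4d\nu_1 L$ slack.

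For the comparison between the algorithm's output $\tilde\s$ (computed from $\tilde h$) and the ideal finite-difference vector $\s^\star$ defined by $s^\star_i \coloneqq (h(\w_i')-h(\w_i))/(2\nu_2)$ (the object that the lemma's second formula is, I believe, notationally conflating with $\tilde\s$), I would simply bound coordinate-wise
\[
|\tilde s_i - s^\star_i| \leq \frac{|\tilde h(\w_i') - h(\w_i')| + |\tilde h(\w_i) - h(\w_i)|}{2\nu_2} \leq \frac{\varepsilon'}{\nu_2},
\]
sum over $i$, and apply the triangle inequality $\|\nabla h(\z) - \tilde\s\|_1 \leq \|\nabla h(\z) - \s^\star\|_1 + d\varepsilon'/\nu_2$.

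For the expectation bound, the key identity is $\E[s^\star_i \mid \bu] = g_i(\bu)$ where $g_i(\bu) \coloneqq \E_{\z\sim U_\infty(\bu,\nu_2)}[\partial_i h(\z)]$. This follows by writing $h(\w_i')-h(\w_i) = \int_{-\nu_2}^{\nu_2} \partial_i h(\z + (u_i+\tau-z_i)\e_i)\,d\tau$ (which uses $\w_i'-\w_i = 2\nu_2\e_i$ and $h$ being $C^2$), then integrating over $\z$ uniformly on $B_\infty(\bu,\nu_2)$; the $z_i$-integration factors out trivially and the remaining integral is exactly the average of $\partial_i h$ over the box. By the triangle inequality,
\[
\E\bigl[\|\nabla h(\z) - \s^\star\|_1\bigr] \leq \E\bigl[\|\nabla h(\z) - \g(\bu)\|_1\bigr] + \E\bigl[\|\g(\bu) - \s^\star\|_1\bigr].
\]
The first term is directly bounded by $\nu_2 d^{3/2} L/\nu_1$ via Lemma~\ref{lem:convex_almost_flat}. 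For the second term, I would use Jensen's inequality and the integral representation of $s^\star_i$ to reduce it to $\sum_i (2\nu_2)^{-1}\int_{-\nu_2}^{\nu_2} \E[|\partial_i h(\z_i(\tau)) - g_i(\bu)|]\,d\tau$, where $\z_i(\tau)$ has the same distribution as $\z$ but with the $i$-th coordinate deterministically set; an application of Lemma~\ref{lem:convex_almost_flat} (with the roles of $\z$ and the frozen coordinate suitably exchanged) bounds this by the same $\nu_2 d^{3/2}L/\nu_1$. Plugging in $\nu_2 = \sqrt{\varepsilon\nu_1/(d^{1/2}L)}$ gives $\nu_2 d^{3/2}L/\nu_1 = d^{5/4}\sqrt{L\varepsilon/\nu_1}$, and the factor of two arises from the two triangle-inequality terms.

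The main technical obstacle is the bound on $\E[\|\g(\bu) - \s^\star\|_1]$: Lemma~\ref{lem:convex_almost_flat} is stated for point evaluations of $\nabla h$, and one must show that replacing each $\partial_i h(\z)$ by its one-dimensional average $s^\star_i$ does not degrade the rate. The above reduction to a fibered expectation should work because the finite difference is just an average of $\partial_i h$ along a line, and averaging can only reduce the $L_1$ deviation from $g_i(\bu)$; the remainder of the proof is routine bookkeeping.
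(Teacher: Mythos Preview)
Your proposal is correct and follows essentially the same route as the paper. All three parts match: convexity at $\z$ plus H\"older and a Lipschitz/convexity step to pass from $h(\z)$ to $h(\w)$ for the first inequality; a coordinatewise $\varepsilon'/\nu_2$ bound for the second; and the triangle inequality through $\g(\bu)$ together with Lemma~\ref{lem:convex_almost_flat} for the third.

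One small point of phrasing on the third part: you describe bounding $\E[\|\g(\bu)-\s^\star\|_1]$ by ``an application of Lemma~\ref{lem:convex_almost_flat} with the roles of $\z$ and the frozen coordinate suitably exchanged.'' The paper's argument here is actually simpler and does not need a second or modified application of the lemma. After writing $s^\star_i$ as the line average $\tfrac{1}{2\nu_2}\int \partial_i h(\z_{-i},t)\,dt$ and pushing the absolute value inside, the integrand no longer depends on $z_i$; replacing the (trivial) $z_i$-average by the $t$-average recovers exactly the uniform distribution on $B_\infty(\bu,\nu_2)$, so $\E_{\z}[|s^\star_i-g_i(\bu)|]\le \E_{\z}[|\partial_i h(\z)-g_i(\bu)|]$. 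Summing over $i$ shows the second triangle-inequality term is bounded by the first, and a single invocation of Lemma~\ref{lem:convex_almost_flat} handles both, giving the factor~$2$. Your last paragraph (``averaging can only reduce the $L_1$ deviation'') captures exactly this, so this is only a remark about wording, not a gap.
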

	\begin{proof}[{\bf Proof}]
		Let $\bu,\w_i$, and $\w_i'$ be the random vectors generated during the call to Algorithm \ref{alg:optimizeold} in the lemma's statement. Further, let $\tilde \s \in \reals^d$ be such that $\tilde s_i \coloneqq (h(\w_i') - h(\w_i))/(2\nu_2)$, for $i\in[d]$. Applying the convexity of $h$ yields, for any $\v \in \reals^d$,
		\begin{align*}
			h(\v) & \geq h(\z)+\left\langle \nabla h(\z),\v-\z\right\rangle \\
			& =h(\z)+\left\langle \tilde \s,\v-\w\right\rangle +\left\langle \nabla h(\z)-\tilde \s,\v-\w\right\rangle +\left\langle \nabla h(\z),\w-\z\right\rangle \\
			& \geq h(\w)+\left\langle \tilde \s,\v-\w\right\rangle -\norm{\nabla h(\z)-\tilde \s}_{1}\norm{\v-\w}_{\infty}-\norm{\nabla h(\z)}_{\infty}\norm{\w-\z}_{1}.
		\end{align*}
		Now, $\norm{\nabla h(\z)}_{\infty}\leq L$ and $\norm{\w-\z}_{1}\leq d\cdot\norm{\w-\z}_{\infty}\leq 2 d(\nu_{1}+\nu_{2})$
		by definition of $\z$ in Algorithm \ref{alg:optimizeold}. Furthermore, since $\varepsilon\leq \nu_1 \sqrt{d} L$, we have $\nu_{2}=\sqrt{\frac{\varepsilon \nu_{1}}{{d}^{1/2}L}}\leq \nu_{1}$. Plugging these facts in the inequality of the previous display implies 
		\begin{align}
			h(\v)  & \geq h(\w)+\left\langle \tilde \s,\v-\w\right\rangle -\norm{\nabla h(\z)-\tilde \s}_{1}\norm{\v-\w}_{\infty}-4 d \nu_1 L.\nn
		\end{align}
		This shows the first inequality we are after. Now, by the definition of $\tilde \s$ in Algorithm \ref{alg:optimizeold} and the fact that $\tilde h$ satisfies $\|\tilde h- h\|_{\infty}\leq \varepsilon'$, we have 
		$$ \norm{\nabla h(\z)-\tilde \s}_{1}  \leq    \norm{\nabla h(\z)-\tilde \s}_{1} + d\varepsilon'/\nu_2.
		$$
		It remains to bound $\E[\|\nabla h(\z) -\tilde \s\|]$. For this, let $\g(\bu)\coloneqq \E_{\z\sim U_{\infty}(\bu, \nu_2)}[\nabla h(\z)]$ and note that
		\begin{align*}
			\E_{\z}\left[ \left|\tilde s_{i}-[\g(\bu)]_{i}\right|\right] & =\E_{\z}\left[\left|\frac{h(\w'_{i})-h(\w_{i})}{2\nu _{2}}-[\g(\bu)]_{i}\right| \right], \\
			& \leq\E_{\z} \left[ \frac{1}{2\nu_{2}}\int\left|\frac{dh}{dw_{i}}(\z+\lambda \e_{i})-[\g(\bu)]_{i}\right|d\lambda \right], \\
			& =\E_{\z}\left[\left|\frac{dh}{dw_{i}}(\z)-[\g(\bu)]_{i}\right|\right],
		\end{align*}
		where we used that both $\z+\lambda \e_{i}$ and $\z$ are uniform distribution
		on $\cB_{\infty}(\bu ,\nu_{2})$ in the last line. Hence, we have
		\begin{align}
			\E_{\z}\left[ \norm{\tilde \s-\nabla h(\z)}_{1} \right]\leq\E_{z}\left[\norm{\nabla h(\z)-\g(\bu)}_{1}+\E_{\z}\norm{\tilde \s-\g(\y)}_{1}\right]&\leq2\E_{\z}\left[\norm{\nabla h(\z)-\g(\y)}_{1}\right]\leq 2d^{5/4}\sqrt{\frac{\varepsilon L}{\nu_1}},\nn
		\end{align}
		where the last inequality follows by Lemma \ref{lem:convex_almost_flat} and the fact that $\nu_{2}=\sqrt{\frac{\varepsilon \nu_{1}}{d^{1/2}L}}\leq \nu_{1}$.
	\end{proof}

	\begin{lemma}
		\label{lem:separate_conv_funcnew} Let $L,\nu_{1}>0$, $\w \in \reals^d$, and $f:\reals^d \rightarrow \reals$ be convex (not necessarily differentiable)
		function such that $\sup_{\g \in \partial  f(\x)}\norm{\g}\leq L$, for any $\x\in B_{\infty}(\w,2\nu_{1})$. Also, let $\varepsilon\in(0 , \nu_{1}\sqrt{d}L]$ and $\tilde f:\reals^d \rightarrow \reals$ be such that $\|\tilde f - f\|_{\infty} \leq \varepsilon$. Then, the output $\tilde \s$ of Algorithm \ref{alg:optimizeold}  with input $(\tilde f,\nu_{1}, L,\varepsilon,\w)$ satisfies
		\begin{gather}
			\forall \v\in\reals^d, \ f(\v)\geq f(\w)+\left\langle \tilde \s,\v-\w\right\rangle - X \cdot \norm{\v-\w}_{\infty}-4d \nu_{1}L;\nn \\
		\| \tilde \s\|_{\infty} \leq L+d^{1/4} \sqrt{L \varepsilon/\nu_1};\ \ \|\tilde \s\|\leq X+ L \leq \|\tilde \s\|_1 + (d+1)L;  \quad \text{and} \ \  \|\tilde \s\|^2 \leq   \left(4 L + d^{1/4} \sqrt{L \varepsilon/\nu_1}\right) X + L^2,  \nn
		\end{gather}
		where $X \geq 0$ is a non-negative random variable satisfying $\E[X ]\leq 3  d^{5/4}\sqrt{{L \varepsilon }/{\nu_1}}$.
	\end{lemma}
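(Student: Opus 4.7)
The plan is to reduce Lemma~\ref{lem:separate_conv_funcnew} to Lemma~\ref{lem:separate_conv_func} via a mollification argument, followed by a limit argument based on Fatou's lemma. The key observation that makes this work cleanly is that the output $\tilde{\s}$ of Algorithm~\ref{alg:optimizeold} depends only on $\tilde{f}$ and the random draw of $(\bu,\z)$; it is unchanged by any auxiliary smoothing of $f$, so a single random $\tilde{\s}$ will serve as the ``approximate subgradient'' for every mollified approximant of $f$ simultaneously.

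First I would construct the smooth approximants. Let $\phi\colon \reals^d \to \reals_{\geq 0}$ be a smooth, compactly supported probability density with $\mathrm{supp}(\phi)\subseteq \cB_\infty(\bm{0},1)$, set $\phi_\delta(\y)\coloneqq \delta^{-d}\phi(\y/\delta)$, and define $h_\delta \coloneqq f*\phi_\delta$. Standard mollification properties yield: $h_\delta$ is $C^\infty$ and convex; $h_\delta \to f$ pointwise on $\reals^d$ (since $f$ is continuous as a finite convex function on $\reals^d$); and, writing $\nabla h_\delta(\x) = \int \g(\x-\y)\phi_\delta(\y)\,d\y$ for a measurable selection $\g$ of $\partial f$, the assumption $\|\g\|_2 \le L$ on $\cB_\infty(\w,2\nu_1)$ gives $\|\nabla h_\delta(\x)\|_\infty \le \|\nabla h_\delta(\x)\|_2 \le L$ for all $\x\in \cB_\infty(\w,\nu_1+\nu_2)$ whenever $\delta\le \nu_1-\nu_2$. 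Lipschitzness of $f$ on $\cB_\infty(\w,2\nu_1)$ further gives $\|h_\delta-f\|_\infty \le L\sqrt{d}\,\delta$ uniformly on $\cB_\infty(\w,2\nu_1-\delta)$.

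Next I would apply Lemma~\ref{lem:separate_conv_func} with $h\coloneqq h_\delta$ and $\tilde h\coloneqq \tilde f$, noting that $\|\tilde f-h_\delta\|_\infty \le \varepsilon+L\sqrt{d}\,\delta$ on the relevant ball. For each small $\delta>0$ this produces, with the same $\tilde{\s}$,
\begin{align}
h_\delta(\v) \;\ge\; h_\delta(\w)+\inner{\tilde\s}{\v-\w} - X_\delta\|\v-\w\|_\infty - 4d\nu_1 L,\quad \forall \v\in\reals^d,\nn
\end{align}
with $X_\delta\coloneqq \|\nabla h_\delta(\z)-\tilde\s\|_1$ and $\E[X_\delta]\le 2d^{5/4}\sqrt{L\varepsilon/\nu_1} + d(\varepsilon+L\sqrt{d}\,\delta)/\nu_2$. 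Setting $X\coloneqq \liminf_{\delta\to 0^+} X_\delta$ and taking the liminf in $\delta$ of the displayed inequality (pointwise in $\v$ and in $\omega$), using $h_\delta(\v)\to f(\v)$, $h_\delta(\w)\to f(\w)$, and $\|\v-\w\|_\infty\ge 0$, delivers the claimed first inequality. Fatou's lemma then gives $\E[X]\le \liminf_{\delta\to 0^+}\E[X_\delta] \le 2d^{5/4}\sqrt{L\varepsilon/\nu_1} + d\varepsilon/\nu_2 = 3d^{5/4}\sqrt{L\varepsilon/\nu_1}$, where the last equality uses $\nu_2=\sqrt{\varepsilon\nu_1/(d^{1/2}L)}$.

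Finally I would derive the three deterministic norm bounds. The $\|\tilde\s\|_\infty$ bound is immediate from $\tilde s_i=(\tilde f(\w'_i)-\tilde f(\w_i))/(2\nu_2)$, $\|\w'_i-\w_i\|=2\nu_2$, the 2-norm Lipschitz bound on $f$, and $\|\tilde f - f\|_\infty\le \varepsilon$: one obtains $|\tilde s_i|\le L+\varepsilon/\nu_2 = L+d^{1/4}\sqrt{L\varepsilon/\nu_1}$. For the other two, I work at fixed $\delta$. Triangle inequality plus $\|\cdot\|_2\le\|\cdot\|_1$ gives $\|\tilde\s\|\le X_\delta+L$; expanding $\|\tilde\s\|^2 = \|(\tilde\s-\nabla h_\delta(\z))+\nabla h_\delta(\z)\|^2$ and bounding the inner-product and squared-difference terms via Hölder with the pair $(\|\cdot\|_1,\|\cdot\|_\infty)$ gives $\|\tilde\s\|^2 \le (4L+d^{1/4}\sqrt{L\varepsilon/\nu_1})X_\delta + L^2$; taking liminf in $\delta$ transfers both bounds to $X$. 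The auxiliary chain $X+L\le \|\tilde\s\|_1+(d+1)L$ follows since $\|\tilde\s-\nabla h_\delta(\z)\|_1\le \|\tilde\s\|_1+d\|\nabla h_\delta(\z)\|_\infty\le \|\tilde\s\|_1+dL$. The only real obstacle is ensuring the mollified gradient bound holds on $\cB_\infty(\w,\nu_1+\nu_2)$ despite the subgradient assumption living on the slightly larger $\cB_\infty(\w,2\nu_1)$; choosing the mollifier's support strictly inside $\cB_\infty(\bm{0},\nu_1-\nu_2)$ prevents the convolution from ever sampling $f$ outside the region where the bound is known, which resolves the issue cleanly.
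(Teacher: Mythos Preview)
Your proposal is correct and follows essentially the same approach as the paper: smooth $f$ to obtain a twice-differentiable approximant, apply Lemma~\ref{lem:separate_conv_func} with the fixed $\tilde{\s}$, and pass to the limit via Fatou's lemma to define $X$ and bound its expectation. The only difference is that the paper uses Gaussian smoothing $h_\sigma(\x)=\E_{\bm{\xi}\sim\cN(\bm{0},\sigma^2 I)}[f(\x+\bm{\xi})]$ (citing \cite{duchi2012randomized} for the gradient and approximation bounds) rather than a compactly supported mollifier; your choice actually handles the localized Lipschitz hypothesis on $\cB_\infty(\w,2\nu_1)$ a bit more cleanly, but otherwise the arguments are interchangeable.
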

	\begin{proof}[{\bf Proof}]
		Let $\nu_2$, $\bu$, $\z$, $\tilde \s$, and $(\w_i,\w_i')_{i\in[d]}$ be the variables generated during the call to Algorithm \ref{alg:optimizeold} in the lemma's statement. Further, let $\sigma >0$ and  $h_\sigma:\reals^d\rightarrow \reals$ be such that 
		\begin{align}
			h_{\sigma}(\w) \coloneqq \E [f(\w+\x)], \quad \text{where} \quad \x \sim \cN(\bm{0},\sigma^2  I_d). \nn
		\end{align}
		It is known that $h_{\sigma}$ is twice differentiable \cite{bhatnagar2007adaptive,nesterov2017random,abernethy2014online}, and satisfies \cite[Lemma 9]{duchi2012randomized},
		\begin{align}
			\forall \w\in \reals^d, \quad 
			\|\nabla h_{\sigma}(\w)\| \leq L, \quad \text{and} \quad f(\w) \leq h_{\sigma}(\w)  \leq f(\w)+L \sigma \sqrt{d}. \label{eq:sand}
		\end{align}
		Thus, we have $\|\tilde f  - h_{\sigma}\|_{\infty}\leq \varepsilon_{\sigma} \coloneqq \varepsilon + L\sigma \sqrt{d}$, and so by Lemma \ref{lem:separate_conv_func}
		\begin{gather}
			h_{\sigma}(\v)\geq h_{\sigma}(\w)+\left\langle \tilde \s,\v-\w\right\rangle - \| \nabla h_{\sigma}(\z)- \tilde \s\|_1  \cdot \norm{\v-\w}_{\infty}-4d \nu_{1}L, \quad \forall \v\in \reals^d, \nn
			\intertext{and for $\tilde \s_{\sigma} \coloneqq (h_{\sigma}(\w_1')-h_{\sigma}(\w_1) ,\dots,h_{\sigma}(\w_d')-h_{\sigma}(\w_d) )^\top/(2\nu_2)$, }
			\|\nabla h_{\sigma}(\z) - \tilde \s\|_1 \leq \|\nabla h_{\sigma}(\z) - \tilde \s_{\sigma}\|_1+ d \varepsilon_{\sigma}/\nu_2 ,\quad \text{and} \quad \E[\|\nabla h_{\sigma}(\z) - \tilde \s_{\sigma}\|_1]\leq 2 d^{5/4} \sqrt{L \varepsilon /\nu_1}, \label{eq:thesecond}
		\end{gather}
		where the expectation is with respect to $\z\sim U_{\infty}(\bu,\nu_2)$ and $\bu \sim U_{\infty}(\w,\nu_1)$. Combining this with \eqref{eq:sand} leads to 
		\begin{align}
			f(\v)\geq f(\w)+\left\langle \tilde \s,\v-\w\right\rangle - \| \nabla h_{\sigma}(\z)- \tilde \s\|_1  \cdot \norm{\v-\w}_{\infty}-4d \nu_{1}L- L \sigma \sqrt{d},\quad \forall \v\in \reals^d. \label{eq:thethird}
		\end{align}
		Now, define $X_n \coloneqq \|\nabla h_{1/n}(\z) - \tilde \s\|_1$, $n\geq 1$ and note that
		\begin{align}
			0\leq X_n\leq  d L + \|\tilde \s\|_1 <+\infty, \quad \forall n. \nn
		\end{align}
		Therefore, we have $X \coloneqq \liminf_{n\to \infty}X_n \leq \|\tilde \s\| +dL<+\infty$. Furthermore, by Fatou's lemma and \eqref{eq:thesecond}, we have 
		\begin{align}
			\E[X] =\E[\liminf_{n\to \infty} X_n] \leq \liminf_{n\to \infty} \E[X_n]\leq \liminf_{n\to \infty} \E[\|\nabla h_{1/n}(\z) - \tilde \s_{1/n}\|_1] + d \varepsilon/\nu_2\leq 3 d^{5/4} \sqrt{L \varepsilon /\nu_1},\nn
		\end{align} 
		where the last inequality also follows by \eqref{eq:thesecond} and the fact that $\nu_{2}=\sqrt{\frac{\varepsilon \nu_{1}}{d^{1/2}L}}$. Combining this with \eqref{eq:thethird} leads to the first inequality of the lemma. Now, we have, for any $i\in[d]$,
		\begin{align}
			\tilde s_i = \frac{\tilde f(\w_i')- \tilde f (\w_i)}{2\nu_2} \leq  \frac{f(\w_i')-  f (\w_i)}{2\nu_2} +  \frac{\varepsilon}{\nu_2} \leq  L +   \frac{\varepsilon}{\nu_2} = L + d^{1/4} \sqrt{\frac{L \varepsilon}{\nu_1}}, \nn 
		\end{align}
		where the last inequality follows by the definition of $\w'_i$ and $\w_i$, and the fact that $\sup_{\g\in \partial f(\x)} \|\g\|\leq L$ (and so $f$ is $L$-Lipschitz). In particular, this implies that
		\begin{align}
			\forall i \in[d],\quad |\tilde s_i - [\nabla h_{1/n} (\z)]_i| \leq 2 L + d^{1/4}\sqrt{L\varepsilon/\nu_1}. \label{eq:thirdthird}
		\end{align}
		Using this, we get 
		\begin{align}
			\|\tilde \s\|^2  &\leq \| \tilde \s - \nabla h_{1/n}(\z)\|^2 +2 \| \tilde \s - \nabla h_{1/n}(\z)\| \|\nabla h_{1/n}(\z)\| + \|\nabla h_{1/n}(\z)\|^2,\nn \\ & \leq     \left(2 L + d^{1/4} \sqrt{L \varepsilon/\nu_1}\right) \| \tilde \s - \nabla h_{1/n}(\z)\|_1  + 2 \|\tilde \s - \nabla h_{1/n}(\z) \|_1  L +  L^2, \quad \text{(by \eqref{eq:thirdthird})} \nn\\
			& \leq \left(4 L + d^{1/4} \sqrt{L \varepsilon/\nu_1}\right) \| \tilde \s - \nabla h_{1/n}(\z)\|_1   +  L^2, \nn \\
			& \leq  \left(4 L + d^{1/4} \sqrt{L \varepsilon/\nu_1}\right)  \liminf_{n\to \infty}  \| \tilde \s - \nabla h_{1/n}(\z)\|_1 + L^2,\nn \\
			& =    \left(4 L + d^{1/4} \sqrt{L \varepsilon/\nu_1}\right) X + L^2.\nn
		\end{align}
	It remains to bound the norm $\|\tilde \s\|$. Similar to how we bounded $\|\tilde \s\|^2$, we have
		\begin{align}
			\|\tilde \s\| &  \leq \| \tilde \s - \nabla h_{1/n}(\z)\| +\|\nabla h_{1/n}(\z)\|,\nn \\ & \leq \| \tilde \s - \nabla h_{1/n}(\z)\|_1 +L,\nn \\
		&  \leq       \liminf_{n\to \infty}  \| \tilde \s - \nabla h_{1/n}(\z)\|_1 + L = X + L.\nn
		\end{align}
		This completes the proof.
	\end{proof}

		\subsection{Proof of Lemma \ref{lem:gauge} (Approximate Gauge Function)}
		\label{sec:gaugeproof}
	\begin{proof}[{\bf Proof of Lemma \ref{lem:gauge}}]
Let $\epsilon = \delta r/(4 \kappa)^2$. We will first show that for all $\w \in \cB(6R/5)$,
		\begin{align}
		[\gamma_{\K}(\w) \geq 9/16\  \text{ or }\ \tilde \gamma \geq 1] \implies	[\mem_{\K}(2\w;\epsilon)=0  \ \text{ and }  \ \|\w\| \geq r/2]. \label{eq:imple}
		\end{align}
		Suppose that $\tilde \gamma \geq 1$. By Lines \ref{line:memset} and \ref{line:memsetnext} of Algorithm \ref{alg:gauge}, this implies that $\mem_{\K}(2\w;\epsilon)=0$. Now suppose that $\gamma_{\K}(\w)\geq 9/16$. Note that $\mem_{\K}(2\w;\epsilon)=1$ only if $2\w\in \cB(\K,\epsilon)$. By the fact that $\cB(r) \subseteq \K$ (Assumption \ref{ass:assum}) and $\delta < 1$, we have that $\K + \cB(\epsilon)= \cB(\K,\epsilon)$. 
		Thus, by a standard result in convex analysis, see e.g.~\cite[Thm~C.3.3.2]{hiriart2004}, we have, for all $\x\in \K$,
		\begin{align}
			\sigma_{\cB(\K,\epsilon)}(\x) = \sigma_{\K}(\x) +\sigma_{\cB(\epsilon)}(\x). \label{eq:dash}
		\end{align}
		Let $\x_* \in \partial \gamma_{\K}(\w)=\argmax_{\x \in \K^\circ} \inner{\x}{\w}$. Since $\x_*\in \K^\circ $, we have $\inner{\x_*}{\y}\leq 1$, for all $\y \in \K$, by definition of $\K^\circ$. Further, since $\w/\gamma_{\K}(\w)\in \K$ (implied by Lemma \ref{lem:projectiononK}) and $\inner{\x_*}{\w} =\gamma_{\K}(\w)$, we have 
		\begin{align}
			\sigma_{\K}(\x_*) = \sup_{\y \in \K}	\inner{\x_*}{\y} =  	\inner{\x_*}{\w/\gamma_{\K}(\w)} = 1. \label{eq:support}
		\end{align}
		Plugging this into \eqref{eq:dash} and using the fact that $\sigma_{\cB(\epsilon)}(\x_*)=  \delta r \|\x_*\|/(8\kappa^2)$, we get
		\begin{align}
			\sigma_{\cB(\K,\epsilon)}(\x_*)=1 + \delta r \|\x_*\|/(8\kappa^2)   \leq 1 + \delta/(8\kappa^2) < 9/8, \label{eq:seperation}
		\end{align}
		where the last inequality follows by the fact that $\delta \in(0,1)$ and $\|\x_*\|\leq 1/r$ (because $\x_* \in \K^\circ$, see Lemma \ref{lem:properties}-(c)). On the other hand, since $\gamma_{\K}(\w)\geq 9/16$, we have, by \eqref{eq:support}
		\begin{align}
			9/8=9/8\inner{\x_*}{\w/\gamma_{\K}(\w)} \leq 	9/8\inner{\x_*}{16\w/9}= \inner{\x_*}{2\w}. \nn
		\end{align}
		This inequality together with \eqref{eq:seperation} implies that the vector $\x_*$ separates $2 \w$ from $\cB(\K,\epsilon)$ and so we have $\mem_{\K}(2 \w; \epsilon)=0$ by definition of the Membership Oracle $\mem_{\K}(\cdot;\epsilon)$. It remains to show that $\|\w\|\geq r/2$. If $\gamma_{\K}(\w)\geq 9/16$, then $\|\w\| \geq 9r/16$ by the fact that $\gamma_{\K}(\w)\leq \|\w\|/r$ (Lemma \ref{lem:properties}-(c)). Also, if $\tilde \gamma \geq 1$, then Lines \ref{line:memset} and \ref{line:memsetnext} of Algorithm \ref{alg:gauge} imply that $\|\w\| \geq r/2$. So far, we have shown that \eqref{eq:imple} holds. By definition of $\alpha$ and $\beta$ in Algorithm \ref{alg:gauge}, \eqref{eq:imple} implies that if $\gamma_{\K}(\w) \geq 9/16$ or $\tilde \gamma \geq 1$, then
		\begin{align}
		\mem_{\K}(\alpha\w; \epsilon) =1 \ \  \text{and}  \ \  	\mem_{\K}(\beta\w; \epsilon) =0. \label{eq:always}
		\end{align}
		Next, we will show that if either $\gamma_{\K}(\w)\geq 9/16$ or $\tilde \gamma\geq 1$, then for any $\mu>0$,
		\begin{align}
			\mem_{\K}(\mu\w; \epsilon)=0 \implies \mu \geq \frac{1}{\gamma_{\K}(\w)} -\frac{\delta }{8\kappa^2}  \quad  \text{and}  \quad \mem_{\K}(\mu\w; \epsilon)=1 \implies \mu \leq \frac{1}{\gamma_{\K}(\w)} +\frac{\delta }{8\kappa^2}. \label{eq:implications}
		\end{align}
	Suppose that $\mem_{\K}(\mu\w; \epsilon)=0$. By \eqref{eq:mem}, this implies that $\mu \w \not\in \cB(\K, -\epsilon)$. On the other hand, since $\w/\gamma_{\K}(\w)$ is on the boundary of $\K$ (by definition of the Gauge function and Lemma \ref{lem:projectiononK}), the vector $\v \coloneqq \w/\gamma_{\K}(\w)- \epsilon \w/\|\w\|$ is in $\cB(\K,-\epsilon)$. Since $\v$ and $\mu \w$ are aligned, the fact that $\mu \w \not\in \cB(\K, -\epsilon)$ and $\K$ is convex implies that
		\begin{align}
			\|\mu \w\| \geq \|\w/\gamma_{\K}(\w) - \epsilon  \w /\|\w\| \| = \|\w\|/\gamma_{\K}(\w) - r \delta /(16\kappa^2) \geq   \|\w\|/\gamma_{\K}(\w) - \|\w\| \delta/(8\kappa^2),\nn
		\end{align}
	where the last inequality follows by the fact that $\|\w\|\geq r/2$ (see \eqref{eq:imple}). Dividing by $\|\w\|$ (this is non-zero by \eqref{eq:imple}) on both sides shows the first implication in \eqref{eq:implications}. Similarly, if $\mem_{\K}(\mu\w; \epsilon)=1$. Again by \eqref{eq:mem}, this implies that $\mu\w \in \cB(\K,\epsilon)$. Combining this with the fact that $\w/\gamma_{\K}(\w)+\epsilon  \w/\|\w\|$ is on the boundary of $\cB(\K,\epsilon)$ (since $\w/\gamma_{\K}(\w)$ is on the boundary of $\K$) implies that 
		\begin{align}
			\|\mu\w\| \leq  \|\w/\gamma_{\K}(\w) + \epsilon \w/\|\w\|\|= \|\w\|/\gamma_{\K}(\w) + \delta r/ (16\kappa^2) \leq  \|\w\|/\gamma_{\K}(\w) + \delta \|\w\|/ (8\kappa^2),\nn
		\end{align}
		where the last inequality follows by the fact that $\|\w\|\geq r/2$ (see \eqref{eq:imple}). This shows the second implication in \eqref{eq:implications}. Let $\tilde \alpha$ and $\tilde \beta$ be the values of $\alpha$ and $\beta$ in Algorithm \ref{alg:gauge} after the while-loop is over. By combining \eqref{eq:always}, which we recall holds when $\gamma_{\K}(\w) \geq 9/16$ or $\tilde \gamma \geq 1$, and \eqref{eq:implications}, we get 		
		\begin{gather} \tilde \gamma \geq  \gamma_{\K}(\w)  ;\ \ \text{and} \ \   \frac{1}{\gamma_{\K}(\w)}- \frac{1}{\tilde \gamma} = \frac{1}{\gamma_{\K}(\w)}- \left(\tilde \alpha - \frac{\delta }{8\kappa^2}\right) \leq \tilde  \beta - \tilde\alpha + \frac{\delta}{8\kappa}  +\frac{\delta}{8\kappa^2}\leq \frac{3\delta}{8\kappa^2 },\label{eq:bad}
		\end{gather}
		where the last inequality follows by the fact that the while-loop in Algorithm \ref{alg:gauge} terminates when $\beta -\alpha \leq \delta/(8\kappa^2)$. Multiplying both sides of \eqref{eq:bad} by $\tilde \gamma \cdot\gamma_{\K}(\w)$, we get 
		\begin{align}
			\tilde \gamma - \gamma_{\K}(\w) \leq \tilde \gamma \cdot \gamma_{\K}(\w) \cdot {3 \delta}/({8 \kappa^2}). \label{eq:poststarte}
		\end{align}
		Using \eqref{eq:bad}, we get that $\tilde \gamma \leq (1/\gamma_{\K}(\w) -3\delta/(8\kappa^2))^{-1}$. Plugging this into \eqref{eq:poststarte}, we get 
		\begin{align}
			\tilde \gamma - \gamma_{\K}(\w) \leq  \frac{\gamma_{\K}(\w) \cdot  {3\delta}/{(8\kappa^2)}}{{1}/{\gamma_{\K}(\w)} - 3 {\delta}/({8\kappa^2})} \stackrel{(a)}{\leq}  \frac{6\kappa/5 \cdot  {3\delta}/({8\kappa^2})}{5/(6 \kappa) - {3\delta}/({8\kappa^2})}\leq  \delta, \nn
		\end{align}
		where $(a)$ follows by the fact that $\gamma_{\K}(\w)\leq  {\|\w\|}/{r}\leq 6\kappa/5$, for all $\w\in \cB(6R/5)$, and the last inequality follows by the fact that $\delta \in(0,1)$ and $\kappa \geq 1$.
		
		We now consider the computational complexity. Note that at every iteration of the while-loop in Line \ref{line:cond} of Algorithm \ref{alg:gauge}, the difference $\beta-\alpha$ halves. Thus, since $\beta-\alpha$ is initially equal to 2, the algorithm terminates (i.e.~when $\beta - \alpha\leq \delta/(8\kappa^2)$) after at most $\ceil{\log_2((4\kappa)^2/\delta)}+1$ steps.
	\end{proof}

		\subsection{Proof of Proposition \ref{prop:subgradient} (Approximate LO Oracle on $\K^\circ$)}
		\label{sec:subgradientproof}
	\begin{proof}[{\bf Proof of Proposition \ref{prop:subgradient}}]
		Let $\varepsilon$ and $\tilde \gamma$ be as in Algorithm \ref{alg:optimize}. First, suppose that $\tilde \gamma\geq 1$. We will show \eqref{eq:bisection} using the result of Lemma \ref{lem:separate_conv_funcnew} with $(f,\tilde f)=(\gamma_{\K},\gau_{\K}(\cdot;\varepsilon))$ and $(\nu_1,\nu_2,\w)$ as in Algorithm \ref{alg:optimize}. First, note that by our choice of $\nu_1$ and $\varepsilon$, the technical condition $\varepsilon \leq  \nu_1 \sqrt{d} L$ under which Lemma \ref{lem:separate_conv_funcnew} holds translates to $\delta \leq 10 d^{3/2}\kappa$. This holds since $\delta \in(0,1)$ and $\kappa,d\geq 1$. We also need to check the technical condition $\|f-\tilde f\|_{\infty}\leq \varepsilon$ of  Lemma \ref{lem:separate_conv_funcnew}. We note that in Algorithm \ref{alg:optimize} we only ever evaluate $\tilde f = \gau_{\K}(\cdot ;\varepsilon)$ at the points $(\w_i,\w_i')_{i\in[d]}$, and so we only need to check the condition
		\begin{align}
			|\gamma_{\K}(\w_i)- \gau_{\K}(\w_i;\varepsilon)| \vee 	|\gamma_{\K}(\w'_i)- \gau_{\K}(\w'_i;\varepsilon)| \leq \varepsilon,\quad \forall i\in[d].\label{eq:prereq}
		\end{align}
		The condition would follow from Lemma \ref{lem:gauge} if $\w_i$ and $\w_i'$ satisfy $\gamma_{\K}(\w_i)\wedge \gamma_{\K}(\w_i') \geq 9/16$ and $\w_i,\w_i'\in \cB(6R/5)$, for all $i\in[d]$. Let $i\in[d]$ and $\v \in\{\w_i,\w_i'\}$. By definition of $\w_i$, $\w_i'$, and $\bu$ in Algorithm \ref{alg:optimize}, we have 
		\begin{align}
			\|\w- \v\| & \leq \sqrt{d} \|\w -\bu  \|_{\infty} + \sqrt{d} \| \bu - \v\|_{\infty}\leq \sqrt{d} \nu_1 + \sqrt{d} \nu_2\leq    \frac{11 r \delta}{100},\label{eq:sink}
		\end{align}
		where the last inequality follows by our choice of $\nu_1$ and $\nu_2$. Combining \eqref{eq:sink} with the fact that $\w\in \cB(R)$ and triangular inequality, we get $\|\v\| \leq \|\w\| + 11 r\delta/100 \leq 6 R/5$, and so $\v\in \cB(6R/5)$.
		 On the other hand, by the fact that $\gamma_{\K}=\sigma_{\K^\circ}$ and the sub-additivity of the support function (Lemma \ref{lem:properties}-(e)), we have 
		\begin{align}
			\gamma_{\K}(\v) = \sigma_{\K^\circ}(\v) \geq \sigma_{\K^\circ}(\w) - \sigma_{\K^\circ}(\w - \v)= \gamma_{\K}(\w) - \gamma_{\K}(\w- \v) \stackrel{(a)}{\geq} \gamma_{\K}(\w) - \frac{11 \delta}{100} \geq \frac{9}{16},\nn 
		\end{align}
		where $(a)$ follows by the fact that $\gamma_{\K}(\w - \v)\leq \|\w - \v\|/r$ (Lemma \ref{lem:properties}-(c)) and \eqref{eq:sink}, and the last inequality follows by the fact that $\delta \leq 1/3$ and $\gamma_{\K}(\w)\geq 2/3$; the latter is because $\tilde \gamma\geq 1$ and $\gamma_{\K}(\w)\geq \tilde \gamma -\delta$ (Lemma \ref{lem:gauge}). Therefore, by Lemma \ref{lem:gauge}, \eqref{eq:prereq} holds, and so does the result of Lemma \ref{lem:separate_conv_funcnew} with $(f,\tilde f)=(\gamma_{\K},\gau_{\K}(\cdot;\varepsilon))$ and $(\nu_1,\nu_2,\w)$ as in Algorithm \ref{alg:optimize}. We will now use this fact to show \eqref{eq:bisection}.
		
By Lemma \ref{lem:properties} (points (a) and (b)), we have $\partial \gamma_{\K}(\x)\subseteq\K^\circ$ for any $\x\in\reals^d$, and so we get
		\begin{align}
			\sup_{\x\in \reals^d}\| \partial \gamma_{\K}(\x)\| \leq \sup_{\y\in \K^\circ } \|\y \| \leq \frac{1}{r},\nn
		\end{align}
	where the last inequality follows by Lemma \ref{lem:properties}-(c). This implies that the Lipschitz constant $L$ in Lemma \ref{lem:separate_conv_funcnew} can be set to $1/r$. Furthermore, by our choice of $\varepsilon$ and $\nu_1$ in Algorithm \ref{alg:optimize}, we have $\varepsilon =  \nu_1^3 /(R^2 r\sqrt{d})$ and so Lemma \ref{lem:separate_conv_funcnew} implies that there exists a non-negative random variable $X$ satisfying $\E[X] \leq 3  \nu_1 d/(rR)$, and for all $\bu\in \reals^d$ 
		\begin{align}
			\gamma_{\K}(\bu) &\geq \gamma_{\K}(\w) + \inner{\tilde\s}{\bu - \w} -X  \|\w -\bu\|_{\infty} -4 d \nu_1/r ,\nn
		\end{align}
		where $\tilde \s$ is as in Algorithm \ref{alg:optimize}. Thus, with $\Delta \coloneqq 2 R  X + 4 d \nu_1/r\geq 0$, we get 
		\begin{align}
			\gamma_{\K}(\bu) &\geq \gamma_{\K}(\w) + \inner{\tilde\s}{\bu - \w} - \Delta \cdot \max(1,\|\bu\|/R), \quad \text{and}\quad \E[\Delta]\leq 10 d\nu_1/r,  \label{eq:premium}
		\end{align}
	where we used the fact that $\w\in \cB(R)$. Further, Lemma \ref{lem:separate_conv_funcnew} implies $\|\tilde \s\|_{\infty} \leq  1/r + d^{1/4} \sqrt{L\varepsilon/\nu_1}$, $\|\tilde \s\| \leq X +1/r$, and $\|\tilde \s\|^2 \leq \left(4 /r + d^{1/4} \sqrt{L\varepsilon/\nu_1}\right) X +1/r^2$, and so
		\begin{align}
		\|	\tilde  \s\|_{\infty} \leq d^{5/4} \sqrt{L\varepsilon/\nu_1}+ 1/r; \quad \|\tilde \s\| \leq \Delta/R + 1/r;\quad \text{and} \quad 	\|\tilde \s\|^2 \leq \left(2 /r + d^{1/4} \sqrt{L\varepsilon/\nu_1}\right) \Delta/R +1/r^2. \label{eq:dijon}
		\end{align}
		Now, since $\varepsilon = \nu_1^3/(R^2 r \sqrt{d})$ and $\nu_1 = \delta r /(10 d)$, we get that $10 d\nu_1/r =\delta$ and $d^{1/4} \sqrt{L\varepsilon/\nu_1} = \delta/(10 R d)$. Plugging these into \eqref{eq:premium} and \eqref{eq:dijon}, we get \eqref{eq:bisection} for the case when $\tilde \gamma \geq 1$.

		It remain to show that $\|\tilde \s\|_{\infty}< +\infty $ almost surely and bound $\Delta$. We do not assume that $\tilde \gamma\geq 1$ anymore. Let $i\in[n]$ and $(\w_i,\w'_i)$ be as in Algorithm \ref{alg:optimize}. Let $\v \in\{\w_i,\w_i'\}$ and suppose that $\gau_{\K}(\w;\varepsilon)\geq 1$, then by Lemma \ref{lem:gauge} and Lemma \ref{lem:properties}-(c), we have 
		\begin{align}
			1\leq 	\gau_{\K}(\v;\varepsilon) \leq \gamma_{\K}(\v)+\varepsilon \leq  \|\v\|/r + \varepsilon \leq \frac{R+ \sqrt{d}\nu_1 + \sqrt{d}\nu_2}{r} +\varepsilon < +\infty.\nn
		\end{align} 
		Alternatively, we have $0 \leq\gau_{\K}(\v;\varepsilon)<1$. Therefore, for all $i\in[d]$, we have
		\begin{align}
					|\tilde s_i| = \frac{|\gau_{\K}(\w_i';\varepsilon)- \gau_{\K}(\w_i;\varepsilon)|}{2\nu_2} \leq \frac{R + \sqrt{d}\nu_1 + \sqrt{d}\nu_2}{\nu_2 r} +\frac{\varepsilon}{\nu_2} &= \frac{100 d^{5/2} \kappa ^2}{\delta^2 r}+\frac{10 d^{2} \kappa }{\delta 
			r}+\frac{\delta }{10 d R}+\frac{\sqrt{d}}{r} , \nn \\
		& \leq \frac{112 d^{5/2} \kappa^2}{r\delta^2}<+\infty.  \label{eq:wings}
		\end{align}
	Finally, by Lemma \ref{lem:separate_conv_funcnew}, we have $X\leq \|\tilde\s\|_1 +d/r$ and so combining this with \eqref{eq:wings}, we get 
	\begin{align}
		\Delta = 2 R X + 4 d \nu_1/r \leq \frac{224 d^{4} \kappa^{3}}{\delta^2} + 2d\kappa+ \frac{4\delta}{10}\leq \frac{15^2 d^{4} \kappa^3}{\delta^2}, \nn
	\end{align}
where the last inequality follows by the fact that $\delta \leq 1/3$. This completes the proof. 
	\end{proof}
	\subsection{Proof of Lemma \ref{lem:celeb} (Efficient Stochastic LO Oracle on $\K^\circ$)}
	\label{sec:celebproof}
\begin{proof}[{\bf Proof of Lemma \ref{lem:celeb}}]
	Let $I\in[d]$ be the random variable in Algorithm \ref{alg:optimizenew} generated during the call to $\onedopt$ in the lemma's statement. Note that the approximate Gauge function $\gau_{\K}$ (Alg.~\ref{alg:gauge}) is deterministic and so  $\hat \gamma=\tilde \gamma$. On the other hand, by definition of $\tilde \s$ and $\hat \s$ in Algorithms \ref{alg:optimize} and \ref{alg:optimizenew}, respectively, we have $$\E[\hat \s]=\E[ \hat s_I \cdot \e_I]=  \sum_{i=1}^d d^{-1}   \E[\hat s_i \mid I=i ]\cdot  \e_i \stackrel{(*)}{=}  \sum_{i=1}^d d^{-1}\E[d\tilde s_i]\cdot \e_i= \tilde \s,$$
		 where $(*)$ follows by the fact that, conditioned on $I=i$, $\hat s _i/d$ has the same distribution as $\tilde s_i$. Similarly,
		\begin{align}
			\E[\|\hat  \s\|^2] = \E[\|  \hat s_I \e_I\|^2]  =  \sum_{i=1}^d d^{-1}\E[\hat s_i^2 \e_i\mid I=i]=\sum_{i=1}^d d^{-1} \E[d^2  \tilde  s_i^2\e_i]= d\E[\|\tilde \s\|^2]\leq d \cdot \left(\frac{1}{r} + \frac{\delta}{R}\right)^2, \nn  
			%	& = d \| \s\|^2.
		\end{align}
		where the last inequality follows by the fact that $\|\tilde \s\|^2\leq (2/r+\delta/R)\Delta/R + 1/r^2$ (Proposition \ref{prop:subgradient}), where $\Delta\geq 0$ is a random variable satisfying $\E[\Delta]\leq \delta$. Finally, the fact that $\|\hat \s\|<+\infty$ a.s.~follows from the fact that $\|\tilde \s\|_{\infty}<+\infty$ a.s. (Prop.~\ref{prop:subgradient}), and for any $i\in[d]$, conditioned on $I=i$, $\hat s_i/d$ has the same distribution as $\tilde s_i$.
	\end{proof}

		\section{General Regret Reduction}
	\label{sec:genred}
	
	\begin{proposition}
		\label{prop:generalred}
		Let $\delta \in(0,1)$, $B>0$, $T\geq 1$, and $\A$ be the sub-routine of Algorithm \ref{alg:projectionfreewrapper}. Suppose there exists a function $\scR^{\A}: \K \times \cup_{t=1}^\infty \reals^{d\times t}\rightarrow \reals_{\geq 0}$ such that for any adversarial sequence of convex losses $(f_t)$ on $\K$, the iterates $(\w_t)$ of $\A$ in response to $(f_t)$, and the subgradients $\bnabla_t \in \partial f_t(\w_t)$, $t\in [T]$, satisfy 
		\begin{gather} \sum_{t=1}^T \inner{ \bnabla_t}{\w_t- \x} \leq \scR^\A(\x, \bnabla_{1:T}), \forall \x\in \K,\nn %\label{eq:round1}
			\intertext{as long as $\|\bnabla_t \|\leq (1+\frac{\delta}{t}+\kappa)B$, for all $t\in [T] $. Then, for any adversarial sequence of $B$-Lipschitz convex losses $(\ell_t)$, the iterates $(\x_t)$ of Alg.~\ref{alg:projectionfreewrapper} with tolerance sequence $\delta_t \coloneqq \frac{\delta}{t^3}$, and the subgradients $\g_t\in\partial \ell_t(\x_t)$, satisfy}
			\P\left[\begin{array}{c} \forall \x\in \K,\  \sum_{t=1}^T \inner{\g_t}{\x_t - \x}\leq \scR^\A(\x,\tilde\g_{1:T})+ \left(4 +\frac{5\delta\ln T}{\rho}  \right)R B \\  \text{and} \quad \forall t\in[T], \ \|\tilde \g_t\|\leq (1+\delta/t+\kappa)B  \end{array} \right] \geq 1 - 2\rho,\nn
		\end{gather}
		for all $\rho\in(0,1)$ and $(\tilde \g_t)$ as in Algorithm \ref{alg:projectionfreewrapper}.
	\end{proposition}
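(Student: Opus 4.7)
}

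The plan is to combine the per-round decomposition of Lemma \ref{lem:mainreduction} with two high-probability arguments: (i) a union bound ensuring the norm condition required by the subroutine's regret function $\scR^\A$ holds, and (ii) a Doob-style maximal inequality controlling the cumulative error contribution from the subgradient approximation. Throughout, the $B$-Lipschitz assumption gives $\|\g_t\|\le B$, which is the bridge allowing the abstract ``$\scR^\A$ valid if $\|\bnabla_t\|\le(1+\delta/t+\kappa)B$'' hypothesis to be activated.

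First, I would invoke Lemma \ref{lem:mainreduction} at each round $t\in[T]$ with tolerance $\delta_t=\delta/t^3$. This produces a non-negative random variable $\Delta_t$ with $\E_{t-1}[\Delta_t]\le\delta_t$ such that
\begin{align}
\inner{\g_t}{\x_t-\x}\le\inner{\tilde\g_t}{\w_t-\x}+(2\delta_t+\Delta_t)R\|\g_t\|, \qquad \|\tilde\g_t\|\le(1+\kappa+\Delta_t)\|\g_t\|, \nn
\end{align}
and $(\x_t)\subset\K$. Summing over $t$ and using $\|\g_t\|\le B$ yields
\begin{align}
\sum_{t=1}^T\inner{\g_t}{\x_t-\x}\le\sum_{t=1}^T\inner{\tilde\g_t}{\w_t-\x}+RB\sum_{t=1}^T(2\delta_t+\Delta_t). \nn
\end{align}

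Second, I would establish the norm-bound event $E=\{\|\tilde\g_t\|\le(1+\delta/t+\kappa)B,\ \forall t\in[T]\}$. Since $\|\tilde\g_t\|\le(1+\kappa+\Delta_t)B$, it suffices to guarantee $\Delta_t\le\delta/t$ for every $t$. Applying Markov's inequality conditionally on $\cG_{t-1}$ gives $\P[\Delta_t>\delta/t\mid\cG_{t-1}]\le t\delta_t/\delta=1/t^2$, and a union bound together with the tail-sum $\sum_{t>T_0}t^{-2}$ trick (equivalently, a maximal inequality applied to the scaled martingale $\sum_{s\le t}s\Delta_s$) can be calibrated so that $\P[E^c]\le\rho$; on $E$, the hypothesis of $\scR^\A$ is satisfied, so $\sum_t\inner{\tilde\g_t}{\w_t-\x}\le\scR^\A(\x,\tilde\g_{1:T})$ for every $\x\in\K$.

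Third, I would control the residual $RB\sum_t(2\delta_t+\Delta_t)$. The deterministic piece is harmless: $2\sum_t\delta_t=2\delta\sum_t t^{-3}\le 4\delta$. For the random piece, Lemma \ref{lem:doob} applied to $(\Delta_t)$ yields, with probability at least $1-\rho$,
\begin{align}
\sum_{t=1}^T\Delta_t\le(1+1/\rho)\sum_{t=1}^T\delta_t, \nn
\end{align}
and the $\ln T$ factor in the stated bound arises naturally when the same device is applied to the rescaled variables $\{t\Delta_t\}_{t\le T}$ (with conditional means summing to $\delta\sum_t t^{-2}$), so that $\sum_t(2\delta_t+\Delta_t)\le 4\delta+(5\delta\ln T)/\rho$ divided by $RB$ absorbs into the advertised additive term. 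A final union bound merges this event with $E$ to give probability at least $1-2\rho$.

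The main obstacle is the probability accounting in the second and third steps: the naive union bound in Step~2 yields only a $\pi^2/6$ failure probability, independent of $\rho$, so one must carefully rescale the Markov threshold by a factor involving $\rho$ and a slowly growing sequence, and then arrange the thresholds so that both (a) the pointwise norm bound $(1+\delta/t+\kappa)B$ appears as stated, and (b) the cumulative error on the good event does not exceed $(4+5\delta\ln T/\rho)RB$. The tightness of these dual constraints is what forces the choice $\delta_t=\delta/t^3$ and is the delicate part of the argument; the rest is bookkeeping.
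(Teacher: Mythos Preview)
Your outline has the right ingredients (Lemma~\ref{lem:mainreduction}, per-round Markov, union bound), but the split into two separate high-probability events---one for the pointwise norm bound and one (via Doob/Lemma~\ref{lem:doob}) for the cumulative sum $\sum_t\Delta_t$---is an unnecessary detour, and the ``main obstacle'' you flag is an artifact of that split. The paper's proof avoids the issue entirely by working on a \emph{single} event: apply Markov at round $t$ with the threshold $t^2\delta_t/\rho$, so that $\P[\Delta_t\ge t^2\delta_t/\rho]\le \rho/t^2$, and then union-bound over $t$ to get $\P[\cE_T^c]\le\sum_t\rho/t^2\le 2\rho$. With $\delta_t=\delta/t^3$ this threshold is exactly $\delta/(\rho t)$, so on $\cE_T$ you automatically have \emph{both} the pointwise bound $\Delta_t\le\delta/(\rho t)$ (hence the norm condition for $\scR^\A$) \emph{and} the cumulative bound $\sum_{t=1}^T\Delta_t\le\sum_{t=1}^T\delta/(\rho t)\le \delta\ln T/\rho$ by simply summing the per-round thresholds. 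No Doob-type argument is needed, and the factor $2$ in $1-2\rho$ comes from $\sum_t t^{-2}\le 2$, not from a union of two events.

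In short: rescale the Markov threshold by $\rho$ (as you hinted at the end), but recognize that this single per-round event already controls the sum deterministically, so Step~3 of your plan becomes trivial and the ``dual constraints'' tension disappears.
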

	\begin{proof}[{\bf Proof}]
		By Lemma \ref{lem:mainreduction}, we have, for all $\w \in \K$ and $t\geq 1$, 
		\begin{align}
			\inner{\g_t}{\x_t - \x}  \leq \inner{\tilde \g_t}{\w_t-\x} + (\delta_t +\Delta_t)R \|\g_t\|,  \label{eq:secondonetwo}
		\end{align}
		where $\Delta_t\geq 0$ is a non-negative random variable satisfying $\E_{t-1}[\Delta_t]\leq \delta_t$. Thus, by the law of total expectation and Markov's inequality, we have 
		\begin{align}
			\P[\Delta_t \geq t^2 \delta_t/\rho ]\leq \rho/t^2.\nn
		\end{align}
		Let $\cE_T$ be the event that $\{\Delta_t \leq t^2 \delta_t/\rho, \forall t\in[T]\}$. By a union bound and the fact that $\sum_{t=1}^{\infty} 1/t^2 \leq 2$, we get that $\P[\cE_T] \geq 1 -2 \rho$. For the rest of this proof, we will condition on the event $\cE_T$. We have, by our choice of $(\delta_t)$, 
		\begin{align}
			\sum_{t=1}^T \Delta_t \leq  \sum_{t=1}^T t^2 \delta_t/\rho =  \sum_{t=1}^T \delta/(\rho t)\leq \frac{\delta \ln T} {\rho}. \label{eq:keye}
		\end{align}
		Also, note that by Lemma \ref{lem:mainreduction} (and the fact that $\cE_T$ holds), we have, 
		\begin{align}
			\|\tilde \g_t\| \leq (1+\Delta_t + \kappa) \|\g_t\|\leq  (1+\delta/t + \kappa) \|\g_t\| \leq (1+\delta/t + \kappa) B,\label{eq:bound}
		\end{align}
		where the last inequality follows by the fact that $\ell_t$ is $B$-Lipschitz. Now, by summing \eqref{eq:secondonetwo} for $t=1,\dots,T$, we obtain, for all $\x\in \K$ 
		\begin{align}
			\sum_{t=1}^T	\inner{\g_t}{\x_t - \x} &\leq \sum_{t=1}^T \inner{\tilde{\g}_t}{\w_t-\x}+\sum_{t=1}^T(2\delta_t +\Delta_t)R \|\g_t\|,\nn \\ 
			& \leq \scR^{\A}(\x,\tilde \g_{1:t}) + \sum_{t=1}^T(2\delta_t +\Delta_t)R \|\g_t\|,\label{eq:regretftrltwo}  \\
			&  \leq  \scR^{\A}(\x,\tilde \g_{1:t}) + (4 +5\delta/\rho \ln (T) )R B, \nn 
			% \label{eq:secondsummed2}
		\end{align}
		where \eqref{eq:regretftrltwo} follows by \eqref{eq:bound} and the assumption made about the regret of Algorithm $\A$, and the last inequality follows by \eqref{eq:keye} and the fact that $\ell_t$ is $B$-Lipschitz. 
	\end{proof}
	
	\section{Algorithm Wrapper for Scale-Invariance}
	\label{sec:scaleinvariant}
	In this appendix, we extend the technique of \cite{mhammedi2019,mhammedi2020} by presenting a general reduction (Algorithm \ref{alg:red}) that makes a large class of Online Learning algorithms scale-invariant and gets rid of problematic ratios in their regret bounds. In particular, we will consider all Online Learning algorithms whose regret bound can be expressed as a map $\scR:\K \times \cup_{t\geq 1}  \reals^{d\times t}  \times  \reals \rightarrow \reals_{\geq 0}$ such that for any comparator $\x$ and sequence of observed subgradients $(\bnabla_t)$, the regret of the algorithm after $t\geq 1$ rounds is bounded from above by $\scR(\x,\bnabla_{1:T},L_t/\epsilon)$, where $\epsilon>0$ is a parameter of the algorithm and $L_t\coloneqq \epsilon \vee \max_{t\in[T]}\|\bnabla_t\|$.  We note that so far we have not restricted the class of eligible algorithms by much. We will further require the following monotonicity property, which is satisfied by most popular Online Learning algorithms:
	\begin{align}
		\scR(\bu,\g_{1:s},p)\leq  \scR(\bu,\g_{1:t},q),\ \ \text{for all } p \leq q , s\leq t,\bu \in \K \text{ and } (\g_t)\subset \reals^d. \label{eq:monot}
	\end{align}
	With this, we now state our reduction result for scale-invariance:
	\begin{proposition}
		\label{prop:red}
		Let $\A$ be the sub-routine of Algorithm \ref{alg:red}. Suppose there exists $\scR^{\A}$ satisfying \eqref{eq:monot} and such that for any adversarial sequence of convex losses $(f_t)$ on $\K$, the iterates $(\x_t)$ of $\A$ in response to $(f_t)$ satisfy 
		\begin{gather}\x_1 = \bm{0}  \quad \text{and}\quad  \sum_{s=1}^t \inner{ \bnabla_s}{\x_s- \x} \leq \scR^\A(\x, \bnabla_{1:t}, L_t/\epsilon), \quad \forall t\geq1, \forall \x\in \K,\label{eq:round1}
			\intertext{where $\epsilon>0$ is a parameter of $\A$; $ \bnabla_s \in \partial f_s(\x_s)$, for all $s\in [t]$; and $L_t=\epsilon \vee \max_{s\in [t]}\|\bnabla_s\|$. Then, for any adversarial sequence of convex losses $(\ell_t)$, the iterates $(\y_t)$ of Algorithm \ref{alg:red} in response to $(\ell_t)$ satisfy}
			\sum_{t=1}^T \inner{\g_t}{\y_t - \x}\leq 2\scR^\A(\x,\g_{1:T},T)+4RB_T,\quad  \forall T\geq1, \forall \x\in \K,  \nn 
		\end{gather}
		where $\g_t$, $t\in[T]$, is any subgradients in $\partial \ell_t(\y_t)$ and $B_T=\epsilon \vee \max_{t\in [T]}\|\g_t\|$.
	\end{proposition}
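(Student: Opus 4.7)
My plan is to exploit the restart/doubling structure of Algorithm \ref{alg:red}, which extends the technique of \cite{mhammedi2019, mhammedi2020}. The algorithm partitions the rounds $\{1, \ldots, T\}$ into consecutive epochs $E_1, \ldots, E_N$ such that within each epoch $E_i$ the wrapper runs a fresh instance $\A_i$ of the base algorithm, initialized with a scale parameter $\epsilon_i$ chosen adaptively from the gradients observed so far (typically from the first gradient in that epoch). A new epoch is triggered whenever the current observed gradient $\g_t$ is too large relative to $\epsilon_i$ (with a trigger threshold calibrated so that within any epoch $L_t/\epsilon_i \le T$), and on such a transition round the wrapper plays a safe iterate in $\K$ (e.g.\ the origin or a previous iterate), contributing at most $2R\|\g_t\| \le 2R B_T$ to the regret for that round.

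First, I would apply the hypothesis \eqref{eq:round1} to each $\A_i$ restricted to its epoch. Since $\A_i$ starts from $\bm{0}$ and the scale parameter $\epsilon_i$ was chosen so that the ratio $L_t/\epsilon_i$ stays at most $T$ throughout $E_i$, the monotonicity property \eqref{eq:monot} gives
\[
\sum_{t \in E_i} \inner{\g_t}{\y_t - \x} \;\le\; \scR^{\A}(\x, \g_{E_i}, L_{E_i}/\epsilon_i) \;\le\; \scR^{\A}(\x, \g_{E_i}, T), \qquad \forall \x \in \K.
\]
Summing over $i$ and adding the per-transition overheads (which apply at the end of each epoch except the last) gives a bound of the form $\sum_{i=1}^N \scR^{\A}(\x,\g_{E_i},T) + 2R\cdot\sum_{i=1}^{N-1} \|\g_{\tau_i}\|$, where $\tau_i$ marks the end of epoch $i$.

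Second, I would bound the two pieces using the geometric growth of the scales $(\epsilon_i)$. Because a restart only occurs when the triggering gradient norm exceeds a constant multiple of $\epsilon_i$ and $\epsilon_{i+1}$ is set to a multiple of this triggering norm, the sequence $(\epsilon_i)$ doubles (or grows by a fixed factor $>1$) from one epoch to the next. This yields $\sum_{i=1}^{N-1}\|\g_{\tau_i}\| \le 2B_T$ by a geometric-series argument, giving the $4R B_T$ term. For the main term, the same geometric structure bounds the ``regret potential'' of epoch $i$ (measured by the quantity that $\scR^{\A}$ depends on, typically $\sum_{t\in E_i}\|\g_t\|^2$ or $L_{E_i}$) by a $2^{-(N-i)}$ fraction of the full-horizon potential; combined with monotonicity \eqref{eq:monot} this gives $\scR^{\A}(\x,\g_{E_i},T)\le 2^{-(N-i)}\scR^{\A}(\x,\g_{1:T},T)$, and summing the geometric series yields the factor $2$ in the final bound.

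The main obstacle is precisely the last telescoping step: the hypothesis on $\A$ gives us only the abstract monotonicity property \eqref{eq:monot}, which per se yields $\scR^{\A}(\x, \g_{E_i}, T) \le \scR^{\A}(\x, \g_{1:T}, T)$ and therefore a loose multiplicative factor of $N$ when summed over all epochs. To obtain the constant $2$, I must use the specific restart rule of Algorithm \ref{alg:red} to show that each epoch $E_i$ with $i < N$ contributes geometrically less than the last epoch $E_N$ in the relevant scale; essentially, the last epoch must absorb a dominant share of the cumulative gradient magnitude. Verifying this carefully, using only the shape of the threshold rule and the monotonicity of $\scR^{\A}$ in the subgradient history, will be the delicate part and likely requires decomposing the final gradient sequence into the concatenation of the epoch sequences and appealing to the restart rule's invariant that each previous epoch's tail is bounded by the next epoch's initial scale.
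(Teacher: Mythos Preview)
Your epoch decomposition, the per-epoch application of \eqref{eq:round1}, and the use of monotonicity \eqref{eq:monot} to replace $L_t/\epsilon_i$ by $T$ are all correct and match the paper. The gap is exactly where you flag it: turning $\sum_{i=1}^N \scR^{\A}(\x,\g_{E_i},T)$ into $2\,\scR^{\A}(\x,\g_{1:T},T)$ cannot be done with only the abstract hypothesis \eqref{eq:monot}. Your proposed fix, a geometric bound $\scR^{\A}(\x,\g_{E_i},T)\le 2^{-(N-i)}\scR^{\A}(\x,\g_{1:T},T)$, implicitly assumes that $\scR^{\A}$ depends on the gradient history through some additive scalar such as $\sum_s\|\g_s\|^2$; this is not part of the assumption. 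For a generic $\scR^{\A}$ satisfying \eqref{eq:monot} (for instance, one that is constant in its second argument once the sequence is nonempty), no geometric decay across epochs holds, and your sum would yield a factor $N$ rather than $2$.

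The paper avoids this by \emph{not} summing $\scR^{\A}$ over all epochs. It applies the $\scR^{\A}$ bound only to the \emph{last two} epochs, which immediately gives the factor $2$; each of these epochs also incurs an additive $RB_T$ from its first round, where the played iterate $\y_\tau$ is inherited from the previous instance rather than being $\A$'s first output $\bm{0}$. For \emph{all earlier} epochs the paper discards the $\scR^{\A}$ bound entirely and uses the trivial estimate $\inner{\g_s}{\y_s-\x}\le 2R\|\g_s\|$. The restart rule of Algorithm~\ref{alg:red} then controls the resulting sum: if $\tau$ and $t$ denote the starts of the ante-penultimate and penultimate epochs, one writes $\|\g_s\|\le B_\tau\cdot\|\g_s\|/B_s$ for $s<\tau$ and uses the restart inequality at round $t$ to bound $B_\tau\sum_{s}\|\g_s\|/B_s$ by $B_t\le B_T$, so that the earlier epochs contribute at most $2RB_T$ in total. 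The missing idea in your plan is precisely this: only two epochs ever touch $\scR^{\A}$, and everything before them is absorbed into the $4RB_T$ term via the crude diameter bound.
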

	
	\begin{algorithm}[tbp]
		\caption{Scale-Invariant Wrapper via Restarts.}
		\label{alg:red}
		\begin{algorithmic}[1]
			\Require OCO Algorithm $\A$ on $\K$ taking parameter $\epsilon>0$.
			\vspace{0.2cm} 
			\State Initialize $\tau=1$, $\x^{\tau}_1=\bm{0}$, and $S_0=B_0=0$.
			\For{$t=1,2,\dots $}
			\State Play $\y_t=\x_t^{\tau}$ and observe $\g_t \in \partial \ell_t(\y_t)$.
			\State Set $B_t = B_{t-1}\vee \|\g_t\|$ and $S_t = S_{t-1}+\|\g_t\|/B_t$.
			\If{$B_t=0$} \algcomment{Continue play the zero vector until the first non-zero subgradients}
			\State Set $\y_{t+1}=\y_t$.
			\State Continue.
			\ElsIf{$B_{t-1}=0$ {\bf or} $B_{t}/B_{\tau} \geq  S_t$} \label{eq:restart}  \algcomment{The latter is checking if $B_t/B_\tau \geq \sum_{s=\tau}^t \|\g_s\|/B_s$}
			\State Set $\tau=t$ and $S_{\tau}=0$.
			\State Initialize $\A$ with parameter $\epsilon=B_\tau$. \algcomment{$\tau$ is the new `round 1'}
			\EndIf
			\State Send linear loss $\w \mapsto \inner{\g_t}{\w}$ to $\A$.
			\State Set $\x^{\tau}_{t+1}$ to $\A$'s $(t+2-\tau)$th output given history $((\x^\tau_{i},\w \mapsto \inner{\g_i}{\w}))_{\tau \leq i\leq t}$.
			\EndFor
		\end{algorithmic}
	\end{algorithm}
	
	\begin{proof}[{\bf Proof of Proposition \ref{prop:red}}]
		We say that $\tau=t$ is the start of an epoch of Algorithm \ref{alg:red} if the condition in Line \ref{eq:restart} is satisfied. We use the convention that the ``last epoch'' starts at $t=T+1$. Let $\tau<t$ be the start of two consecutive epochs. Then, by the condition on Line \ref{eq:restart} of Algorithm \ref{alg:red}, we have 
		\begin{align}
			B_{t-1}/B_{\tau} \leq   \sum_{s=\tau}^{t-1} \| \g_s\|/B_s \leq t-1, \quad \text{and}\quad B_{t}/B_{\tau} >   \sum_{s=\tau}^{t} \| \g_s\|/B_s. \label{eq:restartcond}
		\end{align}
		Recall also that at round $s=\tau$, sub-routine $\A$ is initialized with $\epsilon = B_{\tau}$. Thus, by our assumption in \eqref{eq:round1}, we have $\x^{\tau}_\tau =\bm{0}$ and 
		\begin{align}
			\forall \x\in \K, \quad \sum_{s=\tau}^{t-1} \inner{\g_t}{\y_t-\x} =\inner{\y_\tau}{\g_\tau}+ \sum_{s=\tau}^{t-1} \inner{\g_t}{\x^{\tau}_t-\x} &\stackrel{\eqref{eq:round1}}{\leq}\inner{\y_\tau}{\g_\tau} + \scR^\A(\x,\g_{\tau:t-1}, B_{t-1}/\epsilon),\nn \\
			& \leq R B_t + \scR^\A(\x,\g_{\tau:t-1},t-1), \nn\\
			& \leq R B_T + \scR^\A(\x,\g_{1:T},T),\label{eq:within}
		\end{align}   
		where the last inequality follows by the fact that $\scR^\A \in \cM$. If there are two epochs or less, summing \eqref{eq:within} across the epochs leads to the desired result. Now suppose that there are more than two epochs, and let $\tau$ [resp.~$t$] be the start of the ante-penultimate [resp.~penultimate] epoch (recall that the last epoch starts at $T+1$ by convention). By \eqref{eq:within}, the regret across these two epochs is bounded as 
		\begin{align}
			\sum_{s=\tau}^{T} \inner{\g_t}{\y_t-\x}  \leq 2 R B_T + 2 \scR^\A(\x,\g_{1:T},T).
			\label{eq:lasttwo}
		\end{align}
		We will now bound the regret across the earlier epochs. We have 
		\begin{align}
			\sum_{s=1}^{\tau-1}  \inner{\g_t}{\y_t-\x} \leq 2 R\sum_{s=1}^{\tau-1}  \|\g_s\| \leq 2 R B_{\tau }\sum_{s=1}^{\tau-1}  \frac{\|\g_s\|}{B_s}\leq 2 R B_{\tau }\sum_{s=1}^{t}  \frac{\|\g_s\|}{B_s} \stackrel{\eqref{eq:restartcond}}{<} 2 R B_t \leq 2 RB_T. \label{eq:earlier}
		\end{align}
		Combining \eqref{eq:lasttwo} and \eqref{eq:earlier} leads to the desired result.
	\end{proof}

	\section{Proofs of the Regret Bounds and Convergence Rates}
	\label{sec:proofs}	
	\subsection{Proof of Lemma \ref{lem:mainreduction} (Instantaneous Regret Bound)}
	\label{sec:mainreductionproof}
	To avoid expensive projections our main algorithm (Alg.~\ref{alg:projectionfreewrapper}) makes use of surrogate losses of the form $\tilde \ell(\w) \coloneqq\inner{\g}{\w} + b S_{\K}(\w)$, $\w\in \reals^d$ and $b\geq 0$. The choice of such a surrogate loss function is inspired by existing constrained-to-unconstrained reductions in OCO due to \cite{cutkosky2018black,cutkosky2020parameter}. We will use the approximate optimization Oracle $\O_{\K^\circ}$ from Section \ref{sec:gauge} to compute approximate subgradients of such surrogate losses. In particular, $\O_{\K^\circ}$ (Algorithm \ref{alg:optimize}) guarantees the following:
	\begin{lemma}
	\label{lem:subgradient}
	Let $\g\in \reals^d$, $b\geq  0$, and $\delta \in(0,1/3)$. Let $S_{\K}$ be the Gauge distance function in \eqref{eq:theS} and define $\tilde \ell(\w) \coloneqq\inner{\g}{\w} + b S_{\K}(\w)$. Then, for $\w\in \cB(R)$, the output $(\tilde  \gamma, \tilde \s)$ of Alg.~\ref{alg:optimize} with input $(\w,\delta)$ satisfies \eqref{eq:bisection} and
	\begin{gather}
		\forall \bu \in \cB(R), \ \ \tilde \ell(\bu)\geq \tilde \ell(\w) +  \inner{\g + b  \bnu}{\bu - \w} -b\cdot (\Delta+\delta)\cdot \mathbb{I}_{\{\tilde \gamma \geq 1\}}, \quad \text{where \ $\bnu \coloneqq \mathbb{I}_{\{ \tilde \gamma \geq 1 \}}\tilde \s$,} \nn
	\end{gather}
 and $\Delta\in[0, {15^2 d^{4} \kappa^3}{\delta^{-2}}]$ is the same random variable satisfying \eqref{eq:bisection}; in particular, $\E[\Delta]\leq \delta$. %\eqref{eq:bisection}.
\end{lemma}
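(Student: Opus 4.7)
The statement follows immediately once we rewrite the desired inequality in terms of the Gauge distance. Since $\tilde{\ell}(\bu)-\tilde{\ell}(\w)=\inner{\g}{\bu-\w}+b\cdot(S_\K(\bu)-S_\K(\w))$, after canceling the common $\inner{\g}{\bu-\w}$ term the claim reduces to showing
\[
S_\K(\bu)-S_\K(\w)\;\geq\;\inner{\bnu}{\bu-\w}-(\Delta+\delta)\cdot\mathbb{I}_{\{\tilde{\gamma}\geq 1\}},\qquad \forall \bu\in\cB(R),
\]
which is what I would target. The plan is to split on whether $\tilde{\gamma}<1$ or $\tilde{\gamma}\geq 1$, and in the latter case on whether $\w\in\K$.

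\textbf{Step 1 (trivial branch $\tilde{\gamma}<1$).} Here $\bnu=\mathbb{I}_{\{\tilde{\gamma}\geq 1\}}\tilde{\s}=\bm{0}$ and the indicator on the right-hand side vanishes, so the inequality reduces to $S_\K(\bu)\geq S_\K(\w)$. I would first argue that $\tilde{\gamma}<1$ forces $\w\in\K$ (hence $S_\K(\w)=0$): if $\gamma_\K(\w)<9/16$ then $\w\in\K$ automatically, while if $\gamma_\K(\w)\geq 9/16$, Lemma~\ref{lem:gauge} gives $\gamma_\K(\w)\leq\tilde{\gamma}<1$, so again $\w\in\K$. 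Since $S_\K\geq 0$ everywhere (Lemma~\ref{lem:projectiononK}), this branch is immediate.

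\textbf{Step 2 (main branch $\tilde{\gamma}\geq 1$).} Here $\bnu=\tilde{\s}$ and Proposition~\ref{prop:subgradient} delivers $\gamma_\K(\bu)\geq\gamma_\K(\w)+\inner{\tilde{\s}}{\bu-\w}-\Delta\cdot\max(1,\|\bu\|/R)$; for $\bu\in\cB(R)$ the max equals $1$. I would then use the identity $S_\K(\cdot)=\max(0,\gamma_\K(\cdot)-1)$ from Lemma~\ref{lem:projectiononK}, and sub-split:
\begin{itemize}
\item If $\w\notin\K$ (i.e.~$\gamma_\K(\w)\geq 1$), then $S_\K(\w)=\gamma_\K(\w)-1$ and $S_\K(\bu)\geq\gamma_\K(\bu)-1$, so subtracting yields $S_\K(\bu)-S_\K(\w)\geq\gamma_\K(\bu)-\gamma_\K(\w)\geq\inner{\tilde{\s}}{\bu-\w}-\Delta$, which is stronger than what is needed.
\item If $\w\in\K$ (i.e.~$\gamma_\K(\w)<1$), then $S_\K(\w)=0$. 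This is the delicate sub-case: since $\tilde{\gamma}\geq 1$ and $\gamma_\K(\w)<1$ imply $\gamma_\K(\w)\geq 9/16$, Lemma~\ref{lem:gauge} gives $\gamma_\K(\w)\geq\tilde{\gamma}-\delta\geq 1-\delta$. Plugging this into the bound from Proposition~\ref{prop:subgradient} produces $\gamma_\K(\bu)-1\geq\inner{\tilde{\s}}{\bu-\w}-\Delta-\delta$, and using $S_\K(\bu)\geq\gamma_\K(\bu)-1$ closes the argument.
\end{itemize}

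\textbf{Main obstacle.} The only non-routine issue is the sub-case $\w\in\K$ with $\tilde{\gamma}\geq 1$, where the approximation error in the Gauge-function computation puts $\w$ just barely inside $\K$ while the algorithm treats it as if $\w\notin\K$. This is precisely where the extra $\delta$ slack in the stated bound is consumed, and isolating this case (and invoking the conditional guarantee of Lemma~\ref{lem:gauge}, which only holds for $\gamma_\K(\w)\geq 9/16$ or $\tilde{\gamma}\geq 1$) is the one step that requires care. The bound on $\Delta$ and the claim \eqref{eq:bisection} then just quote Proposition~\ref{prop:subgradient} verbatim.
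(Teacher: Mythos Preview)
Your proposal is correct and follows essentially the same approach as the paper: split on $\tilde\gamma<1$ versus $\tilde\gamma\geq 1$, handle the first branch by showing $\w\in\K$ and invoking convexity, and handle the second branch by combining Proposition~\ref{prop:subgradient} with the approximation guarantee of Lemma~\ref{lem:gauge}. The only cosmetic difference is that the paper avoids your sub-split on $\w\in\K$ versus $\w\notin\K$ by proving the single inequality $\gamma_\K(\w)-1\geq \tilde\gamma-1-\delta\geq S_\K(\w)-\delta$ (valid because $\tilde\gamma\geq 1$ and $\tilde\gamma\geq\gamma_\K(\w)$), which subsumes both of your sub-cases at once; also note that in your delicate sub-case you need not argue $\gamma_\K(\w)\geq 9/16$, since Lemma~\ref{lem:gauge} already applies under the disjunct $\tilde\gamma\geq 1$.
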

	\begin{proof}[{\bf Proof}]%[{\bf Proof of Lemma \ref{lem:subgradient}}]
	First suppose that $\tilde  \gamma <1$. If $\gamma_{\K}(\w)\geq 9/16$, then by Lemma \ref{lem:gauge}, we have $\gamma_{\K}(\w)\leq \tilde \gamma<1$. Alternatively, $\gamma_{\K}(\w)<9/16<1$. Therefore, by Lemma \ref{lem:projectiononK}, $\partial S_{\K}(\w)=\{\bm{0}\}$, and so $\g \in \partial \tilde{\ell}(\w)$. Since the function $\tilde \ell$ is convex, it follows that 
	\begin{align}
\text{[case where $\tilde \gamma<1$]}	\quad	\forall \bu \in \cB(R), \ \ \tilde \ell(\bu) \geq \tilde \ell(\w) + \inner{\g}{\bu - \w}.  \label{eq:firstcase}
	\end{align}
	Now, suppose that $\tilde\gamma\geq 1$. In this case, by Lemma \ref{lem:gauge}, we have $\gamma_{\K}(\w) \leq \tilde  \gamma \leq \gamma_{\K}(\w)+\delta$. Combining this with the fact that $S_{\K}(\w)=0 \vee (\gamma_{\K}(\w) -1)$ (Lemma \ref{lem:projectiononK}), we get  
	\begin{align}
 \gamma_{\K}(\w) -1 \geq \tilde \gamma -1 - \delta \stackrel{(a)}{\geq}    0 \vee (\gamma_{\K}(\w)-1) -\delta =	S_{\K}(\w) - \delta,\label{eq:game} 
	\end{align}
	where $(a)$ follows from the fact that $\tilde \gamma\geq 1$ (by assumption) and $\tilde \gamma \geq \gamma_{\K}(\w)$. Using this, we get 
	\begin{align}
\text{[case where $\tilde \gamma\geq 1$]}\quad		\forall \bu \in \cB(R), \ \ \tilde \ell(\bu) &=  \inner{\g}{\bu}+  0 \vee (b\gamma_{\K}(\bu)-b),\nn \\
		& \geq \inner{\g}{\bu}+ b \gamma_{\K}(\bu)-b, \nn \\ 
		& \geq \inner{\g}{\bu} + b\gamma_{\K}(\w)-b+  \inner{b \tilde \s}{\bu - \w} - b \Delta ,  \label{eq:case}\\
		& \geq  \inner{\g}{\bu}+ bS_{\K}(\w) + \inner{b \tilde  \s}{\bu - \w} - b \Delta - b\delta, \label{eq:theSS}\\
		& = \tilde \ell(\w) + \inner{\g + b \tilde \s}{\bu - \w} - b \Delta -b \delta, \nn \\
		& =\tilde \ell(\w) + \inner{\tilde \g + b \bnu}{\bu - \w} - b \Delta - b\delta, \label{eq:secondcase}
	\end{align}
where \eqref{eq:case} follows by the fact that $\tilde \gamma\geq 1$ and Proposition \ref{prop:subgradient}; \eqref{eq:theSS} follows by \eqref{eq:game}; and \eqref{eq:secondcase} follows by the fact that $\tilde \s = \bnu$ since $\tilde \gamma\geq 1$. Combining \eqref{eq:firstcase} and \eqref{eq:secondcase} implies the desired result.
\end{proof}

\noindent Lemma \ref{lem:subgradient} shows that for any $t\geq 1$, the vector $\tilde \g_t$ in Algorithm \ref{alg:projectionfreewrapper} is an approximate subgradient of the surrogate loss $\tilde \ell_t(\cdot)\coloneqq \inner{\g_t}{\cdot} - \mathbb{I}_{\inner{\g_t}{\w_t}<0} \inner{\g_t}{\x_t} S_{\K}(\cdot)$, where $\g_t\in \partial \ell_t(\x_t)$ and $\x_t$ is the $t$th iterate of Alg.~\ref{alg:projectionfreewrapper}. We now use this to prove Lemma \ref{lem:mainreduction}. We actually state and prove a slight generalization of Lemma \ref{lem:mainreduction}, which will be useful when considering the stochastic optimization setting of Section \ref{sec:stochastic}. In this generalization, we assume that for some $R'\in[r,R]$, 
\begin{align}
\cB(r)\subseteq \K\subseteq \cB(R')\subseteq \cB(R). \label{eq:sanding} 
\end{align}
\begin{lemma}
		\label{lem:mainreductiongen}
		Let $\kappa\coloneqq R'/r$ and $\A$ be any OCO algorithm on $\cB(R)$, for $r,R,R'>0$ as in \eqref{eq:sanding}. Further, for $t\ge1$, let $\w_t, \tilde{\g}_t, \x_t$, and $\g_t\in \partial \ell_t(\x_t)$ be as in Alg.~\ref{alg:projectionfreewrapper} with $\cO_{\K^\circ}\equiv \O_{\K^\circ}$ and any tolerance sequence $(\delta_s)\subset (0,1/3)$. Then, $(\x_t)\subset \K$, and for all $t\geq 1$, there exists a variable $\Delta_t \in \left[0,{15^2 d^{4} (R/r)^3}{\delta^{-2}_t}\right]$ s.t.~$\E_{t-1}[\Delta_t] \leq \delta_t$, $\|\tilde{\g}_t\|\leq   (1+\Delta_t+\kappa) \|\g_t\|$, and
		\begin{align}
			\forall \x\in \K,\quad 
			\inner{\g_t}{\x_t-\x} \leq \tilde\ell_t(\w_t) - \tilde \ell_t(\x)+\delta_t R \|\g_t\| \leq \inner{\tilde{\g}_t}{\w_t-\x} + (2\delta_t +\Delta_t) R \|\g_t\|. \label{eq:individualnew} 
		\end{align}	
		\end{lemma}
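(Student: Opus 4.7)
The plan is to mirror the case analysis of the perfect-oracle proof of Lemma~\ref{lem:mainreductionperfect}, but to track the approximation errors coming from Algorithm~\ref{alg:optimize} via the guarantees of Lemma~\ref{lem:gauge} and Proposition~\ref{prop:subgradient}. Throughout, I would define $\Delta_t$ to be the random variable $\Delta$ produced by Proposition~\ref{prop:subgradient} when $\O_{\K^\circ}$ is invoked at $\w_t$ with tolerance $\delta_t$; the existence and boundedness of $\Delta_t$ (including $\E_{t-1}[\Delta_t]\leq \delta_t$ and $\Delta_t\leq 15^2 d^{4} (R/r)^3/\delta_t^{2}$) then follow immediately from that proposition.

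First I would show that $\x_t\in\K$. From Lemma~\ref{lem:gauge} and a short case split: if $\gamma_t\geq 1$, then the guarantee of Lemma~\ref{lem:gauge} applies, yielding $\gamma_{\K}(\w_t)\leq \gamma_t$, and hence $\gamma_{\K}(\x_t)=\gamma_{\K}(\w_t)/\gamma_t\leq 1$ by positive homogeneity; if $\gamma_t<1$, then either $\gamma_{\K}(\w_t)< 9/16<1$ trivially or Lemma~\ref{lem:gauge} forces $\gamma_{\K}(\w_t)\leq \gamma_t<1$, so $\x_t=\w_t\in\K$. Next, I would bound $\|\tilde\g_t\|$. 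Since $\bnu_t=\mathbb{I}\{\gamma_t\geq 1\}\tilde \s$, Proposition~\ref{prop:subgradient} gives $\|\bnu_t\|\leq \Delta_t/R+1/r$, and using $\x_t\in\K\subseteq \cB(R')$ together with Cauchy--Schwarz yields
\begin{align}
\|\tilde \g_t\|\leq \|\g_t\|+\|\g_t\|\|\x_t\|\|\bnu_t\|\leq \|\g_t\|\bigl(1+R'(\Delta_t/R+1/r)\bigr)\leq (1+\Delta_t+\kappa)\|\g_t\|. \nn
\end{align}

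The first (``left'') inequality of \eqref{eq:individualnew} I would establish by case analysis on $\gamma_t$ and the sign of $\inner{\g_t}{\w_t}$, exactly as in Lemma~\ref{lem:mainreductionperfect}, with the only change being the bookkeeping of the gap $\gamma_t-\gamma_{\K}(\w_t)\in[0,\delta_t]$. When $\gamma_t<1$, we have $\w_t\in\K$, $\x_t=\w_t$, $S_{\K}(\w_t)=0$, and hence $\inner{\g_t}{\x_t}=\tilde\ell_t(\w_t)$ exactly. When $\gamma_t\geq 1$ and $\inner{\g_t}{\w_t}\geq 0$, the inequality $1/\gamma_t\leq 1$ gives $\inner{\g_t}{\x_t}\leq \inner{\g_t}{\w_t}=\tilde\ell_t(\w_t)$. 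The delicate case is $\gamma_t\geq 1$ and $\inner{\g_t}{\w_t}<0$: here a direct computation yields
\begin{align}
\inner{\g_t}{\x_t}-\tilde\ell_t(\w_t) = \inner{\g_t}{\w_t}\cdot \frac{\gamma_{\K}(\w_t)-\gamma_t}{\gamma_t}, \nn
\end{align}
whose magnitude is bounded by $\|\g_t\|R\cdot \delta_t$ using $|\gamma_t-\gamma_{\K}(\w_t)|\leq \delta_t$, $\gamma_t\geq 1$, and $\|\w_t\|\leq R$. The sub-case where $\gamma_t\geq 1$ yet $\gamma_{\K}(\w_t)<1$ (so that $S_{\K}(\w_t)=0$ but $\x_t\neq \w_t$) is handled analogously using $\gamma_t\in[1,1+\delta_t]$.

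The second (``right'') inequality of \eqref{eq:individualnew} will follow from Lemma~\ref{lem:subgradient} applied to the surrogate $\tilde\ell_t$: with $\g=\g_t$ and $b=-\mathbb{I}\{\inner{\g_t}{\w_t}<0\}\inner{\g_t}{\x_t}\geq 0$ (non-negativity is checked from the definition of $\x_t$, which always preserves the sign of $\inner{\g_t}{\w_t}$), the lemma yields
\begin{align}
\tilde\ell_t(\w_t)-\tilde\ell_t(\x)\leq \inner{\tilde\g_t}{\w_t-\x}+ b\,(\Delta_t+\delta_t), \nn
\end{align}
and the prefactor $b\leq \|\g_t\|\|\x_t\|\leq \|\g_t\|R'\leq \|\g_t\|R$ converts this into the claimed $(2\delta_t+\Delta_t)R\|\g_t\|$ slack after combining with the left inequality. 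The main technical obstacle will be getting the signs and indicator patterns to line up cleanly in the ``$\gamma_t\geq 1,\ \inner{\g_t}{\w_t}<0$'' case, where both the definition of $\tilde\ell_t$ and the approximation of $S_{\K}$'s subgradient interact; once this is carefully organised, the rest is direct bookkeeping of the two error quantities $\delta_t$ (from Lemma~\ref{lem:gauge}) and $\Delta_t$ (from Proposition~\ref{prop:subgradient}).
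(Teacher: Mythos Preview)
Your proposal is correct and follows essentially the same approach as the paper: a case analysis on $\gamma_t$ and the sign of $\inner{\g_t}{\w_t}$ for the left inequality, Lemma~\ref{lem:subgradient} for the right inequality, and the $\|\x_t\|\leq R'$ bound combined with Proposition~\ref{prop:subgradient}'s control on $\|\tilde\s\|$ for the norm estimate. Your identity $\inner{\g_t}{\x_t}-\tilde\ell_t(\w_t)=\inner{\g_t}{\w_t}(\gamma_{\K}(\w_t)-\gamma_t)/\gamma_t$ in the case $\gamma_t\geq 1$, $\inner{\g_t}{\w_t}<0$, $\w_t\notin\K$ is in fact a slightly cleaner route to the same bound than the paper's longer manipulation, but the structure of the argument is otherwise identical.
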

	\begin{proof}[{\bf Proof of Lemma \ref{lem:mainreduction}}]
		Let $\gamma_t$, $\bnu_t$, $\w_t, \tilde{\g}_t$, and $\x_t$ be as in Algorithm \ref{alg:projectionfreewrapper}. We will first show that 
		\begin{align}
			-R\|\g_t\| \leq \mathbb{I}_{\inner{\g_t}{\w_t}<0} \cdot \inner{\g_t}{{\x}_t} \leq 0
			\label{eq:firstgoal}
		\end{align}
		First suppose that $\gamma_t < 1$. In this case, $\x_t =\w_t$, and so 
		\begin{align}
			-R \|\g_t\|\leq \mathbb{I}_{\inner{\g_t}{\w_t}<0} \cdot \inner{\g_t}{{\x}_t} =\mathbb{I}_{\inner{\g_t}{\w_t}<0} \cdot \inner{\g_t}{{\w}_t} \leq  0. \nn
		\end{align}
		Now suppose that $\gamma_t \geq 1$, then $\mathbb{I}_{\inner{\g_t}{\w_t}<0} \cdot \inner{\g_t}{{\x}_t}= \mathbb{I}_{\inner{\g_t}{\w_t}<0}  \cdot  \inner{\g_t}{\w_t/\gamma_t}$ and so 
		\begin{align}
0 	\geq		\mathbb{I}_{\inner{\g_t}{\w_t}<0} \cdot \inner{\g_t}{{\x}_t} \geq  \mathbb{I}_{\inner{\g_t}{\w_t}<0}   \inner{\g_t}{\w_t} \geq - R \|\g_t\|,\nn
		\end{align}
		where the last inequality follows by the fact $\w_t\in \cB(R)$. This shows \eqref{eq:firstgoal}. Now recall the definition of the surrogate function $\tilde \ell_t:\cB(R) \rightarrow \reals$:
		\begin{align}
			\tilde \ell_t(\w) \coloneqq \inner{\g_t}{\w} - \mathbb{I}_{\inner{\g_t}{\w_t} <0} \cdot \inner{\g_t}{{\x}_t} \cdot S_{\K}(\w), \nn
		\end{align}
		where $S_{\K}$ is as in \eqref{eq:theS}. By \eqref{eq:firstgoal} and Lemma \ref{lem:subgradient}, there exists a r.v.~$\Delta_t\in [0,\tfrac{15^2 d^{4} \kappa^3}{\delta^2}]$ such that $\E_{t-1}[\Delta_t]\leq \delta_t$ and 
		\begin{align}
			\forall \x\in \K,\quad \tilde \ell_t(\w_t) - \tilde \ell_t(\x) \leq \inner{\tilde\g_t}{\w_t -\x} + R \|\g_t\|\cdot( \Delta_t+\delta_t). \label{eq:convexity2}
		\end{align}
		It remains to show that $\inner{\g_t}{\x_t -\x}\leq \tilde \ell_t(\w_t) - \tilde \ell_t(\x)+ \delta_t R \|\g_t\|$, for all $t\geq 1$ and $\x \in \K$. First, note that for all $\x \in \K$, we have 	$S_{\K}(\x)=0$, and so
		\begin{align}
			\tilde \ell_t(\x) = \inner{\g_t}{\x}, \quad \forall \x \in \K. \label{eq:firstequality}
		\end{align}
		We will now compare $\inner{\g_t}{\x_t}$ to $\tilde \ell_t(\w_t)$ by considering cases. Suppose that $\gamma_t <1$. In this case, we have $\x_t = \w_t$ and so 
		\begin{align}
			\inner{\g_t}{\x_t} = \inner{\g_t}{\w_t} = \tilde \ell_t(\w_t).\quad \text{[\text{case where $\gamma_t < 1$}]} \label{eq:first}
		\end{align}
 Now suppose that $\gamma_t \geq 1$ and $\inner{\g_t}{\w_t}\geq 0$. In this case, we have $\x_t = \w_t/\gamma_t$, and so
		\begin{align}
			\inner{\g_t}{\x_t}=\inner{\g_t}{\w_t/\gamma_t} \leq \inner{\g_t}{\w_t} = \tilde \ell_t(\w_t). \quad \text{[\text{case where $\gamma_t \geq 1, \inner{\g_t}{\w_t}\geq 0$}]} \label{eq:firstsecond}
		\end{align}
		Now suppose that $\gamma_t \geq 1$, $\inner{\g_t}{\w_t}<0$, and $\w_t\in \K$. We note that this implies that $\gamma_{\K}(\w_t)\leq1$ and so $S_{\K}(\w_t)=0$ (Lemma \ref{lem:projectiononK}). Thus, $\tilde \ell_{t}(\w_t)= \inner{\g_t}{\w_t}$. On the other hand, by Lemma \ref{lem:gauge} we have $\gamma_t \leq \gamma_{\K}(\w_t)+ \delta_t \leq 1 + \delta_t$, and so since $\x_t= \w_t/\gamma_t$, we have 
		\begin{align}
			\inner{\g_t}{\x_t} \leq \inner{\g_t}{\w_t}/(1+\delta_t) = \inner{\g_t}{\w_t} - \delta_t \inner{\g_t}{\w_t}/(1+\delta_t). \nn 
			\end{align}
		Thus, since $\tilde \ell_t(\w_t)=\inner{\g_t}{\w_t}$, the previous display implies that 
		\begin{align}
				\inner{\g_t}{\x_t} \leq \tilde{\ell}_t(\w_t)+ \delta_t R \|\g_t\|. \quad \text{[\text{case where $\gamma_t \geq 1, \inner{\g_t}{\w_t}<0, \w_t \in \K$}]} \label{eq:case2}
		\end{align}
	We now consider the last case where $\gamma_t \geq 1$, $\inner{\g_t}{\w_t}<0$, and $\w_t\not\in \K$. We note that this implies that $\gamma_{\K}(\w_t)\geq 1$. By the fact that $\gamma_t \leq \gamma_{\K}(\w_t) +\delta_t$ (Lemma \ref{lem:gauge}), we have 
		\begin{align}
			\inner{\g_t}{\x_t} \leq  \frac{  \inner{\g_t}{\w_t}}{\gamma_{\K}(\w_t) + \delta_t} =  \frac{ \inner{\g_t}{\w_t/\gamma_{\K}(\w_t)}}{1 + \delta_t/\gamma_{\K}(\w_t)}&= \inner{\g_t}{\w_t/\gamma_{\K}(\w_t)} -\frac{\delta_t/\gamma_{\K}(\w_t)}{1+\delta_t/\gamma_{\K}(\w_t)} \inner{\g_t}{\w_t/\gamma_{\K}(\w_t)},\label{eq:seec} 
		\end{align}
		Thus, since $S_{\K}(\w_t)=\gamma_{\K}(\w_t)-1$ (by Lemma \ref{lem:projectiononK} and $\gamma_{\K}(\w_t)\geq 1$), we get 
		\begin{align}
			\inner{\g_t}{\x_t}\cdot S_{\K}(\w_t)  \leq  \inner{\g_t}{\w_t}- \frac{\delta_t \inner{\g_t}{\w_t} }{1+\delta_t/\gamma_{\K}(\w_t)} - \inner{\g_t}{\w_t/\gamma_{\K}(\w_t)}  +  \frac{\delta_t/\gamma_{\K}(\w_t)}{1+\delta_t/\gamma_{\K}(\w_t)} \inner{\g_t}{\w_t/\gamma_{\K}(\w_t)}. \nn
		\end{align}
		Adding this together with \eqref{eq:seec} and using the fact that $\|\w_t\|\leq R$, we get 
		\begin{gather}	
			\inner{\g_t}{\x_t} + \inner{\g_t}{\x_t} S_{\K}(\w_t)\leq \inner{\g_t}{\w_t} +\delta_t R \|\g_t\|,\nn
			\intertext{and so after rearranging and using that $\tilde{\ell}_t(\w_t)= \inner{\g_t}{\w_t} -  \inner{\g_t}{\x_t} S_{\K}(\w_t)$, we get}
			\inner{\g_t}{\x_t}  \leq  \tilde{\ell}_t(\w_t)+ \delta_t R \|\g_t\|. \quad \text{[\text{case where $\gamma_t \geq 1, \inner{\g_t}{\w_t}<0, \w_t \not\in \K$}]}\label{eq:second}
		\end{gather}
		By combining, \eqref{eq:first}, \eqref{eq:firstsecond}, \eqref{eq:case2}, and \eqref{eq:second}, we obtain:
		\begin{align}
				\inner{\g_t}{\x_t}  \leq \tilde{\ell}_t(\w_t)+ \delta_t R \|\g_t\|.\nn
			\end{align}
	Combining this with \eqref{eq:convexity2} and \eqref{eq:firstequality}, we get
		\begin{align}
			\inner{\g_t}{\x_t -\x}  \leq \tilde \ell_t(\w_t) - \tilde \ell_t(\x) +\delta_t R \|\g_t\|  \leq \inner{\tilde{\g}_t}{\w_t -\x} + (2\delta_t +\Delta_t) R \|\g_t\|, \quad \forall \x \in \K. \nn
		\end{align}
		This shows \eqref{eq:individualnew}. It remains to bound $\|\tilde{ \boldsymbol{g}}_t\|$ in terms of $\|\g_t\|$ and show that $\x_t \in \K$. When $\gamma_t <1$, we have $\tilde \g_t =\g_t$ and $\x_t=\w_t$, and so $\|\tilde\g_t\|=\|\g_t\|$. Furthermore, we also have that $\x_t \in \K$. In fact, if $\gamma_{\K}(\w_t)\leq 9/16$, then by definition of the Gauge function we have $\w_t \in \K$ and the same holds for $\x_t$ (since $\x_t=\w_t$). On the other hand, if $\gamma_\K(\w_t)\geq 9/16$, then by Lemma \ref{lem:gauge}, we have $\gamma_{\K}(\w_t)\leq \gamma_t<1$ ($\gamma_t<1$ is the case we are currently considering), and so $\x_t=\w_t\in \K$. 
		 
	Now suppose that $\gamma_t \geq 1$. In this case, we have $\x_t= \w_t/\gamma_t$ and $\tilde\g_t=\g_t - \mathbb{I}_{\inner{\g_t}{\w_t}<0} \inner{\g_t}{\x_t} \bnu_t$. The latter fact together with the positive homogeneity of the Gauge function (Lemma \ref{lem:properties}-(a,b)) imply that $\gamma_\K(\x_t)=\gamma_\K(\w_t/{\gamma_{t}})=\gamma_\K(\w_t)/{\gamma_{t}}\leq \gamma_\K(\w_t)/{\gamma_{\K}(\w_t)}=1$ (since $\gamma_{\K}(\w_t)\leq \gamma_t$ by Lemma \ref{lem:gauge}), and so $\x_t \in \K$. Using that $\x_t \in \K$ (which implies that $\|\x_t\|\leq R'$) and that $\tilde\g_t=\g_t - \mathbb{I}_{\inner{\g_t}{\w_t}<0} \inner{\g_t}{\x_t} \bnu_t$, we get
		\begin{align}
  	\|\tilde{\g}_t\|  =  \|\g_t- \mathbb{I}_{\inner{\g_t}{\w_t} <0}  \inner{\g_t}{{\x}_t}  \bnu_t\|\leq \|\g_t\|(1 + R' \|\bnu_t\|)  
	%		& \leq	\|\g_t\| + \|\g_t\| \| \w_t\|/\gamma_{\K}(\w_t) \cdot (\Delta_t/R+1/r),  \nn \\
			 \leq  (1+ \Delta_t +\kappa )\|\g_t\|, \nn
		\end{align}
		where the last inequality follows the fact that $\|\bnu_t\|  \leq \Delta_t/R+1/r$ (by \eqref{eq:bisection} which is implied by Lemma \ref{lem:subgradient}).
	\end{proof}
	
	\begin{proof}[{\bf Proof of Lemma \ref{lem:mainreduction}}]
	Follows from Lemma \ref{lem:mainreductiongen} with $R'=R$.
	\end{proof}

\subsection{Proof of Theorem \ref{thm:genthmeff} (Regret Bound in Expectation)}
\label{sec:genthmeffproof}
\begin{proof}[{\bf Proof}]
	Let $(\tilde \gamma_t, \tilde \s_t)= \O_{\K^\circ}(\w_t;\delta_t)$ and $(\hat \gamma_t, \hat \s_t)= \onedopt(\w_t;\delta_t)$. Further, let $\tilde  \bnu_t\coloneqq \mathbb{I}_{\{\tilde \gamma_t\geq 1  \}} \tilde \s_t$, and $\gamma_t$ and $\bnu_t$ be as in Algorithm \ref{alg:projectionfreewrapper}. Note that by Proposition \ref{prop:subgradient} and Lemma \ref{lem:celeb}, we have $\|\tilde \s_t\|\vee \|\hat \s_t\| < +\infty$ almost surely and so the expectations $\E_{t-1}[ \mathbb{I}_{\{\tilde \gamma_t\geq 1  \}} \tilde \s_t]$ and  $\E_{t-1}[\mathbb{I}_{\{\hat \gamma_t\geq 1  \}} \hat \s_t]$ are well defined. We also note that $\gamma_t=\hat \gamma_t= \tilde \gamma_t$, and $\gamma_t$, $\w_t$, and $\x_t$ are all deterministic functions of the past ($\w_t$ it the output of \ftrl, which is a deterministic function of the past).
	
	By Lemma \ref{lem:mainreduction}, there exists a random variable $\Delta_t\geq 0$ satisfying $\E_{t-1}[\Delta_t]$ and such that for all $t\geq 1$, \begin{align} \forall \x\in \K, \quad 
		\inner{\g_t}{\x_t - \x}  \leq \inner{\g_t - \mathbb{I}_{\inner{\g_t}{\w_t}<0} \inner{\g_t}{\x_t} \tilde \bnu_t}{\w_t-\x} + (2\delta_t +\Delta_t)R \|\g_t\|,  
		\label{eq:six}\end{align}
	Now by Lemma \ref{lem:celeb}, and the law of total expectation, we have, for all $\x\in \K$,
	\begin{align}
		\E[	\inner{\g_t}{\x_t - \x}]& =\E[ \E_{t-1}[ \inner{\g_t}{\x_t - \x} ]], \nn \\ & \leq  \E[ \E_{t-1}[\inner{\g_t - \mathbb{I}_{\inner{\g_t}{\w_t}<0} \inner{\g_t}{\x_t} \tilde \bnu_t}{\w_t-\x}+(2\delta_t +\Delta_t)R \|\g_t\| ]],  \ \ \text{(by \eqref{eq:six})} \nn \\ 
		& \leq  \E[ \E_{t-1}[\inner{\g_t - \mathbb{I}_{\inner{\g_t}{\w_t}<0} \inner{\g_t}{\x_t} \tilde\bnu_t}{\w_t-\x} ] + 3\delta_t R \|\g_t\|],\nn \\ 
		&  = \E[  \mathbb{I}_{\{ \tilde \gamma_t <1\}} \cdot   \E_{t-1}[\inner{\g_t }{\w_t-\x} ]] \nn \\ & \quad  +  \E[ \mathbb{I}_{\{\tilde \gamma_t \geq 1\}} \cdot    \E_{t-1}[\inner{\g_t - \mathbb{I}_{\inner{\g_t}{\w_t}<0} \inner{\g_t}{\x_t} \tilde  \s_t}{\w_t-\x}]+ 3\delta_t R \|\g_t\|],\nn  \\ 
		& = \E[ \mathbb{I}_{\{ \tilde \gamma_t<1\}} \cdot     \E_{t-1}[\inner{\g_t }{\w_t-\x}]] \nn \\ 
		& \quad  +  \E[  \mathbb{I}_{\{ \hat  \gamma_t \geq 1 \}} \cdot  \E_{t-1}[\inner{\g_t - \mathbb{I}_{\inner{\g_t}{\w_t}<0} \inner{\g_t}{\x_t} \hat \s_t}{\w_t-\x} ]+ 3\delta_t R \|\g_t\|], \label{eq:attract}  \\ & =  \E[ \E_{t-1}[\inner{\g_t - \mathbb{I}_{\inner{\g_t}{\w_t}<0} \inner{\g_t}{\x_t} \bnu_t}{\w_t-\x} ] + 3\delta_t R \|\g_t\|], \nn \\ 
		& = \E[\inner{\tilde \g_t}{\w_t -\x} + 3 \delta_t R \|\g_t\|],\label{eq:noone}
	\end{align}
	where \eqref{eq:attract} follows by the facts that $\tilde \gamma_t = \hat \gamma_t$ and $\E_{t-1}[\hat \s_t]= \E_{t-1}[\tilde  \s_t]$, when $\tilde \gamma_t \geq 1$ (Lemma \ref{lem:celeb}).
	Summing \eqref{eq:noone} for $t=1,\dots,T$, we obtain, 
	\begin{align}
		\forall \x\in \K	,\quad 	\sum_{t=1}^T\E[	\inner{{\g}_t}{\x_t - \x}] & \leq \E\left[\sum_{t=1}^T \inner{\tilde{\g}_t}{\w_t-\x}+3\sum_{t=1}^T\delta_t R \|\g_t\|\right], \nn \\
		& \leq \E \left[2 R \sqrt{2\sum_{t=1}^T \| {\tilde{\g}}_t\|^2}  + 3\sum_{t=1}^T\delta_t R \|\g_t\|\right],\label{eq:regretftrllw}  \\
		& \leq2 R \sqrt{\E \left[2\sum_{t=1}^T \E_{t-1} \left[ \| {\tilde{\g}}_t\|^2\right]\right]}  +\E\left[ 3\sum_{t=1}^T\delta_t R \|\g_t\|\right],\label{eq:regretftrlnew}   
		%	&  \leq 4 (1+\kappa)R \sqrt{\sum_{t=1}^T \| {{\g}}_t\|^2} + 2\sum_{t=1}^T\delta_t R \|\g_t\|, \nn 
		% \label{eq:secondsummed2}
	\end{align}
	where the last step follows by Jensen's inequality and \eqref{eq:regretftrllw} follows by our choice of the subroutine $\A$ ($\equiv$\ftrl) and the regret bound of \ftrl{} in Proposition \ref{prop:ftrl-proximal}.
	
	 It remains to bound $\|\tilde \g_t\|$ in terms of $\|\g_t\|$ and show that $\x_t \in \K$. First, if $\gamma_t <1$, then $(\tilde \g_t,\x_t)=(\g_t,\w_t)$, and so $\|\tilde \g_t\|\leq \|\g_t\|(1+R\|\s_t\|)$. Furthermore, we also have that $\x_t \in \K$. In fact, if $\gamma_{\K}(\w_t)\leq 9/16$, then by definition of the Gauge function we have $\w_t \in \K$ and the same holds for $\x_t$ (since $\x_t=\w_t$). On the other hand, if $\gamma_\K(\w_t)\geq 9/16$, then by Lemma \ref{lem:gauge}, we have $\gamma_{\K}(\w_t)\leq \gamma_t<1$, and so $\x_t=\w_t\in \K$. 
	 
	 Now suppose that $\gamma_t \geq 1$. In this case, we have $\x_t = \w_t/ \gamma_t$ and $\bnu_t =  \s_t$. Using this, the triangular inequality, and Cauchy Schwarz, we get 
	\begin{align}
		\|\tilde{\g}_t\|   =  \|\inner{\g_t}{\w} - \mathbb{I}_{\inner{\g_t}{\w_t} <0} \cdot \inner{\g_t}{{\x}_t} \cdot  \s_t\|= 	\|\g_t\| + \|\g_t\| \| \w_t\|/\gamma_t \cdot \|{\s}_t\|
		& \leq  \|\g_t\| (1 + R\| \s_t\|), \nn 
	\end{align}
	where the last inequality follows by the fact that $\gamma_t \geq 1$. Therefore, using Lemma \ref{lem:celeb}, we get $\|\tilde \g_t\|\leq \|\g_t\|(1+\Delta_t + \kappa)$ and
	\begin{align}
		\E_{t-1}[	\|\tilde{\g}_t\|^2 ]\leq 2\|\g_t\|^2 (1 + R^2 \E_{t-1}[\|{\s}_t\|^2])=2\|\g_t\|^2 (1 + R^2 \E_{t-1}[\|\hat{\s}_t\|^2] ) \leq 2 \|\g_t\|^2 (1+ d\cdot  (\kappa+\delta_t)^2).\nn
	\end{align}
	Plugging this into \eqref{eq:regretftrlnew} leads to, for all $\x\in \K$,
	\begin{align}
		\sum_{t=1}^T\E[	\inner{{\g}_t}{\x_t - \x}] &\leq 4 R \sqrt{\sum_{t=1}^T (1+d \cdot (\kappa+\delta_t)^2) \cdot  \E \left[ \| {{\g}}_t\|^2\right]}  + 3\E\left[\sum_{t=1}^T\delta_t R \cdot  \|\g_t\|\right] ,\nn \\
		& \leq 4 R \sqrt{\sum_{t=1}^T (1+d\cdot (\kappa+\delta)^2) \cdot \E \left[ \| {{\g}}_t\|^2\right]}  + 6 \delta R \cdot \E\left[\max_{t\in[T]}\|\g_t\|\right],\nn
	\end{align}
where in the last inequality we used that $\sum_{t=1}^{+\infty}1/t^2 \leq 2$. Furthermore, when $\gamma_t\geq1$, we have $	\gamma_\K(\x_t)=\gamma_\K(\w_t/{\gamma_{t}})=\gamma_\K(\w_t)/{\gamma_{t}}\leq \gamma_\K(\w_t)/{\gamma_{\K}(\w_t)}=1$ (since $\gamma_{\K}(\w_t)\leq \gamma_t$ by Lemma \ref{lem:gauge}), and so $\x_t \in \K$. 
\end{proof}

\subsection{Proof of Theorem \ref{thm:genthmprob} (Regret Bound in High Probability)}
\label{sec:genthmprobproof}
To prove Theorem \ref{thm:genthmprob}, we need the following extension of Lemma \ref{lem:celeb}:
\begin{lemma}
	\label{lem:celebext}
	Let $\delta\in (0,1)$, $\w\in \cB(R)$, and $\kappa \coloneqq R/r$, with $r,R>0$ as in \eqref{eq:lowradius}. Further, let $(\tilde \gamma ,  \tilde \s)=\O_{\K^\circ}(\w;\delta)$ and $( \hat\gamma,   \hat\s)=\onedopt(\w;\delta)$ (Alg.~\ref{alg:optimizenew}). Then, $\hat \gamma =\tilde \gamma$, $\|\hat \s\|<+\infty$ a.s., and if $\hat \gamma \geq 1$ it follows that
	\begin{gather}
		\E[  \hat \s] =  \E[\tilde \s],  \quad     \E[\|  \hat \s\|^2]\leq  d\cdot \left({1}/{r}+ {\delta}/{R} \right)^2,\label{eq:firtrow} \\
		\|\hat \s\|_{\infty} \leq d \cdot (1/r+ \delta/R), \quad  \|\hat \s\| \leq d \cdot (1/r+ \delta/R), \quad  \text{and} \quad \E[\|\hat \s\|^4] \leq d^3\cdot \left({1}/{r}+ {\delta}/{R} \right)^4. \label{eq:secondrow}
	\end{gather}
\end{lemma}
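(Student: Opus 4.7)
The plan is to observe first that the content of \eqref{eq:firtrow} (namely $\hat\gamma=\tilde\gamma$, $\|\hat\s\|<+\infty$ a.s., $\E[\hat\s]=\E[\tilde\s]$, and $\E[\|\hat\s\|^2]\le d(1/r+\delta/R)^2$) is exactly the conclusion of Lemma~\ref{lem:celeb}, so I can simply invoke that lemma verbatim. The genuinely new content is \eqref{eq:secondrow}: two almost-sure norm bounds on $\hat\s$ and a fourth-moment bound. For all three, the structural fact I will exploit is that, by construction of $\onedopt$ (Alg.~\ref{alg:optimizenew}), $\hat\s = d\,\tilde s_I\,\e_I$ where $I$ is uniform on $[d]$ and, conditional on $I=i$, the scalar $\tilde s_I$ has the same distribution as the $i$th coordinate $\tilde s_i$ of the output $\tilde\s$ of $\O_{\K^\circ}$.

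For the almost-sure bounds, I would invoke Proposition~\ref{prop:subgradient}, which gives the deterministic inequality $\|\tilde \s\|_\infty \le 1/r+\delta/R$ when $\tilde\gamma\ge 1$. Since $\hat\s = d\,\tilde s_I\,\e_I$, this immediately yields
\[
\|\hat\s\|_\infty = d\,|\tilde s_I| \le d\,\|\tilde\s\|_\infty \le d\bigl(1/r+\delta/R\bigr),\qquad
\|\hat\s\| = d\,|\tilde s_I| \le d\bigl(1/r+\delta/R\bigr),
\]
which are the first two claims of \eqref{eq:secondrow}.

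For the fourth-moment bound, I would combine the deterministic $\ell_\infty$ bound with the $\E[\|\tilde\s\|^2]$ bound already extracted inside the proof of Lemma~\ref{lem:celeb} (which itself came from Proposition~\ref{prop:subgradient}, using $\E[\Delta]\le\delta$ to check that $(2/r+\delta/R)\delta/R + 1/r^2 = (1/r+\delta/R)^2$, so $\E[\|\tilde\s\|^2]\le (1/r+\delta/R)^2$). Conditioning on $I$ and using the identity $\E[f(\tilde s_I)\mid I=i]=\E[f(\tilde s_i)]$, I compute
\[
\E[\|\hat\s\|^4] \;=\; d^4\,\E[\tilde s_I^4] \;=\; d^3\sum_{i=1}^d \E[\tilde s_i^4].
\]
Then I bound $\tilde s_i^4 \le \|\tilde\s\|_\infty^2\,\tilde s_i^2 \le (1/r+\delta/R)^2\,\tilde s_i^2$ almost surely, and sum to obtain
\[
\E[\|\hat\s\|^4] \;\le\; d^3\bigl(1/r+\delta/R\bigr)^2\,\E[\|\tilde\s\|^2] \;\le\; d^3\bigl(1/r+\delta/R\bigr)^4,
\]
as required.

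There is no real obstacle here beyond bookkeeping: the core tools (the deterministic $\ell_\infty$ bound on $\tilde\s$, the $\E[\|\tilde\s\|^2]$ bound, and the ``conditioning on $I$'' identity) have all already been established in Proposition~\ref{prop:subgradient} and Lemma~\ref{lem:celeb}. The only subtle step is recognizing that to lift a second-moment bound to a fourth-moment bound it is cleaner to peel off two factors of $\|\tilde\s\|_\infty$ via the deterministic $\ell_\infty$ bound than to attempt a direct moment computation, since $\tilde s_i^2$ across $i$ does not split as nicely as $\tilde s_i^4$ would in expectation.
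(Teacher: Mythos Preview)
Your proposal is correct and follows essentially the same approach as the paper: invoke Lemma~\ref{lem:celeb} for \eqref{eq:firtrow}, then use the deterministic bound $\|\tilde\s\|_\infty\le 1/r+\delta/R$ from Proposition~\ref{prop:subgradient} to get the almost-sure norm bounds on $\hat\s$, and finally peel off two $\ell_\infty$ factors to lift the second-moment bound to a fourth-moment bound. The only cosmetic difference is that the paper bounds $\E[\hat s_I^4]\le \|\hat\s\|_\infty^2\,\E[\|\hat\s\|^2]$ directly (using the already-established \eqref{eq:firtrow}), whereas you route through $\sum_i\E[\tilde s_i^4]\le \|\tilde\s\|_\infty^2\,\E[\|\tilde\s\|^2]$ and then need the small side computation $\E[\|\tilde\s\|^2]\le(1/r+\delta/R)^2$; the paper's version avoids that extra step, but the logic is the same.
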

\begin{proof}[{\bf Proof}]
	Lemma \ref{lem:celeb} implies \eqref{eq:firtrow}. We now show \eqref{eq:secondrow}. Proposition \ref{prop:subgradient} implies that $\tilde s_i\leq \delta/R + 1/r$, for all $i\in[d]$. Let $I\in[d]$ be the random variable in Algorithm \ref{alg:optimizenew} generated during the call to $\onedopt$ in the lemma's statement. Since, conditioned on $I=i$, $\hat s _i/d$ has the same distribution as $\tilde s_i$ and $\hat s_i =0$ when $I\neq i$, we get that $\hat s_i\leq d\cdot (1/r+\delta/R)$. This implies the first inequality in \eqref{eq:secondrow}. Using this, we get 
	\begin{align}
		\E[\|\hat  \s\|^4] = \E[\|  \hat s_I \e_I\|^4] = \E[\hat s_I^4 ] \leq d^2 \cdot \left(\frac{1}{r}+\frac{\delta}{R}\right)^2 \cdot   \E[\hat s_I^2 ]= d^2 \cdot \left(\frac{1}{r}+\frac{\delta}{R}\right)^2 \cdot \E[\|\hat \s\|^2]\leq  d^3 \cdot \left(\frac{1}{r}+\frac{\delta}{R}\right)^4,\nn
	\end{align}
	where the last inequality follows the right-most inequality in \eqref{eq:firtrow}. This shows the right-most inequality in \eqref{eq:secondrow}. Finally, we have 
	\begin{align}
		\|\hat \s\| =  |\hat s_I| \leq \max_{i\in[d]} |\hat s_i|\leq  d \cdot (1/r + \delta/R),\nn
	\end{align}
	where the last inequality follows by the first inequality in \eqref{eq:secondrow}, which we already showed. Now we do not assume that $\hat \gamma \geq 1$ anymore. Finally, the fact that $\|\hat \s\|< +\infty$ follows by Lemma \ref{lem:celeb}.
\end{proof}

\begin{proof}[{\bf Proof of Theorem \ref{thm:genthmprob}}]
	The fact that $(\x_t)\subset \K$ follows from Lemma \ref{thm:genthmeff}. Let $(\tilde \gamma_t, \tilde \s_t)= \O_{\K^\circ}(\w_t;\delta_t)$ and $(\hat \gamma_t, \hat \s_t)= \onedopt(\w_t;\delta_t)$. %, and $I_t\in[d]$ be the random variable generated within Algorithm \ref{alg:optimizenew} during the latter call to $\onedopt$.
	Further, let $\tilde  \bnu_t\coloneqq \mathbb{I}_{\{\tilde \gamma_t\geq 1  \}} \tilde \s_t$, and $\gamma_t$ and $\bnu_t$ be as in Algorithm \ref{alg:projectionfreewrapper}. Note that by Proposition \ref{prop:subgradient} and Lemma \ref{lem:celeb}, we have $\|\tilde \s_t\|\vee \|\hat \s_t\| < +\infty$ almost surely and so the expectations $\E_{t-1}[ \mathbb{I}_{\{\tilde \gamma_t\geq 1  \}} \tilde \s_t]$ and  $\E_{t-1}[\mathbb{I}_{\{\hat \gamma_t\geq 1  \}} \hat \s_t]$ are well defined. We also note that $\gamma_t=\hat \gamma_t= \tilde \gamma_t$, and $\gamma_t$, $\w_t$, and $\x_t$ are all deterministic functions of the past ($\w_t$ it the output of \ftrl, which is a deterministic function of the past).
	
	By Lemma \ref{lem:mainreduction}, there exists a random variable $\Delta_t\geq 0$ satisfying $\E_{t-1}[\Delta_t]$ and such that for all $t\geq 1$, \begin{align} \forall \x\in \K, \quad 
		\inner{\g_t}{\x_t - \x} & \leq \inner{\g_t - \mathbb{I}_{\inner{\g_t}{\w_t}<0} \inner{\g_t}{\x_t} \tilde \bnu_t}{\w_t-\x} + (2\delta_t +\Delta_t)R \|\g_t\|,  \nn \\
		& =  \inner{\tilde \g_t}{\w_t-\x} + \mathbb{I}_{\inner{\g_t}{\w_t}<0}\inner{\g_t}{\x_t}  \inner{ \bnu_t-  \tilde \bnu_t}{\w_t - \x}+ (2\delta_t +\Delta_t)R \|\g_t\|.
		\label{eq:sixten}\end{align}
	Fix $\x\in \K$ and let $X_t\coloneqq \mathbb{I}_{\inner{\g_t}{\w_t}<0}\inner{\g_t}{\x_t} \cdot  \inner{\bnu_t-  \tilde \bnu_t}{\w_t - \x}$. We start by bounding $|X_t|$ from above. By the fact that $\x_t \in \K$ and the $B$-Lipschitzness of $\ell_t$, we have  
	\begin{align}
		\quad |X_t| \leq 2 \|\g_t\| R  \| \bnu_t -  \tilde\bnu_t\|\leq 2 R B \cdot( \| \bnu_t\|+ \| \tilde \bnu_t\|)\leq 4 d R B \cdot (\kappa + \delta),\label{eq:firstlast}
	\end{align}
	where the last inequality follows by the fact that $\bnu_t$ [resp.~$\tilde \bnu_t$] has the same distribution as $\mathbb{I}_{\{\hat \gamma_t \geq 1 \}} \hat \s_t$ [resp.~$\mathbb{I}_{\{\tilde \gamma_t \geq 1\}} \tilde \s_t$], and $\|\hat \s_t \cdot \mathbb{I}_{\{\hat \gamma_t \geq 1\}}\|\leq d \cdot(\delta/R+1/r)$ by Lemma \ref{lem:celebext} [resp.~$\|\tilde \s_t \cdot \mathbb{I}_{\{\tilde \gamma_t \geq 1\}}\|\leq d \cdot(\delta/R+1/r)$ by Proposition \ref{prop:subgradient}]. We now show that $\E_{t-1}[X_t]=0$. Using the definition of $X_t$, we have 
	\begin{align}
		\E_{t-1}[X_t] & = \E_{t-1}[  \mathbb{I}_{\{ \tilde  \gamma_t <1 \}} \cdot  \mathbb{I}_{\inner{\g_t}{\w_t}<0}\inner{\g_t}{\x_t} \cdot  \inner{ \bnu_t-  \tilde \bnu_t}{\w_t - \x}  ] \nn \\ & \quad +  \E_{t-1}[ \mathbb{I}_{\{ \tilde \gamma_t \geq 1  \}} \cdot  \mathbb{I}_{\inner{\g_t}{\w_t}<0}\inner{\g_t}{\x_t} \cdot  \inner{ \bnu_t- \tilde\bnu_t}{\w_t - \x}],\nn \\
		& = \E_{t-1}[ \mathbb{I}_{\{\hat \gamma_t\geq 1 \}} \cdot  \mathbb{I}_{\inner{\g_t}{\w_t}<0}\inner{\g_t}{\x_t} \cdot  \inner{ \hat \s_t-  \tilde \s_t}{\w_t - \x} ] =0, \label{eq:freed}
	\end{align}
	where the equalities in \eqref{eq:freed} follow by the facts that $\tilde \gamma_t = \hat \gamma_t = \gamma_t$ (by Lemma~\ref{lem:celebext}); $\bnu_t = \hat \s_t=\bm{0}$ if $\gamma_t \geq 1$; and that $\E_{t-1}[ \mathbb{I}_{\{\tilde \gamma_t\geq 1 \}} \cdot ( \hat \s_t-  \tilde \s_t) ]=0$ (by Lemma \ref{lem:celebext}). Using that $\x_t$ is a deterministic function of the past, we get
	\begin{align}
		\E_{t-1}[X_t^2 ]& \leq   \|\x_t\|^2 \|\g_t\|^2   \E_{t-1}[\| \bnu_t-  \tilde \bnu_t\|^2\cdot \|\w_t - \x\|^2], \nn \\ & 
		\leq 8 R^4 \|\g_t\|^2 \E_{t-1}[ (\| \bnu_t\|^2+  \|\tilde \bnu_t\|^2)], \quad (\x_t\in \K)
		\nn \\ 
		& \leq  8 R^4 \|\g_t\|^2 \E_{t-1}[ \mathbb{I}_{\{\hat \gamma_t \geq 1  \}} \cdot  (\| \hat \s_t\|^2+  \|\tilde \s_t\|^2)],\nn \\ &
		\leq   8 R^2 B^2 d (\kappa+\delta)^2. \label{eq:freed2}
	\end{align}
where the last inequality follows by Lemma \ref{lem:celebext} and Proposition \ref{prop:subgradient}. Thus, by \eqref{eq:sixten}, \eqref{eq:firstlast}, \eqref{eq:freed}, \eqref{eq:freed2}, and Freedman's inequality (Theorem \ref{thm:freedman}), we have, for any $\rho \in(0,1)$, with probability at least $1-\rho$, 
	\begin{align}
		\sum_{t=1}^T	\inner{\g_t}{\x_t - \x}& =  \sum_{t=1}^T	\inner{\tilde \g_t}{\w_t - \x} + \sum_{t=1}^T (2\delta_t + \Delta_t) R\|\g_t\| + 4R B (\kappa+\delta) \sqrt{2 d T \ln \rho^{-1}} + 4 d R B \cdot (\kappa+\delta) , \nn \\ & \leq  2 R\sqrt{2\sum_{t=1}^T \|\tilde \g_t\|^2 } + \sum_{t=1}^T (2\delta_t + \Delta_t) R\|\g_t\| + 4R B (\kappa+\delta) \sqrt{2 d T \ln \rho^{-1}} + 4 d RB \cdot (\kappa+\delta), \label{eq:milestone}
	\end{align}
where the last inequality follows by the regret bound \ftrl{} in Proposition \ref{prop:ftrl-proximal}. We now bound the first two terms on the RHS of \eqref{eq:milestone}. We start with $\sum_{t=1}^T \|\tilde \g_t\|^2$. With $Y_t \coloneqq  \|\tilde \g_t\|^2$, we have 
	\begin{align}
		\E_{t-1}[Y_t] & \leq  \E_{t-1}[ 2\|\g_t\|^2 + 2\|\g_t\|^2 \|\x_t\|^2 \|\bnu_t\|^2 ],\nn \\
		& \leq 2B^2 R^2  \E_{t-1}[\mathbb{I}_{\{ \hat \gamma_t <1 \}} \cdot   \|\bnu_t\|^2 ] + 2B^2 R^2  \E_{t-1}[\mathbb{I}_{\{ \hat \gamma_t \geq 1\}} \cdot   \|\hat\s_t\|^2 ] + 2\|\g_t\|^2,\quad (\x_t \in \K)\nn \\
		& \leq  2 d B^2 (\kappa +\delta)^2 + 2 B^2, \label{eq:exp}
	\end{align} 
	where the last inequality follows by Lemma \ref{lem:celebext}. Similarly, we also have 
	\begin{align}
		\E_{t-1}[Y_t^2] & \leq  \E_{t-1}[ 8\|\g_t\|^4 + 8\|\g_t\|^4 \|\x_t\|^4 \|\bnu_t\|^4 ],\nn \\
		& =  8 B^4 R^4 \E_{t-1}[  \|\bnu_t\|^4 ]+ 8 B^4,\nn  \quad  (\x_t \in \K)\\
		& =  8 B^4 R^4 \E_{t-1}[\mathbb{I}_{\{ \hat \gamma_t <1\}} \cdot    \|\bnu_t\|^4 ] +8 B^4 R^4 \E_{t-1}[\mathbb{I}_{\{ \hat \gamma_t \geq 1\}} \cdot   \|\hat \s_t\|^4 ]+ 8 B^4,\nn \\
		& \leq  8 d^3  B^4 (\kappa+\delta)^4 + 8  B^4,\label{eq:mom}
	\end{align} 
	where the last inequality follows by Lemma \ref{lem:celebext}. Also, we have 
	\begin{align}
		|Y_t| \leq \|\g_t\| + \|\g_t\| \|\x_t\| \|\bnu_t\|\leq  B \cdot( 1+ R \| \bnu_t\| )\leq B \cdot (1 +d \cdot(\kappa +\delta)),  \label{eq:up}
	\end{align} 
	where the last inequality follows by the fact that $\bnu_t$ has the same distribution as $\mathbb{I}_{\{\hat \gamma_t \geq 1\}} \hat \s_t$, and $\|\hat \s_t\|\leq d \cdot(\delta/R+1/r)$ by Lemma \ref{lem:celebext}. By combining \eqref{eq:exp}, \eqref{eq:mom}, and \eqref{eq:up}, and applying Freedman's inequality (Theorem \ref{thm:freedman}), we get for all $\rho>0$, with probability at least $1-\rho$,
	\begin{align}
		\sum_{t=1}^T \|\tilde \g_t\|^2& \leq\sum_{t=1}^T \E_{t-1}[\|\tilde \g_t\|^2] + 4 \sqrt{ 2B^4( 1 + d^{3} (\kappa+\delta)^4 )T \ln(1/\rho)} + B^2 (1+d (\kappa+\delta))^2 \ln (1/\delta )  ,\nn \\ & \leq    4 B^2 d (\kappa+\delta)^2 T + 8 d^{3/2}B^2 (\kappa+\delta)^2\sqrt{ T \ln (1/\rho)}+ B^2 (1+d (\kappa+\delta))^2 \ln (1/\delta ), \nn \\
		& \leq  12 B^2 d (\kappa+\delta)^2 T+ 4B^2 d^2 (\kappa+\delta)^2 \ln (1/\delta ), \label{eq:secondprobevent}
	\end{align}
	where the last inequality follows by the fact that $d\ln(1/\rho)\leq T$.
	Finally, by Markov's inequality and the fact that $\ell_t$ is $B$-Lipschitz, we have, for all $\rho$,
	\begin{align}
		\P\left[\sum_{t=1}^T \Delta_t \|\g_t\| \geq   2 B \delta \sqrt{T} \rho^{-1} \right] \leq \frac{\rho \sum_{t=1}^T\E[ \Delta_t \|\g_t\|]}{2 \delta B\sqrt{T}} \leq \frac{\rho \sum_{t=1}^T\E[ \Delta_t ]}{2\delta \sqrt{T}} \leq \frac{\rho \sum_{t=1}^T\delta_t}{2\delta } \leq \rho, \label{eq:markov}
	\end{align}
	where the last inequality follows the fact that $\delta_t = \delta/t^{-1/2}$ and $\sum_{t=1}^T 1/t^{-1/2}\leq 2\sqrt{T}$, for all $T\geq 1$. By combining \eqref{eq:milestone}, \eqref{eq:secondprobevent}, and \eqref{eq:markov}, and a union bound, we get, with probability at least $1-3 \rho$,
	\begin{align}
		\sum_{t=1}^T	\inner{\g_t}{\x_t - \x} & \leq   4 R B(\kappa+\delta)\sqrt{6 d T +  2 d  \ln \rho^{-1} } +6  R B\delta\sqrt{T} /\rho  + 4R B (\kappa+\delta) \sqrt{2 d T \ln \rho^{-1}} + 4 d R B \cdot (\kappa+\delta),\nn \\
		& \leq 8 R B(\kappa+\delta)\sqrt{ d  T (3+\ln \rho^{-1})  + d \ln \rho^{-1}}    + 2 d B R \cdot (2\kappa+\delta \cdot(2 +3\sqrt{T}/\rho)),\nn
	\end{align}
where the last inequality follows by the fact that $\sqrt{a}+\sqrt{b}\leq \sqrt{2 a + 2 b}$. This completes the proof.
\end{proof}

	\subsection{Proof of Theorem \ref{thm:mainstrong} (The Strongly Convex Case)}
	\label{sec:mainstrongproof}
	Before proving Theorem \ref{thm:mainstrong}, we present a result that may be viewed as an extension of Lemma \ref{lem:mainreduction}:
	\begin{lemma}
		\label{lem:tool}
		Let $\w_t, \tilde{\g}_t$, and $\x_t$ be as in Algorithm \ref{alg:projectionfreewrapper} with $\cO_{\K^\circ}\equiv \O_{\K^\circ}$, any tolerance sequence $(\delta_s)\subset (0,1/3)$, and any OCO subroutine $\A$ defined on $\cB(R)$ ($R$ as in \eqref{eq:lowradius}). Then, for all $t\geq 1$ and all $\x\in\K$, we have 
		\begin{align}
			\inner{\tilde \g_t}{\x_t}\leq \inner{\tilde\g_t}{\w_t} + (2 \delta_t + \Delta_t) R \|\g_t\|,\nn
		\end{align}
		where $\Delta_t\in[0,{15^2 d^{4} \kappa^3}{\delta^{-2}_t}]$ is the random variable in Lemma \ref{lem:mainreduction}, which satisfies $\E_{t-1}[\Delta_t]\leq \delta_t$.
	\end{lemma}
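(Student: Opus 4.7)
The plan is to reduce the statement to a direct application of Lemma~\ref{lem:mainreduction} evaluated at the in-set comparator $\x = \x_t$. Since Lemma~\ref{lem:mainreduction} (specifically equation~\eqref{eq:individualnew}) holds for \emph{every} $\x \in \K$, and since the same lemma already guarantees $(\x_t) \subset \K$, this substitution is legal. The key observation that makes the substitution informative is that $\x_t \in \K$ implies $S_{\K}(\x_t) = 0$ (by Lemma~\ref{lem:projectiononK}), so the surrogate loss collapses to $\tilde{\ell}_t(\x_t) = \inner{\g_t}{\x_t}$; this is what allows us to convert a statement about $\tilde{\ell}_t$-values into a statement about linear functionals of $\tilde{\g}_t$.

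Concretely, I would proceed as follows. First, set $\x = \x_t$ in the chain \eqref{eq:individualnew}. The leftmost expression $\inner{\g_t}{\x_t - \x_t}$ equals zero, so the left-to-middle inequality yields
\begin{align}
\tilde{\ell}_t(\x_t) \leq \tilde{\ell}_t(\w_t) + \delta_t R \|\g_t\|. \nn
\end{align}
Second, the middle-to-right inequality, with $\x = \x_t$, rearranges to
\begin{align}
\inner{\tilde{\g}_t}{\x_t - \w_t} \leq \tilde{\ell}_t(\x_t) - \tilde{\ell}_t(\w_t) + (\delta_t + \Delta_t) R \|\g_t\|. \nn
\end{align}
Third, chaining the two displays gives the desired bound
\begin{align}
\inner{\tilde{\g}_t}{\x_t - \w_t} \leq (2\delta_t + \Delta_t) R \|\g_t\|, \nn
\end{align}
which is exactly $\inner{\tilde{\g}_t}{\x_t} \leq \inner{\tilde{\g}_t}{\w_t} + (2\delta_t + \Delta_t)R\|\g_t\|$.

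There is no real obstacle here beyond being careful about the role of $\x_t \in \K$: the lemma is essentially a corollary of Lemma~\ref{lem:mainreduction}, obtained by specializing the comparator. The only point that deserves explicit verification is that the random variable $\Delta_t$ appearing in the bound is the \emph{same} variable as in Lemma~\ref{lem:mainreduction} (it is, since we invoke that lemma once, not twice, and extract both inequalities from the same instantiation); this ensures we do not accumulate two independent error terms. Consequently the proof reads as a short two-line deduction and requires no new technical machinery.
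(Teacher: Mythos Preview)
Your proof is correct and follows the same core idea as the paper: instantiate the chain inequality \eqref{eq:individual} from Lemma~\ref{lem:mainreduction} at the comparator $\x=\x_t\in\K$. Your execution is in fact tidier than the paper's: the paper uses only the right (middle-to-right) half of \eqref{eq:individual} to obtain \eqref{eq:keypre} and then \emph{re-derives} the inequality $\tilde\ell_t(\x_t)\le \tilde\ell_t(\w_t)+\delta_t R\|\g_t\|$ by repeating the four-case analysis from the proof of Lemma~\ref{lem:mainreductiongen}, whereas you simply read that same inequality off the left (left-to-middle) half of \eqref{eq:individual} with $\x=\x_t$. Both routes land on the same two-line combination, so the difference is purely cosmetic; your remark about $S_{\K}(\x_t)=0$ is true but not actually needed in your chain, since the $\tilde\ell_t$-values cancel.
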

	\begin{proof}[{\bf Proof}]
		Instantiating \eqref{eq:individual} in Lemma \ref{lem:mainreduction} with $\x=\x_t$, we get 
		\begin{align}
			\inner{\tilde \g_t}{\w_t}\geq \tilde\ell_t(\w_t) - \tilde \ell_t(\x_t) + \inner{\tilde \g_t}{\x_t}- (\delta_t+\Delta_t) R\|\g_t\|.\label{eq:keypre}
		\end{align}
		Now, to get the desired result, we need to show that $\tilde\ell_t(\w_t) - \tilde \ell_t(\x_t)\geq - \delta_t R \|\g_t\|$. When $\gamma_t<1$, we have $\x_t = \w_t$ and so \begin{align}\tilde \ell_t(\x_t)=\tilde \ell_t(\w_t). \quad \text{[case where $\gamma_t<1$]} \label{eq:case1}
			\end{align} 
		Now, suppose that $\gamma_t \geq 1$ and $\inner{\g_t}{\w_t}\geq 0$. By the fact that $\x_t = \w_t/\gamma_t$, we get
		\begin{align}
			\tilde \ell_t(\x_t) =	\inner{\g_t}{\x_t} \leq \inner{\g_t}{\w_t} = \tilde \ell_t(\w_t).\quad \text{[case where $\gamma_t\geq 1, \inner{\g_t}{\w_t}\geq 0$]}  \label{eq:firstsecond2}
		\end{align}
		Now suppose that $\gamma_t \geq 1$, $\inner{\g_t}{\w_t}<0$, and $\w_t\in \K$. We note that this implies that $\gamma_{\K}(\w_t)\leq1$ and so $S_{\K}(\w_t)=0$ (Lemma \ref{lem:projectiononK}). Thus, $\tilde \ell_{t}(\w_t)= \inner{\g_t}{\w_t}$. On the other hand, by Lemma \ref{lem:gauge} we have $\gamma_t \leq \gamma_{\K}(\w_t)+ \delta_t\leq 1 + \delta_t$, and so since $\x_t= \w_t/\gamma_t$, we have 
	\begin{align}
		\inner{\g_t}{\x_t} \leq \inner{\g_t}{\w_t}/(1+\delta_t) = \inner{\g_t}{\w_t} - \delta_t \inner{\g_t}{\w_t}/(1+\delta_t). \nn 
	\end{align}
	Thus, since $\tilde \ell_t(\w_t)=\inner{\g_t}{\w_t}$ and $\tilde \ell_t(\x_t) = \inner{\g_t}{\x_t}$ (because $\x_t\in \K$), the previous display implies that 
	\begin{align}
	\tilde \ell_t(\x_t)	=\inner{\g_t}{\x_t} \leq \tilde{\ell}_t(\w_t)+ \delta_t R \|\g_t\|. \quad \text{[\text{case where $\gamma \geq 1, \inner{\g_t}{\w_t}<0, \w_t \in \K$}]} \label{eq:case22}
	\end{align}
	We now consider the last case where $\gamma_t \geq 1$, $\inner{\g_t}{\w_t}<0$, and $\w_t\not\in \K$. We note that this implies that $\gamma_{\K}(\w_t)\geq 1$. By the fact that $\gamma_t \leq \gamma_{\K}(\w_t) +\delta_t$ (Lemma \ref{lem:gauge}), we have 
	\begin{align}
		\inner{\g_t}{\x_t} \leq  \frac{  \inner{\g_t}{\w_t}}{\gamma_{\K}(\w_t) + \delta_t} =  \frac{ \inner{\g_t}{\w_t/\gamma_{\K}(\w_t)}}{1 + \delta_t/\gamma_{\K}(\w_t)}&= \inner{\g_t}{\w_t/\gamma_{\K}(\w_t)} -\frac{\delta_t/\gamma_{\K}(\w_t)}{1+\delta_t/\gamma_{\K}(\w_t)} \inner{\g_t}{\w_t/\gamma_{\K}(\w_t)},\label{eq:seec22} 
	\end{align}
	Thus, since $S_{\K}(\w_t)=\gamma_{\K}(\w_t)-1$ (by Lemma \ref{lem:projectiononK} and $\gamma_{\K}(\w_t)\geq 1$), we get 
	\begin{align}
		\inner{\g_t}{\x_t}\cdot S_{\K}(\w_t)  \leq  \inner{\g_t}{\w_t}- \frac{\delta_t \inner{\g_t}{\w_t} }{1+\delta_t/\gamma_{\K}(\w_t)} - \inner{\g_t}{\w_t/\gamma_{\K}(\w_t)}  +  \frac{\delta_t/\gamma_{\K}(\w_t)}{1+\delta_t/\gamma_{\K}(\w_t)} \inner{\g_t}{\w_t/\gamma_{\K}(\w_t)}. \nn
	\end{align}
	Adding this together with \eqref{eq:seec22} and using the fact that $\|\w_t\|\leq R$, we get 
	\begin{gather}	
		\inner{\g_t}{\x_t} + \inner{\g_t}{\x_t} S_{\K}(\w_t)\leq \inner{\g_t}{\w_t} +\delta_t R \|\g_t\|,\nn
		\intertext{and so after rearranging and using that $\tilde\ell_t(\x_t)=\inner{\x_t}{\g_t}$ (since $\x_t\in\K$) and $\tilde{\ell}_t(\w_t)= \inner{\g_t}{\w_t} -  \inner{\g_t}{\x_t} S_{\K}(\w_t)$, we get}
		\inner{\g_t}{\x_t}  \leq  \tilde{\ell}_t(\w_t)+ \delta_t R \|\g_t\|. \quad \text{[\text{case where $\gamma \geq 1, \inner{\g_t}{\w_t}<0, \w_t \not\in \K$}]}\label{eq:second22}
	\end{gather}
	By combining, \eqref{eq:case1}, \eqref{eq:firstsecond2}, \eqref{eq:case22}, and \eqref{eq:second22}, we obtain:
	\begin{align}
	\tilde\ell_t(\x_t)  \leq \tilde{\ell}_t(\w_t)+ \delta_t R \|\g_t\|.\nn
	\end{align}
		Combining this with \eqref{eq:keypre}, we get 
		\begin{align}
			\inner{\tilde \g_t}{\x_t}\leq \inner{\tilde \g_t}{\w_t} + (2\delta_t + \Delta_t) R \|\g_t\|.\nn
		\end{align}
	\end{proof}

\noindent	We now present the versions of Lemmas \ref{lem:mainreduction} and \ref{lem:tool} when $\cO_{\K^\circ} \equiv \onedopt$:
	\begin{lemma}
		\label{lem:weso}
		Let $\w_t, \tilde{\g}_t$, and $\x_t$ be as in Algorithm \ref{alg:projectionfreewrapper} with $\cO_{\K^\circ}\equiv \onedopt$; any tolerance sequence $(\delta_s)\subset (0,1/3)$; any tolerance sequence $(\delta_s)\subset (0,1/3)$; and any OCO subroutine $\A$ defined on $\cB(R)$ ($R$ as in \eqref{eq:lowradius}). Then, for all $t\geq 1$, $\x_t\in \K$ and there exists a family of random variables $\{ \xi_t(\x) : \x\in \K\}$ such that for all $\x\in \K$, $\E_{t-1}[\xi_t(\x)]=0$,  $|\xi_t(\x)|<4d(\kappa+\delta_t)R\|\g_t\|$ almost surely, and
		\begin{align}
	 \inner{\g_t}{\x_t-\x}  &\leq \inner{\tilde\g_t}{\w_t-\x} + (2 \delta_t + \Delta_t) R \|\g_t\| +  \xi_t(\x);\label{eq:good}\\
	 	\inner{\tilde \g_t}{\x_t} & \leq \inner{\tilde\g_t}{\w_t} + (2 \delta_t + \Delta_t) R \|\g_t\| +  \xi_t(\x_t), \label{eq:badgood} 
				\end{align}
 where $\Delta_t\in [0,{15^2d^{4} \kappa^3}{\delta^{-2}_t}]$ is a random variable satisfying $\E_{t-1}[\Delta_t]\leq \delta_t$. Furthermore, $\|\tilde \g_t\| \leq (1 +d\kappa +d\Delta_t)\|\g_t\|$ and $\E_{t-1}[\|\tilde \g_t\|^2]\leq 2(1+d (\kappa+\delta_t)^2)\|\g_t\|^2$. 
	\end{lemma}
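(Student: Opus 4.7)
The plan is to reduce this lemma to the two deterministic-style results already proved for $\cO_{\K^\circ}\equiv\O_{\K^\circ}$, namely Lemma \ref{lem:mainreduction} and Lemma \ref{lem:tool}, by absorbing the difference between the one-coordinate estimator returned by $\onedopt$ and the full-coordinate estimator returned by $\O_{\K^\circ}$ into a zero-mean, almost-surely bounded correction term $\xi_t(\x)$. Concretely, I would let $(\hat\gamma_t,\hat\s_t)=\onedopt(\w_t;\delta_t)$ be the quantities actually used in Algorithm~\ref{alg:projectionfreewrapper} under this instantiation, and introduce auxiliary random variables $(\tilde\gamma_t,\tilde\s_t)=\O_{\K^\circ}(\w_t;\delta_t)$ drawn independently conditional on the past, together with the ``full'' version $\bnu_t^\star=\mathbb{I}_{\{\tilde\gamma_t\ge1\}}\tilde\s_t$ and $\tilde\g_t^{\star}=\g_t-\mathbb{I}_{\inner{\g_t}{\w_t}<0}\inner{\g_t}{\x_t}\bnu_t^\star$. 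Since $\onedopt$ and $\O_{\K^\circ}$ use the same subroutine $\gau_{\K}$ to produce the scalar approximation, Lemma~\ref{lem:celeb} yields $\hat\gamma_t=\tilde\gamma_t$, so $\x_t$ is determined entirely by $\hat\gamma_t$ and thus coincides with the $\x_t$ that would have been produced by $\O_{\K^\circ}$; in particular $\x_t\in\K$ follows verbatim from Lemma~\ref{lem:mainreduction}.

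Next, I would apply Lemma~\ref{lem:mainreduction} to the fictitious run with $\tilde\g_t^{\star}$ to obtain
\[
\inner{\g_t}{\x_t-\x}\;\le\;\inner{\tilde\g_t^{\star}}{\w_t-\x}+(2\delta_t+\Delta_t)R\|\g_t\|,
\]
and similarly Lemma~\ref{lem:tool} with the same $\Delta_t$. Rewriting $\inner{\tilde\g_t^{\star}}{\w_t-\x}=\inner{\tilde\g_t}{\w_t-\x}+\inner{\tilde\g_t^{\star}-\tilde\g_t}{\w_t-\x}$, the natural choice is
\[
\xi_t(\x)\;\coloneqq\;\mathbb{I}_{\inner{\g_t}{\w_t}<0}\,\inner{\g_t}{\x_t}\,\inner{\bnu_t-\bnu_t^{\star}}{\w_t-\x},
\]
where $\bnu_t=\mathbb{I}_{\{\hat\gamma_t\ge 1\}}\hat\s_t$. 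Because $\hat\gamma_t=\tilde\gamma_t$ and Lemma~\ref{lem:celeb} gives $\E_{t-1}[\mathbb{I}_{\{\hat\gamma_t\ge1\}}\hat\s_t]=\E_{t-1}[\mathbb{I}_{\{\tilde\gamma_t\ge 1\}}\tilde\s_t]$ coordinate-wise (and $\x_t,\w_t,\x,\g_t$ are $\cG_{t-1}$-measurable), this immediately yields $\E_{t-1}[\xi_t(\x)]=0$. The almost-sure bound $|\xi_t(\x)|<4d(\kappa+\delta_t)R\|\g_t\|$ follows from $\|\x_t\|\le R$, $\|\w_t-\x\|\le 2R$ (since $\x_t\in\K\subseteq\cB(R)$), and the coordinate-wise bounds $\|\bnu_t\|_\infty,\|\bnu_t^{\star}\|_\infty\le d(\kappa+\delta_t)/R$ supplied by Lemma~\ref{lem:celebext} and Proposition~\ref{prop:subgradient}. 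The same $\xi_t(\x_t)$ handles \eqref{eq:badgood} by the identical manipulation starting from Lemma~\ref{lem:tool}.

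For the norm bounds on $\tilde\g_t$, I would split on $\gamma_t=\hat\gamma_t$: if $\gamma_t<1$ then $\tilde\g_t=\g_t$ and the bounds are trivial; otherwise $\x_t=\w_t/\gamma_t$, $\|\x_t\|\le R$, and $\tilde\g_t=\g_t-\mathbb{I}_{\inner{\g_t}{\w_t}<0}\inner{\g_t}{\x_t}\hat\s_t$, so $\|\tilde\g_t\|\le\|\g_t\|(1+R\|\hat\s_t\|)\le\|\g_t\|(1+d\kappa+d\Delta_t)$ by the deterministic $\ell_\infty$-bound on $\hat\s_t$ from Lemma~\ref{lem:celebext} (choosing the suitable event-dependent $\Delta_t$ from Proposition~\ref{prop:subgradient} in the worst case). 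Taking conditional expectation and using $(a+b)^2\le 2a^2+2b^2$ together with the second-moment bound $\E_{t-1}[\|\hat\s_t\|^2\mathbb{I}_{\{\hat\gamma_t\ge1\}}]\le d(\kappa+\delta_t)^2/R^2$ from Lemma~\ref{lem:celebext} gives $\E_{t-1}[\|\tilde\g_t\|^2]\le 2(1+d(\kappa+\delta_t)^2)\|\g_t\|^2$.

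The main obstacle is not conceptual but bookkeeping: one has to keep track of two coupled randomness sources (the $\onedopt$ sampling, which drives the algorithm, and the fictitious $\O_{\K^\circ}$ draw, which only exists in the analysis) and verify that $\E_{t-1}[\xi_t(\x)]=0$ holds uniformly in $\x\in\K$ despite $\x_t$ itself depending on $\hat\gamma_t$. This is handled by the key fact $\hat\gamma_t=\tilde\gamma_t$ from Lemma~\ref{lem:celeb}, which decouples the event $\{\hat\gamma_t\ge 1\}$ from the direction of $\hat\s_t$ conditional on the past, allowing the unbiasedness of $\hat\s_t$ (as an estimator of $\tilde\s_t$) to pass through the indicator.
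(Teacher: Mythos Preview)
Your proposal is correct and follows essentially the same approach as the paper: introduce the auxiliary $\O_{\K^\circ}$ output $(\tilde\gamma_t,\tilde\s_t)$, apply Lemmas~\ref{lem:mainreduction} and~\ref{lem:tool} to the ``full'' version, and define $\xi_t(\x)$ as exactly the coupling term you wrote, using $\hat\gamma_t=\tilde\gamma_t$ and $\E_{t-1}[\hat\s_t]=\E_{t-1}[\tilde\s_t]$ from Lemma~\ref{lem:celeb} for the zero-mean property and Lemma~\ref{lem:celebext}/Proposition~\ref{prop:subgradient} for the almost-sure and second-moment bounds. The only cosmetic difference is that the paper does not frame the auxiliary $\O_{\K^\circ}$ call as ``drawn independently'' but simply invokes the distributional equalities from Lemma~\ref{lem:celeb}; either reading works.
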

	\begin{proof}[{\bf Proof}]
		Let $t\geq 1$; $\x\in \K$; $(\tilde \gamma_t, \tilde \s_t)= \O_{\K^\circ}(\w_t;\delta_t)$; and $(\hat \gamma_t, \hat \s_t)= \onedopt(\w_t;\delta_t)$. Further, let $\tilde  \bnu_t\coloneqq \mathbb{I}_{\{\tilde \gamma_t\geq 1  \}} \tilde \s_t$ and $\xi_t(\x) \coloneqq \mathbb{I}_{\inner{\g_t}{\w_t}<0}\inner{\g_t}{\x_t}  \inner{ \bnu_t-  \tilde \bnu_t}{\w_t - \x}$. By Lemma \ref{lem:mainreduction}, there exists a random variable $\Delta_t\geq 0$ satisfying $\E_{t-1}[\Delta_t]\leq \delta_t$ and such that for all $t\geq 1$ and $\x\in \K$, 
		\begin{align}
			\inner{\g_t}{\x_t - \x} & \leq \inner{\g_t - \mathbb{I}_{\inner{\g_t}{\w_t}<0} \inner{\g_t}{\x_t} \tilde \bnu_t}{\w_t-\x} + (2\delta_t +\Delta_t)R \|\g_t\|,\nn \\
			& \leq  \inner{\tilde \g_t}{\w_t - \x} +(2\delta_t +\Delta_t)R \|\g_t\|+\xi_t(\x).
			\nn
		\end{align}
	By Proposition \ref{prop:subgradient} and Lemma \ref{lem:celebext}, we have $\|\tilde \s_t\|\vee \|\hat \s_t\|<d\cdot (1/R+\delta/r)$ whenever $\hat\gamma_t =\tilde \gamma_t \geq 1$. This implies that $\|\tilde \bnu_t\|\vee \|\hat \bnu_t\|<d\cdot (1/R+\delta/r)$, and so $|\xi_t(\x)|<4 d(\kappa+\delta) R \|\g_t\|$. We now show that the conditional expectation of $\xi_t(\x)$ is zero. By Lemma \ref{lem:celeb}, and the law of total expectation, we have 
		\begin{align}
			\E_{t-1}[\xi_t(\x)]&  \leq   \E_{t-1}[\mathbb{I}_{\inner{\g_t}{\w_t}<0}\inner{\g_t}{\x_t}  \inner{ \bnu_t-  \tilde \bnu_t}{\w_t - \x}],  \nn \\ 
			&  =   \mathbb{I}_{\{\tilde \gamma_t \geq 1\}} \cdot    \mathbb{I}_{\inner{\g_t}{\w_t}<0}\inner{\g_t}{\x_t}  \inner{\E_{t-1}[ \bnu_t-  \tilde \bnu_t]}{\w_t - \x},\label{eq:penul} \\ 
			&  =  \mathbb{I}_{\{\tilde \gamma_t \geq 1\}} \cdot    \mathbb{I}_{\inner{\g_t}{\w_t}<0}\inner{\g_t}{\x_t}  \inner{\E_{t-1}[ \s_t-  \tilde \s_t]}{\w_t - \x}, \nn\\ 
			& =  0. \label{eq:noone10}
		\end{align}
		where \eqref{eq:penul} follows by the fact that $\w_t,\gamma_t,\tilde \gamma_t$, and $\g_t$ are deterministic function of the past, and \eqref{eq:noone10} follows by the fact $\tilde \gamma_t = \hat \gamma_t$; and $\E_{t-1}[\hat \s_t]= \E_{t-1}[\tilde  \s_t]$, when $\tilde \gamma_t \geq 1$ (Lemma \ref{lem:celeb}). This shows \eqref{eq:good}. We follow similar steps to show \eqref{eq:badgood}, but we use the result of Lemma \ref{lem:tool} instead of Lemma \ref{lem:mainreduction}.  By Lemma \ref{lem:tool}, we have 
		\begin{gather}
		 \inner{\g_t - \mathbb{I}_{\inner{\g_t}{\w_t}<0} \inner{\g_t}{\x_t} \tilde \bnu_t}{\x_t}  \leq \inner{\g_t - \mathbb{I}_{\inner{\g_t}{\w_t}<0} \inner{\g_t}{\x_t} \tilde \bnu_t}{\w_t} + (2\delta_t +\Delta_t)R \|\g_t\|,\nn
		 \shortintertext{and so by rearranging, we get}
		 	\inner{\tilde \g_t}{\x_t}\leq \inner{\tilde \g_t}{\w_t} + (2\delta_t + \Delta_t) R \|\g_t\| + \xi_t(\x_t).\nn
			\end{gather}		
		This shows \eqref{eq:badgood}. We now bound $\|\tilde \g_t\|$. When $\tilde \gamma_t <1$, we have $\g_t=\tilde \g_t$ and so $\|\tilde \g_t\| \leq (1 + d\kappa + d\Delta_t) \|\g_t\|$ holds trivially. Now suppose that $\tilde \gamma_t \geq 1$. In this case, $\bnu_t$ has the same distribution as $\mathbb{I}_{\{\hat \gamma_t \geq 1\}}\hat \s_t$, and so
		\begin{align}
			\|\tilde{\g}_t\|  =  \|\inner{\g_t}{\w} - \mathbb{I}_{\inner{\g_t}{\w_t} <0}  \inner{\g_t}{{\x}_t}  \bnu_t\|= 	\|\g_t\| + \|\g_t\| \frac{\| \w_t\|}{\gamma_t}  \| \bnu_t\| \leq \|\g_t\|(1 + R \| \bnu_t\|)  
			%		& \leq	\|\g_t\| + \|\g_t\| \| \w_t\|/\gamma_{\K}(\w_t) \cdot (\Delta_t/R+1/r),  \nn \\
			\leq  (1+ d\Delta_t +d\kappa )\|\g_t\|, \nn
		\end{align}
		where the last inequality follows from the fact that $\mathbb{I}_{\{\hat \gamma_t \geq 1\}}\|\hat \s_t\|\leq d\cdot (\Delta_t/R+1/r)$, by Lemma \ref{lem:celebext}. From the penultimate inequality in the above display and Lemma \ref{lem:celeb}, we also have 
		\begin{align}
			\E_{t-1}[	\|\tilde{\g}_t\|^2 ]\leq 2\|\g_t\|^2 (1 + R^2 \E_{t-1}[\|\hat{\s}_t\|^2] ) \leq 2 \|\g_t\|^2 (1+ d\cdot  (\kappa+\delta_t)^2).\nn
		\end{align}
	Finally, the fact that $\x_t \subset \K$ follows from Theorem \ref{thm:genthmeff}. In fact, looking at the proof of Theorem \ref{thm:genthmeff} it is clear that the fact that $(\x_t) \subset \K$ only uses the fact that the subroutine $\A$ outputs iterates in $(\x_t)$ in $\cB(R)$. 
	\end{proof}
	
\noindent We now present the proof of Theorem \ref{thm:mainstrong}. We note that the proof follows from that of \cite[Theorem 7]{cutkosky2018black} with small modifications to account for the differences between their constrained-to-unconstrained reduction and our projection-free reduction (Algorithm \ref{alg:projectionfreewrapper}), as well as the differences described in Section \ref{sec:strong} to achieve scale-invariance.
\begin{proof}[{\bf Proof of Theorem \ref{thm:mainstrong}}]
	First of all, note that $(\x_t)\subset \K$ follows directly from Lemma \ref{lem:weso}. Next, we will instantiate Theorem \ref{thm:genthmstrong} below with $\delta_t = \delta/t^2$ for all $t\geq1$ and some $\delta \in(0,1/3)$. By Lemma \ref{lem:weso} and the fact that the losses are $B$-Lipschitz, the variables $\zeta_T, Z_T, M_T,$ and $N_T$ in Theorem \ref{thm:genthmstrong} below satisfy
	\begin{itemize}
		\item $\E[\zeta_T] \leq 1 + d\kappa + 2d\delta_T$, where we used the fact that $\E_{t-1}[\Delta_t]\leq \delta_t$ and $\sum_{t=1}^{\infty}1/t^2\leq 2$.
		\item $\zeta_T \leq 16^2 d^{5}\kappa^3/\delta^2_T$, where we used the fact that $\Delta_t\leq 15^2 d^{4}\kappa^3/\delta^2_t$, $\forall t$.
		\item $Z_T\leq \epsilon^2 +B^2 (16^4 d^{10}\kappa^6/\delta^4_T) (1+T) \leq B^2 (16^4 d^{10}\kappa^6/\delta^4_T) (2+T)$.
		\item $M_T \leq 2 \sqrt{\ln_+ \left( { 2 (16^6 d^{15}\kappa^9/\delta^6_T (2T+1)^2  R B^2 }/{\epsilon^2}\right)}.$
		\item $\E[N_T] \leq \E[4 (1 + d\kappa + 2d\delta) B  \ln_+ \left( { 4 (16^6 d^{15}\kappa^9/\delta^6_T )(2T+1)^2  R B^2}/{\epsilon^2}\right) +   \epsilon/R +  (1+2 d\kappa+ 8d\delta) B].$ This follows by the facts that $\E[\zeta_T] \leq 1 + d\kappa + 2d\delta$ and $\zeta_T \leq 16^2 d^{5}\kappa^3/\delta^2_T$.
		\item $\E_{t-1}[\|\tilde \g_t \|^2]\leq 2 (1+ d (\kappa+\delta_t)^2)\|\g_t\|^2$ by Lemma \ref{lem:weso}.
	\end{itemize}
	Thus, for $\nu \coloneqq  1/(R\wedge 1) + \kappa + 2\delta$, there exists $W_T = O\left( \ln (e+\kappa d R  T B/(\delta \epsilon)) \right)$ such that 
	\begin{align}
	\E\left[	\sum_{t=1}^T \inner{\g_t}{\x_t- \x} \right]&\leq W_T \sqrt{\E\left[\sum_{t=1}^T  \|\tilde \g_t\|^2 \|\x_t-\x\|^2\right]} +d\nu  R B  W_T^2, \nn \\
		& =W_T\sqrt{\E\left[\sum_{t=1}^T \E_{t-1} [\|\tilde \g_t\|^2 \|\x_t-\x\|^2]\right]} +d\nu  R B  W_T^2,\nn \\
		& \leq  W_T \sqrt{\E\left[2 d\nu^2\sum_{t=1}^T \|\g_t\|^2 \|\x_t-\x\|^2\right]} +d\nu  R B  W_T^2,\nn \\ 
			& \leq  U_T \sqrt{\E\left[\sum_{t=1}^T \|\g_t\|^2 \|\x_t-\x\|^2\right]} +  R B  U_T^2/\nu,\label{eq:regret}
	\end{align}
where $U_T = O\left( \nu d^{1/2}\ln (e+\kappa d R  T B/(\delta \epsilon)) \right)$. Thus, for general convex functions, we have, in expectation
	\begin{align}
		\sum_{t=1}^T (\ell_t(\x_t) -\ell_t(\x)) \leq U_T R B \sqrt{T} +  R B U_T^2/\nu.\nn
	\end{align}
	For $\mu$-strongly convex functions $(\ell_t)$, we have
	\begin{align}
	\E\left[\sum_{t=1}^T (\ell_t(\x_t)-\ell_t(\x))\right] &\leq \E\left[\sum_{t=1}^T \inner{\g_t}{\x_t - \x}\right] - \E\left[ \frac{\mu}{2}\sum_{t=1}^T \|\x_t - \x   \|^2\right], \nn \\
		& \leq  U_T \sqrt{\E\left[\sum_{t=1}^T \|\g_t\|^2 \|\x_t-\x\|^2\right]} +  R B  U_T^2/\nu -\E\left[\frac{\mu}{2} \sum_{t=1}^T \|\x_t - \x \|^2\right], \label{eq:134} \\
		& \leq \inf_{\eta>0}  \left\{ \E\left[ \sum_{t=1}^T \|\g_t\|^2 \|\x_t - \x\|^2/\eta\right]    + \eta U_T^2/4 \right\}  - \E\left[\mu \sum_{t=1}^T \|\x_t - \x   \|^2/2\right]+\nu  R B  U_T^2/\nu,\nn \\
		& \leq  \frac{ \mu}{2} \E\left[\sum_{t=1}^T \frac{\|\g_t\|^2}{B^2} \|\x_t - \x\|^2\right]  + \frac{B^2 U_T^2}{2 \mu} -  \E\left[\frac{ \mu}{2} \sum_{t=1}^T \|\x_t - \x   \|^2\right]+  R B  U_T^2/\nu, \label{eq:setup}\\
		& \leq  B^2 U_T^2/(2 \mu)+   R B  U_T^2/\nu,\nn
	\end{align}
where \eqref{eq:134} follows by \eqref{eq:regret} and \eqref{eq:setup} follows by setting $\eta = 2 B^2/\mu$.
\end{proof}
	\begin{theorem}
		\label{thm:genthmstrong}
		Let $\delta \in (0,1/3)$ and $\kappa\coloneqq R/r$, with $r$ and $R$ as in \eqref{eq:lowradius}. Suppose that Algorithm \ref{alg:projectionfreewrapper} is run with $\cO_{\K^\circ}\equiv \onedopt$; $\delta_t = \delta/t^2$, $\forall t \geq 1$; and sub-routine $\A$ set to Alg.~\ref{alg:projectionfreewrapperstong} with parameter $\epsilon>0$. Then, for any adversarial sequence of convex losses $(\ell_t)$ on $\K$, the iterates $(\x_t)$ of Alg.~\ref{alg:projectionfreewrapper} in response to $(\ell_t)$ satisfy,
		\begin{align}
		\forall T\geq 1,\forall \x\in \K,\ \	\sum_{t=1}^T \inner{\g_t}{\x_t- \x} &\leq M_T \sqrt{2   \left[ (\epsilon^2+  \zeta_T B_T^2) \|\x\|^2 +  \sum_{t=1}^T \|\tilde \g_t \|^2 \|\x_t-\x\|^2\right] \cdot \ln \frac{Z_T}{ \epsilon^2}} \nn \\
			& \qquad +2R N_T \cdot\left(1+ \ln \frac{Z_T}{\epsilon^2}\right) + \Xi_T(\x), \nn
		\end{align}
	where $\g_t\in \partial \ell_t(\x_t)$, $\forall t$, $B_T \coloneqq \epsilon \vee \max_{t\in[T]} \|\g_t\|$, and $\zeta_T, Z_T, M_T, \Xi_T(\x)$ and $N_T$ are such that:
		\begin{itemize}
			\item $\zeta_T \coloneqq 1 +d\kappa +d\max_{t\in[T]} \Delta_t$, with $\Delta_t\geq 0$ a non-negative random variable satisfying $\E_{t-1}[\Delta_t]\leq \delta_t$.
			\item $Z_T\leq  \epsilon^2 + \tilde B_T^2 + \sum_{t=1}^T \|\tilde {\g}_t\|^2\leq   \epsilon^2+  \zeta_T^2 B^2_T +\zeta_T^2 \sum_{t=1}^T \|\g_t\|^2$.
			\item $M_T \coloneqq 2  \sqrt{\ln_+\left({2 \zeta_t^2(R + 2\zeta_t R T) Q_T }/{\epsilon^2}\right)}$, and $Q_t \coloneqq \epsilon^2+ \sum_{i=1}^t \|\g_i\|^2$.
			\item $\Xi_T(\x)\coloneqq \sum_{t=1}^T \xi_t(\x)$, where $\{ \xi_t(\x) : \x\in \K\}$ is a family of random variables satisfying for all $\x\in \K$, $\E_{t-1}[\xi_t(\x)]=0$. 
			\item $N_T \coloneqq 4 \zeta_t B_T \ln_+ \left( {4 \zeta_t^2 B_T(R + 2 \zeta_t R  T)\sqrt{Q_T} }/{\epsilon^2} \right) +B_T +\frac{\epsilon}{R} + \sum_{t=1}^T (2\delta_t +\Delta_t) \|\g_t\| +2d(\kappa+\delta)B_T.$ 
		\end{itemize}
	\end{theorem}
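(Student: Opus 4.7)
My approach is to first use Lemma~\ref{lem:weso} to convert the problem of bounding the regret $\sum_t \inner{\g_t}{\x_t - \x}$ of the outer Algorithm~\ref{alg:projectionfreewrapper} into a problem about the linearized regret $\sum_t \inner{\tilde\g_t}{\w_t - \x}$ of the inner subroutine (Alg.~\ref{alg:projectionfreewrapperstong}), plus the additive noise $\sum_t (2\delta_t + \Delta_t) R \|\g_t\|$ and the martingale contribution $\Xi_T(\x) = \sum_t \xi_t(\x)$. Summing the first inequality of Lemma~\ref{lem:weso} gives exactly this decomposition, and after bounding the additive noise via the estimates $\Delta_t \leq 15^2 d^4 \kappa^3/\delta_t^2$ and $\|\tilde \g_t\| \leq \zeta_T \|\g_t\|$, the residual collapses into the $2 R N_T (1 + \ln (Z_T/\epsilon^2))$ piece of the claimed bound. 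The rest of the proof is thus devoted to an adaptive regret bound for the inner algorithm.

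For this, I would unfold the iterate identity $\w_t = \bu_{t+1} + \v_t/Z_t$ (Line~4 of Alg.~\ref{alg:projectionfreewrapperstong}) to split
\begin{equation*}
\sum_{t=1}^T \inner{\tilde\g_t}{\w_t - \x} = \sum_{t=1}^T \inner{\tilde\g_t}{\bu_{t+1} - \x} + \sum_{t=1}^T \inner{\tilde\g_t}{\v_t}/Z_t.
\end{equation*}
The first sum is precisely the linearized regret of the FreeGrad instance fed the linear losses $\w \mapsto \inner{\tilde\g_t}{\w}$, and Proposition~\ref{prop:freegrad} bounds it by $2\|\x\|\sqrt{\bar V_T \ln_+(2\|\x\|\bar V_T/\epsilon^2)} + 4 \tilde B_T \|\x\| \ln(\cdots) + \epsilon$, where $\bar V_T$ and $\tilde B_T$ are the FreeGrad-internal clipped-gradient quantities. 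Because $\tilde B_T \leq \zeta_T B_T$ and $\bar V_T \leq Z_T$ by construction of the $Z_t$ update, this produces the $(\epsilon^2 + \zeta_T B_T^2)\|\x\|^2$ part of the expression under the square root in the statement, plus a contribution to $N_T$.

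The delicate step is the drift term $\sum_t \inner{\tilde\g_t}{\v_t}/Z_t$, which is the ingredient that turns a flat $\sqrt{T}$-style bound into a data-adaptive bound scaling with $\sum_t \|\tilde\g_t\|^2 \|\x_t-\x\|^2$. I plan to expand $\v_t = \v_{t-1} + \|\hat\g_t\|^2 \x_t$ into $\v_t = \sum_{s\leq t} \|\hat\g_s\|^2 \x_s$ and swap the order of summation, then apply the second inequality of Lemma~\ref{lem:weso}, which reads $\inner{\tilde\g_t}{\x_t} \leq \inner{\tilde\g_t}{\w_t} + (2\delta_t+\Delta_t)R\|\g_t\| + \xi_t(\x_t)$, to rewrite the resulting double sum as a weighted telescoping expression. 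The monotonicity of $Z_t$ (whose increments $\|\hat\g_t\|^2 + \tilde B_t^2 - \tilde B_{t-1}^2$ are nonnegative) then reduces the drift to a quantity controlled by $\sqrt{\sum_t \|\tilde\g_t\|^2 \|\x_t-\x\|^2}$ via a Cauchy--Schwarz step, and pairing with $\sqrt{\ln(Z_T/\epsilon^2)}$ gives the second summand inside the square root in the final bound. This telescoping/Cauchy--Schwarz step is the main obstacle: it is exactly the mechanism by which the reduction substitutes $\|\x_t-\x\|$ for the static comparator norm $\|\x\|$ appearing in Proposition~\ref{prop:freegrad}, which is what drives the logarithmic rate for strongly convex losses in Theorem~\ref{thm:mainstrong}. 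All remaining pieces---the clipped-to-unclipped conversion between $\tilde\g_t$ and $\hat\g_t$, and the collection of additive lower-order terms into $N_T$---should then fall out by routine estimates.
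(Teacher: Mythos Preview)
Your overall plan—reduce via Lemma~\ref{lem:weso}, then analyze the inner subroutine by splitting $\w_t$ into its \freegrad{} and averaging components—is the right skeleton, but the decomposition you propose misses the mechanism that actually produces the $\sum_t \|\tilde\g_t\|^2 \|\x_t - \x\|^2$ term, and as written it cannot deliver the stated bound.

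The gap is in how the comparator enters. You split $\sum_t \inner{\hat\g_t}{\w_t - \x}$ into the \freegrad{} regret against $\x$ and a drift $\sum_t \inner{\hat\g_t}{\bar\x_{t-1}}$ (with $\bar\x_{t-1} = \v_{t-1}/Z_{t-1}$). But this drift term is \emph{independent of $\x$}, so no bound on it can possibly scale with $\|\x_t - \x\|$; conversely, the \freegrad{} term against $\x$ scales like $\|\x\|\sqrt{Z_T}$, which is the full $Z_T$ rather than just the $(\epsilon^2 + \tilde B_T^2)\|\x\|^2$ piece in the statement. Adding the two cannot reproduce the claimed square root of $(\epsilon^2 + \zeta_T B_T^2)\|\x\|^2 + \sum_t \|\tilde\g_t\|^2\|\x_t - \x\|^2$ (take $\x$ equal to the running average $\bar\x_T$ to see the mismatch). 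The paper instead runs \freegrad{} against the \emph{shifted} comparator $\x - \bar\x_T$, so that the \freegrad{} term contributes $M_T\sqrt{Z_T}\,\|\x - \bar\x_T\|$ (the bias), while the drift becomes $\sum_t \inner{\hat\g_t}{\bar\x_{t-1} - \bar\x_T}$ and is bounded by $M_T\sigma_T\sqrt{Z_T\ln(Z_T/\epsilon^2)}$ (the variance); the bias--variance identity $Z_T\|\x - \bar\x_T\|^2 + Z_T\sigma_T^2 = V_T(\x)$ then yields exactly the quantity under the square root.

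Your intended use of the second inequality in Lemma~\ref{lem:weso} is correct in spirit but misplaced: it is not used to telescope the drift directly. In the paper's argument (their Lemma~\ref{lem:shortcut}), the drift $\sum_t \inner{\hat\g_t}{\bar\x_{t-1} - \bar\x_T}$ is first telescoped into $\sum_t \inner{\sum_{i\leq t}\hat\g_i}{\bar\x_{t-1} - \bar\x_t}$, and \emph{then} the inequality $\inner{\hat\g_i}{\x_i} \leq \inner{\hat\g_i}{\w_i} + \cdots$ is invoked to bound $\|\sum_{i\leq t}\hat\g_i\|$ via the \freegrad{} regret guarantee (this is where $M_t$ and $N_t$ arise). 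The identity $\bar\x_{t-1} - \bar\x_t = \tfrac{\|\hat\g_t\|^2}{Z_{t-1}}(\x_t - \bar\x_t)$ and a Cauchy--Schwarz step against $\sum_t \|\hat\g_t\|^2/Z_{t-1} \leq \ln(Z_T/\epsilon^2)$ complete the variance bound. You will need all three ingredients—the shifted comparator, the bias--variance identity, and the partial-sum bound on $\|\sum_{i\leq t}\hat\g_i\|$—to close the argument.
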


	\begin{proof}[{\bf Proof}]
		Define $\zeta_t \coloneqq 1+d\kappa+d\max_{s\leq t }\Delta_s$, where $\kappa=R/r$ and $\Delta_t$ is the same random variable as in Lemma~\ref{lem:weso}. Note that $\zeta_t \geq 0$ and $\E[\zeta_t]\leq 1 +d\kappa+d\sum_{s=1}^t\delta_s$. For any $t\geq 1$, consider the random vector $\X_t$ that takes value $ \x_{i}$ for $s \le t$ with probability proportional to $\|\hat{\g}_{i}\|  ^2$, and value $\bm{0}$ with probability proportional to $\epsilon^2 +\tilde B_t^2$, where $\tilde B_t \coloneqq \epsilon \vee \max_{i\in[t]}\|\hat \g_i\|$ and $\hat \g_t\coloneq \tilde \g_t \cdot \tilde B_{t-1}/\tilde B_t$ (see Algorithm \ref{alg:projectionfreewrapperstong}). Moving forward, we define 
		\begin{align}
			z_t \coloneqq \|\hat{\g}_{t}\|^2,\quad \text{and} \quad  z_{0,t}\coloneqq \epsilon^2+\tilde B_t^2, \nn
		\end{align}
		so that $Z_t \coloneqq  z_{0,t}+\sum_{i=1}^t z_i= \epsilon^2 + \tilde B_t^2 + \sum_{i=1}^t \|\hat{\g}_i\|^2$. We make the following definitions/observations:
		\begin{itemize}
			\item We define
			$V_T(\x)
			%&=D^2 + \sum_{t=1}^T \|\hat{\g}_t\| ^2\| \x_t-\x\|^2\\
			=z_{0,T} \|\x\|^2 +  \sum_{t=1}^T z_t \| \x_t-\x\|^2
			= Z_T \cdot \E[\|\X_T-\x\|^2]~.$
			\item $Z_t \leq  \zeta_T^2 B^2_T\cdot (T+1) + \epsilon^2$, for all $t\in[T]$, which follows from Lemma \ref{lem:weso}.
			\item $\overline{ \x}_t = \E[\X_t] =   Z_t^{-1}\cdot \sum_{i=1}^t z_i \cdot \x_i = \v_t/Z_t$, where $(\v_i)$ are as in Algorithm \ref{alg:projectionfreewrapperstong}.
			\item $\sigma^2_t := (  z_{0,t}\|\overline{ \x}_t\|^2+\sum_{i=1}^t z_t\cdot   \|  \x_{i}-\overline{ \x}_t\|^2)/Z_t$ so that $\sigma^2_t = \E[\|\X_t-\overline{ \x}_t\|^2]$.
		\end{itemize}
		To prove the theorem, we are going to show for any $\x\in\K$,
		\begin{equation}
			\label{eqn:biasvariancetarget}
			\sum_{t=1}^T \inner{\g_t}{\x_t- \x}
			\le M_T \sqrt{Z_T \cdot \|\x- \x_T\|^2} +  M_T   \sqrt{\sigma_T^2 \cdot Z_T\cdot \ln \frac{Z_T}{ \epsilon^2}}+2R N_T \cdot\left(1+ \ln \frac{Z_T}{\epsilon^2}\right) + \Xi_T(\x),
		\end{equation}
		where $M_T$ and $N_T$ are as in \eqref{eq:M} and \eqref{eq:N} below, respectively, which implies the desired bound by a bias-variance decomposition: $$Z_T\cdot\|\x-\overline{ \x}_T\|^2 + Z_T\cdot \sigma^2_T = Z_T\cdot\E[\|\X_T-\x\|^2]=  V_T(\x).$$
		In particular, combining this with \eqref{eqn:biasvariancetarget} and using the fact that $\sqrt{x}+\sqrt{y} \leq \sqrt{2 (x+y)}$ for all $x,y>0$, we get 
		\begin{align}
			\sum_{t=1}^T \inner{\g_t}{\x_t- \x}
			\le  M_T \sqrt{2 V_T(\x) \cdot \|\x- \x_T\|^2 \cdot \ln \frac{Z_T}{ \epsilon^2}} +2R N_T \cdot\left(1+ \ln \frac{Z_T}{\epsilon^2}\right)+ \Xi_T(\x) .\nn
		\end{align}
		To get to \eqref{eqn:biasvariancetarget}, first observe that for all $\x \in \K$, the linearized regret of Algorithm \ref{alg:projectionfreewrapperstong} satisfies
		\begin{align}
			\sum_{t=1}^T \inner{\g_t}{\x_t- \x}- \sum_{t=1}^T	\inner{\bar{\g}_t}{\x_t - \x} =	  \sum_{t=1}^T	\inner{{\g}_t -\bar{\g}_t}{\x_t - \x} 
			\stackrel{(*)}{\leq} 2 R\sum_{t=1}^T  (B_t- B_{t-1})\leq 2 R B_T, \nn
		\end{align}
		where $(*)$ following by Cauchy Schwarz.
		Using this together with \eqref{eq:good} (multiplied by $\tilde{B}_{t-1}/\tilde B_t$) and the clipped regret of \freegrad{} in Proposition \ref{prop:freegrad}, we have, for all $\x\in \K$,
		\begin{align}
			\sum_{t=1}^T \inner{\g_t}{\x_t- \x} - 2 R B_T&  \leq  	\sum_{t=1}^T \langle  \bar\g_t,  \x_t-\x\rangle, \nn \\
			&\leq \sum_{t=1}^T \langle \hat{\g}_t, \w_t-\x\rangle +\sum_{t=1}^T (2\delta_t + \Delta_t) R \|\g_t\| + \sum_{t=1}^T \xi_t(\x),\nn \\
			&=\sum_{t=1}^T \langle \hat{\g}_t, \bu_t - (\x-\overline{ \x}_T)\rangle + \sum_{t=1}^T \langle \hat{\g}_t, \overline{ \x}_{t-1} - \overline{\x}_T\rangle + \sum_{t=1}^T(\xi_t(\x)+(2\delta_t + \Delta_t) R \|\g_t\|), \nn \\
			%	& = \scR_T^{\A}(\u- \x_T)+ \epsilon+\sum_{t=1}^T \langle \hat{\g}_t, \overline{ \x}_{t-1} - \overline{ \x}_T\rangle,  \nn \\
			&=M_T \sqrt{Z_T \cdot\|\x-\overline{ \x}_T\|^2}  + \sum_{t=1}^T \langle \hat{\g}_t, \overline{ \x}_{t-1} - \overline{ \x}_T\rangle +  2R N_T + \Xi_T(\x) ,\label{eq:new}
		\end{align}
		where $\Xi_T(\x)\coloneqq \sum_{t=1}^T \xi_t(\x)$. Note that the first term on the RHS of \eqref{eq:new} is exactly what we want, so we only have to derive an upper bound on the second one. This is readily done through Lemma~\ref{lem:shortcut} that immediately gives us the stated result.
	\end{proof}

	\begin{lemma}
		\label{lem:shortcut}
		Under the hypotheses of Theorem~\ref{thm:genthmstrong}, we have 
		\begin{gather}
			\sum_{t=1}^T \langle \hat{\g}_t, \overline{ \x}_{t-1} - \overline{ \x}_T\rangle
			\leq  M_T \sigma_T \sqrt{Z_T\ln \frac{Z_T}{ \epsilon^2}}+2R N_T \cdot \ln \frac{Z_T}{ \epsilon^2}, \ \ \text{where} \nn \\
			M_T \coloneqq 2\sqrt{\ln_+\left(\frac{2 \zeta_t^2(R + 2\zeta_t R T) Q_T }{\epsilon^2}\right)}; \quad Q_T \coloneqq \epsilon^2 + \sum_{t=1}^T \|\g_t\|^2; \quad \text{and} \label{eq:M} \\  N_T \coloneqq 4 \zeta_t B_T \ln_+ \left( \frac{4 \zeta_t^2 B_T(R + 2 \zeta_t R  T)\sqrt{Q_T} }{\epsilon^2} \right) +B_T +\frac{\epsilon}{R} + \sum_{t=1}^T (2\delta_t +\Delta_t) \|\g_t\| + 2 d (\kappa+\delta)B_T.\label{eq:N}
		\end{gather}
	\end{lemma}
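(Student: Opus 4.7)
The plan is to first reduce the target sum via Abel summation to a sum of partial-subgradient terms, then control each via FreeGrad's clipped regret bound (Proposition~\ref{prop:freegrad}) applied with a carefully chosen comparator, and finally combine through Cauchy--Schwarz and a harmonic-sum estimate. Setting $S_s\coloneqq\sum_{i\le s}\hat\g_i$ and using the telescoping identity $\overline{\x}_{t-1}-\overline{\x}_T=\sum_{s=t}^T(\overline{\x}_{s-1}-\overline{\x}_s)$, swapping summations yields
\begin{equation*}
\sum_{t=1}^T\langle\hat\g_t,\overline{\x}_{t-1}-\overline{\x}_T\rangle=\sum_{s=1}^T\langle S_s,\overline{\x}_{s-1}-\overline{\x}_s\rangle.
\end{equation*}
From the updates $\v_s=\v_{s-1}+z_s\x_s$ and $Z_s=Z_{s-1}+z_s+(\tilde B_s^2-\tilde B_{s-1}^2)$ in Algorithm~\ref{alg:projectionfreewrapperstong}, a direct calculation gives
\begin{equation*}
\overline{\x}_s-\overline{\x}_{s-1}=\frac{z_s(\x_s-\overline{\x}_{s-1})}{Z_s}-\frac{\tilde B_s^2-\tilde B_{s-1}^2}{Z_s}\,\overline{\x}_{s-1},
\end{equation*}
with $\overline{\x}_{s-1}\in\cB(R)$ since it is a convex combination of $\bm0$ and iterates $\x_i\in\K\subseteq\cB(R)$.

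For each $s$, I would invoke FreeGrad's regret bound---which depends on the comparator only through $\|\y\|$ and is therefore symmetric in $\y$---applied both at $\y=\bm0$, yielding $\sum_{t\le s}\langle\hat\g_t,\bu_t\rangle\le\epsilon$, and at $\y=\pm(\overline{\x}_{s-1}-\overline{\x}_s)$, to obtain
\begin{equation*}
\langle S_s,\overline{\x}_{s-1}-\overline{\x}_s\rangle\le 2\|\overline{\x}_{s-1}-\overline{\x}_s\|\sqrt{Z_s\ln_+\!\bigl(2\|\overline{\x}_{s-1}-\overline{\x}_s\|Z_s/\epsilon^2\bigr)}+4B_s\|\overline{\x}_{s-1}-\overline{\x}_s\|\ln(\cdots)+\epsilon.
\end{equation*}
Plugging in the explicit formula for $\overline{\x}_s-\overline{\x}_{s-1}$ and using $\|\overline{\x}_{s-1}-\overline{\x}_s\|\le 2R$ inside the logs (so the arguments collapse to the data-free expressions appearing inside $M_T$), the dominant piece becomes $2\sum_s\sqrt{z_s/Z_s}\cdot\sqrt{z_s}\,\|\x_s-\overline{\x}_{s-1}\|\cdot\sqrt{\ln_+(\cdot)}$, while the $(\tilde B_s^2-\tilde B_{s-1}^2)/Z_s$ piece telescopes. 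Cauchy--Schwarz then gives
\begin{equation*}
\sum_{s=1}^T\sqrt{z_s/Z_s}\cdot\sqrt{z_s}\,\|\x_s-\overline{\x}_{s-1}\|\le\sqrt{\sum_{s=1}^T z_s/Z_s}\cdot\sqrt{\sum_{s=1}^T z_s\|\x_s-\overline{\x}_{s-1}\|^2}.
\end{equation*}

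The classical harmonic estimate $\sum_sz_s/Z_s\le\ln(Z_T/\epsilon^2)$ contributes one $\sqrt{\ln}$ factor, combining with the per-step $\sqrt{\ln_+(\cdot)}$ from FreeGrad into the $M_T\sqrt{\ln(Z_T/\epsilon^2)}$ structure. The other Cauchy--Schwarz factor $\sum_sz_s\|\x_s-\overline{\x}_{s-1}\|^2$ is controlled by $O(Z_T\sigma_T^2)$ after applying $\|\x_s-\overline{\x}_{s-1}\|^2\le 2\|\x_s-\overline{\x}_T\|^2+2\|\overline{\x}_T-\overline{\x}_{s-1}\|^2$ and a standard weighted-variance manipulation on the second piece, producing the $M_T\sigma_T\sqrt{Z_T\ln(Z_T/\epsilon^2)}$ term. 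All remaining pieces---the telescoping $\tilde B_s^2-\tilde B_{s-1}^2$ terms bounded by $\zeta_T^2B_T^2$, the $\sum_t(2\delta_t+\Delta_t)\|\g_t\|$ and $2d(\kappa+\delta)B_T$ errors inherited from Lemma~\ref{lem:weso}, and the $4B_s\|\overline{\x}_{s-1}-\overline{\x}_s\|\ln(\cdots)$ summand in FreeGrad's bound---assemble into $2RN_T\ln(Z_T/\epsilon^2)$ after absorbing a second factor of $\sum_sz_s/Z_s\le\ln(Z_T/\epsilon^2)$. The principal obstacle will be twofold: first, matching the log structure exactly so that FreeGrad's $\sqrt{\ln_+(\cdot)}$ combined with the harmonic-sum $\ln$ produces precisely one $\sqrt{\ln}$ per factor of $\sigma_T\sqrt{Z_T}$ (rather than $\ln^{3/2}$), which forces the uniform $\|\y\|\le 2R$ bound inside the logs and a consistent choice of comparator scale; and second, preventing the $+\epsilon$ contribution present in each per-$s$ FreeGrad application from naively summing to $T\epsilon$---this requires a potential-function-style argument replacing the per-$s$ applications by an ``integrated'' bound, so that the $\epsilon$ contribution is attenuated to $\epsilon\ln(Z_T/\epsilon^2)$, which is exactly what appears inside $2RN_T$ as $(\epsilon/R)\cdot 2R\ln(Z_T/\epsilon^2)$.
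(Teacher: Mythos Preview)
Your Abel summation opening step matches the paper exactly, and your identity for $\overline{\x}_s-\overline{\x}_{s-1}$ is correct. The gap is in the per-$s$ bound you claim for $\langle S_s,\overline{\x}_{s-1}-\overline{\x}_s\rangle$. Applying FreeGrad at $\y=\overline{\x}_s-\overline{\x}_{s-1}$ gives
\[
\langle S_s,\overline{\x}_{s-1}-\overline{\x}_s\rangle
= \sum_{i\le s}\langle\hat\g_i,\bu_i-\y\rangle-\sum_{i\le s}\langle\hat\g_i,\bu_i\rangle
\le R_{\text{FG}}(\|\y\|)-\sum_{i\le s}\langle\hat\g_i,\bu_i\rangle,
\]
so you need a \emph{lower} bound on $\sum_{i\le s}\langle\hat\g_i,\bu_i\rangle$. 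Your invocation of FreeGrad at $\y=\bm0$ only gives the upper bound $\sum_{i\le s}\langle\hat\g_i,\bu_i\rangle\le\epsilon$, which points the wrong way; there is no symmetry argument that flips this. The only available lower bound is the crude $\sum_{i\le s}\langle\hat\g_i,\bu_i\rangle\ge -R\sum_{i\le s}\|\hat\g_i\|$, and plugging that into your display replaces the ``$+\epsilon$'' by ``$+R\sum_{i\le s}\|\hat\g_i\|$'', which after summing over $s$ is far too large. Your proposed ``potential-function-style'' fix cannot repair this, because the obstruction is not the $\epsilon$ but the uncontrolled $\sum_{i\le s}\langle\hat\g_i,\bu_i\rangle$.

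The paper's route avoids this entirely: it bounds the \emph{norm} $\|S_t\|$ rather than $\langle S_t,\cdot\rangle$. It applies FreeGrad with a comparator of norm $x$ in the direction $S_t/\|S_t\|$, uses $\bu_i=\w_i-\overline{\x}_{i-1}$ and Lemma~\ref{lem:weso} (specifically \eqref{eq:badgood}) to convert $\sum_i\langle\hat\g_i,\bu_i\rangle$ into $\sum_i\langle\hat\g_i,\x_i-\overline{\x}_{i-1}\rangle$ plus error terms, and lower-bounds the latter trivially by $-2R\sum_i\|\hat\g_i\|$. Dividing through by $x$ and then choosing $x=R+2R\sum_i(\|\hat\g_i\|\vee\|\g_i\|)/B_T$ turns this nuisance term into $B_T$ and the $\epsilon$ into $\epsilon/R$; both sit harmlessly inside $N_t$. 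The freedom to choose $x$ large is what makes this work, and it is only available because you are bounding $\|S_t\|$ (scale-invariant after dividing by $x$) rather than pairing $S_t$ against a fixed small vector. After this, the paper proceeds essentially as you outline: Cauchy--Schwarz, the harmonic-sum bound $\sum_t z_t/Z_{t-1}\le\ln(Z_T/\epsilon^2)$, and a telescoping bias--variance identity showing $\sum_t (Z_t/Z_{t-1})z_t\|\x_t-\overline{\x}_t\|^2\le\sigma_T^2 Z_T$.
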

	\begin{proof}[{\bf Proof}]
		We have that
		\begin{align*}
			\sum_{i=1}^t \langle \hat{\g}_i, \overline{ \x}_{i-1}-\overline{ \x}_t\rangle - \sum_{i=1}^{t-1} \langle \hat{\g}_i, \overline{ \x}_{i-1}-\overline{ \x}_{t-1}\rangle
			&= \left\langle \sum_{i=1}^t \hat{\g}_{i},\overline{ \x}_{t-1}-\overline{ \x}_t\right\rangle~.
		\end{align*}
		The telescoping sum gives us
		\[
		\sum_{t=1}^T \langle \hat{\g}_t, \overline{ \x}_{t-1}-\overline{ \x}_T\rangle 
		=\sum_{t=1}^{T} \left\langle \sum_{i=1}^t \hat{\g}_i,\overline{ \x}_{t-1}-\overline{ \x}_t\right\rangle
		\leq \sum_{t=1}^{T} \left\| \sum_{i=1}^t \hat{\g}_i\right\|   \|\overline{ \x}_{t-1}-\overline{ \x}_t\|~.
		\]
		So in order to bound $\sum_{t=1}^T \langle \hat{\g}_t,\overline{ \x}_{t-1}-\overline{ \x}_T\rangle$, it suffices to bound $\left\| \sum_{i=1}^t \hat{\g}_i\right\|   \|\overline{ \x}_{t-1}-\overline{ \x}_t\|$ by a sufficiently small value.
		First, we will tackle $\left\|\sum_{i=1}^t \hat{\g}_i\right\|$. By Lemma \ref{lem:weso} (in particular \eqref{eq:badgood}) and the fact that $\hat \g_t = \tilde \g_t \cdot B_{t-1}/B_t$, we have, 
		\begin{align}
			\forall x>0,\quad		\sum_{i=1}^t -2R \|\hat{\g}_i\| +\left\|\sum_{i=1}^t\hat{\g}_i\right\|  x
			&\le \sum_{i=1}^t \langle \hat{\g}_i,\x_i-\overline{ \x}_{i-1}\rangle +\left\|\sum_{i=1}^t\hat{\g}_i\right\|  x , \quad (\text{since } \x_i,\overline \x_{i-1}\in \K) \nn  \\
			&\le \sum_{i=1}^t \langle \hat{\g}_i,\w_i-\overline{ \x}_{i-1}\rangle +\left\|\sum_{i=1}^t\hat{\g}_i\right\|  x + \sum_{i=1}^t\xi_i(\x_i)+ (2\delta_i + \Delta_i) R \|\g_i\|),\nn\\
			&= \sum_{i=1}^t \langle \hat{\g}_i,\bu_i\rangle+\left\|\sum_{i=1}^t\hat{\g}_i\right\|  x+\sum_{i=1}^t (2\delta_i + \Delta_i) R \|\g_i\|+ \sum_{i=1}^t\xi_i(\x_i), \nn\\
			&\le  2x\sqrt{\left( \epsilon^2 + \sum_{i=1}^t \|\hat{\g}_i\| ^2\right)\ln_+\left(\frac{2\zeta_t^2 x Q_t}{\epsilon^2}\right)} + \sum_{i=1}^t\xi_i(\x_i)\nn \\ & \quad + \sum_{i=1}^t (2\delta_i+ \Delta_i) R \|\g_i\| + 4 \zeta'_t x\ln\left(\frac{4 \zeta_t^2 B_T x\sqrt{Q_t}}{\epsilon^2}\right) + \epsilon, \label{eq:freegradreg}
		\end{align}
		where $\bu_i$ is the $i$th output of \freegrad{}, $Q_t \coloneqq \epsilon^2+ \sum_{i=1}^t \|\g_i\|^2\leq \epsilon^2  + t B_t^2$, and $\zeta'_t \coloneqq  \max_{s\in[t]} \{ (1+d\kappa+ d \Delta_s)B_s\}$. The passage to \eqref{eq:freegradreg} follows from the regret bound \freegrad{} (see Proposition \ref{prop:freegrad}) and the fact that $\|\hat \g_t\|\leq \zeta_t  \|\g_t\|$ (see Lemma \ref{lem:weso}). Moving forward, we let $G_t \coloneqq \epsilon +\sum_{i=1}^t (2\delta_i +\Delta_i)R \|\g_i\|$. Dividing by $x$ in \eqref{eq:freegradreg} and solving for $\left\|\sum_{i=1}^t\hat{\g}_i\right\| $, we get
		\begin{align*}
			\left\|\sum_{i=1}^t\hat{\g}_i\right\|  
			&\le 2\sqrt{\left(\epsilon^2 +\sum_{i=1}^t \|\hat{\g}_i\| ^2\right)\ln_+\left(\frac{2\zeta_t^2 x Q_t}{\epsilon^2}\right)} + 4 \zeta'_t  \ln\left(\frac{4 \zeta_t^2 B_t x\sqrt{Q_t}}{\epsilon^2}\right) +  \frac{G_t}{x}  +\frac{2R }{x} \sum_{i=1}^t \|\hat{\g}_i\| +\frac{1}{x}\sum_{i=1}^t \xi_i(\x_i)~.
		\end{align*}
		Set $x=R +2R\sum_{i=1}^t (\|\hat{\g}_i\| \vee \|\g_i\|)/B_T$ and using the facts that $\|\hat \g_t\|\leq \zeta_t \|\g_t\|$ and $|\xi_t(\x)|\leq 4 d (\kappa+\delta_t) R \|\g_t\|$, for all $\x\in \K$ (see Lemma \ref{lem:weso}), we conclude that:
		\begin{gather}
			\left\|\sum_{i=1}^t\hat{\g}_i\right\|  \le M_t\sqrt{ \epsilon^2 +\sum_{i=1}^t\|\hat{\g}_i\| ^2 } +N_t, \quad \text{where}    \nn \\
			M_t \coloneqq 2\sqrt{\ln_+\left(\frac{2 \zeta_t^2(R + 2\zeta_t R t) Q_t }{\epsilon^2}\right)},\nn \\	\text{and} \ \  N_t \coloneqq 4  \zeta'_t \cdot \ln_+ \left( \frac{4 \zeta_t^2 B_t(R + 2 \zeta_t R  t)\sqrt{Q_t} }{\epsilon^2} \right) +B_t + \frac{G_t}{R}  + 2 d (\kappa +\delta)B_t. \nn
		\end{gather}
		We recall that $Q_t \coloneqq \epsilon^2+ \sum_{i=1}^t \|\g_i\|^2\leq \epsilon^2  + t B_t^2$. With this in hand, we have
		\begin{equation}
			\sum_{t=1}^T \langle \hat{\g}_t, \overline{ \x}_{t-1}-\overline{ \x}_T\rangle 
			\le \sum_{t=1}^{T} \left\|\sum_{i=1}^t \hat{\g}_i\right\|  \|\overline{ \x}_{t-1}-\overline{ \x}_t\|
			\le M_T\sum_{t=1}^{T}\sqrt{\epsilon^2 +\sum_{i=1}^t \|\hat{\g}_i\|^2  }\|\overline{ \x}_{t-1}-\overline{ \x}_t\| + N_T \sum_{t=1}^{T}\|\overline{ \x}_{t-1}-\overline{ \x}_t\|~.\nn
		\end{equation}
		Now, we relate $\|\overline{ \x}_{t-1}-\overline{ \x}_{t}\|$ to $\| \x_t-\overline{ \x}_t\|$:
		\[
		\overline{ \x}_{t-1}-\overline{ \x}_t 
		= \overline{ \x}_{t-1} - \frac{Z_{t-1}\overline{ \x}_{t-1} + \|\hat{\g}_t\|  ^2  \x_t}{Z_t}
		=\frac{\|\hat{\g}_t\|  ^2}{Z_t}(\overline{ \x}_{t-1} -  \x_t)
		=\frac{\|\hat{\g}_t\|  ^2}{Z_t}(\overline{ \x}_{t} -  \x_t) + \frac{\|\hat{\g}_t\|  ^2}{Z_t}(\overline{ \x}_{t-1}-\overline{ \x}_t),
		\]
		that implies
		\[
		Z_t \cdot (\overline{ \x}_{t-1}-\overline{ \x}_t)
		= \|\hat{\g}_t\|  ^2( \x_t-\overline{ \x}_t)+\|\hat{\g}_t\|  ^2(\overline{ \x}_{t-1}-\overline{ \x}_t),
		\]
		that is
		\begin{equation}
			\label{eq:lemma_shortcut_eq1}
			\overline{ \x}_{t-1}-\overline{ \x}_t
			= \frac{\|\hat{\g}_t\|  ^2}{Z_{t-1}}( \x_t-\overline{ \x}_t)~.
		\end{equation}
		Hence, we have 
		\[
		M_T\sum_{t=1}^{T}\sqrt{\epsilon^2 +\sum_{i=1}^t \|\hat{\g}_i\|^2  }\|\overline{ \x}_{t}-\overline{ \x}_{t-1}\|
		\leq M_T\sum_{t=1}^T \sqrt{Z_t}\frac{\| \hat \g_t\|  ^2}{Z_{t-1}}\| \x_t-\overline{ \x}_t\|,
		\]
		and
		\[
		N_T\sum_{t=1}^{T}\|\overline{ \x}_{t}-\overline{ \x}_{t-1}\|
		\leq N_T\sum_{t=1}^T \frac{\|\hat \g_t\|  ^2}{Z_{t-1}}\| \x_t-\overline{ \x}_t\|
		\leq 2 RN_T \sum_{t=1}^T \frac{\|\hat  \g_t\|  ^2}{Z_{t-1}}~.
		\]
		Using Cauchy–Schwarz inequality, we have
		\begin{align*}
			M_T\sum_{t=1}^T \sqrt{Z_t}\frac{\| \hat \g_t\|  ^2}{Z_{t-1}}\| \x_t-\overline{ \x}_t\|
			&\leq M_T\sqrt{\sum_{t=1}^T \frac{\|\hat{\g}_t\| ^2}{Z_{t-1}}} \sqrt{\sum_{t=1}^T \frac{Z_t}{Z_{t-1}}\|\hat{\g}_t\|^2\| \x_t-\overline{ \x}_t\|^2 }~.
		\end{align*}
		So, putting together the last inequalities, we have
		\[
		\sum_{t=1}^T \langle \hat{\g}_t, \overline{ \x}_{t-1}-\overline{ \x}_T\rangle \leq M_T\sqrt{\sum_{t=1}^T \frac{\|\hat{\g}_t\|  ^2}{Z_{t-1}}} \sqrt{\sum_{t=1}^T \frac{Z_t}{Z_{t-1}}\|\hat{\g}_t\|  ^2\| \x_t-\overline{ \x}_t\|^2 } +2R N_T \sum_{t=1}^T \frac{\| \hat \g_t\|  ^2}{Z_{t-1}}~.
		\]
		We now focus on the the term $\sum_{t=1}^T \frac{\|\hat \g_t\|  ^2}{Z_{t-1}}$ that is easily bounded:
		\begin{align*}
			\sum_{t=1}^T \frac{\| \hat \g_t\|  ^2}{Z_{t-1}} &\leq \sum_{t=1}^T \frac{\| \hat \g_t\|  ^2}{\epsilon^2 + \sum_{i=1}^t \|\hat \g_i\|^2 } \leq \ln \frac{Z_T}{ \epsilon^2},
		\end{align*}
		where the first inequality follows by the fact that $\tilde B_{t-1}\geq \|\hat \g_t\|$, and in the last inequality we used the inequality $$\sum_{t=1}^T \frac{a_t}{\sum_{i=0}^t a_i} \leq \ln\left(\frac{\sum_{t=0}^T a_t}{a_0}\right),$$ 
		for all $a_t\geq0$. To bound the term $\sum_{t=1}^T \frac{Z_t}{Z_{t-1}}\|\tilde{\g}_t\|  ^2\| \x_t-\overline{ \x}_t\|^2$ from above, observe that
		\begin{align*}
			\sigma_T^2 Z_T 
			&=(\epsilon^2 +\tilde B_T^2)\|\overline{ \x}_T\|^2+\sum_{t=1}^T \|\hat{\g}_t\|  ^2\| \x_t-\overline{ \x}_T\|^2,\\ 
			&\geq (\epsilon^2 +\tilde B_{T-1}^2)\|\overline{ \x}_T\|^2+\sum_{t=1}^{T-1} \|\hat{\g}_t\|  ^2\| \x_t-\overline{ \x}_T\|^2 + \|\hat{\g}_T\|  ^2\| \x_T-\overline{ \x}_T\|^2, \\
			&= Z_{T-1}\cdot (\sigma_{T-1}^2+\|\overline{ \x}_T-\overline{ \x}_{T-1}\|^2) +\|\hat{\g}_T\|  ^2\| \x_T-\overline{ \x}_T\|^2, \\
			&= Z_{T-1}\sigma_{T-1}^2+\|\hat{\g}_T\|  ^2\left(1+\frac{\|\hat{\g}_T\|  ^2}{Z_{T-1}}\right)\| \x_T-\overline{ \x}_T\|^2, \\
			&= Z_{T-1}\sigma_{T-1}^2+\|\hat{\g}_T\|  ^2 \frac{Z_T}{Z_{T-1}} \| \x_T-\overline{ \x}_T\|^2,
			%&\geq Z_{T-1}\sigma_{T-1}^2 +\|\hat{\g}_T\|  ^2\| \x_T-\overline{ \x}_T\|^2,
		\end{align*}
		where the third equality comes from bias-variance decomposition and the fourth one comes from~\eqref{eq:lemma_shortcut_eq1}. Hence, we have
		\[
		\sum_{t=1}^T \frac{Z_t}{Z_{t-1}} \|\hat{\g}_t\|  ^2\| \x_t-\overline{ \x}_t\|^2 
		= \sum_{t=1}^T (\sigma_t^2 Z_t - \sigma_{t-1}^2 Z_{t-1} ) \leq \sigma^2_T Z_T~.
		\]
		Putting all together, we have the stated bound.
	\end{proof}

	\subsection{Proof of Theorem \ref{thm:genthmsmooth} (The Smooth Stochastic Case)}
	\label{sec:genthmsmoothproof}
	We will need to use the result of Lemma \ref{lem:mainreductiongen} (in particular \eqref{eq:individualnew}) to show the desired convergence rate. In order for the result of Lemma \ref{lem:mainreductiongen} to be valid in the setting of Theorem \ref{thm:genthmsmooth}, we need to show that the iterates $(\w_t)$ in Algorithm \ref{alg:projectionfreewrappersmooth} are in $\cB(R)$, which is what we do next:
	\begin{lemma}
		In the setting of Theorem \ref{thm:genthmsmooth}, we have $(\w_t)\subset \cB(R)$.
		\end{lemma}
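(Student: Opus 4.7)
The plan is to prove this by induction on $t$, exploiting the convex combination structure of $\w_{t+1}$ in Algorithm \ref{alg:projectionfreewrappersmooth}. Since $\w_{t+1} = (1-\mu_{t+1})(\x_t - \eta_t \g_t) + \mu_{t+1} \bu_{t+1}$ and the FTRL-based subroutine constrains $\bu_{t+1} \in \cB(R')$, it suffices to control $\|\x_t - \eta_t \g_t\|$. The iterate $\x_t$ lies in $\K \subseteq \cB(R')$ by the conclusion of Lemma \ref{lem:mainreductiongen} (applied to the master Algorithm \ref{alg:projectionfreewrapper} at rounds $1,\dots,t$), but we must first secure its hypothesis, namely $\w_s \in \cB(R)$ for $s \le t$; this is precisely where the induction comes in. So the induction hypothesis $\{\w_s\in\cB(R):s\le t\}$ gives $\x_t \in \K\subseteq \cB(R')$ via Lemma \ref{lem:mainreductiongen}, and then the inductive step will bound $\|\w_{t+1}\|$.

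The key quantitative step is bounding $\eta_t \|\g_t\|$ by a controllable multiple of $R'$. From the update $Z_t = Z_{t-1} + \Lambda_t \|\g_t\|^2$ and $\Lambda_t = t(t+1)/2 - 1$, one has $Z_t \ge \Lambda_t \|\g_t\|^2 \ge 2\|\g_t\|^2$ whenever $t\ge 2$, hence
\[
\eta_t \|\g_t\| \;=\; \frac{\nu R'\|\g_t\|}{\sqrt{Z_t}} \;\le\; \frac{\nu R'}{\sqrt{2}} \;\le\; \nu R'.
\]
Combining with $\|\x_t\|\le R'$ and $\|\bu_{t+1}\|\le R'$ yields, for $t\ge 2$,
\[
\|\w_{t+1}\| \le (1-\mu_{t+1})(R' + \nu R') + \mu_{t+1} R' \le (1+\nu)R' \le R,
\]
where the last inequality is exactly \eqref{eq:newnewrad}.

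What remains are the base cases. For $t=0$, $\w_1 = \bm{0} \in \cB(R)$ trivially. For $t=1$, one has $\Lambda_2 = 2$ and so $\mu_2 = 2/\Lambda_2 = 1$, which forces $\w_2 = \bu_2 \in \cB(R') \subseteq \cB(R)$ regardless of the (possibly ill-controlled) quantity $\|\x_1 - \eta_1 \g_1\|$. This is the only delicate point: the specific choices of $\Lambda_t$ and $\mu_{t+1}$ in Lines 6--7 of Algorithm \ref{alg:projectionfreewrappersmooth} are tuned so that either the prefactor $(1-\mu_{t+1})$ kills the potentially unbounded term at small $t$, or the bound $\Lambda_t \ge 2$ kicks in to directly tame $\eta_t \|\g_t\|$. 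I expect the main (and essentially only) obstacle to be verifying that this dichotomy covers \emph{all} rounds cleanly, which the two-line check above settles. Once the induction closes, every $\w_t$ lies in $\cB(R)$, which in turn justifies invoking Lemma \ref{lem:mainreductiongen} to guarantee $(\x_t) \subset \K$ throughout the run.
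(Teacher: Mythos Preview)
Your proof is correct and follows the same route as the paper's: both bound $\|\w_{t+1}\|$ via the convex-combination structure, control $\eta_t\|\g_t\|\le \nu R'$ using $Z_t\ge \Lambda_t\|\g_t\|^2$, and establish $\x_t\in\K$ from the Gauge-projection step of Algorithm~\ref{alg:projectionfreewrapper} (the paper re-derives this directly rather than citing Lemma~\ref{lem:mainreductiongen}, since in Theorem~\ref{thm:genthmsmooth} the oracle is $\onedopt$ rather than $\O_{\K^\circ}$, though the $\x_t\in\K$ argument only uses $\gamma_t$ and is identical for both). Your explicit handling of the step $t=1$ via $\mu_2=1$ is in fact tidier than the paper's displayed inequality, which as written relies on exactly this observation to close at that step.
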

		\begin{proof}[{\bf Proof}]
 Let $\gamma_t$ be as in Algorithm \ref{alg:projectionfreewrappersmooth}. For $t=1$, we have $\w_1=\bm{0}$. For $t>1$, we have $\w_{t}=(1-\mu_{t})(\x_{t-1} - \eta_{t-1} \g_{t-1}) +\mu_{t} \bu_{t}$ and $\|\bu_t\|\leq R'$ (since it is the output of \ftrl{} with parameter $R'$), and so
		\begin{align}
			\|\w_{t}\| \leq (1-\mu_t)\|\x_{t-1}\| +  \mu_t R'+ |\eta_{t-1}| \|\g_{t-1}\| &= (1-\mu_t) \|\x_{t-1}\| + \mu_t R' + \frac{\nu R'  \|\g_{t-1}\|}{\sqrt{Z_{t-1}}}, \nn \\ &\leq (1-\mu_t) \|\x_{t-1}\| + (\mu_t+\nu) R'. \nn
		\end{align}
		Thus, $\w_t\in  \cB(R)$ if $\x_t\in \K$ (which implies $\|\x_t\|\leq R'$). We will now show that $\x_{t}\in \K$. We consider two cases. Suppose that $\gamma_t <1$. In this case, $\x_t=\w_t$. If $\gamma_{\K}(\w_t)\leq 9/16$, then by definition of the Gauge function we have $\w_t \in \K$ and the same holds for $\x_t$ (since $\x_t=\w_t$). On the other hand, if $\gamma_\K(\w_t)\geq 9/16$, then by Lemma \ref{lem:gauge}, we have $\gamma_{\K}(\w_t)\leq \gamma_t<1$, and so $\x_t=\w_t\in \K$. 
		
		Now suppose that $\gamma_t \geq 1$. In this case, we have $\x_t= \w_t/\gamma_t$ and so $	\gamma_\K(\x_t)=\gamma_\K(\w_t/{\gamma_{t}})=\gamma_\K(\w_t)/{\gamma_{t}}\leq \gamma_\K(\w_t)/{\gamma_{\K}(\w_t)}=1$ (since $\gamma_{\K}(\w_t)\leq \gamma_t$ by Lemma \ref{lem:gauge}), and so $\x_t \in \K$.
	\end{proof}
\noindent Now that we know Lemma \ref{lem:mainreductiongen} holds in the setting of Theorem \ref{thm:genthmsmooth}, we state a version of the latter for the case where $\cO_{\K^\circ} =\onedopt$:
\begin{lemma}
	\label{lem:newbound}
	Consider the same setting of Lemma \ref{lem:mainreductiongen} except with $\cO_{\K^\circ} \equiv \onedopt$. Then, for any $t\geq 1$,
	\begin{align}
		\x_t\in \K;\ \ \forall \x\in \K, \ \  \inner{\g_t}{\x_t-\x} \leq \E_{t-1} [\inner{\tilde{\g}_t}{\w_t-\x}] + 3 \delta_t R \|\g_t\|; \ \ \text{and} \ \ \|\tilde \g_t\|\leq (1+d\kappa+d\Delta_t) \|\g_t\|,   \nn
 	\end{align}
 	where we recall that $\kappa=R'/r$ and $\cB(r)\subseteq \K\subseteq \cB(R')\subseteq \cB(R)$ (this is the setting of Lemma \ref{lem:mainreductiongen}).
\end{lemma}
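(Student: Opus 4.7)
The plan is to mirror the proof of Lemma \ref{lem:mainreductiongen} and use Lemma \ref{lem:celeb} (in particular, the identity $\hat \gamma_t=\tilde \gamma_t$ and $\E_{t-1}[\hat \s_t]=\E_{t-1}[\tilde \s_t]$ when $\hat \gamma_t \geq 1$) to replace the deterministic quantities produced by $\O_{\K^\circ}$ with the stochastic ones produced by $\onedopt$ upon taking conditional expectations. Throughout, I will couple the two oracles: at round $t$ the iterate $\w_t$ is a deterministic function of $\cG_{t-1}$ (since the subroutine \ftrl{} is deterministic and its inputs up to round $t-1$ are $\cG_{t-1}$-measurable), so I can also evaluate the hypothetical deterministic output $(\tilde \gamma_t,\tilde \s_t)=\O_{\K^\circ}(\w_t;\delta_t)$ alongside the actual output $(\hat \gamma_t,\hat \s_t)=\onedopt(\w_t;\delta_t)$.

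First, I would dispatch $\x_t\in \K$. Since $\gau_{\K}$ is deterministic, $\hat \gamma_t = \tilde \gamma_t=\gamma_t$ in Algorithm \ref{alg:projectionfreewrapper} regardless of which oracle is used, and the Gauge-projection construction $\x_t = \mathbb{I}_{\{\gamma_t\geq 1\}}\w_t/\gamma_t + \mathbb{I}_{\{\gamma_t<1\}}\w_t$ is identical in both cases. The argument from the proof of Lemma \ref{lem:mainreductiongen} that relies only on Lemma \ref{lem:gauge} carries over unchanged to give $\x_t\in \K$.

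Second, for the regret inequality I apply the pathwise bound of Lemma \ref{lem:mainreductiongen} with the deterministic oracle $\O_{\K^\circ}$ to obtain, for all $\x\in \K$,
\begin{align*}
\inner{\g_t}{\x_t-\x} \leq \inner{\g_t - \mathbb{I}_{\inner{\g_t}{\w_t}<0}\inner{\g_t}{\x_t}\,\tilde\bnu_t}{\w_t-\x} + (2\delta_t+\Delta_t)R\|\g_t\|,
\end{align*}
where $\tilde\bnu_t \coloneqq \mathbb{I}_{\{\tilde\gamma_t\geq 1\}}\tilde \s_t$ and $\Delta_t$ satisfies $\E_{t-1}[\Delta_t]\leq \delta_t$ with $\Delta_t\le 15^2 d^4 \kappa^3/\delta_t^2$. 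Taking $\E_{t-1}[\cdot]$ and using that $\w_t,\gamma_t,\x_t,\g_t,\mathbb{I}_{\inner{\g_t}{\w_t}<0}$ are $\cG_{t-1}$-measurable, the only stochastic object on the right-hand side is $\tilde\bnu_t$. Lemma \ref{lem:celeb} gives $\hat\gamma_t=\tilde\gamma_t$ and $\E_{t-1}[\hat \s_t]=\E_{t-1}[\tilde \s_t]$ whenever $\hat\gamma_t\geq 1$; together with $\bnu_t=\tilde\bnu_t=\bm{0}$ when $\gamma_t<1$, this yields $\E_{t-1}[\tilde\bnu_t]=\E_{t-1}[\bnu_t]$. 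Substituting and recalling $\tilde \g_t=\g_t - \mathbb{I}_{\inner{\g_t}{\w_t}<0}\inner{\g_t}{\x_t}\bnu_t$ produces $\inner{\g_t}{\x_t-\x}\leq \E_{t-1}[\inner{\tilde \g_t}{\w_t-\x}] + 3\delta_t R\|\g_t\|$.

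Third, for $\|\tilde \g_t\|\leq(1+d\kappa+d\Delta_t)\|\g_t\|$: when $\gamma_t<1$ we have $\tilde \g_t=\g_t$ and the bound is trivial. When $\gamma_t\geq 1$, by construction $\hat s_{t,I}=d\,\tilde s_{t,I}$ so that $\|\hat \s_t\|=|\hat s_{t,I}|\leq d\|\tilde \s_t\|\leq d(\Delta_t/R+1/r)$ by the norm bound $\|\tilde \s_t\|\leq \Delta_t/R+1/r$ from Proposition~\ref{prop:subgradient}. Combining this with $\|\x_t\|\leq R'$ (shown above) and $R'/R\leq 1$ in
\begin{align*}
\|\tilde \g_t\|\leq \|\g_t\|(1+\|\x_t\|\|\bnu_t\|)\leq \|\g_t\|\bigl(1+R'(d\Delta_t/R+d/r)\bigr)\leq (1+d\kappa+d\Delta_t)\|\g_t\|
\end{align*}
gives the desired inequality. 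The main technical care will be in the measurability bookkeeping in step two—tracking precisely which objects are $\cG_{t-1}$-measurable and which depend on the internal randomness of $\onedopt$—so the substitution $\E_{t-1}[\tilde\bnu_t]=\E_{t-1}[\bnu_t]$ can be applied cleanly inside the linear functional of the bound.
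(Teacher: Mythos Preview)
Your proposal is correct and follows essentially the same route as the paper: couple the two oracles on the same $(\bu,\z)$, invoke the pathwise bound of Lemma~\ref{lem:mainreductiongen} with $\O_{\K^\circ}$, take $\E_{t-1}$, and use Lemma~\ref{lem:celeb} (together with $\hat\gamma_t=\tilde\gamma_t$) to swap $\E_{t-1}[\tilde\bnu_t]$ for $\E_{t-1}[\bnu_t]$. One minor slip: you call $\O_{\K^\circ}$ ``deterministic,'' but it is not---it samples $\bu,\z$; the point you actually need (and which the paper also uses) is that $\w_t,\gamma_t,\x_t,\g_t$ are $\cG_{t-1}$-measurable because $\gau_{\K}$ is deterministic and the subroutine $\A$ is deterministic given its inputs. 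For the norm bound, your route $|\hat s_{t,I}|=d|\tilde s_{t,I}|\le d\|\tilde\s_t\|\le d(\Delta_t/R+1/r)$ via the $\ell_2$-bound in Proposition~\ref{prop:subgradient} is slightly more direct than the paper's appeal to Lemma~\ref{lem:celebext} and yields the stated $\Delta_t$-dependence exactly.
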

\begin{proof}[{\bf Proof}]
		Let $(\tilde \gamma_t, \tilde \s_t)= \O_{\K^\circ}(\w_t;\delta_t)$ and $(\hat \gamma_t, \hat \s_t)= \onedopt(\w_t;\delta_t)$. Further, let $\tilde  \bnu_t\coloneqq \mathbb{I}_{\{\tilde \gamma_t\geq 1  \}} \tilde \s_t$, and $\gamma_t$ and $\bnu_t$ be as in Algorithm \ref{alg:projectionfreewrapper}. Note that by Proposition \ref{prop:subgradient} and Lemma \ref{lem:celeb}, we have $\|\tilde \s_t\|\vee \|\hat \s_t\| < +\infty$ almost surely and so the expectations $\E_{t-1}[ \mathbb{I}_{\{\tilde \gamma_t\geq 1  \}} \tilde \s_t]$ and  $\E_{t-1}[\mathbb{I}_{\{\hat \gamma_t\geq 1  \}} \hat \s_t]$ are well defined. By Lemma \ref{lem:mainreductiongen}, there exists a random variable $\Delta_t\geq 0$ satisfying $\E_{t-1}[\Delta_t]$ and such that for all $t\geq 1$ and $\x\in \K$, \begin{align}
		\inner{\g_t}{\x_t - \x}  \leq \inner{\g_t - \mathbb{I}_{\inner{\g_t}{\w_t}<0} \inner{\g_t}{\x_t} \tilde \bnu_t}{\w_t-\x} + (2\delta_t +\Delta_t)R \|\g_t\|. 
		\label{eq:six1}\end{align}
	Thus, since $\gamma_t=\hat \gamma_t= \tilde \gamma_t$, and $\gamma_t$, $\w_t$, and $\x_t$ are all deterministic functions of the past ($\w_t$ it the output of \ftrl, which is a deterministic function of the past), we have
	\begin{align}
			\inner{\g_t}{\x_t - \x}& = \E_{t-1}[ \inner{\g_t}{\x_t - \x} ], \nn \\ & \leq   \E_{t-1}[\inner{\g_t - \mathbb{I}_{\inner{\g_t}{\w_t}<0} \inner{\g_t}{\x_t} \tilde \bnu_t}{\w_t-\x}+(2\delta_t +\Delta_t)R \|\g_t\| ],  \ \ \text{(by \eqref{eq:six1})} \nn \\ 
		& \leq  \inner{\g_t - \mathbb{I}_{\inner{\g_t}{\w_t}<0} \inner{\g_t}{\x_t}\E_{t-1}[ \tilde\bnu_t]}{\w_t-\x}  + 3\delta_t R \|\g_t\|,\nn \\ 
		&  =   \mathbb{I}_{\{ \tilde \gamma_t <1\}} \cdot   \inner{\g_t }{\w_t-\x}  \nn \\ & \quad  +  \mathbb{I}_{\{\tilde \gamma_t \geq 1\}} \cdot    \inner{\g_t - \mathbb{I}_{\inner{\g_t}{\w_t}<0} \inner{\g_t}{\x_t}\E_{t-1}[ \tilde  \s_t]}{\w_t-\x}+ 3\delta_t R \|\g_t\|, \nn \\ %\label{eq:deter1} \\ 
		& =  \mathbb{I}_{\{ \tilde \gamma_t<1\}} \cdot     \inner{\g_t }{\w_t-\x} \nn \\ 
		& \quad  +   \mathbb{I}_{\{ \hat  \gamma_t \geq 1 \}} \cdot  \inner{\g_t - \mathbb{I}_{\inner{\g_t}{\w_t}<0} \inner{\g_t}{\x_t} \E_{t-1}[\hat \s_t]}{\w_t-\x} + 3\delta_t R \|\g_t\|, \label{eq:attract1}  \\ & =   \E_{t-1}[\inner{\g_t - \mathbb{I}_{\inner{\g_t}{\w_t}<0} \inner{\g_t}{\x_t} \bnu_t}{\w_t-\x} ] + 3\delta_t R \|\g_t\|, \nn \\ 
	& =  \E_{t-1} [\inner{\tilde{\g}_t}{\w_t-\x}] + 3 \delta_t R \|\g_t\|,	\nn
	\end{align}
	where \eqref{eq:attract1} follows by the facts that $\tilde \gamma_t = \hat \gamma_t$ and $\E_{t-1}[\hat \s_t]= \E_{t-1}[\tilde  \s_t]$, when $\tilde \gamma_t \geq 1$ (Lemma \ref{lem:celeb}). It remains to bound $\|\tilde \g_t\|$ and show that $\x_t \in \K$. The latter follows from the proof of Lemma \ref{lem:mainreductiongen} since $\gamma_t=\hat\gamma_t = \tilde \gamma_t$. We now bound $\|\tilde \g_t\|$. When $\gamma_t <1$, we have $\g_t=\tilde \g_t$ and so $\|\tilde \g_t\| \leq (1 + d\kappa + d\Delta_t) \|\g_t\|$ holds trivially. Now suppose that $\gamma_t \geq 1$. Then, we have 
	\begin{align}
		\|\tilde{\g}_t\|  =  \|\inner{\g_t}{\w} - \mathbb{I}_{\inner{\g_t}{\w_t} <0}  \inner{\g_t}{{\x}_t}  \bnu_t\| \stackrel{(*)}{\leq} \|\g_t\|(1 + R \|\bnu_t\|)  
	%		& \leq	\|\g_t\| + \|\g_t\| \| \w_t\|/\gamma_{\K}(\w_t) \cdot (\Delta_t/R+1/r),  \nn \\
	\stackrel{(**)}{\leq}  (1+ d\Delta_t +d\kappa )\|\g_t\|, \nn
\end{align}
where $(*)$ follows from the fact that $\|\x_t\|\le R'$ (since $\x_t \in \K$), and $(**)$ follows the fact that $\|\bnu_t\|\leq d(\Delta_t/R+1/r)$ by Lemma \ref{lem:celebext}.
	\end{proof}
	
\noindent The following lemma will also be useful to us in the proof of Theorem \ref{thm:genthmsmooth}:
	\begin{lemma}
		\label{lem:conv}
		Let $\beta >0$, $t\geq 1$, $\bxi_t \in \reals^d$ be a random vector satisfying \eqref{eq:stochsetting} for $\sigma>0$, and $\ell_t$ be as in \eqref{eq:modifiedloss}. Further, suppose that $\K$ satisfies \eqref{eq:newnewrad}. When $f$ is $\beta$-smooth on $\K$, we have, 
		\begin{align}
			\E[\|\g_t\|^2]\leq  \sigma^2 + 2 R' \beta, \quad \text{for any $\x\in \K$ and $\g_t\in \partial \ell_t(\x)$.} \nn
		\end{align}
	\end{lemma}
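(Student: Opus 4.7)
The plan is to exploit the additive-noise form of $\ell_t$ to separate deterministic gradient mass from stochastic fluctuation, and then invoke the smoothness consequence \eqref{eq:crebro}. First, because $f$ is differentiable by the $\beta$-smoothness assumption and $\ell_t(\x) = f(\x) + \inner{\x}{\bxi_t}$ is the sum of a differentiable function and a linear function, the subdifferential $\partial \ell_t(\x)$ reduces to the singleton $\{\nabla f(\x) + \bxi_t\}$, so every $\g_t \in \partial \ell_t(\x)$ takes the form $\g_t = \nabla f(\x) + \bxi_t$.

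Given this representation, I would expand the squared norm and take expectations using the noise conditions from \eqref{eq:stochsetting}. Since $\E[\bxi_t]=\bm{0}$ and $\E[\|\bxi_t\|^2] \leq \sigma^2$, the cross term vanishes and
\[
\E[\|\g_t\|^2] = \|\nabla f(\x)\|^2 + 2\inner{\nabla f(\x)}{\E[\bxi_t]} + \E[\|\bxi_t\|^2] \leq \|\nabla f(\x)\|^2 + \sigma^2.
\]
Hence the problem reduces to bounding $\|\nabla f(\x)\|^2$ uniformly over $\x\in\K$ in terms of $\beta$ and $R'$.

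For this last step I would invoke the hypothesis in Assumption \ref{ass:assum} that $\text{int}(\K) \cap \argmin_{\x\in\K} f(\x) \neq \emptyset$: it furnishes a minimizer $\x_* \in \text{int}(\K)$ satisfying the first-order optimality condition $\nabla f(\x_*) = \bm{0}$, which is exactly the setup under which \eqref{eq:crebro} holds and yields $\|\nabla f(\x)\|^2 \leq 2\beta(f(\x)-f(\x_*))$. The proof would then finish by combining this with a suboptimality bound obtained from $\K\subseteq \cB(R')$: convexity gives $f(\x)-f(\x_*) \leq \inner{\nabla f(\x)}{\x-\x_*}$, which one bounds by Cauchy--Schwarz together with the diameter estimate implied by $\K\subseteq \cB(R')$, producing the desired control on $\|\nabla f(\x)\|^2$.

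The argument is essentially mechanical once \eqref{eq:crebro} is invoked, so there is no serious technical obstacle. The only subtle point is verifying that the interior-minimizer hypothesis in Assumption \ref{ass:assum} is precisely what licenses \eqref{eq:crebro}: without $\nabla f(\x_*) = \bm{0}$, the right-hand side of \eqref{eq:crebro} would carry a $\|\nabla f(\x_*)\|^2$ correction and the clean separation $\E[\|\g_t\|^2]\leq \sigma^2 + (\text{deterministic part})$ would not follow.
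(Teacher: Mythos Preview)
Your approach is exactly the paper's: write $\g_t = \nabla f(\x) + \bxi_t$, expand the squared norm, use $\E[\bxi_t]=\bm{0}$ and $\E[\|\bxi_t\|^2]\le\sigma^2$, and then appeal to \eqref{eq:crebro}. The paper's proof is even terser than yours---it simply cites \eqref{eq:stochsetting} and \eqref{eq:crebro} for the final inequality $\|\nabla f(\x)\|^2 \le 2R'\beta$ and does not carry out your convexity/Cauchy--Schwarz closure at all.

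One caveat worth flagging: if you actually execute your closing step you get $\|\nabla f(\x)\|^2 \le 2\beta\,\inner{\nabla f(\x)}{\x-\x_*} \le 2\beta\,\|\nabla f(\x)\|\cdot\|\x-\x_*\|$, hence $\|\nabla f(\x)\| \le 4\beta R'$ and therefore $\|\nabla f(\x)\|^2 \le 16\beta^2(R')^2$---not the stated $2R'\beta$. The paper's one-line justification does not supply the missing ingredient $f(\x)-f(\x_*)\le R'$ either, so the precise form $2R'\beta$ appears to be an informal (and dimensionally inconsistent) shorthand in the paper; your route would produce a rigorous bound, just with a different constant and $\beta$-dependence.
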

	\begin{proof}[{\bf Proof}]
		Let $\x\in \K$ and $\g_t \in \partial \ell_t(\x)$. First, since $f$ is differentiable, $\ell_t$ is also differentiable. Thus, $\g_t = \nabla f(\x) + \bxi_t$, and so 
		\begin{align}
			\E[\|\g_t\|^2] = \E[\|\nabla f(\x)\|^2 + 2 \inner{\nabla f(\x)}{ \bxi_t} + \|\bxi_t\|^2]&  = \|\nabla f(\x)\|^2 + \E[\|\bxi_t\|^2] \leq  2R'\beta  + \sigma^2, \nn
		\end{align}
		where the last inequality follows \eqref{eq:stochsetting} and \eqref{eq:crebro}.
	\end{proof}
	\noindent We now present the proof of Theorem \ref{thm:genthmsmooth}. We note that the proof is very similar to that of \cite[Theorem 4]{cutkosky2019} with modifications to account for the application of our projection-free reduction (Algorithm \ref{alg:projectionfreewrapper}).
	\begin{proof}[{\bf Proof of Theorem \ref{thm:genthmsmooth}}]
		Throughout this proof, we let $\lambda_t =t$, $\forall t$, and $\nu$ and $R'$ be as in \eqref{eq:newnewrad}. Note that $\Lambda_t$ in Algorithm \ref{alg:projectionfreewrappersmooth} satisfies $\Lambda_t =\sum_{s=1}^t \lambda_s$. By a standard convexity argument, we have
		\begin{align}
			\E\left[\sum_{t=1}^T \lambda_t(f(\x_t) -f(\x))\right]&\le \E\left[\sum_{t=1}^T \lambda_t\langle \g_t, \x_t -\x\rangle\right]\nn \\
			& \leq \E\left[\sum_{t=1}^T \lambda_t\langle \tilde\g_t, \w_t -\x\rangle\right] +  \E\left[\sum_{t=1}^T 3\lambda_{t} \delta_t R \|\g_t\| \right],\quad (\text{by Lemma \ref{lem:newbound}})\nn \\ 
			&=\E\left[\sum_{t=1}^T  \lambda_t\langle \tilde \g_t, \bu_t - \x\rangle+\sum_{t=1}^T \lambda_t\langle \tilde \g_t, \w_t - \bu_t\rangle \right] \nn \\ & \quad +   \E\left[ \sum_{t=1}^T3\lambda_{t} \delta_t R \cdot \E[ \|\g_t\| \mid \x_t ] \right],\nn \\
			& \leq \E\left[\sum_{t=1}^T  \lambda_t\langle \tilde \g_t, \bu_t - \x\rangle+\sum_{t=1}^T \lambda_t\langle \tilde \g_t, \w_t - \bu_t\rangle \right]+ 3 R \sqrt{\sigma^2 + 2 \beta R'} \sum_{t=1}^T \lambda_t \delta_t,  \label{eq:combining}
		\end{align}
		where the last inequality follows by Lemma \ref{lem:conv} and Jensen's inequality. By letting $\y_{s}\coloneqq  \x_{s}-\eta_s \g_s$, we have by Line \ref{line:blueend} of Algorithm \ref{alg:projectionfreewrappersmooth} \begin{align}\label{eq:firstone} \lambda_s \cdot (\w_s - \bu_s) = \Lambda_{s-1} \cdot (\y_{s-1}-\w_s), \ \ \text{ for all $s\geq 1$.}
		\end{align}
		Moreover, the first term on the RHS of \eqref{eq:combining} is the regret of the \ftrl{} instance (Algorithm $\A$) within Algorithm \ref{alg:projectionfreewrappersmooth}. Thus, by Proposition \ref{prop:ftrl-proximal} and Lemma \ref{lem:mainreductiongen}, this term is bounded from above by 
		\begin{align}
	\scR^{\Ftrl}_T(\x) \coloneqq 4 (1+d\kappa) R' \sqrt{\sum_{t=1}^T \lambda_t^2 \|\g_t\|^2} + 4 dR'\sum_{t=1}^T\lambda_t  \Delta_t \|\g_t\| , \quad \forall \x\in \K.  
		 \label{eq:regftrl2}
		\end{align} 
		Plugging \eqref{eq:firstone} and \eqref{eq:regftrl2} into \eqref{eq:combining}, yields
		\begin{align}
			\E\left[\sum_{t=1}^T \lambda_t(f(\x_t) -f(\x))\right]   &\le\E\left[\scR^{\Ftrl}_T(\x)\right] + \E\left[\sum_{t=1}^T\Lambda_{t-1} \langle \tilde \g_t, \y_{t-1} - \w_t\rangle\right]+  3 R \sqrt{\sigma^2 + 2 \beta R'} \sum_{t=1}^T \lambda_t \delta_t,\nn \\
			&\le\E\left[\scR^{\Ftrl}_T(\x)\right] + \E\left[\sum_{t=1}^T\Lambda_{t-1} \langle  \g_t, \y_{t-1} - \x_t\rangle\right] \nn \\ & \qquad  +\E\left[\sum_{t=1}^T3\Lambda_{t-1}  \delta_t R \|\g_t\| \right] +3 R \sqrt{\sigma^2 + 2 \beta R'} \sum_{t=1}^T \lambda_t \delta_t,\label{eq:cor} \\
			& =   \E\left[\scR^{\Ftrl}_T(\x)\right] + \E\left[\sum_{t=1}^T\Lambda_{t-1} \langle  \g_t, \y_{t-1} - \x_t\rangle\right]\nn \\ & \qquad +\E\left[\sum_{t=1}^T 3 \delta_t \Lambda_{t-1} R \cdot \E[ \|\g_t\|\mid \x_t] \right]+  3 R \sqrt{\sigma^2 + 2 \beta R'} \sum_{t=1}^T \lambda_t \delta_t,\nn \\
			& \leq \E\left[\scR^{\Ftrl}_T(\x)\right] + \E\left[\sum_{t=1}^T\Lambda_{t-1} \langle  \g_t, \y_{t-1} - \x_t\rangle\right] +3 R\sqrt{\sigma^2  + 2\beta R'}\sum_{t=1}^T( \lambda_t+\Lambda_{t-1})  \delta_t,\nn
		\end{align}
		where \eqref{eq:cor} follows by Lemma \ref{lem:newbound}, and the last inequality follows by Lemma \ref{lem:conv} and Jensen's inequality. Next, we use convexity again to argue $$\E[\langle \g_t, \y_{t-1} -\x_t\rangle ]\le \E[f(\y_{t-1}) - f(\x_t)],$$ and then we subtract $\E[\sum_{t=1}^T \lambda_t f(\x_t)]$ from both sides:
		\begin{align}
			\E[-\Lambda_{T} f(\x) ]&\le \E\left[\scR^{\Ftrl}_T(\x)\right]+\sum_{t=1}^T  \left(\Lambda_{t-1}f(\y_{t-1}) -\Lambda_t f(\x_t)\right)+3 R\sqrt{\sigma^2  + 2\beta R'}\sum_{t=1}^T( \lambda_t+\Lambda_{t-1})  \delta_t.  \nn
		\end{align}
		Now we use smoothness to relate $f(\y_{t})$ to $f(\x_{t})$. Let $\nu \coloneqq 4\sqrt{2} (1+d\kappa)$ so that $\eta_t = \nu R'/\sqrt{Z_t}$, where $Z_t$ is as in Algorithm \ref{alg:projectionfreewrappersmooth}; i.e.~$Z_t \coloneqq \epsilon^2 +\sum_{s=1}^t \Lambda_s \|\g_s\|^2$. With this, we have:
		\begin{align*}
			\E[f(\y_{t})]&\le \E[f(\x_{t}) + \nabla f(\x_{t})(\y_{t}-\x_{t}) + \frac{\beta}{2}\|\x_{t}-\y_{t}\|^2],\\
			&\le  \E\left[f(\x_{t}) - \eta_t\|\g_t\|^2 + \eta_t \langle \bxi_t,\g_t\rangle +\frac{\beta\eta_t^2\|\g_t\|^2}{2}\right].
		\end{align*}
		Then multiply by $\Lambda_t$:
		\begin{align*}
			\E[\Lambda_{t}(f(\y_{t}) - f(\x_{t}))]&\le \E\left[- \frac{\nu R'\Lambda_t \|\g_t\|^2}{\sqrt{\epsilon^2+\sum_{i=1}^{t} \Lambda_i \|\g_i\|^2}} + \frac{\beta\eta_t^2\Lambda_{t}\|\g_t\|^2}{2}+\eta_t \Lambda_t\langle \bxi_t,\g_t\rangle\right].
		\end{align*}
		Next, we make use of the following facts (see e.g.~\cite{cutkosky2019,levy2018online}): for positive numbers $\alpha_0,\dots,\alpha_n$,
		\begin{align*}
			\sqrt{\sum_{i=1}^n \alpha_i}\le \sum_{i=1}^n \frac{\alpha_i}{\sqrt{\sum_{j=1}^i \alpha_{j}}}\le 2\sqrt{\sum_{i=1}^n \alpha_i}
			\quad \text{and} \quad
			\sum_{i=1}^n \frac{\alpha_i}{\alpha_0+\sum_{j=1}^i \alpha_{j}}\le \ln\left(\alpha_0+\sum_{i=1}^n \alpha_i\right) -\ln \alpha_0.
		\end{align*}
		Using this, we obtain
		\begin{align}
			\E\left[\sum_{t=1}^T \Lambda_t (f(\y_{t}) - f(\x_{t}))\right]&\le \E\left[-\nu R' \sqrt{ \epsilon^2+\sum_{t=1}^T \Lambda_t\|\g_t\|^2} +\frac{\nu^2(R')^2\beta}{2} \ln\left(1+\frac{\sum_{t=1}^T \Lambda_{t} \|\g_t\|^2}{\epsilon^2}\right)\right] \nn \\ & \qquad   +\nu R' \epsilon  + \E\left[\sum_{t=1}^T \eta_t \langle \bxi_t, \Lambda_t \g_t\rangle \right],\nn \\
			&\le \E\left[-\nu R' \sqrt{ \epsilon^2+\sum_{t=1}^T \Lambda_t\|\g_t\|^2}\right] +\nu R' \epsilon  + \E\left[\sum_{t=1}^T \eta_t \langle \bxi_t, \Lambda_t \g_t\rangle \right] \nn \\ & \qquad  +\frac{\nu^2(R')^2\beta}{2} \ln\left(1+\frac{\sum_{t=1}^T \Lambda_{t}  \E\left[\|\g_t\|^2\right]}{\epsilon^2}\right) ,\label{eq:triangle} \\
			& \leq \E\left[-\nu R' \sqrt{ \epsilon^2+\sum_{t=1}^T \Lambda_t\|\g_t\|^2} \right] + \nu R'\epsilon + \frac{\nu^2(R')^2\beta}{2} \ln\left(1+\frac{(\sigma^2 +2 R' \beta)\sum_{t=1}^T \Lambda_{t}  }{\epsilon^2}\right) \nn \\
			& \qquad + \E\left[\sum_{t=1}^T \eta_t \langle \bxi_t, \Lambda_t \g_t\rangle \right],\label{eq:thisone}
		\end{align}
		where \eqref{eq:triangle} follows by Jensen's inequality (the log is concave) and the triangular inequality, and \eqref{eq:thisone} follows by Lemma \ref{lem:conv}.
		Using Cauchy-Schwarz, we obtain:
		\begin{align}
			\E\left[\sum_{t=1}^T\eta_t \langle \bxi_t, \Lambda_t\g_t\rangle \right]&\le \E\left[\sqrt{\sum_{t=1}^T \Lambda_t \|\bxi_t\|^2 }\sqrt{\sum_{t=1}^T \eta_t^2 \Lambda_t \|\g_t\|^2}\right],\nn \\
			&\le \E\left[\nu R'  \sqrt{ \sum_{t=1}^T \Lambda_t \|\bxi_t\|^2 \cdot \ln\left(1+ \epsilon^{-2}\sum_{s=1}^T\Lambda_s\|\g_s\|^2\right)}\right],\nn \\
			&\le \E\left[\nu R' \sqrt{ \sum_{t=1}^T \Lambda_t \|\bxi_t\|^2 \cdot \ln\left(1+ 2\epsilon^{-2}\sum_{s=1}^T\Lambda_s(\|\bxi_s\|^2+\|\nabla f(\x_s)\|^2)\right)}\right],\nn\\
			&\le \E\left[\nu R'  \sqrt{ \sum_{t=1}^T \Lambda_t \|\bxi_t\|^2 \cdot \ln\left(1+ 2\epsilon^{-2}\sum_{s=1}^T\Lambda_s(\|\bxi_s\|^2+2 R' \beta)\right)}\right], \quad (\text{by \eqref{eq:crebro}}),\nn \\
			&\le \E\left[\nu R' \sqrt{ \sum_{t=1}^T \Lambda_t \|\bxi_t\|^2 \cdot \ln\left(1+ 2\epsilon^{-2}\sum_{s=1}^T\Lambda_s(\|\bxi_s\|^2+2 R' \beta)\right)}\right],\label{eq:jen} \\
			& \le \nu \sigma  R' \sqrt{\sum_{t=1}^T\Lambda_t   \cdot \ln\left(1+ \frac{ 2(\sigma^2 +2 R' \beta)\sum_{s=1}^T\Lambda_s }{\epsilon^2}\right)},\nn 
		\end{align}
		where \eqref{eq:jen} follows by the concavity of the function $x \mapsto \sqrt{x \cdot\ln (a+b x)}$, for any $a\geq1$, $b>0$, and $x>0$ (see Lemma \ref{lem:concave}) and Jensen's inequality, and the last inequality follows by the fact that $\E[\|\bxi_t\|^2]\le \sigma^2$.
		
		Combining everything, we get
		\begin{align*}
			\E\left[\sum_{t=1}^T -\lambda_t f(\x)\right]&\le\E\left[ \scR^{\Ftrl}_T(\x)+\sum_{t=1}^T \left(\Lambda_{t-1}f(\y_{t-1}) -\Lambda_t f(\y_t)\right)\right]\\
			&\quad +\frac{\nu^2\beta (R')^2}{2}\ln\left(1+\frac{\sum_{t=1}^T \Lambda_{t}(\sigma^2 +2 R' \beta) }{\epsilon^2}\right)-\nu R' \sqrt{\epsilon^2+\sum_{t=1}^T \Lambda_t\|\g_t\|^2} \nn \\ & \quad  + \nu R'\epsilon +\nu R'\sigma \sqrt{\sum_{t=1}^T \Lambda_{t} \ln\left(1+\frac{2(\sigma^2 +2 R' \beta) \sum_{s=1}^T \Lambda_{s} }{\epsilon^2}\right)} + 3 R\sqrt{\sigma^2  + 2\beta R'}\sum_{t=1}^T( \lambda_t+\Lambda_{t-1})  \delta_t.
		\end{align*}
		Now observe that  $t^2>\Lambda_t>\lambda_t^2/2$ and recall $\scR^{\Ftrl}_T(\x)= 4 (1+d\kappa) R'\sqrt{\sum_{t=1}^T \lambda_t^2 \|\g_t\|^2} + 4d R' \sum_{t=1}^T\lambda_t \Delta_t  \|\g_t\|$. Therefore, since $\nu=4\sqrt{2}(1+d\kappa)$ we have:
		\begin{align*}
			\E\left[\scR^{\Ftrl}_T(\x)-\nu R' \sqrt{\epsilon^2+\sum_{t=1}^T \Lambda_t\|\g_t\|^2}\right]& \le \E\left[4(1+d\kappa) R'\sqrt{\sum_{t=1}^T \lambda_t^2 \|\g_t\|^2}- 4 \sqrt{2} (1+d\kappa) R' \sqrt{\sum_{t=1}^T \lambda_t^2\|\g_t\|^2/2}\right]\nn \\ & \qquad  + \E\left[4dR' \sum_{t=1}^T \lambda_t \Delta_t \|\g_t\| \right],\nn \\
			& = \E\left[4dR' \sum_{t=1}^T \lambda_t \Delta_t \cdot \E[\|\g_t\| \mid \x_t ] \right],\nn \\
			& \leq 4d R'\sqrt{\sigma^2 + 2\beta R} \sum_{t=1}^T \lambda_t \delta_t, 
		\end{align*}
		where the last inequality follows by Lemma \ref{lem:conv}. Also, observe that $\sum_{t=1}^T\Lambda_{t}\le\sum_{t=1}^T t^2 \le T^3$. Thus, we telescope the sum to obtain:
		\begin{align}
			\E[\Lambda_{T}(f(\y_T)-f(\x))]&\le \nu R'\epsilon +\frac{\nu^2(R')^2 \beta}{2} \ln\left(1+\frac{(\sigma^2 +2 R' \beta)T^3 }{\epsilon^2}\right)+ \nu R'T^{3/2}\sigma\sqrt{\ln\left(1+\frac{2(\sigma^2 +2 R' \beta)T^3 }{\epsilon^2}\right)}\nn \\
			& \ \qquad  + 3 R\sqrt{\sigma^2  + 2\beta R'}\sum_{t=1}^T(\Lambda_{t-1}+3d\lambda_t)  \delta_t,\nn \\
			&\le \nu R'\epsilon +\frac{\nu^2(R')^2 \beta}{2} \ln\left(1+\frac{(\sigma^2 +2 R' \beta)T^3 }{\epsilon^2}\right)+ \nu R'T^{3/2}\sigma\sqrt{\ln\left(1+\frac{2(\sigma^2 +2 R' \beta)T^3 }{\epsilon^2}\right)}\nn \\
			& \ \qquad  + 3\delta  R (\ln T+6d) \sqrt{\sigma^2  + 2\beta R'}, \label{eq:penultimate}
		\end{align}
		where the last inequality follows by the fact that $\delta_t=\delta/t^3$, $\sum_{t=1}^T 1/t \leq \ln T$; and $\sum_{t=1}^T 1/t^2 \leq 2$.
		Dividing \eqref{eq:penultimate} by $\Lambda_T = \sum_{t=1}^T\lambda_{t}=\tfrac{T(T+1)}{2} >T^2/2$ shows the inequality of theorem. Finally, the fact that $(\x_t)\subset \K$ follows by Lemma \ref{lem:newbound}.
	\end{proof}
	\clearpage
	\bibliography{biblio}

\newcommand{\etalchar}[1]{$^{#1}$}
\begin{thebibliography}{VEKVdH21}

\bibitem[ALST14]{abernethy2014online}
Jacob Abernethy, Chansoo Lee, Abhinav Sinha, and Ambuj Tewari.
\newblock Online linear optimization via smoothing.
\newblock In {\em Conference on Learning Theory}, pages 807--823. PMLR, 2014.

\bibitem[Bha07]{bhatnagar2007adaptive}
Shalabh Bhatnagar.
\newblock Adaptive newton-based multivariate smoothed functional algorithms for
  simulation optimization.
\newblock {\em ACM Transactions on Modeling and Computer Simulation (TOMACS)},
  18(1):1--35, 2007.

\bibitem[BRZ21]{Bomze2021}
Immanuel~M. Bomze, Francesco Rinaldi, and Damiano Zeffiro.
\newblock Frank--wolfe and friends: a journey into projection-free first-order
  optimization methods.
\newblock {\em 4OR}, 19(3):313--345, 2021.

\bibitem[Bub15]{bubeck2015}
S{\'e}bastien Bubeck.
\newblock Convex optimization: Algorithms and complexity.
\newblock {\em Foundations and Trends{\textregistered} in Machine Learning},
  8(3-4):231--357, 2015.

\bibitem[CBCG04]{cesa2004}
Nicolo Cesa-Bianchi, Alex Conconi, and Claudio Gentile.
\newblock On the generalization ability of on-line learning algorithms.
\newblock {\em IEEE Transactions on Information Theory}, 50(9):2050--2057,
  2004.

\bibitem[CBF18]{cutkosky2018distributed}
Ashok Cutkosky and R{\'o}bert Busa-Fekete.
\newblock Distributed stochastic optimization via adaptive sgd.
\newblock {\em Advances in Neural Information Processing Systems},
  31:1910--1919, 2018.

\bibitem[CBMS07]{cesa2007}
Nicolo Cesa-Bianchi, Yishay Mansour, and Gilles Stoltz.
\newblock Improved second-order bounds for prediction with expert advice.
\newblock {\em Machine Learning}, 66(2):321--352, 2007.

\bibitem[CO18]{cutkosky2018black}
Ashok Cutkosky and Francesco Orabona.
\newblock Black-box reductions for parameter-free online learning in banach
  spaces.
\newblock In {\em Conference On Learning Theory}, pages 1493--1529. PMLR, 2018.

\bibitem[Cut19]{cutkosky2019}
Ashok Cutkosky.
\newblock Anytime online-to-batch, optimism and acceleration.
\newblock In {\em International Conference on Machine Learning}, pages
  1446--1454. PMLR, 2019.

\bibitem[Cut20]{cutkosky2020parameter}
Ashok Cutkosky.
\newblock Parameter-free, dynamic, and strongly-adaptive online learning.
\newblock In {\em International Conference on Machine Learning}, pages
  2250--2259. PMLR, 2020.

\bibitem[CZK19]{chen2019projection}
Lin Chen, Mingrui Zhang, and Amin Karbasi.
\newblock Projection-free bandit convex optimization.
\newblock In {\em The 22nd International Conference on Artificial Intelligence
  and Statistics}, pages 2047--2056. PMLR, 2019.

\bibitem[DBW12]{duchi2012randomized}
John~C Duchi, Peter~L Bartlett, and Martin~J Wainwright.
\newblock Randomized smoothing for stochastic optimization.
\newblock {\em SIAM Journal on Optimization}, 22(2):674--701, 2012.

\bibitem[Dur19]{durrett2019}
Rick Durrett.
\newblock {\em Probability: theory and examples}, volume~49.
\newblock Cambridge university press, 2019.

\bibitem[FMP14]{friedlander2014gauge}
Michael~P Friedlander, Ives Macedo, and Ting~Kei Pong.
\newblock Gauge optimization and duality.
\newblock {\em SIAM Journal on Optimization}, 24(4):1999--2022, 2014.

\bibitem[FW{\etalchar{+}}56]{frank1956}
Marguerite Frank, Philip Wolfe, et~al.
\newblock An algorithm for quadratic programming.
\newblock {\em Naval research logistics quarterly}, 3(1-2):95--110, 1956.

\bibitem[Gar16]{garber2016faster}
Dan Garber.
\newblock Faster projection-free convex optimization over the spectrahedron.
\newblock In {\em Proceedings of the 30th International Conference on Neural
  Information Processing Systems}, pages 874--882, 2016.

\bibitem[GH16]{garber2016}
Dan Garber and Elad Hazan.
\newblock A linearly convergent variant of the conditional gradient algorithm
  under strong convexity, with applications to online and stochastic
  optimization.
\newblock {\em SIAM Journal on Optimization}, 26(3):1493--1528, 2016.

\bibitem[GK20]{garber2020improved}
Dan Garber and Ben Kretzu.
\newblock Improved regret bounds for projection-free bandit convex
  optimization.
\newblock In {\em International Conference on Artificial Intelligence and
  Statistics}, pages 2196--2206. PMLR, 2020.

\bibitem[GLS93]{grotschel1993}
M~Gr{\"o}tschel, L~Lov{\'a}sz, and A~Schrijver.
\newblock Geometric algorithms and combinatorial optimization.
\newblock {\em Algorithms and Combinatorics}, 1993.

\bibitem[GLS12]{grotschel2012}
Martin Gr{\"o}tschel, L{\'a}szl{\'o} Lov{\'a}sz, and Alexander Schrijver.
\newblock {\em Geometric algorithms and combinatorial optimization}, volume~2.
\newblock Springer Science \& Business Media, 2012.

\bibitem[Haz08]{hazan2008sparse}
Elad Hazan.
\newblock Sparse approximate solutions to semidefinite programs.
\newblock In {\em Latin American symposium on theoretical informatics}, pages
  306--316. Springer, 2008.

\bibitem[Haz16]{hazan2016introduction}
Elad Hazan.
\newblock Introduction to online convex optimization.
\newblock {\em Foundations and Trends in Optimization}, 2(3-4):157--325, 2016.

\bibitem[Haz19]{hazan2019}
Elad Hazan.
\newblock Introduction to online convex optimization.
\newblock {\em arXiv preprint arXiv:1909.05207}, 2019.

\bibitem[HK12]{hazan2012}
Elad Hazan and Satyen Kale.
\newblock Projection-free online learning.
\newblock In {\em Proceedings of the 29th International Coference on
  International Conference on Machine Learning}, pages 1843--1850, 2012.

\bibitem[HL16]{hazana16}
Elad Hazan and Haipeng Luo.
\newblock Variance-reduced and projection-free stochastic optimization.
\newblock In Maria~Florina Balcan and Kilian~Q. Weinberger, editors, {\em
  Proceedings of The 33rd International Conference on Machine Learning},
  volume~48 of {\em Proceedings of Machine Learning Research}, pages
  1263--1271, New York, New York, USA, 20--22 Jun 2016. PMLR.

\bibitem[HM20]{hazan2020}
Elad Hazan and Edgar Minasyan.
\newblock Faster projection-free online learning.
\newblock In {\em Conference on Learning Theory}, pages 1877--1893. PMLR, 2020.

\bibitem[HUL04]{hiriart2004}
Jean-Baptiste Hiriart-Urruty and Claude Lemar{\'e}chal.
\newblock {\em Fundamentals of convex analysis}.
\newblock Springer Science \& Business Media, 2004.

\bibitem[Jag13]{jaggi2013}
Martin Jaggi.
\newblock Revisiting frank-wolfe: Projection-free sparse convex optimization.
\newblock In {\em International Conference on Machine Learning}, pages
  427--435. PMLR, 2013.

\bibitem[Ker20]{kerdreux2020accelerating}
Thomas Kerdreux.
\newblock {\em Accelerating conditional gradient methods}.
\newblock PhD thesis, Universit{\'e} Paris sciences et lettres, 2020.

\bibitem[KG21]{kretzu2021}
Ben Kretzu and Dan Garber.
\newblock Revisiting projection-free online learning: the strongly convex case.
\newblock In {\em International Conference on Artificial Intelligence and
  Statistics}, pages 3592--3600. PMLR, 2021.

\bibitem[KKW19]{KempkaKW19}
Michal Kempka, Wojciech Kot{\l}owski, and Manfred~K. Warmuth.
\newblock Adaptive scale-invariant online algorithms for learning linear
  models.
\newblock In {\em Proceedings of the 36th International Conference on Machine
  Learning, {ICML} 2019, 9-15 June 2019, Long Beach, California, {USA}}, pages
  3321--3330, 2019.

\bibitem[Kot17]{Kotlowski17}
Wojciech Kot{\l}owski.
\newblock Scale-invariant unconstrained online learning.
\newblock In {\em International Conference on Algorithmic Learning Theory,
  {ALT} 2017, 15-17 October 2017, Kyoto University, Kyoto, Japan}, pages
  412--433, 2017.

\bibitem[KV05]{kalai2005}
Adam Kalai and Santosh Vempala.
\newblock Efficient algorithms for online decision problems.
\newblock {\em Journal of Computer and System Sciences}, 71(3):291--307, 2005.

\bibitem[KW92]{kuczynski1992}
Jacek Kuczy{\'n}ski and Henryk Wo{\'z}niakowski.
\newblock Estimating the largest eigenvalue by the power and lanczos algorithms
  with a random start.
\newblock {\em SIAM journal on matrix analysis and applications},
  13(4):1094--1122, 1992.

\bibitem[LK19]{levy2019}
Kfir Levy and Andreas Krause.
\newblock Projection free online learning over smooth sets.
\newblock In {\em The 22nd International Conference on Artificial Intelligence
  and Statistics}, pages 1458--1466. PMLR, 2019.

\bibitem[LPZZ17]{lan2017}
Guanghui Lan, Sebastian Pokutta, Yi~Zhou, and Daniel Zink.
\newblock Conditional accelerated lazy stochastic gradient descent.
\newblock In {\em International Conference on Machine Learning}, pages
  1965--1974. PMLR, 2017.

\bibitem[LSV18]{lee2018efficient}
Yin~Tat Lee, Aaron Sidford, and Santosh~S Vempala.
\newblock Efficient convex optimization with membership oracles.
\newblock In {\em Conference On Learning Theory}, pages 1292--1294. PMLR, 2018.

\bibitem[LSW15]{lee2015}
Yin~Tat Lee, Aaron Sidford, and Sam Chiu-wai Wong.
\newblock A faster cutting plane method and its implications for combinatorial
  and convex optimization.
\newblock In {\em 2015 IEEE 56th Annual Symposium on Foundations of Computer
  Science}, pages 1049--1065. IEEE, 2015.

\bibitem[LYC18]{levy2018online}
Kfir~Y Levy, Alp Yurtsever, and Volkan Cevher.
\newblock Online adaptive methods, universality and acceleration.
\newblock {\em Advances in Neural Information Processing Systems},
  31:6500--6509, 2018.

\bibitem[LZ16]{lan2016}
Guanghui Lan and Yi~Zhou.
\newblock Conditional gradient sliding for convex optimization.
\newblock {\em SIAM Journal on Optimization}, 26(2):1379--1409, 2016.

\bibitem[McM17]{mcmahan2017survey}
H~Brendan McMahan.
\newblock A survey of algorithms and analysis for adaptive online learning.
\newblock {\em The Journal of Machine Learning Research}, 18(1):3117--3166,
  2017.

\bibitem[MK20]{mhammedi2020}
Zakaria Mhammedi and Wouter~M. Koolen.
\newblock Lipschitz and comparator-norm adaptivity in online learning.
\newblock In {\em Proceedings of Thirty Third Conference on Learning Theory},
  volume 125, pages 2858--2887. PMLR, 2020.

\bibitem[MKVE19]{mhammedi2019}
Zakaria Mhammedi, Wouter~M Koolen, and Tim Van~Erven.
\newblock Lipschitz adaptivity with multiple learning rates in online learning.
\newblock In {\em Conference on Learning Theory}, pages 2490--2511. PMLR, 2019.

\bibitem[MMS19]{meszaros2019}
Karola M{\'e}sz{\'a}ros, Alejandro~H Morales, and Jessica Striker.
\newblock On flow polytopes, order polytopes, and certain faces of the
  alternating sign matrix polytope.
\newblock {\em Discrete \& Computational Geometry}, 62(1):128--163, 2019.

\bibitem[Mol20]{molinaro2020}
Marco Molinaro.
\newblock Curvature of feasible sets in offline and online optimization.
\newblock {\em arXiv preprint arXiv:2002.03213}, 2020.

\bibitem[MYJ{\etalchar{+}}12]{mahdavi2012}
Mehrdad Mahdavi, Tianbao Yang, Rong Jin, Shenghuo Zhu, and Jinfeng Yi.
\newblock Stochastic gradient descent with only one projection.
\newblock {\em Advances in neural information processing systems}, 25:494--502,
  2012.

\bibitem[NS17]{nesterov2017random}
Yurii Nesterov and Vladimir Spokoiny.
\newblock Random gradient-free minimization of convex functions.
\newblock {\em Foundations of Computational Mathematics}, 17(2):527--566, 2017.

\bibitem[OP16]{orabona2016}
Francesco Orabona and D{\'a}vid P{\'a}l.
\newblock Open problem: Parameter-free and scale-free online algorithms.
\newblock In {\em Conference on Learning Theory}, pages 1659--1664. PMLR, 2016.

\bibitem[RML13]{ross2013normalized}
Stephane Ross, Paul Mineiro, and John Langford.
\newblock Normalized online learning.
\newblock In {\em Proceedings of the Twenty-Ninth Conference on Uncertainty in
  Artificial Intelligence}, pages 537--545, 2013.

\bibitem[Sch03]{schrijver2003}
Alexander Schrijver.
\newblock {\em Combinatorial optimization: polyhedra and efficiency},
  volume~24.
\newblock Springer Science \& Business Media, 2003.

\bibitem[SPW15]{saunderson2015}
James Saunderson, Pablo~A Parrilo, and Alan~S Willsky.
\newblock Semidefinite descriptions of the convex hull of rotation matrices.
\newblock {\em SIAM Journal on Optimization}, 25(3):1314--1343, 2015.

\bibitem[SS{\etalchar{+}}11]{shalev2011}
Shai Shalev-Shwartz et~al.
\newblock Online learning and online convex optimization.
\newblock {\em Foundations and trends in Machine Learning}, 4(2):107--194,
  2011.

\bibitem[SST10a]{srebro2010}
Nathan Srebro, Karthik Sridharan, and Ambuj Tewari.
\newblock Smoothness, low noise and fast rates.
\newblock In J.~Lafferty, C.~Williams, J.~Shawe-Taylor, R.~Zemel, and
  A.~Culotta, editors, {\em Advances in Neural Information Processing Systems},
  volume~23. Curran Associates, Inc., 2010.

\bibitem[SST10b]{srebro2010smoothness}
Nathan Srebro, Karthik Sridharan, and Ambuj Tewari.
\newblock Smoothness, low-noise and fast rates.
\newblock In {\em Proceedings of the 23rd International Conference on Neural
  Information Processing Systems-Volume 2}, pages 2199--2207, 2010.

\bibitem[VEK16]{Erven2016}
Tim {Van}~Erven and Wouter~M. Koolen.
\newblock Metagrad: Multiple learning rates in online learning.
\newblock In {\em Advances in Neural Information Processing Systems}, pages
  3666--3674, 2016.

\bibitem[VEKVdH21]{van2021metagrad}
Tim {Van}~Erven, Wouter~M. Koolen, and Dirk {Van}~der Hoeven.
\newblock Metagrad: Adaptation using multiple learning rates in online
  learning.
\newblock {\em arXiv preprint arXiv:2102.06622}, 2021.

\bibitem[Win17]{Wintenberger2017}
Olivier Wintenberger.
\newblock Optimal learning with {B}ernstein online aggregation.
\newblock {\em Machine Learning}, 106(1):119--141, 2017.

\bibitem[WWB19]{ward2019}
Rachel Ward, Xiaoxia Wu, and Leon Bottou.
\newblock Adagrad stepsizes: Sharp convergence over nonconvex landscapes.
\newblock In {\em International Conference on Machine Learning}, pages
  6677--6686. PMLR, 2019.

\bibitem[Zin03]{zinkevich2003}
Martin Zinkevich.
\newblock Online convex programming and generalized infinitesimal gradient
  ascent.
\newblock In {\em Proceedings of the 20th international conference on machine
  learning (icml-03)}, pages 928--936, 2003.

\end{thebibliography}
	\bibliographystyle{alpha}
	
\end{document}